\newcommand{\nobracket}{}
\newcommand{\nocomma}{}
\newenvironment{itemizedot}{\begin{itemize} }{\end{itemize}}
\newcommand{\assign}{:=}
\newcommand{\tmem}[1]{{\em #1\/}}
\newcommand{\tmop}[1]{\ensuremath{\operatorname{#1}}}
\newtheoremstyle{dotless}{}{}{\itshape}{}{\bfseries}{}{ }{}
\theoremstyle{dotless}
\newtheorem{proposition}{Proposition}[section]
\newtheorem{corollary}{Corollary}[section]
\newtheorem{theorem}{Theorem}[section]
\newtheorem{lemma}{Lemma}[section]
\newcommand{\DDD}{{\mathcal{D}}}
\newcommand{\BBB}{{\mathcal{B}}}
\newcommand{\DD}{{\mathbb{D}}}
\newcommand{\RR}{{\mathbb{R}}}
\newcommand{\CC}{{\mathbb{C}}}
\newcommand{\VV}{{\mathbb{V}}}
\renewcommand{\Cap}{{\text{Cap}}}
\theoremstyle{definition}
\DeclareMathOperator{\vvar}{Var}
\DeclareMathOperator{\supp}{supp}
\DeclareMathOperator{\capp}{Cap}
\begin{document}

\begin{frontmatter}[classification=text]


\author[nicola]{Nicola Arcozzi\thanks{Partially supported by the grants INDAM-GNAMPA 2017 "Operatori e disuguaglianze integrali in spazi con simmetrie" and PRIN 2018 "Variet\`{a} reali e complesse: geometria, topologia e analisi armonica"}}
\author[pavel]{Pavel Mozolyako\thanks{Partially supported by project 22-11-00071 by the Russian Science Foundation (results of Sections 3.1 and 3.3), by the Ministry of Science and Higher Education of Russian Federation, agreement 075-15-2021-602}}
\author[kalle]{Karl-Mikael Perfekt\thanks{Partially supported by grant 334466 of the Research Council of Norway, “Fourier Methods and Multiplicative Analysis”.}}
\author[giulia]{Giulia Sarfatti\thanks{Partially supported by by INDAM-GNSAGA, by the 2014 SIR grant "Analytic Aspects in Complex and Hypercomplex Geometry", by Finanziamento Premiale FOE 2014 "Splines for accUrate NumeRics: adaptIve models for Simulation Environments", and by PRIN 2022 "Interactions between Geometric Structures and Function Theories" of the Italian Ministry of Education (MIUR).}}

\begin{abstract}
We characterize the Carleson measures for the Dirichlet space on the bidisc, hence also its multiplier space. Following Maz'ya and Stegenga, 
the characterization is given in terms of a capacitary condition. We develop the foundations of a bi-parameter potential theory on the bidisc
and prove a Strong Capacitary Inequality. In order to do so, we have to overcome the obstacle that the Maximum Principle fails in the bi-parameter theory.
\end{abstract}
\end{frontmatter}

\section{Introduction}

\noindent{\bf Notation.} We denote by $\DD$ the unit disc $\DD=\{z\in\CC:\ |z|<1\}$ in the complex plane and by $\partial\DD$ its boundary. We write 
$A\lesssim B$ ($A \gtrsim B$) if there is a constant independent on the variables on which $A$ and $B$ depend (which might be numbers, variables, sets...) such that 
$A\le CB$ ($CA\ge B$ respectively), and $A\approx B$, if $A\lesssim B$ and $A \gtrsim B$.

\vskip1cm
In $1979$, Alice Chang \cite{chang1979}, extending a foundational result of Carleson \cite{carleson1962} in one variable, 
characterized the Carleson measures for the bi-harmonic Hardy space of the bidisc, that is, those measures $\mu$ on $\DD^2$ such that the identity operator
boundedly maps $h^2(\DD)\otimes h^2(\DD)$ into $L^2(\mu)$. 
In \cite{carleson1974} Carleson had previously shown that the bi-parameter theory presents very peculiar features.
Results in bi-parameter potential theory and harmonic analysis are scattered in the literature. See however Chapter 3 in the monograph \cite{MS2013}, and the references mentioned at p. 124, and the lectures notes \cite{Gundy1980}.
At the same time, Stegenga \cite{stegenga1980} characterized the Carleson measures for the holomorphic Dirichlet space on the unit disc.
Following standard use in complex function theory, we say that a measure $\mu$ is  a {\it Carleson measure for the 
Hilbert function space $H$} if $H$ continuously embeds into $L^2(\mu)$. 

Carleson measures proved to be a central notion in the analysis of holomorphic spaces, as they intervene in the characterization of multipliers, 
interpolating sequences, and Hankel-type forms, in Corona-type problems, in the characterization of exceptional sets at the boundary, and more.
In this article we characterize the Carleson measures for the Dirichlet space on the bidisc, and we obtain as a consequence a characterization 
of its multiplier space.

As the Dirichlet space is defined by a Sobolev norm, it is not surprising that Stegenga's 
characterization is given in terms of a potential theoretic object, set capacity, and that the proof relies on deep results from Potential Theory, such as the Strong Capacitary Inequality. The main effort in this article is developing a 
bi-parameter potential theory which is rich enough to state and prove the characterization theorem. There are obstructions to doing
so, which we will illustrate below.

Other approaches to similar problems have been suggested in the past. The closest result is Eric Sawyer's characterization of the 
weighted inequalities for the bi-parameter Hardy operator \cite{sawyer1985}. Sawyer's extremely clever combinatorial-geometric argument does not seem
to work in our context, or at least we were not able to make it work. The difficulty lies in the fact that Sawyer deals with 
the product of two segments, while we work with the product of two hyperbolic discs. For similar reasons, we were not able 
to extend the \textit{good-lambda} argument in \cite{ars2002} to the bi-parameter case.
The simple approach via maximal functions in \cite{ars2007}
could work, if knowledge concerning weighted maximal bi-parameter functions was more developed. 
The difficulty is that, contrary to the linear case, bi-parameter
maximal functions do not always satisfy weighted $L^2$ inequalities. 
We refer to  \cite{Cairoli1971} and \cite{Gundy1980} for early, detailed accounts of two-parameter processes and their associated maximal functions, and \cite{marcin} for seminal results on bi-parameter maximal functions.
In the one parameter case, Carleson measures for the Dirichlet space can also be characterized using a Bellman function argument \cite{AHMV18}. At the moment, 
however, the Bellman function technique, having at its heart stochastic optimization for martingales, does not work in the two time-parameter martingale
theory underlying bi-parameter Potential Theory. The scheme of dualizing the embedding to translate the problem from bi-parameter--holomorphic to bi-parameter--dyadic has been borrowed from  \cite{ars2007, ars2008}.

We now state our results more precisely. Let $\DDD(\DD)$ be the Dirichlet space on the unit disc $\DD=\{z\in\CC:\ |z|<1\}$; that is,
the space of the functions $f(z)=\sum_{n=0}^\infty\widehat{f}(n)z^n$, analytic in $\DD$, 
such that the norm
\begin{equation}\label{introdef}
\|f\|_\DDD^2:=\sum_{n=0}^\infty(n+1)\left|\widehat{f}(n)\right|^2=\|f\|_{H^2(\DD)}^2+\frac{1}{\pi}\int_\DD\left|f^\prime(z)\right|^2dA(z)<\infty. 
\end{equation}
The Dirichlet space on the bidisc $\DD^2$ can be temporarily defined as $\DDD(\DD^2):=\DDD(\DD)\otimes\DDD(\DD)$. The main aim of this article is proving the following.
\begin{theorem}\label{intromain}
 Let $\mu\ge0$ be a Borel measure on $\overline\DD^2$, the closure of the bidisc. Then the following are equivalent.
 \begin{itemize}
  \item[(I)] There is a constant $C_1>0$ such that 
  \begin{equation}\label{introimbed}
   \sup_{0\le r<1}\int_{r\DD^2}|f|^2d\mu\le C_1\|f\|_{\DDD(\DD^2)}^2, \quad f \in \DDD(\DD^2);
  \end{equation}
  \item[(II)] There is a constant $C_2>0$ such that for all $n\geq 1$ and for all choices of arcs $J^1_1,\dots,J^1_n$ and $J^2_1,\dots,J^2_n$ on $\partial\mathbb{D}$, we have that
  \begin{equation}\label{introcap}
   \mu\left(\bigcup_{k=1}^n S\left( J^1_k\times J^2_k\right) \right)\le C_2 \capp_{(\frac{1}{2}, \frac{1}{2})}\left(\bigcup_{k=1}^n  J^1_k\times J^2_k \right).
  \end{equation}
 \end{itemize}
Moreover, the constants $C_1$ and $C_2$ are comparable independently of $\mu$.
\end{theorem}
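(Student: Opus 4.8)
The plan is to follow the Maz'ya--Stegenga scheme, adapted to the bi-parameter setting, with a Strong Capacitary Inequality on the bitree as the central new ingredient. As a first step I would discretize: using a Whitney-type decomposition of $\DD$, replace the disc by a dyadic tree $T$, so that $\DD^2$ is modelled by the bitree $T^2=T\times T$, the Carleson boxes $S(J^1_k\times J^2_k)$ become product subtrees (``rectangles'') of $T^2$, and the capacity appearing in \eqref{introcap} is comparable to the discrete bitree capacity
\[
\capp(E)=\inf\Big\{\|\varphi\|^2_{\ell^2(T^2)}:\ \varphi\ge0,\ I\varphi\ge1 \text{ on } E\Big\},
\]
where $I$ is the bi-parameter summation operator $I\varphi(\alpha)=\sum_{\beta\ge\alpha}\varphi(\beta)$ along the rectangle of predecessors of $\alpha$. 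Under this dictionary $\|f\|_{\DDD(\DD^2)}\approx\|\varphi\|_{\ell^2(T^2)}$ when $f=I\varphi$, and, since one may restrict to $\varphi\ge0$ in the embedding (because $|I\varphi|\le I|\varphi|$), condition (I) becomes equivalent to the discrete trace inequality $\|I\varphi\|_{L^2(\mu)}\le C\,\|\varphi\|_{\ell^2(T^2)}$ for all $\varphi\ge0$. Setting up this dictionary carefully — in particular the dualization that passes from the holomorphic embedding to the dyadic one, following \cite{ars2007,ars2008} — is the preliminary task.

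Granting the dictionary, the implication (I)$\Rightarrow$(II) is the easy direction. Given a finite union $E=\bigcup_kJ^1_k\times J^2_k$ of boundary boxes, take a near-extremal $\varphi\ge0$ for $\capp(E)$, so that $I\varphi\ge1$ on the rectangles and $\|\varphi\|^2_{\ell^2}\le 2\capp(E)$; since $I\varphi$ is monotone along the order of $T^2$, it is $\ge1$ on the whole shadow $\bigcup_kS(J^1_k\times J^2_k)$, and testing \eqref{introimbed} against $f=I\varphi$ gives
\[
\mu\Big(\bigcup_kS(J^1_k\times J^2_k)\Big)\le\int|I\varphi|^2\,d\mu\le C_1\|\varphi\|^2_{\ell^2}\le 2C_1\capp(E),
\]
so (II) holds with $C_2\lesssim C_1$.

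For the substantial direction (II)$\Rightarrow$(I), restrict to finitely supported $\varphi\ge0$ and run the layer-cake estimate
\[
\int_{T^2}|I\varphi|^2\,d\mu=\int_0^\infty 2\lambda\,\mu\big(\{I\varphi>\lambda\}\big)\,d\lambda.
\]
Each superlevel set $\Omega_\lambda=\{I\varphi>\lambda\}$ is a down-set of $T^2$, hence a union of rectangles, so (II) applies (using monotonicity of $\mu$ and of $\capp$) and yields $\mu(\Omega_\lambda)\lesssim C_2\capp(\Omega_\lambda)$. Combining the last two displays, (I) with $C_1\lesssim C_2$ would follow from the bi-parameter \emph{Strong Capacitary Inequality}
\[
\int_0^\infty 2\lambda\,\capp\big(\{I\varphi>\lambda\}\big)\,d\lambda\lesssim\|\varphi\|^2_{\ell^2(T^2)}.
\]
The termwise bound $\capp(\Omega_\lambda)\le\lambda^{-2}\|\varphi\|^2$ is far too weak here — it makes the integral diverge logarithmically — so this inequality is a genuine theorem, not a formality.

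Proving the Strong Capacitary Inequality is where the real difficulty lies, and is the step I expect to be the main obstacle. In the one-parameter theory it is deduced from a weak-type estimate for the capacities of the level sets together with the Maximum Principle for $I$ — the fact that the equilibrium potential of a set is bounded by a constant everywhere — which is precisely what lets the contributions of the coronas $\Omega_{2^{n}}\setminus\Omega_{2^{n+1}}$ be decoupled and summed against $\|\varphi\|^2$. In the bi-parameter setting this Maximum Principle fails: the equilibrium potential of a rectangle can be much larger than $1$ away from that rectangle. The plan is therefore to replace it by a weaker, quantitative substitute tailored to the bitree — a controlled bound on the overshoot of the relevant potentials — obtained through a stopping-time decomposition of the down-sets $\Omega_\lambda$ that respects the product structure of $T^2$ (for instance, conditioning on one coordinate and running the one-parameter argument uniformly in the other, or a two-step selection of Calder\'on--Zygmund type on $T^2$). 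Tracking the constants through the discretization, the duality, the testing argument and this estimate then yields the asserted comparability of $C_1$ and $C_2$, completing the proof.
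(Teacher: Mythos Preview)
Your proposal is correct and follows essentially the same route as the paper: dualize and discretize to the bitree, reduce (II)$\Rightarrow$(I) via the layer-cake to a Strong Capacitary Inequality, and prove the latter by a quantitative replacement for the (failing) Maximum Principle obtained by freezing one tree coordinate and running a one-parameter argument on the other. The paper makes the ``controlled overshoot'' precise as $\capp(\{\VV^\mu>\lambda\})\lesssim \lambda^{-3}\mathcal{E}[\mu]$ (Theorem~\ref{almostmaximum}), proved exactly via the layer-by-layer reduction you sketch; the only point where your outline is slightly glib is the passage from holomorphic $f$ to nonnegative $\varphi$, which in the paper goes through the reproducing kernel and the crucial positivity $\Re K_z(w)\approx|K_z(w)|$ rather than the pointwise bound $|I\varphi|\le I|\varphi|$.
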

Here $S\left( J^1\times J^2\right)=S\left( J^1\right)\times S\left(J^2\right)$ 
is the Carleson box in $\overline{\DD}^2$ based on  $J^1\times J^2$ and $\capp_{(\frac{1}{2}, \frac{1}{2})}$ is the canonical extension of $\frac{1}{2}$-Bessel capacity from
linear potential theory to bi-parameter potential theory, which will be defined later in the article.
It can be estimated from above and below by the capacity which is
naturally associated with the reproducing kernel of 
$\DDD(\DD^2)$, and several other versions of capacity. In the one parameter case, Stegenga (see \cite[Theorem 4.2]{stegenga1980}) exhibited examples of measures $\lambda$ satisfying the appropriate version of (\ref{introcap}) for single arcs, but not for arbitrary unions of arcs. It is easy to see that the measure $\mu
=\lambda\otimes m$, where $m$ is arclength, provides an example where (\ref{introcap}) holds for rectangles, yet fails for arbitrary unions of them.

We can, for the moment informally, view (\ref{introimbed}) 
as the boundedness of the imbedding $Id:\DDD(\DD^2)\hookrightarrow L^2(\mu)$.
A measure satisfying (\ref{introimbed}) is a \it Carleson measure \rm for $\DDD(\DD^2)$ and we define $$[\mu]_{CM}:=\|Id\|_{\BBB(\DDD(\DD^2),L^2(\mu))}^2$$
as its {\it Carleson measure norm}. Actually, the result has a stronger version, which we will prove, where in the left hand side of (I),
$f$ is replaced by its {\it radial variation}, whose main contribution is given by
\begin{equation}\label{radialvariation}
 \vvar_{12}f(\zeta,\xi)=\int_0^{1}\int_0^{1}\left|\partial_{zw}f(r\zeta,s\xi )\right|drds,\ (\zeta,\xi)\in\overline\DD^2
\end{equation}
The full definition appears in Section \ref{SS:2.5}. In particular, this allows us to recover, on the bidisc, Beurling's result \cite{Beurling40} on exceptional sets 
for the radial variation of functions in the Dirichlet space.

A function $b$ holomorphic in $\DD^2$ is a \it multiplier \rm of 
$\DDD(\DD^2)$ if multiplication times $b$, $M_b:f\mapsto bf$, is bounded on $\DDD(\DD^2)$. The operator norm of $M_b$ is, by definition, 
the multiplier norm of $b$. From Theorem \ref{intromain} we deduce a characterization of multipliers.
\begin{theorem}\label{intromultiplier}
Let $b$ be holomorphic in $\DD^2$ and define the measure $d\mu_b:=|\partial_{zw}b(z,w)|^2dA(z)dA(w)$.
There exist positive constants $C_1,C_2$ such that:
\begin{equation}\label{introstegenga}
\left\|M_b\right\|_{\BBB(\DDD(\DD^2))}^2\approx [\mu_b]_{CM}+\|b\|_{H^\infty}^2+\sup_{w \in \DD} [|\partial_z b(\cdot, w)|^2dA(\cdot)]_{CM(\mathcal{D}(\mathbb{D}))}
+\sup_{z \in \DD} [|\partial_w b(z, \cdot)|^2dA(\cdot)]_{CM(\mathcal{D}(\mathbb{D}))},
\end{equation}
where $[\cdot]_{CM(\mathcal{D}(\mathbb{D}))}$ denotes the Carleson measure norm with respect to the Dirichlet space in the disc.
\end{theorem}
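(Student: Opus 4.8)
The plan is to reduce the multiplier problem to an iterated application of the Carleson measure characterization in Theorem \ref{intromain}, exploiting the tensor structure of $\DDD(\DD^2)=\DDD(\DD)\otimes\DDD(\DD)$. First I would establish the elementary but essential fact that $b$ is a multiplier of $\DDD(\DD^2)$ if and only if $M_b$ is bounded, in which case $b\in H^\infty(\DD^2)$: testing $M_b$ against the normalized reproducing kernels $k_{(z,w)}/\|k_{(z,w)}\|$ and using that $\DDD(\DD^2)$ is a reproducing kernel Hilbert space whose kernel dominates the Szegő kernel gives $\|b\|_{H^\infty}\lesssim\|M_b\|$. This accounts for the $\|b\|_{H^\infty}^2$ term and shows it is necessary. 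The remaining three terms will come from differentiating the product $bf$.

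Next I would compute $\partial_{zw}(bf)$ by the (two-variable) Leibniz rule, obtaining the four terms
\[
\partial_{zw}(bf)=(\partial_{zw}b)\,f+(\partial_z b)(\partial_w f)+(\partial_w b)(\partial_z f)+b\,(\partial_{zw}f).
\]
Inserting this into the expression $\|bf\|_{\DDD(\DD^2)}^2\approx\iint|\partial_{zw}(bf)|^2\,dA\,dA+\text{lower-order terms}$ and applying the elementary inequality $|a_1+a_2+a_3+a_4|^2\lesssim\sum|a_i|^2$ (and, for the reverse direction, the fact that the four terms can be recovered up to controlled errors), the problem splits: the first term is exactly $\int|f|^2\,d\mu_b$, controlled by $[\mu_b]_{CM}\|f\|_{\DDD(\DD^2)}^2$ via Theorem \ref{intromain}; the fourth term is $\le\|b\|_{H^\infty}^2\,\iint|\partial_{zw}f|^2$. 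For the two mixed terms, say $\iint|\partial_z b(z,w)|^2|\partial_w f(z,w)|^2\,dA(z)\,dA(w)$, I would freeze the variable $w$ and, for each fixed $w$, recognize the inner integral $\int_{\DD}|\partial_z b(z,w)|^2|g_w(z)|^2\,dA(z)$ — where $g_w(\zeta)=\partial_w f(\zeta,w)$ — as the one-variable Dirichlet-space Carleson embedding applied to $g_w\in\DDD(\DD)$; this is bounded by $\sup_w[|\partial_z b(\cdot,w)|^2 dA]_{CM(\DDD(\DD))}\cdot\|g_w\|_{\DDD(\DD)}^2$, and then integrating in $w$ and using Fubini together with the tensor description of the $\DDD(\DD^2)$ norm yields a bound by that supremum times $\|f\|_{\DDD(\DD^2)}^2$. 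The symmetric term is handled by freezing $z$.

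For the converse inequalities (that each term on the right is dominated by $\|M_b\|^2$), I would test $M_b$ against well-chosen functions: the $[\mu_b]_{CM}$ bound follows by taking $f=1$ (or, more precisely, running the necessity direction of Theorem \ref{intromain} for $\mu_b$ against arbitrary $f$ and noting that the other three Leibniz terms are of "lower order" relative to a gain in one variable), while the two supremum terms follow by taking $f(z,w)=h(z)$ depending on only one variable, so that $\partial_w f\equiv 0$ kills the fourth Leibniz term and reduces $M_b$ boundedness on such slice functions to a uniform-in-$w$ one-variable multiplier/Carleson estimate for $\partial_z b(\cdot,w)$; Stegenga's theorem in the disc then identifies this with the stated Carleson norm. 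The main obstacle I anticipate is bookkeeping the "lower-order" terms — the $H^2$-parts of the $\DDD$ norms and the one-variable Dirichlet energies of the partial slices $\partial_z b(\cdot,w)$, $\partial_w b(z,\cdot)$ — and checking that each is genuinely controlled by the four displayed quantities (in particular that $\sup_w\|\partial_z b(\cdot,w)\|_{\DDD(\DD)}^2$ and $\|b\|_{H^\infty}^2$ together absorb them); this requires carefully splitting $\DDD(\DD^2)$ according to which variables are "differentiated" and invoking one-variable facts uniformly in the frozen parameter, but involves no new potential theory beyond Theorems \ref{intromain} and Stegenga's result.
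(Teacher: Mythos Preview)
Your sufficiency direction (right-hand side controls $\|M_b\|^2$) via the Leibniz decomposition of $\partial_{zw}(bf)$, freezing one variable, and invoking the one-dimensional Carleson embedding is correct and matches the paper's approach.

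The gap is in the necessity direction for the two supremum terms. Testing against $f(z,w)=h(z)$ does \emph{not} yield a uniform-in-$w$ bound: when you compute $\|bh\|_{\DDD(\DD^2)}^2$ you integrate over $w$, so you obtain only an averaged estimate of the form $\int_{\DD}[\,|\partial_z b(\cdot,w)|^2 dA\,]_{CM(\DDD(\DD))}\,(\cdots)\,dA(w)$, not $\sup_{w}[\,\cdots\,]_{CM}$. A multiplier $b$ could in principle have $|\partial_z b(\cdot,w_0)|^2 dA$ with large Carleson norm at a single $w_0$ while the integral in $w$ stays small; your test functions cannot detect this. The paper circumvents this by using the vector-valued viewpoint $\DDD(\DD^2)\cong\DDD(\DDD(\DD))$ and the adjoint identity
\[
M_b^{\ast}(K_z\otimes h)=K_z\otimes M_{b(z,\cdot)}^{\ast}h,\qquad h\in\DDD(\DD),
\]
which localizes in $z$ via the reproducing kernel and gives $\sup_{z}\|M_{b(z,\cdot)}\|_{\BBB(\DDD(\DD))}\le\|M_b\|_{\BBB(\DDD(\DD^2))}$ directly; Stegenga's theorem then converts this into the supremum of one-variable Carleson norms. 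This step is also what makes your extraction of $[\mu_b]_{CM}$ work: the paper first bounds the mixed Leibniz terms $\iint|\partial_w b|^2|\partial_z f|^2$ by $\|M_b\|^2\|f\|^2$ using the sup estimate just obtained, and only then isolates $\iint|f|^2\,d\mu_b$ by the reverse triangle inequality. Without the pointwise-in-one-variable multiplier bound, neither the sup terms nor the $[\mu_b]_{CM}$ term can be shown necessary.
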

Up to this point the results exactly match those obtained by Stegenga in 1980. The proof, however, 
has to overcome a series of obstructions. The most prominent one is that the potentials of positive 
measures do not satisfy a maximum principle: the supremum of the potential $\VV^\mu$ of a measure $\mu$ can be much larger than its 
supremum on the support of $\mu$. Essentially, this is due to the fact that the product of distance functions on metric spaces $X$ and $Y$
is usually very far from itself being a distance on $X\times Y$. To oversimplify a large body of knowledge, a far reaching Potential Theory can be developed if the
reciprocal of the defining kernel behaves like a distance, satisfying the triangle inequality up to a constant factor.
The product of two copies of such kernels, however, fails to satisfy this property.

In order to isolate the essential difficulties, it is convenient to transfer the holomorphic problem to a dyadic one.
In the sequel, $T$ will denote a set of vertices labelling dyadic arcs in $\partial\DD$, which might be seen as an oriented dyadic tree with respect 
to the relation $I\subset J$: we identify a dyadic arc with a vertex of $T$, and an oriented edge of $T$ can be thought of as an inclusion relation $J\subset I$ of a a dyadic arc in the dyadic arc of twice its length. 
To each arc $I$ we associate 
the usual Carleson box $S(I)$ and the Whitney square $Q(I)$ consisting of its upper half, $Q(I)=S(I)\setminus\cup_{J\subsetneqq I}S(J)$.

It is convenient to set up a different notation for objects related to $\DD$ and objects related to $T$: we will assign to a dyadic arc $I$ a label
$\alpha_I$ in $T$ and, vice versa, each $\alpha$ in $T$ will be the label of some dyadic arc $I_\alpha = I(\alpha)$. The \it root \rm of $T$ is $o$, where
$I_o=\partial\DD$.
Also, $T$ has a natural boundary $\partial T$,
endowed with a metric which makes $\overline{T} = T \cup \partial T$ into a compact space. We can think of an element $\zeta$ in $\partial T$ as the label for an infinite, decreasing 
sequence of dyadic arcs: 
$P(\zeta):=\{I_n\}_{n=0}^\infty$, with  $I_n\supset I_{n+1}$, and $\left|I_n\right|/2\pi=2^{-n}$. We write $\alpha\le\beta$ if $I_\alpha\subseteq I_\beta$.
This convention is different from the one used in \cite{ars2002}.
To each $\alpha$ in $\overline{T}$ we associate the subset
$P(\alpha)=\{\beta:\ \alpha\le\beta\le o\}$. To each $\alpha$ we also associate the region $S(\alpha)=\{\zeta\in\overline{T}:\ \alpha\in P(\zeta)\}$.

The geometric objects defined in the disc have natural counterparts in the bidisc: 
we define $S(J^1\times J^2)=S(J^1)\times S(J^2)$, $Q(J^1\times J^2)=Q(J^1)\times Q(J^2)$, and so on. 
The bitree $T^2=T\times T$ labels dyadic rectangles. If $\alpha=(\alpha_z,\alpha_w)\in T\times T$, we associate to it 
$J(\alpha)=J(\alpha_z)\times J(\alpha_w)$.
A basic fact is that $T^2$, identified with the set of the dyadic rectangles $J^1\times J^2$, 
does not have a tree structure with respect to inclusion.
We can move positive Borel measures from  $\overline{\DD}^2$ to  $\overline{T}^2$, 
in a way which will be made precise later: to each Borel measure $\mu$ on $\overline{\DD}^2$ we associate a unique Borel measure
$\Lambda^\ast\mu$ on $\overline{T}^2$ and, vice-versa, to each Borel measure $\nu$ on $(\partial T)^2$ we can associate a unique Borel measure
$\Lambda_\ast\nu$ on $(\partial\mathbb{D})^2$ (here we restrict ourselves to measures supported on $(\partial T)^2$, since we do not really need to transplant the measure on the rest of the bitree to the bidisc). Essentially, $\Lambda_\ast\nu$ associates in natural way a measure on the \it distinguished boundary \rm $(\partial\mathbb{D})^2$ of $\DD^2$ to the restriction of $\nu$ to the  
\it distinguished boundary \rm $(\partial T)^2$ of $T^2$. 
Conversely, $\Lambda^\ast\mu$ concentrates the measure of $Q(I_\alpha)\subseteq\DD^2$ into $\alpha\in T^2$, it essentially preserves the measures 
on the distinguished boundaries, and acts in a mixed way on the remaining parts of the boundaries.  
The precise definitions of $\Lambda_\ast$ and $\Lambda^\ast$ are slightly technical and will be given later.

We define a natural \it bi-parameter Hardy operator \rm $\mathbb{I}$
acting on functions $\varphi:\overline{T}^2\to\RR$,
\begin{equation}\label{bihardy}
 \mathbb{I}\varphi(\zeta):=\sum_{\alpha\in P(\zeta)}\varphi(\alpha),
\end{equation}
provided the sum makes sense. This is certainly the case if $\varphi\ge0$, which is what we will assume throughout the article. We define the operator $ \mathbb{I}$ on real valued functions in order to have for its adjoint the usual definition.
The operator $\mathbb{I}$ is analogous to the bi-parameter Hardy operator studied by Eric Sawyer in \cite{sawyer1985}, 
and it is the bi-parameter version of the operator $I$ introduced in \cite{ars2002}.
Dually, we have the operator $\mathbb{I}^\ast$ acting on (a priori, signed) Borel measures on $\overline{T}^2$, $\mathbb{I}^\ast\mu(\alpha):=\mu(S(\alpha))$. 
These simple operators encode all relevant information.
\begin{theorem}\label{dyadization}
 Let $\mu$ be a positive, Borel measure on $\overline{\DD}^2$. Then
 \begin{equation}\label{dyadizationnorm}
  C_1[\mu]_{CM}\le   \left\|\mathbb{I}\right\|_{\BBB(\ell^2(T^2),L^2(\Lambda^\ast\mu))}^2\le  C_2[\mu]_{CM},
 \end{equation}
where $C_1$ and $C_2$ are universal constants. Moreover, we can replace $[\mu]_{CM}$ by the larger quantity
$$
\sup_{\left\|f\right\|_{\DDD(\DD^2)}=1}\left\|\vvar f\right\|_{L^2(\overline{\mathbb{D}}^2, d\mu)}^2,
$$
where the {\bf radial variation} of $f$, $\vvar f$, is defined in \eqref{e:575}, and has \eqref{radialvariation} as its leading summand. 
\end{theorem}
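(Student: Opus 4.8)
\medskip
\noindent\textbf{Proof (strategy).}
The plan is to dualize and discretize: transplant the holomorphic embedding $\DDD(\DD^2)\hookrightarrow L^2(\mu)$ to a weighted inequality for the bi-parameter Hardy operator $\mathbb{I}$ on the bitree, via a Whitney decomposition of $\DD^2$. The starting point is the one-parameter norm equivalence $\|g\|_{\DDD(\DD)}^2\approx|\widehat g(0)|^2+\int_\DD|g'|^2\,dA$ and, tensoring it, the fact that $\|f\|_{\DDD(\DD^2)}^2$ is comparable to the sum of $|\widehat f(0,0)|^2$, two one-parameter terms, and the genuinely bi-parameter term $\int_\DD\int_\DD|\partial_{zw}f|^2\,dA\,dA$. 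Writing $f$ through its mixed primitive, $f(z,w)=f(z,0)+f(0,w)-f(0,0)+\int_{[0,z]\times[0,w]}\partial_{\zeta\eta}f$, one sees that modulo the one-parameter pieces the size of $f$ — and, more robustly, of the radial variation $\vvar_{12}f$ — is governed by the \emph{positive} operator $g\mapsto\int_{[0,z]\times[0,w]}|g|$. I would work with $\vvar f$ rather than $f$ throughout, precisely because it removes all cancellation and turns the discretization into a two-sided estimate; this is also why the stronger statement with $\sup_{\|f\|_{\DDD(\DD^2)}=1}\|\vvar f\|_{L^2(\mu)}^2$ comes out at no extra cost.

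First I would record the one-parameter discretization, by now standard (cf. \cite{ars2002,ars2007,ars2008}): with $Q(\alpha)$ the Whitney square of the dyadic arc $I_\alpha$, one has $\int_\DD|g'|^2\,dA\approx\sum_{\alpha\in T}|\varphi_g(\alpha)|^2$, where $\varphi_g(\alpha)$ is a suitably normalized mean of $g'$ over $Q(\alpha)$, and the primitive satisfies $|g(z)|\lesssim\sum_{\alpha:\,z\in I_\alpha}|\varphi_g(\alpha)|$ with the radial variation of $g$ comparable to the right-hand side. Tensoring in the two variables gives $\|\varphi_f\|_{\ell^2(T^2)}^2\approx\int_\DD\int_\DD|\partial_{zw}f|^2\,dA\,dA$ with $\varphi_f(\alpha_z,\alpha_w)$ the mean of $\partial_{zw}f$ over $Q(\alpha_z)\times Q(\alpha_w)$, and — using $P(\zeta)=P(\zeta_z)\times P(\zeta_w)$ — the bi-parameter part of $\vvar_{12}f$ at a point of $Q(I_\beta)$ comparable to $\sum_{\alpha_z\le\beta_z}\sum_{\alpha_w\le\beta_w}|\varphi_f(\alpha)|=\mathbb{I}|\varphi_f|(\beta)$. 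Conversely, given $\varphi\ge0$ on $T^2$, a standard atomic construction produces $f$ holomorphic on $\DD^2$ whose mixed derivative realizes $\varphi$, with $\|f\|_{\DDD(\DD^2)}\lesssim\|\varphi\|_{\ell^2(T^2)}$ and — crucially, since all contributions carry the same sign — $|f(z,w)|\gtrsim\mathbb{I}\varphi(\beta)$ on $Q(I_\beta)$. The transplant $\Lambda^\ast$ is designed for exactly this bookkeeping: it concentrates $\mu(Q(I_\alpha))$ at the vertex $\alpha$ and respects the distinguished boundary, so that for any $\psi$ that is $\approx$-constant on Whitney boxes, $\int_{\overline{\DD}^2}\psi^2\,d\mu\approx\int_{\overline{T}^2}(\psi^{\mathbb{I}})^2\,d\Lambda^\ast\mu$ with $\psi^{\mathbb{I}}$ the corresponding function on $\overline{T}^2$; applied to the pieces above this gives $\int_{\overline{\DD}^2}(\vvar_{12}f)^2\,d\mu\approx\int_{\overline{T}^2}|\mathbb{I}\varphi_f|^2\,d\Lambda^\ast\mu$.

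With these comparisons the theorem follows by chaining. (i) $[\mu]_{CM}\le\sup_{\|f\|_{\DDD(\DD^2)}=1}\|\vvar f\|_{L^2(\mu)}^2$, because with the full definition of $\vvar f$ from Section~\ref{SS:2.5} — which also carries $\widehat f(0,0)$ and the one-parameter variations — the triangle inequality along radii gives $|f|\le\vvar f$ pointwise on $\overline{\DD}^2$ (boundary values read off as radial limits). (ii) $\sup_{\|f\|_{\DDD(\DD^2)}=1}\|\vvar f\|_{L^2(\mu)}^2\lesssim\|\mathbb{I}\|^2$: the bi-parameter part of $\vvar f$ is $\lesssim\mathbb{I}|\varphi_f|$, and since $\mathbb{I}$ is positivity preserving its $L^2(\Lambda^\ast\mu)$-norm is $\le\|\mathbb{I}\|\,\|\varphi_f\|_{\ell^2(T^2)}\lesssim\|\mathbb{I}\|\,\|f\|_{\DDD(\DD^2)}$; each lower-order piece is dominated by testing $\mathbb{I}$ on sequences supported on a single slice $\{\alpha_w=o\}$, on $\{\alpha_z=o\}$, or on $\{(o,o)\}$, which makes $\mathbb{I}\varphi$ collapse to the corresponding one-parameter (resp. zeroth-order) operator, so those Carleson constants are $\lesssim\|\mathbb{I}\|^2$ as well. (iii) $\|\mathbb{I}\|^2\lesssim[\mu]_{CM}$: it suffices to test on $\varphi\ge0$ (since $|\mathbb{I}\varphi|\le\mathbb{I}|\varphi|$); with $f$ the holomorphic function built from $\varphi$, $\int_{\overline{T}^2}|\mathbb{I}\varphi|^2\,d\Lambda^\ast\mu\lesssim\int_{\overline{\DD}^2}|f|^2\,d\mu\le[\mu]_{CM}\,\|f\|_{\DDD(\DD^2)}^2\lesssim[\mu]_{CM}\,\|\varphi\|_{\ell^2(T^2)}^2$. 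Together, (i)--(iii) give $[\mu]_{CM}\le\sup_{\|f\|_{\DDD(\DD^2)}=1}\|\vvar f\|_{L^2(\mu)}^2\lesssim\|\mathbb{I}\|^2\lesssim[\mu]_{CM}$, which is the assertion; the identity $\|\mathbb{I}\|_{\BBB(\ell^2(T^2),L^2(\Lambda^\ast\mu))}=\|\mathbb{I}^\ast\|$ is used only to pass freely between the operator and its adjoint $\mathbb{I}^\ast\mu(\alpha)=\mu(S(\alpha))$.

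The main obstacle I anticipate is making the bi-parameter Whitney discretization and the transplant $\Lambda^\ast$ genuinely two-sided \emph{up to the full topological boundary} of $\DD^2$: besides the distinguished boundary $(\partial\DD)^2$ one must treat the ``sides'' where one coordinate stays in $\DD$, verifying that the $\Lambda^\ast$-lift of $\vvar_{12}f$ is comparable to $\mathbb{I}|\varphi_f|$ there too, and one must check that each of the three lower-order error terms is absorbed — either by the classical one-parameter Carleson theory (Stegenga, \cite{ars2002}) or, as in step (ii), by a sub-operator of $\mathbb{I}$. By contrast, the phenomena that dominate the rest of the paper — that $T^2$ is not a tree under inclusion, and that bi-parameter potentials violate the maximum principle — do not enter here, because $\mathbb{I}$ and $\mathbb{I}^\ast$ are built from the product structure $P(\zeta)=P(\zeta_z)\times P(\zeta_w)$; those difficulties are postponed to the capacitary part of the argument.
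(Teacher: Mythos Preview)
Your chain of implications $(\mathrm{i})\Rightarrow(\mathrm{ii})\Rightarrow(\mathrm{iii})$ matches the paper's cycle $(A)\Rightarrow(B)\Rightarrow(C)\Rightarrow(A)$, and your treatment of $(\mathrm{i})$ and $(\mathrm{ii})$ is essentially what the paper does: $|f|\le\vvar f$ pointwise, and for $(C)\Rightarrow(A)$ the paper also bounds each Whitney piece of $\vvar_{12}f$ by a single number $H(\alpha)^{1/2}$ via the mean value property and Jensen, then sums to get $\vvar_{12}f\lesssim\mathbb{I}H^{1/2}$; the one-parameter error terms are handled, as you suggest, by showing that the marginal of $\mu$ is Carleson for $\DDD(\DD)$, which the paper proves by testing the adjoint $\mathbb{I}^\ast_{\Lambda^\ast\mu}$ on characteristic functions of slices.

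The genuine divergence is in your step $(\mathrm{iii})$, i.e.\ the direction $[\mu]_{CM}\gtrsim\|\mathbb{I}\|^2$. The paper does \emph{not} build a holomorphic $f$ from $\varphi\ge0$. Instead it dualizes: using that $\DDD(\DD^2)$ is an RKHS with product kernel $K_z(w)=\bigl(C_1+\log\frac{1}{1-\bar z_1w_1}\bigr)\bigl(C_1+\log\frac{1}{1-\bar z_2w_2}\bigr)$, the Carleson condition becomes $\int\!\!\int g(z)g(w)\,\Re K_z(w)\,d\mu\,d\mu\lesssim\|g\|_{L^2(\mu)}^2$ for $g\ge0$; then one discretizes the kernel via $\Re K_z(w)\approx d_{\mathfrak G^2}(\alpha\wedge\beta)$ on Whitney boxes, which unfolds exactly into $\|\mathbb{I}^\ast_{\tilde\mu}\tilde g\|_{\ell^2}^2\lesssim\|\tilde g\|_{L^2(\tilde\mu)}^2$. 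The ``same sign'' issue you flag is precisely why the paper chooses $C_1$ large enough that $\Re K_z\approx|K_z|$; this is what makes the duality two-sided. Your atomic construction is a legitimate alternative in one parameter, but in the bi-parameter setting the claim that products of one-variable atoms give $|f|\gtrsim\mathbb{I}\varphi$ with $\|f\|_{\DDD(\DD^2)}\lesssim\|\varphi\|_{\ell^2}$ is not as ``standard'' as you suggest --- the phase of a product of logarithms is exactly where cancellation lurks, and the paper's kernel route sidesteps this entirely.

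One further structural difference you should be aware of: for measures with mass on $\partial\DD^2$, the paper's $(B)\Rightarrow(C)$ does \emph{not} proceed directly. It first applies the already-proven interior case to the dilates $\mu_R$, then passes to the limit via the \emph{subcapacitary} characterization (Theorem~\ref{traceinequality}), using that $\capp\bigl(\bigcup\mathcal S(\alpha^j)\bigr)\approx\capp\bigl(\bigcup\mathcal S(p(\alpha^j))\bigr)$. So the boundary extension of Theorem~\ref{dyadization} actually consumes the Strong Capacitary Inequality, contrary to your closing remark that the capacitary difficulties are fully postponed. Your direct Whitney-to-boundary plan may well work, but you have correctly identified it as the place requiring the most care, and the paper chose to route around it.
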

With this theorem at hand, the characterization of the Carleson measures for $\DDD(\DD^2)$ can be reduced to that of estimating the quantity 
$\left\|\mathbb{I}\right\|_{\BBB(\ell^2(T^2),L^2(\nu))}^2$, and it suffices to consider the case of nonnegative functions. In the linear case, 
in \cite{ars2002} this was done in terms of a Kerman--Sawyer \cite{kermansawyer1986} type testing condition. We will instead follow the capacitary 
path introduced by Maz'ya \cite{mazya1972}, then Adams \cite{adams1976}, in proving sharp trace inequalities, 
which was transplanted by Stegenga \cite{stegenga1980} to the holomorphic world.


Following a general scheme \cite[Sections 2.3-2.5]{ah1996}, the operators $\mathbb{I}$ and $\mathbb{I}^\ast$ can interpreted in terms of a
potential theory on $(\overline{T})^2\times T^2$. More precisely, we consider the potential kernel 
$k(\zeta,\alpha)=\chi(\alpha\in P(\zeta))=\chi(\zeta\in S(\alpha))$. Following \cite{ah1996}, if $E\subseteq \overline{T}^2$, then we define \textit{discrete bi-logarithmic capacity} by
\begin{equation}\label{capacitydef}
 \capp(E)=\inf\left\{\|\varphi\|_{\ell^2(T^2)}^2:\ \mathbb{I}\varphi(\zeta)\ge1 \text{ if }\zeta\in E\right\}.
\end{equation}
The \it trace inequality \rm we wish to prove is
\begin{theorem}\label{traceinequality}
There are positive constants $C_1,C_2$ such that,  if $\nu$ is a Borel measure on $\overline{T}^2$, then
 \begin{equation}\label{eqtrace}
  C_1\sup_{n\ge1; \alpha_1,\dots,\alpha_n\in T^2}\frac{\nu\left(\cup_{j=1}^n S(\alpha_j)\right)}{\capp\left(\cup_{j=1}^n \partial S(\alpha_j)\right)}
  \le \left\|\mathbb{I}\right\|_{\BBB(\ell^2(T^2),L^2(\nu))}^2
  \le C_2\sup_{n\ge1; \alpha_1,\dots,\alpha_n\in T^2}\frac{\nu\left(\cup_{j=1}^n S(\alpha_j)\right)}{\capp\left(\cup_{j=1}^n \partial S(\alpha_j)\right)}.
 \end{equation}
\end{theorem}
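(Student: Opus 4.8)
The plan is to follow the Maz'ya--Adams capacitary scheme, adapted to the bitree, splitting the two-sided estimate into a \emph{lower bound} (the capacitary condition is necessary) and an \emph{upper bound} (it is sufficient), the latter being where the failure of the maximum principle bites. For the lower bound, fix rectangles $\alpha_1,\dots,\alpha_n\in T^2$ and set $E=\cup_j\partial S(\alpha_j)$. By the definition \eqref{capacitydef} of $\capp$, choose a near-extremal $\varphi\ge 0$ with $\mathbb I\varphi\ge 1$ on $E$ and $\|\varphi\|_{\ell^2(T^2)}^2\le (1+\epsilon)\capp(E)$. Since $\nu(\cup_j S(\alpha_j))=\int_{\cup_j\partial S(\alpha_j)}d\nu$ once $\nu$ is suitably interpreted on $\overline T^2$ (more precisely one first reduces to $\nu$ carried by $\partial T^2$ by the usual limiting/sweeping argument, or works directly with $S(\alpha_j)$), we get
\begin{equation*}
\nu\Bigl(\cup_j S(\alpha_j)\Bigr)\le \int |\mathbb I\varphi|^2\,d\nu\le \|\mathbb I\|_{\BBB(\ell^2(T^2),L^2(\nu))}^2\,\|\varphi\|_{\ell^2(T^2)}^2\le (1+\epsilon)\|\mathbb I\|^2\capp(E),
\end{equation*}
and taking the supremum over configurations and letting $\epsilon\to 0$ yields the left inequality with $C_1=1$ (or any universal constant after the reduction step).

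For the upper bound, abbreviate $M=\sup_{n,\alpha_j}\nu(\cup_j S(\alpha_j))/\capp(\cup_j\partial S(\alpha_j))$ and assume $M<\infty$. Given $\varphi\ge 0$ on $T^2$ with $\|\varphi\|_{\ell^2(T^2)}=1$, write $g=\mathbb I\varphi$ and decompose $\int g^2\,d\nu = 2\int_0^\infty t\,\nu(\{g>t\})\,dt$. The core of the argument is a \emph{Strong Capacitary Inequality} on the bitree, of the form $\sum_{k\in\mathbb Z} 2^{2k}\capp(\{g>2^k\})\lesssim \|\varphi\|_{\ell^2(T^2)}^2$, which must be available from the potential-theoretic development promised earlier in the paper (it is the bi-parameter replacement for the one-parameter SCI used by Stegenga). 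Granting it, for each $k$ the superlevel set $\{g>2^k\}$ is, up to the boundary identifications, covered by a union $\cup_j S(\alpha_j^{(k)})$ of Carleson boxes over which $\mathbb I\varphi$ is large on the stems, so $\nu(\{g>2^k\})\le M\cdot\capp(\cup_j\partial S(\alpha_j^{(k)}))\le M\cdot C\,\capp(\{g>2^{k-1}\})$; summing the dyadic pieces and invoking the SCI gives $\int g^2\,d\nu\lesssim M\sum_k 2^{2k}\capp(\{g>2^{k-1}\})\lesssim M\|\varphi\|_{\ell^2(T^2)}^2$, which is the right inequality.

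I expect the genuine obstacle to be the Strong Capacitary Inequality itself, not the dyadic slicing around it. In the classical (one-parameter) setting the SCI is proved using the maximum principle for potentials — one controls $\capp(\{g>t\})$ by testing with a truncated equilibrium potential and using that the potential of the associated measure does not exceed a constant times $t$ \emph{everywhere} because it does so on the support. On the bitree this maximum principle fails, exactly as flagged in the introduction: $\VV^\mu$ can overshoot its supremum on $\supp\mu$. So the plan's hard step is to supply a surrogate — most plausibly a \emph{weak-type} maximum principle or a quasi-additivity/strong subadditivity estimate for $\capp$ on the bitree, bounding the potential by $Ct$ off an exceptional set whose capacity is itself controlled, and then absorbing the exceptional contribution. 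Establishing that substitute, presumably via the two-parameter martingale/stopping-time machinery and the comparability of $\capp$ with the reproducing-kernel capacity, is where the real work of the section will lie; once it is in hand, the proof of \eqref{eqtrace} proceeds as sketched above with only universal constants appearing.
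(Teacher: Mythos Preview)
Your proposal is correct and follows essentially the same route as the paper: the lower bound by testing with an admissible $\varphi$, and the upper bound via the layer-cake decomposition combined with the subcapacitary hypothesis and the Strong Capacitary Inequality (Theorem~\ref{capacitarystrong}), which the paper indeed proves separately by exactly the ``weak-type maximum principle'' surrogate you anticipate (Theorem~\ref{almostmaximum} and Corollary~\ref{c:c5.1}). The one place where your sketch is slightly loose is the line $\nu(\cup_j S(\alpha_j))=\int_{\cup_j\partial S(\alpha_j)}d\nu$; rather than reducing $\nu$ to the boundary, the paper simply works with $\capp(\cup_j \mathcal{S}(\alpha_j))$ throughout and invokes the comparability $\capp(\cup_j \mathcal{S}(\alpha_j))\approx\capp(\cup_j \partial\mathcal{S}(\alpha_j))$ (Corollary~\ref{c:A.3}) to match the statement.
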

Theorem \ref{traceinequality} follows by a standard argument from a Strong Capacitary Inequality of Adams type \cite{adams1976}.
\begin{theorem}\label{capacitarystrong}
 There is a constant $C>0$ such that, whenever $\varphi:T^2\to[0,+\infty)$,
 \begin{equation}\label{capstrong}
   \int_0^\infty\capp(\{\zeta\in(\partial T)^2:\ \mathbb{I}\varphi(\zeta)>\lambda\})d\lambda^2\le C\|\varphi\|_{\ell^2(T^2)}^2.
 \end{equation}
\end{theorem}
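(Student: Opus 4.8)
The plan is to follow the classical scheme of Adams and Maz'ya for strong capacitary inequalities, but to be careful about the failure of the maximum principle on the bitree. Write $u = \mathbb{I}\varphi$ on $(\overline{T})^2$, and for $j \in \mathbb{Z}$ set $E_j = \{\zeta \in (\partial T)^2 : u(\zeta) > 2^j\}$. Since $\lambda \mapsto \capp(\{u>\lambda\})$ is nonincreasing, the left-hand side of \eqref{capstrong} is comparable to $\sum_{j \in \mathbb{Z}} 2^{2j}\capp(E_j)$, and it suffices to bound this sum by $C\|\varphi\|_{\ell^2(T^2)}^2$. For each $j$ I would like to produce an admissible function $\psi_j : T^2 \to [0,\infty)$ for $E_j$, i.e.\ with $\mathbb{I}\psi_j \ge 1$ on $E_j$, built from the piece of $\varphi$ that is \emph{active at level $2^j$}: roughly, $\psi_j(\alpha) = 2^{-j}\varphi(\alpha)\,\chi(\alpha \in G_j)$ where $G_j \subseteq T^2$ is the set of vertices $\alpha$ such that the partial sum of $\varphi$ along $P(\alpha)$, truncated between the stopping levels where it crosses $2^{j}$ and $2^{j+1}$, contributes. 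On a genuine tree one defines $G_j$ via the stopping time $\{\alpha : \mathbb{I}\varphi(\alpha) > 2^j\}$ and its minimal elements; on the bitree $T^2$ there is no single stopping "antichain'' with the usual covering properties, so the first technical step is to replace it by the correct bi-parameter analogue — the down-set $\mathcal{O}_j = \{\alpha \in T^2 : \alpha \in P(\zeta) \text{ for some }\zeta \in (\partial T)^2 \text{ with } u(\zeta)>2^j\}$ is the natural candidate, but one must check that $\sum_{\alpha \in P(\zeta)}\varphi(\alpha)\chi(\alpha \in \mathcal{O}_j \setminus \mathcal{O}_{j+1})$ is still $\gtrsim 2^j$ for $\zeta \in E_j$, which is where the two-parameter structure forces a more delicate argument than in the linear case.

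Once the decomposition $\varphi = \sum_j \varphi_j$ with $\varphi_j$ supported on a "ring'' $R_j \subseteq T^2$ is in place and $2^{-j}\varphi_j$ is (up to a universal constant) admissible for $E_j$, the capacity estimate gives $\capp(E_j) \le 2^{-2j} C \|\varphi_j\|_{\ell^2(T^2)}^2$, hence $\sum_j 2^{2j}\capp(E_j) \le C\sum_j \|\varphi_j\|_{\ell^2(T^2)}^2$. Because the rings $R_j$ have bounded overlap — each $\alpha \in T^2$ lies in $R_j$ for at most a bounded number of consecutive $j$, as $\varphi(\alpha)$ alone cannot move the partial sum $\mathbb{I}\varphi$ through more than $O(1)$ dyadic scales unless $\varphi(\alpha)$ is itself the dominant term, a case handled separately — the sum $\sum_j \|\varphi_j\|_{\ell^2(T^2)}^2$ is controlled by $C\|\varphi\|_{\ell^2(T^2)}^2$, closing the argument. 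The bounded-overlap bookkeeping is the step where the explicit combinatorics of the bitree must be checked, but it is the same kind of estimate that underlies Theorem \ref{dyadization}.

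The main obstacle is exactly the point flagged in the introduction: on $T^2$ one cannot invoke a maximum principle to say that the potential $\mathbb{I}\psi_j$ of the truncated mass is $\le C$ everywhere, nor can one use a clean stopping-time antichain, so the verification that the truncated functions $2^{-j}\varphi_j$ are genuinely admissible for $E_j$ — with a constant independent of $j$ and of $\varphi$ — has to be done by a direct summation-by-parts along each geodesic $P(\zeta)$, controlling the cross terms between the two parameters. I expect this to be the technical heart of the proof; everything else (the dyadic slicing of the $\lambda$-integral, the capacity's subadditivity and monotonicity, and the bounded-overlap sum) is routine once that lemma is available.
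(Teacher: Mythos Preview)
Your proposal has a genuine gap at precisely the point you flag as ``the technical heart'': the admissibility of the ring pieces $2^{-j}\varphi_j$ cannot be established by summation-by-parts, because the counterexample of Proposition~\ref{countermaxprinc} (equivalently Proposition~\ref{p:A.2.1}) kills it. Take $\varphi=\mathbb{I}^*\mu$ with $\mu$ the equilibrium measure of that proposition, so that $\mathbb{I}\varphi=\mathbb{V}^\mu\le 1$ on all of $\supp\varphi=\bigcup_i P(\alpha^i)\subset P(\omega)$, yet $\mathbb{I}\varphi(\omega)\ge\lambda$. Then for every $j$ with $2^{j+1}<\lambda$ one has $\omega\in E_{j+1}$, hence $\supp\varphi\subset P(\omega)\subset\mathcal{O}_{j+1}$, so your ring function $\varphi_j=\varphi\cdot\chi_{\mathcal{O}_j\setminus\mathcal{O}_{j+1}}$ is identically zero. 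Thus $\mathbb{I}\varphi_j\equiv 0$ cannot dominate $2^j$ on any level set with positive capacity. The same obstruction defeats the alternative ring $\{2^j<\mathbb{I}\varphi\le 2^{j+1}\}$: this set is disjoint from $\supp\varphi$ (where $\mathbb{I}\varphi\le 1$), so again the truncated function vanishes. More generally, any partition of $\varphi$ into disjointly supported pieces must put all the mass into at most $O(1)$ of them, while the level sets $E_j$ persist across $\sim\log_2\lambda$ scales; you cannot have bounded overlap and admissibility simultaneously.

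The paper's proof takes a completely different route and never decomposes $\varphi$. It passes to the equilibrium measures $\mu_k$ of the level sets $E_k$, reduces the strong capacitary inequality to the off-diagonal mutual energy estimate $\mathcal{E}[\mu_j,\mu_k]\lesssim(\capp E_j)^{1/3}(\capp E_k)^{2/3}$ for $j\le k$ (Lemma~\ref{l:l5.2.1}), and derives that from the quantitative substitute for the maximum principle announced as Theorem~\ref{almostmaximum}: for an equilibrium measure, $\capp\{\mathbb{V}^\mu>\lambda\}\lesssim\lambda^{-3}\mathcal{E}[\mu]$. The $\lambda^{-3}$ decay is the new idea; it is proved via a Rearrangement Lemma (Lemma~\ref{l:l5.1}) that slices the bitree into one-parameter layers $T_x\times\{\alpha_y\}$ and invokes a genuine tree statement (Lemma~\ref{l:l5.2}) on each layer. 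What you identify as the hard step is not a technicality to be pushed through, but a structural obstruction that forces this detour through equilibrium measures and dimension reduction.
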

The Strong Capacitary Inequality is standard when $\capp$ is the capacity associated to a radially decreasing kernel. This is the case, 
with sufficient approximation, in the linear case of a tree $T$, where $\Cap$ is associated with a Bessel-like kernel. 
See for example \cite{ah1996} for the general theory, \cite{kaltonverbitsky1999} for the relation with semilinear equations, and \cite{arsw2014} for 
case of trees and metric spaces. The literature on Bessel-like kernels is vast and we just mention a few titles.
However, our capacity is associated with the tensor product of two Bessel-like kernels, which is itself very different from a Bessel kernel, in
the same way that the tensor product of two distance functions is typically not a distance function. In particular, the Maximum Principle
for potentials fails completely.
\begin{proposition}\label{countermaxprinc} 
 For any $\lambda \geq 1$ there exists a measure $\mu\ge0$ on $(\partial{T})^2$ such that $\VV^\mu\le1$ on $\supp\mu$, but $\VV^\mu(\zeta_0)\geq\lambda$ at some 
 $\zeta_0\in (\partial{T})^2$. Moreover, we can take $\mu=\mu_E$ to be the equilibrium measure of some set $E\subseteq \partial{T}^2$.
\end{proposition}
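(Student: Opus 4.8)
The plan is to construct the counterexample explicitly on the bitree by exploiting the fundamental obstruction described in the introduction: the reciprocal of the product kernel badly fails the triangle inequality, so a potential can be small on a set while being large at a point that is "diagonally far" from that set in each coordinate separately. Concretely, I would fix a large scale parameter $N$ (to be chosen in terms of $\lambda$), work with the finite truncation of $T^2$ at generation $N$, and take $E$ to be (the boundary trace of) a single dyadic rectangle $S(\alpha)$ with $\alpha=(\alpha_z,\alpha_w)$ at generation $(N,N)$, i.e. a "deep corner" box. The point $\zeta_0$ will be the boundary point lying in $S(\beta)$ for $\beta=(o,o)$ only through a path that avoids $\alpha$ — more precisely, I want a $\zeta_0$ whose predecessor set $P(\zeta_0)$ in the bitree meets $S(\alpha)$ in many vertices even though $\zeta_0\notin\overline{S(\alpha)}$ in the product sense. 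The mechanism: in the bitree, $P(\zeta)$ for $\zeta=(\zeta_z,\zeta_w)$ is the \emph{rectangle} $P(\zeta_z)\times P(\zeta_w)$, so a single boundary point already "sees" order $N^2$ vertices, and a box $S(\alpha)$ of product-measure-like size $2^{-N}\cdot 2^{-N}$ in the two-parameter sense has capacity that decays only polylogarithmically, not like its "area."

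The key computation is the capacity of such a corner box. By the definition (\ref{capacitydef}) and the structure of $\mathbb{I}$, computing $\capp(\partial S(\alpha))$ for $\alpha$ at bi-generation $(m,m)$ amounts to a two-dimensional discrete optimization: minimize $\sum_{\beta\in T^2}\varphi(\beta)^2$ subject to $\sum_{\beta\in P(\zeta)}\varphi(\beta)\ge 1$ for all $\zeta$ in the boundary of the box. Using symmetry one expects the optimizer to depend only on the bi-generation $(j,k)$ with $j,k\le m$, and a Lagrange/convexity argument reduces this to minimizing a quadratic form whose solution gives $\capp(\partial S(\alpha))\approx 1/m$ (up to the precise normalization of the kernel), in sharp contrast with the linear tree, where a single deep vertex has capacity $\approx 1/m$ as well — but here the crucial point is that the \emph{equilibrium potential} $\VV^{\mu_E}$, while bounded by $1$ on $E=\partial S(\alpha)$ by construction, overshoots off $E$. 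To see the overshoot, I would take the equilibrium measure $\mu_E$ (which, by standard potential theory for the quadratic-form capacity in (\ref{capacitydef}), is supported on $E$ and satisfies $\VV^{\mu_E}=1$ q.e. on $E$, with $\|\mu_E\|=\capp(E)$) and evaluate $\VV^{\mu_E}(\zeta_0)=\mu_E(S(\zeta_0))$ at a boundary point $\zeta_0$ chosen so that $S(\zeta_0)\supseteq$ a large "L-shaped" or "cross-shaped" sub-portion of the support: because $P(\zeta_0)$ is a full rectangle of vertices, it can contain the predecessor-rectangles of a positive fraction of the boundary points of $S(\alpha)$ even when $\zeta_0$ itself is not in $S(\alpha)$, and summing the equilibrium masses over that fraction yields $\VV^{\mu_E}(\zeta_0)\gtrsim \log$-factor $\to\infty$ as $m\to\infty$. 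Choosing $m$ large enough that this factor exceeds $\lambda$ completes the construction.

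The main obstacle I anticipate is making the last step — the off-support lower bound for the equilibrium potential — quantitatively clean, because it requires understanding not just the \emph{size} of $\mu_E$ but its \emph{distribution} over the boundary of the corner box, and the equilibrium measure for a two-parameter quadratic form is not given by an explicit formula the way the one-parameter (radially decreasing kernel) case is. I would handle this by not insisting on the true equilibrium measure at first: instead take an explicit near-optimal test function $\varphi$ for $\capp(\partial S(\alpha))$ (the symmetric one from the computation above), let $\mu$ be $\mathbb{I}\varphi$ against the natural measure, or rather work with the associated measure realizing the capacity via duality, and show directly that $\VV^\mu(\zeta_0)$ is large — then invoke the general fact that the true equilibrium measure dominates the potential pointwise in the appropriate averaged sense, or simply argue that the extremal $\varphi$ is unique (strict convexity of $\|\cdot\|_{\ell^2(T^2)}^2$) so the near-optimal explicit construction \emph{is} essentially the equilibrium one. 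A secondary technical point is transferring from the finite truncation to the genuine boundary $(\partial T)^2$: one takes the box $S(\alpha)$ with $\alpha$ at a finite bi-generation but keeps the ambient bitree infinite, so $E=\partial S(\alpha)\subseteq(\partial T)^2$ is already a genuine boundary set and no limiting argument is needed, only the observation that vertices below generation $m$ in either coordinate contribute nothing to the relevant potentials by the choice of $\varphi$.
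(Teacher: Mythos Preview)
Your approach has a fatal gap: taking $E=\partial S(\alpha)$ to be a \emph{single} dyadic box cannot produce any overshoot of the equilibrium potential. The reason is that a single box is a product set $E=\partial S(\alpha_x)\times\partial S(\alpha_y)$, and for product sets the bi-parameter potential theory factorizes completely. If $\mu_x,\mu_y$ are the one-dimensional equilibrium measures of $\partial S(\alpha_x),\partial S(\alpha_y)$, then $\mu_x\otimes\mu_y$ is supported on $E$, and since $\mathbb{I}^*(\mu_x\otimes\mu_y)(\beta)=I^*\mu_x(\beta_x)\,I^*\mu_y(\beta_y)$ one gets
\[
\mathbb{V}^{\mu_x\otimes\mu_y}(\zeta)=V^{\mu_x}(\zeta_x)\,V^{\mu_y}(\zeta_y).
\]
This equals $1$ on $E$ and, by strict convexity of the energy, $\mu_x\otimes\mu_y$ \emph{is} the equilibrium measure $\mu_E$. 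But the one-parameter Maximum Principle (Lemma~\ref{l:A.1.1}, Property~\ref{l:A1.s.3}) holds on each tree factor, so $V^{\mu_x}\le1$ and $V^{\mu_y}\le1$ everywhere, giving $\mathbb{V}^{\mu_E}\le1$ on all of $\overline{T}^2$. There is no point $\zeta_0$ with $\mathbb{V}^{\mu_E}(\zeta_0)>1$, let alone $\ge\lambda$. (Incidentally, the same factorization shows $\capp(\partial S(\alpha))\approx 1/m^2$, not $1/m$.)

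The paper's construction is genuinely different and avoids exactly this product trap: it takes $E=\{\alpha^0,\dots,\alpha^n\}$ to be a finite set of $n+1$ vertices lying along a ``hyperbola'' above a fixed boundary point $\omega$, with $d_T(\alpha^i_x)=20^{\,n-i}$ and $d_T(\alpha^i_y)=20^{\,i}$, so all $\alpha^i$ have the same bi-depth $d_{T^2}(\alpha^i)=20^n$ but are pairwise nearly orthogonal, $\sum_{i\ne j}d_{T^2}(\alpha^i\wedge\alpha^j)\ll 20^n$. This forces the equilibrium masses $\mu_E(\alpha^i)$ to be comparable (each roughly $20^{-n}$), while $\omega$ lies below \emph{all} of them, so $\mathbb{V}^{\mu_E}(\omega)=\sum_i d_{T^2}(\alpha^i)\mu_E(\alpha^i)\gtrsim n$. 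The non-product geometry of $E$ is the whole point: your ``L-shaped or cross-shaped'' intuition is gesturing in the right direction, but it must be built into the \emph{set} $E$ itself, not sought inside the equilibrium measure of a product set.
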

We go around this difficulty by proving an estimate showing that the set where the potential is large has small capacity. 
This is the main novelty of this article.
\begin{theorem}\label{almostmaximum}
 There is a positive constant $C$ such that, if $\mu$ is the equilibrium measure of a Borel subset of $(\partial T)^2$ and $\lambda\ge1$, one has:
 \begin{equation}\label{smallcapacity}
  \capp(\{\zeta\in(\partial T)^2:\ \VV^\mu(\zeta)>\lambda\})\le C\frac{\|\mathbb{I}^\ast\mu\|_{\ell^2(T^2)}^2}{\lambda^3}.
 \end{equation}
\end{theorem}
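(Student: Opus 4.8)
\medskip
\noindent\textbf{Plan of proof.} Put $g:=\mathbb I^\ast\mu$, so that $g(\alpha)=\mu(S(\alpha))\ge0$ on $T^2$, $\VV^\mu=\mathbb I g$, and — since $\mathbb I$ and $\mathbb I^\ast$ are adjoint — $\|g\|_{\ell^2(T^2)}^2=\langle g,\mathbb I^\ast\mu\rangle=\langle\mathbb I g,\mu\rangle=\int\VV^\mu\,d\mu$. Writing $E_\lambda:=\{\zeta\in(\partial T)^2:\VV^\mu(\zeta)>\lambda\}$, this identity already yields the mass bound $\mu(E_\lambda)\le\lambda^{-1}\|\mathbb I^\ast\mu\|^2$, while $\lambda^{-1}g$ is admissible in \eqref{capacitydef} for $E_\lambda$, so that $\capp(E_\lambda)\le\|\mathbb I^\ast\mu\|^2/\lambda^2$. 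The entire content of the theorem is to upgrade this trivial estimate by one power of $\lambda$. Equivalently, it is enough to exhibit $\tilde g$ with $0\le\tilde g\le g$, with $\mathbb I\tilde g\ge\lambda$ on $E_{2\lambda}$ (which suffices after replacing $\lambda$ by $2\lambda$), and with $\|\tilde g\|_{\ell^2(T^2)}^2\le C\|\mathbb I^\ast\mu\|^2/\lambda$; then $\varphi:=\tilde g/\lambda$ is admissible for $\capp(E_{2\lambda})$ and gives the $\lambda^{-3}$. One structural property of $g$ would be used throughout: $g$ is non-increasing along every path-grid, $g(\alpha)\ge g(\beta)$ when $\alpha\le\beta$, because then $S(\alpha)\subseteq S(\beta)$. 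This is the residue of the ``superharmonicity'' that, in the one-parameter theory, yields the Maximum Principle itself.

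The strategy is a two-parameter stopping-time construction. After reducing, through the inner regularity of $\capp$, to a finitely supported $\mu$ and to bounding the capacity of a finite union of sets $S(\alpha)$ exhausting $E_{2\lambda}$, I would attach to each $\zeta\in E_{2\lambda}$ a finite family $\mathcal R(\zeta)\subseteq P(\zeta)$ carrying $\sum_{\alpha\in\mathcal R(\zeta)}g(\alpha)\ge\lambda$, and set $\mathcal R:=\bigcup_{\zeta\in E_{2\lambda}}\mathcal R(\zeta)$, $\tilde g:=g\,\chi_{\mathcal R}$. The delicate point is the choice of stopping rule: $\mathcal R(\zeta)$ must be built from rectangles on which $g$ is \emph{small}, that is, far from the root of $T^2$ — the root region carries the bulk of the $\ell^2$-norm of $g$ and including it would break the budget — yet still be able to accumulate mass $\lambda$. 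One natural device is to take, for the largest dyadic scale $\delta=\delta(\zeta)$ at which it is possible, all $\alpha\in P(\zeta)$ with $g(\alpha)\in(\delta/2,\delta]$, completed if necessary by a tail of smaller rectangles. With any such choice the admissibility is automatic,
\[
\mathbb I\tilde g(\zeta)=\sum_{\alpha\in P(\zeta)\cap\mathcal R}g(\alpha)\ \ge\ \sum_{\alpha\in\mathcal R(\zeta)}g(\alpha)\ \ge\ \lambda\qquad(\zeta\in E_{2\lambda}),
\]
and everything reduces to the estimate $\sum_{\alpha\in\mathcal R}g(\alpha)^2\le C\|\mathbb I^\ast\mu\|^2/\lambda$.

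This last inequality is the heart of the proof and the place where the failure of the Maximum Principle (Proposition \ref{countermaxprinc}) has to be circumvented. In one parameter the analogous stopping rectangles form an antichain with pairwise disjoint shadows, so the sum of $g^2$ over them simply telescopes inside $\|g\|^2$; no $\lambda$-gain is extracted there, and none is needed, because the Maximum Principle separately bounds $\VV^\mu$ off a set of zero capacity. In the bitree the stopping rectangles attached to different boundary points overlap genuinely, and the contribution of $\mu$ to the potential coming from outside $E_{2\lambda}$ cannot simply be discarded. The gain must therefore be produced quantitatively: a rectangle $\alpha$ enters $\mathcal R(\zeta)$ only for $\zeta\in S(\alpha)\cap E_{2\lambda}$, a set of $\mu$-measure at most $\|\mathbb I^\ast\mu\|^2/\lambda$; and since $\alpha$ sits at the \emph{stopping} scale, the monotonicity of $g$ along grids forces $g(\alpha)=\mu(S(\alpha))$ to be comparable with the $\mu$-mass that $\mu|_{S(\alpha)\cap E_{2\lambda}}$ still needs in order to build a potential of size $\lambda$ — which lets one trade one factor of $g(\alpha)$ for $\mu$-mass at the cost of $\lambda^{-1}$. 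Carrying this trade consistently through \emph{both} scales of the bitree — a combinatorial count of the multiplicities with which the boxes $S(\alpha)$, $\alpha\in\mathcal R$, cover $E_{2\lambda}$, organized by the two dyadic parameters — is what produces exactly the missing power of $\lambda$.

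The main obstacle, then, is precisely this two-parameter overlap bookkeeping. In the linear theory the Maximum Principle allows one to replace $\mu$ by its restriction to the super-level set without losing the potential, which makes the whole argument collapse; here one has no such shortcut and must keep track, rectangle by rectangle and scale by scale, of how much $\mu$-mass is ``wasted'' on the tails of $\VV^\mu$, controlling it with only the energy of $\mu$ and the mass bound $\mu(E_\lambda)\lesssim\|\mathbb I^\ast\mu\|^2/\lambda$. I expect the technical core to take the form of a lemma asserting that, for a finite family $\{\alpha_j\}\subseteq T^2$ and a measure $\nu$ on $\bigcup_j S(\alpha_j)$ whose potential is bounded there, $\sum_j\nu(S(\alpha_j))^2$ is controlled by the bound on the potential times the energy of $\nu$ — a bitree surrogate for the one-parameter statement that the potential of a measure is bounded by its supremum on the measure's support.
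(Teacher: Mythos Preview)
Your reduction is correct and matches the paper's: the whole theorem amounts to producing $\tilde g\le g:=\mathbb I^\ast\mu$ with $\mathbb I\tilde g\gtrsim\lambda$ on the super-level set and $\|\tilde g\|_{\ell^2}^2\lesssim\lambda^{-1}\|g\|_{\ell^2}^2$. But the method you propose for the last estimate has a genuine gap. The paragraph on ``two-parameter overlap bookkeeping'' describes the \emph{desired outcome}, not a mechanism: you say that ``carrying this trade consistently through both scales \dots\ is what produces exactly the missing power of $\lambda$'' without indicating what the trade is or why the multiplicities are controlled. Your proposed core lemma --- bounding $\sum_j\nu(S(\alpha_j))^2$ by (sup of potential on $\supp\nu$)$\times$(energy of $\nu$) --- is morally a two-parameter boundedness principle for potentials, and in light of Proposition~\ref{countermaxprinc} there is no reason to expect it in the generality you state. (Minor aside: with the paper's order convention $\alpha\le\beta$ means $S(\alpha)\subseteq S(\beta)$, so your monotonicity should read $g(\alpha)\le g(\beta)$, not the reverse.)

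The paper's route is genuinely different: it does \emph{not} attempt a direct two-parameter stopping time. Instead it \emph{slices} the bitree into layers $T_x\times\{\alpha_y\}$, $\alpha_y\in T_y$, and builds the admissible function $\varphi$ layer by layer, reducing everything to a one-dimensional statement (Lemma~\ref{l:l5.2}): if $F\subset\partial T$ has equilibrium measure $\rho$ and $f\ge0$ on $T$ satisfies $V^\rho\le\delta$ on $\supp f$, then there is a measure $\sigma$ with $V^\sigma\ge If$ on $F$ and $\mathcal E[\sigma]\lesssim\delta\|f\|_{\ell^2}^2$. This is proved on a single tree, where the one-parameter Domination Principle \emph{does} hold: one partitions $F$ by the $\delta$-stopping set of $V^\rho$ and takes $\sigma$ to be the equilibrium measure of each piece reweighted by the (constant) value of $If$ there. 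On each layer, $f=f_R$ is the slice of $\mathbb I^\ast\mu$ restricted to $E^1=\{\VV^\mu\le1\}$, the condition $V^\rho\lesssim\lambda^{-1}$ on $\supp f_R$ comes from $\VV^\mu\le1$ there, and the pieces are reassembled via a stopping time in the $y$-variable only. The factor $\lambda^{-1}$ in the energy is produced \emph{inside} each one-dimensional slice by the smallness of $V^\rho$ on $\supp f_R$, not by a global two-parameter count. This dimension-reduction is the paper's main new idea and is what replaces the missing Maximum Principle; your direct two-parameter combinatorics would need an independent argument of comparable strength, which your proposal does not supply.
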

Theorem \ref{almostmaximum} is reduced to a new problem in linear Potential Theory on the tree $T$, which is solved.
We have considered the Dirichlet space on the bidisc only. Some parts of our argument easily extend to more general environments; for instance, Theorem \ref{intromultiplier} extends to polydiscs. The dyadization scheme in Theorem \ref{dyadization}
can be similarly extended to polytrees with any number of factors. We believe that the results can be extended using different 
powers in the definition of the Dirichlet norm: $1<p<\infty$ is a natural choice. Weights could be taken into consideration. 
As this article enters unexplored territory, we have preferred to consider its most basic object: the unweighted 
Dirichlet space ($p=2$) on the bidisc. Our results can also be used to prove trace inequalities in other contexts, using different dyadization schemes. We will return to this in other works. We have made an effort to provide all details of all proofs. 
We will point out, however, which parts of our arguments are, in our opinion, standard, and which are new.
\subsubsection*{Layout of the article} The paper is organized as follows. In Section~\ref{S:2} we prove Theorems \ref{intromain} and \ref{intromultiplier} -- modulo the Strong Capacitary Inequality -- as well as several other statements mentioned above. We introduce the discrete model of the bidisc, move the problem there and solve the discrete version. The approach is adapted from one-dimensional techniques, and mostly follows \cite{ars2008} and \cite{arsw2014}. We only present the general line of the argument, postponing technical details to the Appendix. Section \ref{S:5} contains substantially new results. There we prove the Strong Capacitary Inequality on the bitree, which is a crucial part of our method. Section \ref{conclusions} contains some concluding reflections. In the Appendix, Section \ref{S:A}, we collect auxiliary results used or mentioned before, as well as some counterexamples.



\section{Proof of Theorem 1: discretization and 'soft' argument}\label{S:2}
We start with introducing some notation and describe the properties of $\mathcal{D}(\mathbb{D}^2)$ that we use later.\\

Given a holomorphic function $f(z_1,z_2)=\sum_{m,n\geq0}a_{mn}z_1^mz_2^n$ on $\mathbb{D}^2$ we let
\begin{equation}\notag
\|f\|^2_{\mathcal{D}(\mathbb{D}^2)} = \sum_{m,n\geq 0}|a_{mn}|^2(m+1)(n+1).
\end{equation}
This norm can also be written as follows,
\begin{equation}\notag
\begin{split}
&\|f\|^2_{\mathcal{D}(\mathbb{D}^2)} = \frac{1}{\pi^2}\int_{\mathbb{D}^2}|\partial_{z_1,z_2}f(z_1,z_2)|^2\,dA(z_1)\,dA(z_2)+\sup_{0\leq r_2<1}\frac{1}{2\pi^2}\int_{\partial\mathbb{D}}\int_{\mathbb{D}}|\partial_{z_1}f(z_1,r_2e^{it})|^2\,dA(z_1)\,dt +\\
&\sup_{0\leq r_1<1}\frac{1}{2\pi^2}\int_{\mathbb{D}}\int_{\partial\mathbb{D}}|\partial_{z_2}f(r_1e^{it},z_2)|^2\,ds\,dA(z_2) + \sup_{0\leq r_1,r_2<1}\frac{1}{4\pi^2}\int_{\partial\mathbb{D}}\int_{\partial\mathbb{D}}|f(r_1e^{is},r_2e^{it})|^2\,ds\,dt = \\
&\|f\|^2_* + \;\textup{other terms},
\end{split}
\end{equation}
where $\|f\|_*$ is a seminorm which is invariant under biholomorphisms of the bidisc. In what follows however we use an equivalent norm, arising from the representation $\mathcal{D}(\mathbb{D}^2) = \mathcal{D}(\mathbb{D})\otimes\mathcal{D}(\mathbb{D})$. For $f\in \textrm{Hol}(\mathbb{D})$ let
\begin{equation}\label{e:21}
\|f\|^2_{\mathcal{D}} := \frac{1}{\pi}\int_{\mathbb{D}}|f'(z)|^2\,dA(z) + C_0|f(0)|^2,
\end{equation}
where $C_0>0$ is a constant to be chosen shortly.
It is classical fact that the Dirichlet space on the unit disc is a Reproducing Kernel Hilbert Space (RKHS) \cite{AM02}, and, consequently,  $\mathcal{D}(\mathbb{D}^2)$ is one as well. The reproducing kernel $K_z,\; z\in\mathbb{D}^2$, generated by $\|\cdot\|_{\mathcal{D}}$, is
\begin{equation}\label{e:22}
K_z(w) = \left(C_1 + \log\frac{1}{1-\bar{z}_1w_1}\right)\left(C_1 + \log\frac{1}{1-\bar{z}_2w_2}\right),\quad z,w\in\mathbb{D}^2
\end{equation}
a product of reproducing kernels for $\mathcal{D}(\mathbb{D})$ in respective variables. Here $C_1 = 1/C_0$. 
Hence $K_z$ enjoys the following important property
\begin{equation}\label{e:23}
\Re K_z(w) \approx |K_z(w)|,\quad z,w\in\mathbb{D}^2,
\end{equation}
if we take $C_1$ to be large enough (see Lemma \ref{l:A.0.1}).

This Section is organized as follows. First we use duality arguments and the RKHS property of $\mathcal{D}(\mathbb{D}^2)$ to replace the Carleson Condition \eqref{introimbed} with something more tangible, first doing so for measures supported strictly inside the bidisc. Then, in Section \ref{SS:2.3}, we construct the discrete approximation of the bidisc -- the bitree -- and we introduce the basics of (logarithmic) Potential Theory there. Next we move all the objects from the bidisc to the bitree, obtaining an equivalent discrete characterization of Carleson measures for $\mathcal{D}(\mathbb{D}^2)$, see Theorem \ref{dyadization}. Adopting the approach by Maz'ya we reduce the problem to verifying a certain property of the bilogarithmic potential --- the Strong Capacitary Inequality. Assuming this inequality holds, see Theorem \ref{capacitarystrong} and its proof in Section \ref{S:5}, we prove Theorem \ref{intromain} for measures inside the bidisc in Section \ref{SS:2.6}. In Section \ref{SS:2.5} we extend this result to measures supported on $\overline{\mathbb{D}}^2$ and replace the function $f$ in \eqref{introimbed} with its radial variation. Finally, in Section \ref{SS:2.7} we describe the multipliers of $\mathcal{D}(\mathbb{D}^2)$.

\subsection{Duality approach}\label{SS:2.2}
Let $\mu \ge0$ be a finite Borel measure on $\overline{\mathbb{D}}^2$, and assume for a time being that $\mu(\partial\mathbb{D}^2) = 0$. We consider the general case later in Section \ref{SS:2.5}.

To modify \eqref{introimbed} we first observe that the embedding $Id:\mathcal{D}(\mathbb{D}^2)\rightarrow L^2(\mathbb{D}^2,d\mu)$ is bounded if and only if the adjoint operator 
\[
\Theta := Id^*: L^2(\mathbb{D}^2,d\mu)\rightarrow \mathcal{D}(\mathbb{D}^2)
\]
is bounded as well. To proceed we need to know the action of $\Theta$, which is provided by the RKHS property of $\mathcal{D}(\mathbb{D}^2)$. Indeed, given a function $g\in L^2(\mathbb{D}^2,\,d\mu)$ we have 
\[
(\Theta g)(z) = \langle\Theta g, K_z\rangle_{\mathcal{D}(\mathbb{D}^2)} = \langle g, K_z\rangle_{L^2(\mathbb{D}^2,\,d\mu)} =
\int_{\mathbb{D}^2}g(w)\overline{K_z(w)}\,d\mu(w).
\]
Then boundedness of $\Theta$ means that for any $g\in L^2(\mathbb{D}^2,\,d\mu)$ one has
\begin{equation}\label{e:24}
\begin{split}
&\|g\|^2_{L^2(\mathbb{D}^2,\,d\mu)} \gtrsim \|\Theta g\|^2_{\mathcal{D}(\mathbb{D}^2)} = \langle\Theta g,\Theta g\rangle_{\mathcal{D}(\mathbb{D}^2)} = \langle g, \Theta g\rangle_{L^2(\mathbb{D}^2,\,d\mu)} =\\
&\int_{\mathbb{D}^2}\int_{\mathbb{D}^2}g(z)\overline{g(w)}K_z(w)\,d\mu(z)\,d\mu(w).
\end{split}
\end{equation}
Taking $g$ to be real and non-negative, we see that \eqref{e:24} becomes
\begin{equation}\label{e:25}
\|g\|^2_{L^2(\mathbb{D}^2,\,d\mu)} \gtrsim \int_{\mathbb{D}^2}\int_{\mathbb{D}^2}g(z)g(w)\Re K_z(w)\,d\mu(z)\,d\mu(w),
\end{equation}
since, clearly, $\int_{\mathbb{D}^2}\int_{\mathbb{D}^2}g(z)g(w)\Im K_z(w)\,d\mu(z)\,d\mu(w) =0$. On the other hand, if $g$ is an arbitrary function in $L^2(\mathbb{D}^2,\,d\mu)$, then \eqref{e:25} applied to $|g|$ gives
\[
\|g\|^2_{L^2(\mathbb{D}^2,\,d\mu)} \gtrsim \int_{\mathbb{D}^2}\int_{\mathbb{D}^2}|g(z)||g(w)|\Re K_z(w)\,d\mu(z)\,d\mu(w) \approx \int_{\mathbb{D}^2}\int_{\mathbb{D}^2}|g(z)||g(w)||K_z(w)|\,d\mu(z)\,d\mu(w),
\]
and we are back at \eqref{e:24}. To summarize, the measure $\mu$ on $\mathbb{D}^2$ is Carleson for $\mathcal{D}(\mathbb{D}^2)$ if and only if \eqref{e:25} holds for any non-negative function in $L^2(\mathbb{D}^2,\,d\mu)$. Observe that unlike \eqref{introimbed}, condition \eqref{e:25} does not mention the Sobolev norm of the Dirichlet space, nor its analytic structure, which makes it a much more viable candidate for discretization.

\subsection{The bitree}\label{SS:2.3}
Let $T$ be a rooted directed uniform (each vertex has the same amount of children) infinite binary tree -- in what follows we call such an object \textit{a dyadic tree}. There is a natural order relation provided by the arborescent structure: given two points $\alpha,\beta$ in the vertex set $\mathcal{V}(T)$ we say that $\alpha \leq \beta$, if $\beta$ lies on the unique geodesic connecting $\alpha$ and the root $o$, we also write $\beta \geq\alpha$, if $\alpha\leq\beta$. Here and when this is less cumbersome, we identify a geodesic with a sequence of vertices in the obvious way. Consider an infinite directed sequence $\omega$ starting at the root, i.e. $\omega = \{\omega^j\}_{j=0}^{\infty}$ with $\omega^0 = o$, $\omega^{j+1} \leq\omega^{j}$ and $\omega^j,\omega^{j+1}$ are connected by an edge. The set of all such sequences is (as usual) called the \textit{boundary} of $T$ and denoted by $\partial T$, we also let $\overline{T} = T\cup \partial T$. For any $\omega = \{\omega^j\}_{j=0}^{\infty}\in \partial T$ we always say that $\omega \leq \omega^j,\; j\geq0$.

 If $\alpha,\beta \in \mathcal{V}(T)\cup\partial T$, then there there exists a unique point $\gamma \in \mathcal{V}(T)\cup\partial T$ that is \textit{the least common ancestor} of $\alpha$ and $\beta$, we denote it by $\alpha\wedge\beta$. Namely, we have that $\gamma\geq\alpha,\;\gamma\geq\beta$, and if there is another point $\tilde{\gamma}$ satisfying these relations, then $\tilde{\gamma}\geq\gamma$. In other words, $\gamma$ is the first intersection point of the geodesics connecting $\alpha$ and $\beta$ to the root. The total amount of common ancestors of $\alpha$ and $\beta$ is denoted by $d_T(\alpha\wedge\beta)$. Note that $d_T(\alpha\wedge\beta) = \textrm{dist}_T(\alpha\wedge\beta,o)+1$, where $\textrm{dist}_T$ is the usual graph distance on $T$. $d_T$ may be infinite; for instance, $d_T(\omega\wedge\omega) = \infty$ whenever $\omega\in\partial T$. The \textit{predecessor set} (with respect to the geometry of $\overline{T}$) of a point $\alpha\in\mathcal{V}(T)\cup\partial T$ is
\[
\mathcal{P}(\alpha) = \{\beta\in \mathcal{V}(T)\cup\partial T:\; \beta\geq\alpha\}.
\]
In particular, every point is its own predecessor. The \textit{successor set} is
\[
\mathcal{S}(\beta) := \{\alpha\in\mathcal{V}(T)\cup\partial T:\; \beta\in\mathcal{P}(\alpha)\},\quad \beta\in \overline{T}.
\]
Clearly $d_T(\alpha\wedge\beta) = |\mathcal{P}(\alpha\wedge\beta)|$.
\\

A dyadic tree is a well known and often used way of discretizing the unit disc, so it is reasonable to assume that the discrete analogue of the bidisc $\mathbb{D}\times\mathbb{D}$ should look like a Cartesian product of two dyadic trees. With this observation in mind we define the graph $T^2$ which we call \textit{bitree} as follows. Given two dyadic trees $T_x$ and $T_y$ we let
\[
\mathcal{V}(T^2) := \mathcal{V}(T_x)\times \mathcal{V}(T_y),
\]
to be the vertex set of $T^2$. In other words a vertex $\alpha\in \mathcal{V}(T^2)$ is a pair $(\alpha_x,\alpha_y)$ of vertices of two (identical) coordinate trees. Given two vertices $\alpha,\beta\in \mathcal{V}(T^2)$ we connect them by an edge whenever $\alpha_x=\beta_x$ and $\alpha_y$ and $\beta_y$ are connected by an edge in $T_y$, or, vice versa, $\alpha_y=\beta_y$ and $\alpha_x,\beta_x$ are neighbours. The order relation (and hence the direction of $T^2$) are induced from the coordinate trees, we say that $\alpha \leq \beta$ if and only if $\alpha_x\leq\beta_x$ \textit{and} $\alpha_y\leq\beta_y$. As in one-dimensional case we define the boundary $\partial T^2 := T_x\times\partial T_y\bigcup \partial T_x\times T_y\bigcup \partial T_x\times\partial T_y$. The last part $\partial T_x\times\partial T_y$ we call the \textit{distinguished boundary} of the bitree (similar to the bidisc setting) and denote by $(\partial T)^2$. We also let $\overline{T}^2 = T^2\bigcup\partial T^2$. As before, we define predecessor and successor sets of a vertex $\alpha = (\alpha_x,\alpha_y)$ using the same notation
\[
\mathcal{P}(\alpha) = \mathcal{P}(\alpha_x)\times\mathcal{P}(\alpha_y),\; \mathcal{S}(\alpha) = \mathcal{S}(\alpha_x)\times\mathcal{S}(\alpha_y).
\]
Sometimes, to avoid confusion, we specify the dimension by writing $\mathcal{S}_{T}(\alpha)$ for a point $\alpha$ in the tree $T$, and $\mathcal{S}_{T^2}(\alpha)$ for a point $\alpha$ in the bitree. We use the same convention for predecessor sets.

Similar to one-dimensional setting we denote the number, possibly infinite, of common ancestors of $\alpha$ and $\beta$ by $d_{T^2}(\alpha\wedge\beta)$, where $\alpha\wedge\beta = (\alpha_x\wedge\beta_x,\alpha_y\wedge\beta_y)$ is the (unique) least common ancestor of $\alpha$ and $\beta$. We have 
\[d_{T^2}(\alpha\wedge\beta) = d_T(\alpha_x\wedge\beta_x)\cdot d_T(\alpha_y\wedge\beta_y) = |\mathcal{P}(\alpha_x\wedge\beta_x)||\mathcal{P}(\alpha_y\wedge\beta_y)| = |\mathcal{P}(\alpha\wedge\beta)|.
\]

In what follows we do not really need to consider the edges of $T$, since we are studying the unweighted Dirichlet space. From now on we do not distinguish the graph and its vertex set, in that we write $\alpha \in \overline{T}^2$ ($\overline{T}$) instead of $\alpha\in \mathcal{V}(T^2)\bigcup\partial T^2$ ($\mathcal{V}(T)\bigcup\partial T$). We also write $d_T(\alpha_x)$ and $d_{T^2}(\beta)$ instead of $d_T(\alpha_x\wedge\alpha_x)$ and $d_{T^2}(\beta\wedge\beta)$.\\

A natural way to interpret the dyadic tree is to identify its vertices with the approximating intervals for the classical Cantor set on the unit interval. Namely, consider the ternary Cantor set $E = \bigcap_{j=0}^{\infty}E_j$, where $E_0 = [0,1]$, and $E_k$ consists of $2^k$ closed intervals of length $3^{-k}$. Then each point of $T$ corresponds to a unique interval in $E_j$ (or, more precisely, to its midpoint), and, similarly, $\partial T$ maps to $E$. In the same vein the points of $T^2$ correspond to ternary rectangles (Cartesian products of centerpoints of intervals in $E_j$ and $E_k$). In particular, $(\partial T)^2$ can be identified with $E^2$. Note that this means that $\overline{T}$ can be embedded into $\mathbb{R}^2$, and, consequently, $\overline{T}^2$ into $\mathbb{R}^4$. We will use this embedding to define a Potential Theory on the bitree. We also observe that $T^2$ no longer has unique geodesics; it is not acyclic like $T$. However, $T^2$ still \textit{does not have directed cycles}.

\subsection{Potential theory on the bitree}\label{SS:2.35}
We start by defining a metric on $\overline{T}^2$: given $\alpha,\beta\in \overline{T}^2$ we let
\begin{equation}\label{e:31}
\delta(\alpha,\beta) := 2^{-d_T(\alpha_x\wedge\beta_x)} + 2^{-d_T(\alpha_y\wedge\beta_y)} - \frac12\left(2^{-d_T(\alpha_x)} + 2^{-d_T(\beta_x)} + 2^{-d_T(\alpha_y)} + 2^{-d_T(\beta_y)}\right).
\end{equation}
This metric makes $\overline{T}^2$ into a compact space. The properties of a metric are easily verified for the function $\delta_x:\overline{T}\times \overline{T}\to[0,\infty)$, $\delta_x(\alpha_x,\beta_x)=
2^{-d_T(\alpha_x\wedge\beta_x)} - \frac12\left(2^{-d_T(\alpha_x)} + 2^{-d_T(\beta_x)}\right)$, and $\delta$ is the sum of two copies of such distance, one on each factor of $\overline{T}\times\overline{T}$.

Denote by $\mathbb{M}$ the (open) bitree $T^2$ equipped with the counting measure $\nu_c$ (so that $\nu_c(\{\alpha\}) = 1,\; \alpha\in T^2$). We define a kernel $G:\mathbb{R}^4\times\mathbb{M}\rightarrow \mathbb{R}_+$ to be $G(\alpha,\beta) := \chi_{\mathcal{S}_{\beta}}(\alpha)$, where $\alpha\in \overline{T}^2\subset\mathbb{R}^4$ (here we consider $\overline{T}^2$ to be a compact subset of $\mathbb{R}^4$), $\beta\in T^2$ and $\mathcal{S}_{\beta} := \{\gamma\in\overline{T}^2:\; \gamma\leq\beta\}$ is the $\overline{T}^2$-successor set of $\beta$. It is easy to verify that $G$ is lower semicontinuous on $\overline{T}\subset\mathbb{R}^4$ in first variable, and measurable on $\mathbb{M}$ in second variable. Extending kernels, functions, and measures from $\overline{T}^2$ to $\mathbb{R}^4$, by letting them be zero outside $\overline{T}^2$, we are squarely in the context of Adams and Hedberg \cite[Chapter 2]{ah1996}. We thus have a well-defined potential theory on the bitree. We refer to \cite[Chapter 2]{ah1996} for the general theory, while recalling some of its main features below. \par
Given a non-negative Borel measure $\mu$ on $\overline{T}^2$ (which by extension is Borel on $\mathbb{R}^4$) and a non-negative $\nu_c$-measurable function $f$ on $\mathbb{M}$ we let
\begin{subequations}\label{e:32}
\begin{eqnarray}
\label{e:32.1}& (\mathbb{I}f)(\alpha) := \int_{\mathbb{M}}G(\alpha,\beta)f(\beta)\,d\nu_c(\beta) = \sum_{\gamma\geq\alpha}f(\gamma),\\
\label{e:32.2}& (\mathbb{I}^*\mu)(\beta) := \int_{\overline{T}^2}G(\alpha,\beta)\,d\mu(\alpha) = \int_{\mathcal{S}(\beta)}\,d\mu(\alpha).
\end{eqnarray}
\end{subequations}
Observe that a measure supported on $T^2$ and a non-negative function there are pretty much the same objects --- a collection of masses assigned to the points of the bitree. The Potential Theory generated by these two operators leads us to the notions of bilogarithmic potential 
\begin{equation}\label{e:33}
\mathbb{V}^{\mu} := (\mathbb{I}\mathbb{I}^*)(\mu)
\end{equation}
and capacity
\begin{equation}\label{e:334}
\capp E := \inf\left\{\int f^2\,d\nu_c:\; f\geq0,\,(\mathbb{I}f)(\alpha)\geq1,\; \alpha\in E\right\},
\end{equation}
for Borel set $E\subset \overline{T}^2$.
Given two Borel measures $\mu,\nu\geq0$ on $\overline{T}^2$ we define their mutual energy to be
\begin{equation}\label{e:337}
\mathcal{E}[\mu,\nu] := \int_{\overline{T}^2}\mathbb{V}^{\mu}\,d\nu = \int_{\overline{T}^2}\mathbb{V}^{\nu}\,d\mu = \sum_{\alpha\in T^2}(\mathbb{I}^*\mu)(\alpha)(\mathbb{I}^*\nu)(\alpha),
\end{equation}
the last two equalities following from Tonelli's theorem. When $\mu=\nu$ we write $\mathcal{E}[\mu]$ instead, and we call it \textit{the energy of $\mu$}. Given a Borel set $E\subset \overline{T}$ there exists a uniquely defined \textit{equilibrium measure} $\mu_E\geq0$ that generates the minimizer in \eqref{e:334}, so that
\[
\capp E = \int_{T^2}(\mathbb{I}^*\mu_E)^2\,d\nu_C = \mathcal{E}[\mu_E] = \mu_E(E),
\]
see \cite{ah1996}. If $E$ is a compact set, then one also has $\supp\mu_E\subset E$.\\

\subsection{From the bidisc to the bitree: measures supported inside}\label{SS:2.4}
Now that the bitree has been defined, we can move all the objects from the bidisc here. In doing that we first assume that the measures in question are supported on $\mathbb{D}^2$ (so they have no mass on the boundary).\\
\begin{figure}[h]
\centering
\includegraphics[width=0.7\textwidth]{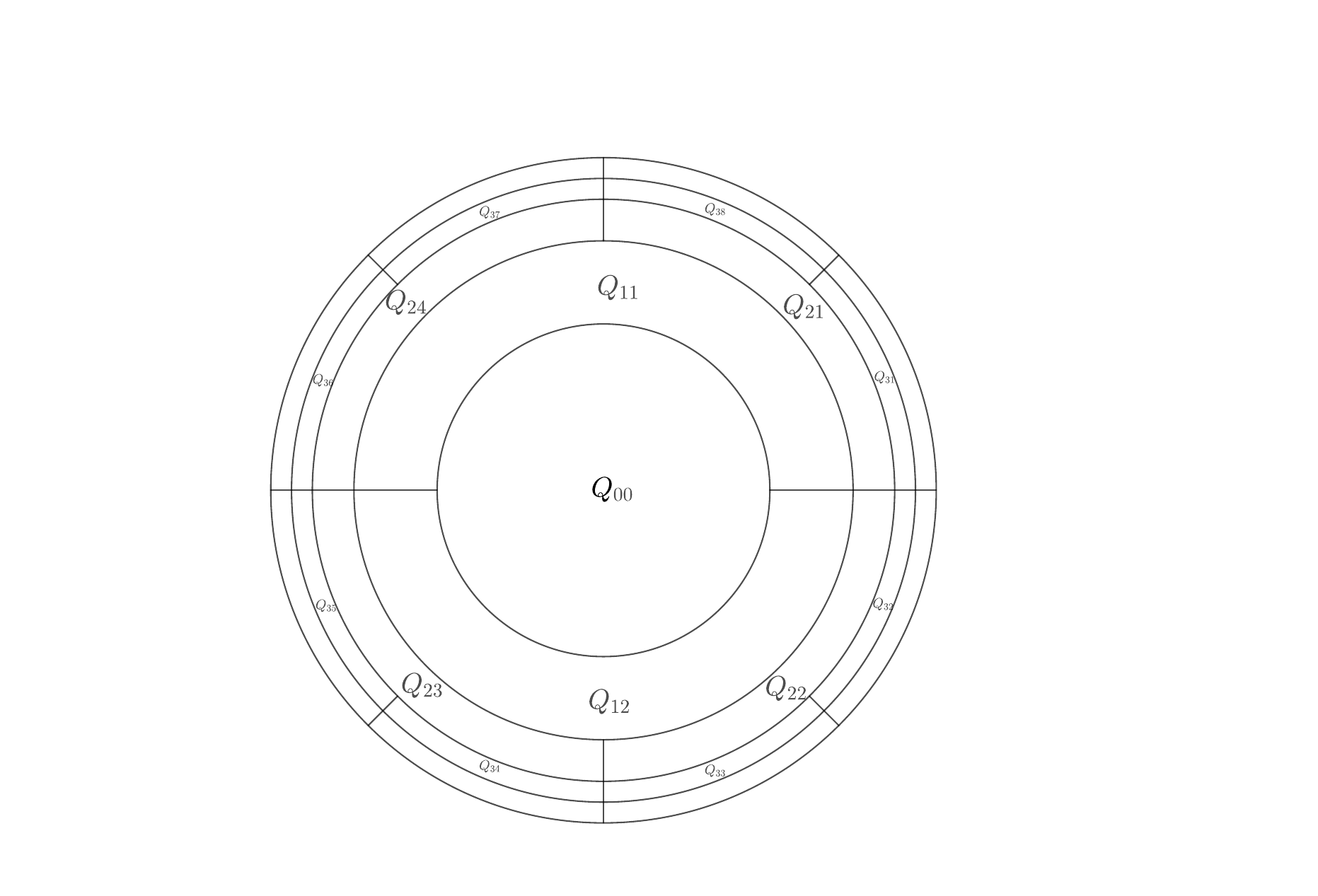}
\caption{Discretized disc}
\label{fig:17}
\end{figure}
We start with making a decomposition of the unit disc into dyadic Carleson boxes. For integer $j\geq0$ and $1\leq l\leq 2^j$ let $z_{jl}= (1-2^{-j})e^{\frac{2\pi i(2l-1)}{2^{j+1}}}$, and for $z=re^{it}\in\mathbb{D}$ let $J(z) = \{e^{is}:\; t- (1-r)\pi \leq s \leq t+ (1-r)\pi\}$, $S(z) = \{\rho e^{is}:\;e^{is}\in J(z);\, r\leq\rho\leq1\}$, and let $Q(z) = \{ \rho e^{is}\in S(z):\;\frac{1-r}{2}\leq 1-\rho\leq 1-r\}$ be the 'upper half' of $S(z)$. We write $Q_{jl} := Q(z_{jl})$. Now we see that there is one-to-one map between points (vertices) of $T$ and dyadic Carleson half-cubes $Q_{jl}$; $Q_{00}$ corresponds to the root $o$, $Q_{11}$ and $Q_{12}$ to its two children etc. (see Fig. \ref{fig:17}). In other words, for every $\alpha\in T$ there exists a unique half-cube $Q_{\alpha}$, and vice versa, for every half-cube $Q_{jl}$ there is exactly one point $\alpha^{jl}\in T$. The collection $\{Q_{\alpha}\}_{\alpha\in T}$ forms a covering of the unit disc. Note also that given a point $z\in \mathbb{D}$ it is possible to pick the half-box $Q_{\alpha}\ni z$ in a unique way. Though it can happen that there are several half-boxes $Q_{\alpha}$ containing $z$ (up to four), we can still pick up one of them (say, whichever is closer to $\partial\mathbb{D}$ and/or with larger $\arg z_{jl}$), and we do this, wherever it is needed, in a consistent fashion throughout the whole paper.

Next we introduce an auxiliary graph $\mathfrak{G}$ in such a way that $\mathcal{V}(\mathfrak{G}) := \mathcal{V}(T)$, and $\{\alpha,\beta\}$ is an edge of $\mathfrak{G}$, if $\textup{cl}Q_{\alpha}\cap \textup{cl}Q_{\beta} \neq\emptyset$. Here $\textup{cl}$ means closure in Euclidean distance. Basically we take $T$ and add extra edges connecting points corresponding to adjacent Carleson half-cubes, see Fig. \ref{fig:18}.
\begin{figure}[h]
\centering
\includegraphics[width=0.6\textwidth]{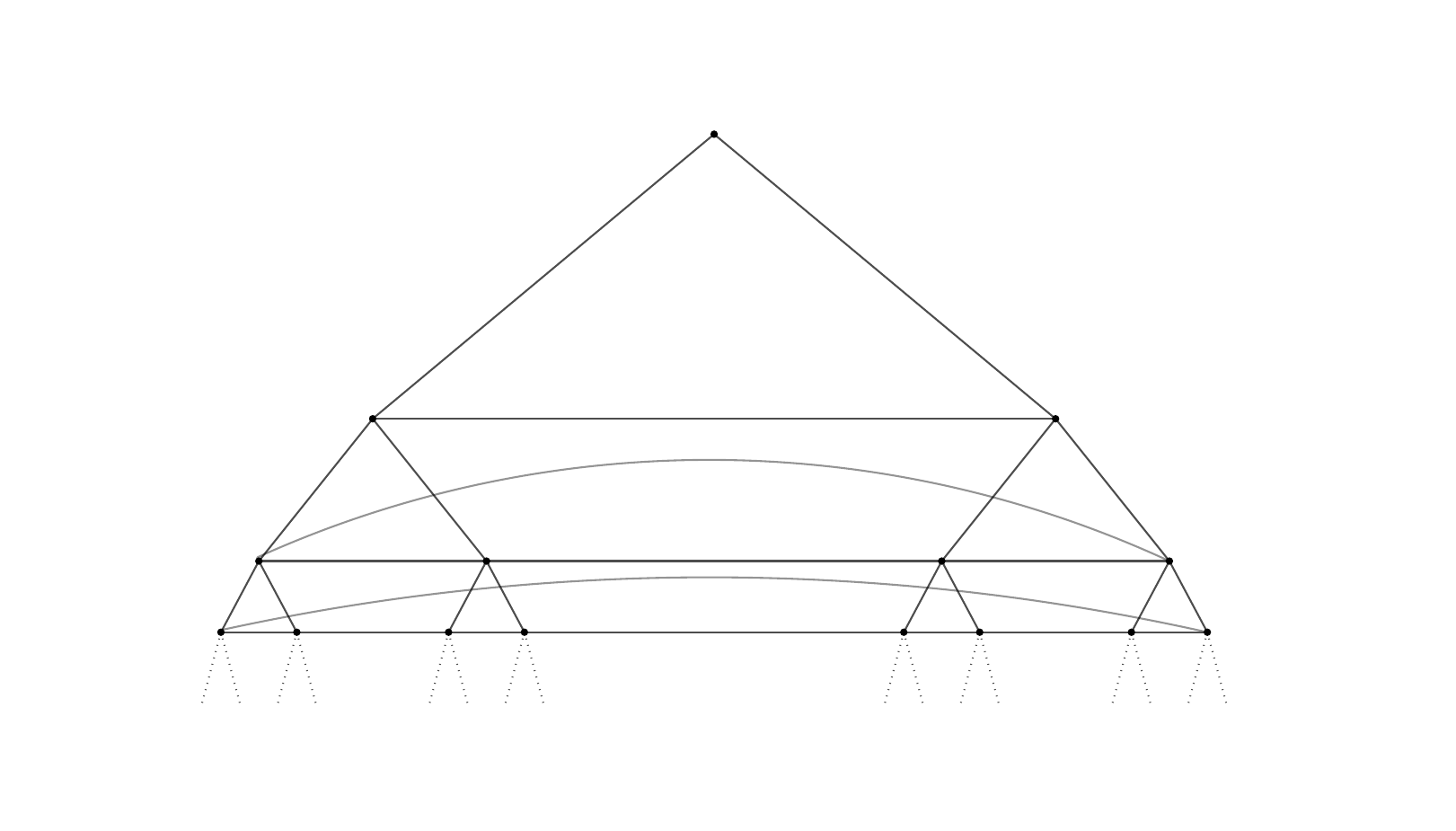}
\caption{Graph $\mathfrak{G}$}
\label{fig:18}
\end{figure}
 Given a vertex or leaf $\alpha\in \mathcal{V}(T)\bigcup\partial T$ we define the \textit{$\mathfrak{G}$-extended predecessor set} to be
$
\mathcal{P}_{\mathfrak{G}}(\alpha) := \{\beta \in \mathcal{V}(T) = \mathcal{V}(\mathfrak{G}):\; dist_{\mathfrak{G}}(\beta,\Gamma(o,\alpha)) \leq 1\},
$
where $\Gamma(o,\alpha)$ is the (unique) geodesic in $\overline{T}$ connecting $\alpha$ and the root $o$. In other words, we take the $\overline{T}$-predecessor set $\mathcal{P}_T(\alpha)$ and add all the adjacent (in $\mathfrak{G}$) vertices (see Fig. \ref{fig:19}). As before, we set 
$
\mathcal{S}_{\mathfrak{G}}(\beta) := \{\alpha\in \overline{T}:\; \beta\in \mathcal{P}_{\mathfrak{G}}(\alpha) \}
$
to be the \textit{$\mathfrak{G}$-extended successor set.}
\begin{figure}[h]
	\centering
	\begin{subfigure}{0.45\textwidth}
	\includegraphics[width=\textwidth]{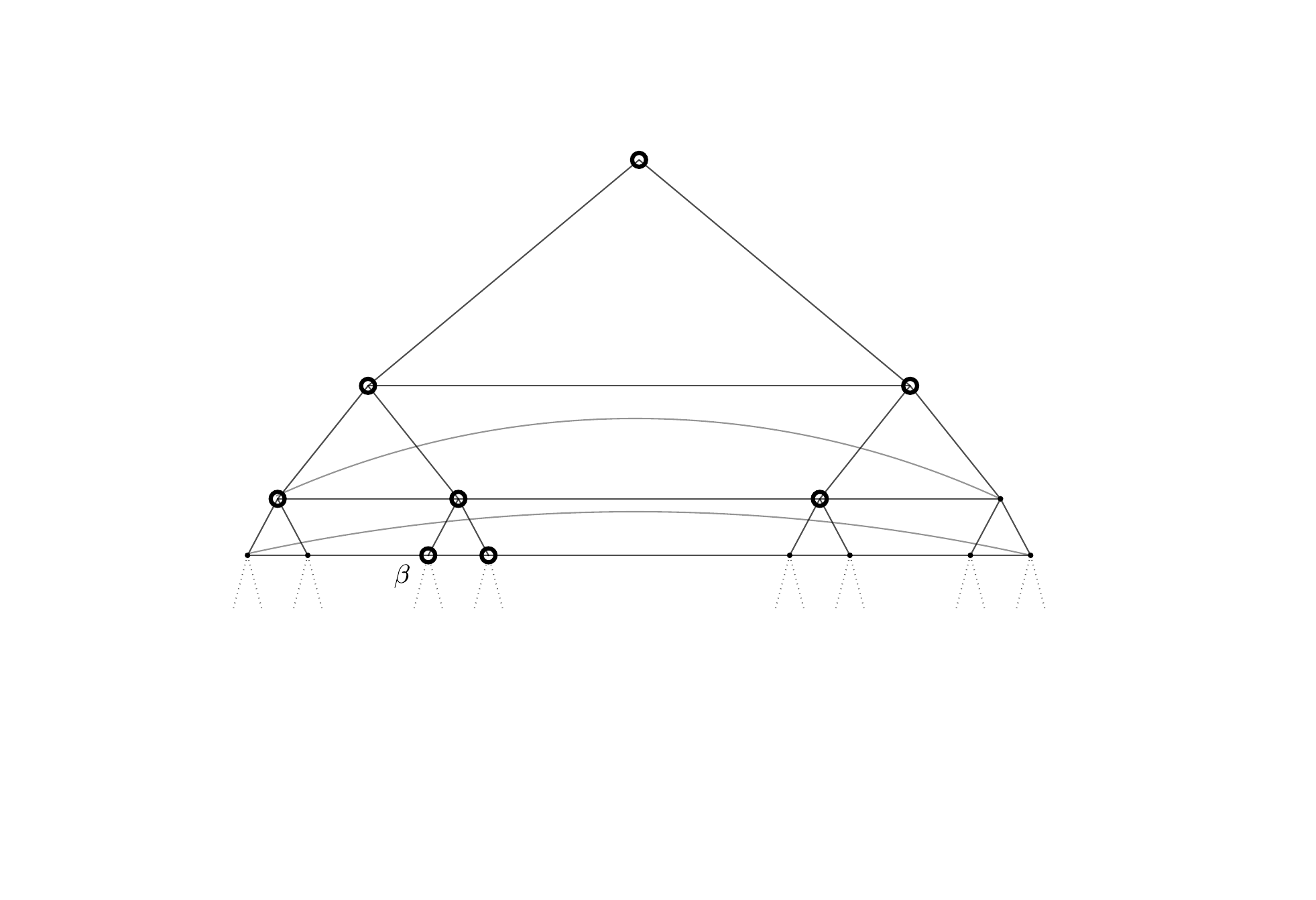}
        \caption{Predecessor set}
        \vspace{0.5mm}
    \end{subfigure}
    \begin{subfigure}{0.45\textwidth}\
     \includegraphics[trim = 0 10 0 10, clip, width=\textwidth]{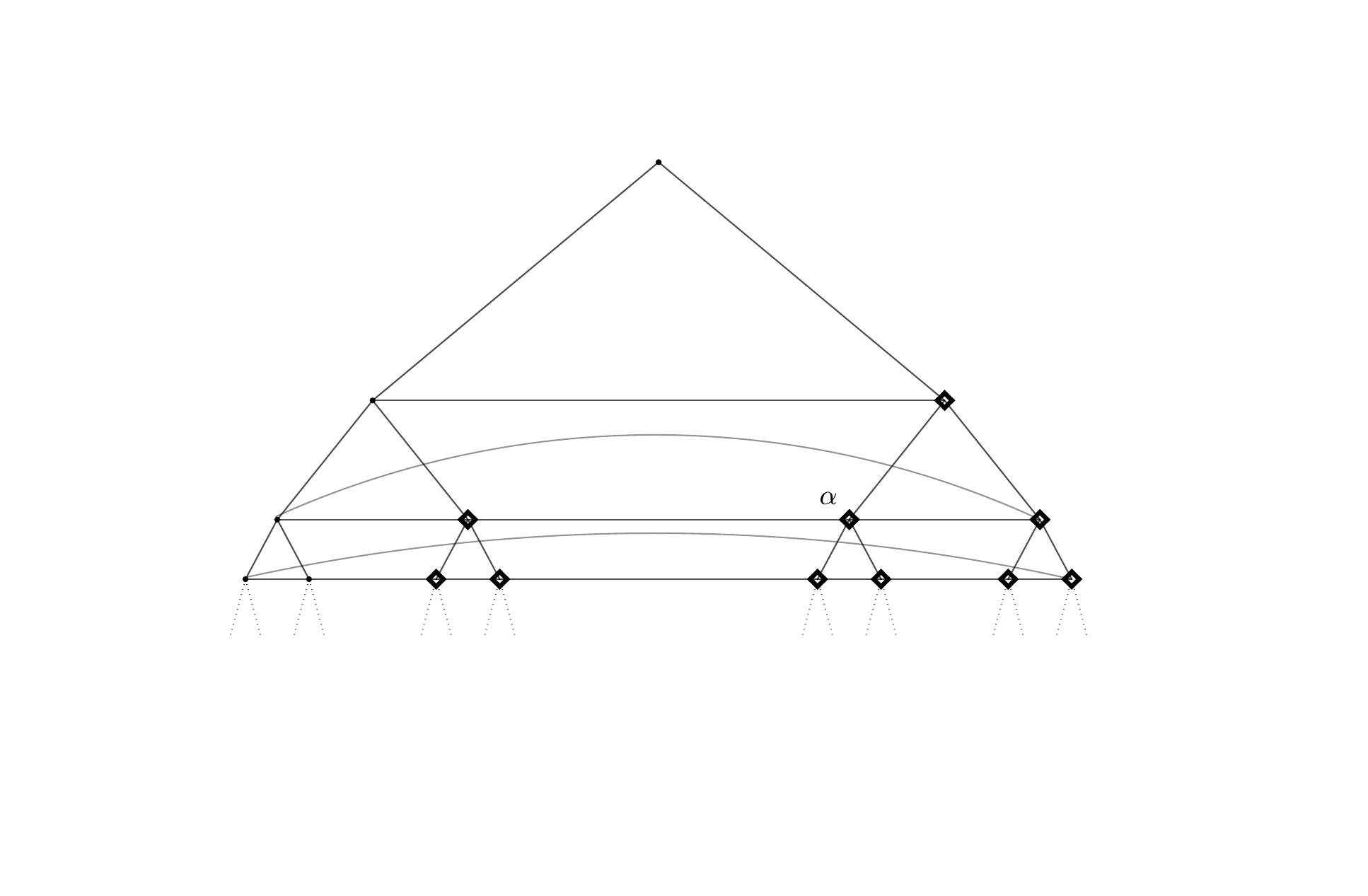}
        \caption{Successor set}
    \end{subfigure}
	\caption{}
	\label{fig:19}
\end{figure}
Since (by definition) $\mathcal{P}(\alpha) \subset \mathcal{P}_{\mathfrak{G}}(\alpha)$ for any $\alpha\in \overline{T}$, we have the same inclusion for the successor sets, $\mathcal{S}(\alpha) \subset \mathcal{S}_{\mathfrak{G}}(\alpha)$, and this inclusion is proper unless the vertex in question is the root $o$. On the other hand, the successor sets are 'comparable on average'. To elaborate, let $N(\alpha)$ be the set of neighbours of $\alpha$ in $\mathfrak{G}$ (so $\alpha$ is connected by an edge in $\mathfrak{G}$ to the points in $N(\alpha)$). Then
\begin{equation} \label{e:331}
\mathcal{S}_{\mathfrak{G}}(\alpha) \subset \bigcup_{\beta\in N(\alpha)}\mathcal{S}(\beta),
\end{equation}
and $|N(\alpha)| \leq 5$ (in particular $\bigcup_{\alpha\in T}N(\alpha)$ covers each point at most $5$ times). Another way to look at $\mathcal{S}_{\mathfrak{G}}(\alpha)$ is to consider the dyadic interval $J_{\alpha}$ and its two immediate neighbours of the same rank $J^{\pm}_{\alpha}$. Then 
\[
\mathcal{S}_{\mathfrak{G}}(\alpha)\cap T = \left\{\beta\in T: J_{\beta}\subset J_{\alpha}\cup J^-_{\alpha}\cup J^{+}_{\alpha}\right\}.
\]
The correspondence between $\partial T$ and $\partial\mathbb{D}$ will be explained later in Section \ref{SS:2.5}.
Finally, we let
\[
d_{\mathfrak{G}}(\alpha\wedge\beta) := \left|\mathcal{P}_{\mathfrak{G}}(\alpha)\cap\mathcal{P}_{\mathfrak{G}}(\beta)\right|,\quad \alpha,\beta\in T.
\]
As before, we keep the same notation for the bitree, namely given two points $\alpha = (\alpha_x,\alpha_y)$ and $\beta = (\beta_x,\beta_y)$ in $T^2$ we set
\begin{equation}\notag
\begin{split}
&\mathcal{P}_{\mathfrak{G}^2}(\alpha) := \mathcal{P}_{\mathfrak{G}}(\alpha_x)\times\mathcal{P}_{\mathfrak{G}}(\alpha_y),\; \mathcal{S}_{\mathfrak{G}^2}(\alpha) := \mathcal{S}_{\mathfrak{G}}(\alpha_x)\times\mathcal{S}_{\mathfrak{G}}(\alpha_y)\\
&d_{\mathfrak{G}^2}(\alpha\wedge\beta) := d_{\mathfrak{G}}(\alpha_x\wedge\beta_x)\cdot d_{\mathfrak{G}}(\alpha_y\wedge\beta_y)
\end{split}
\end{equation}
The main reason to introduce this auxiliary graph $\mathfrak{G}$ is that the geometry of the tree $T$ does not completely agree with the geometry of the unit disc $\mathbb{D}$. For instance, one can easily find a pair of points $z,w\in\mathbb{D}$, very close to each other, while the tree distance between $\alpha$ and $\beta$ corresponding to these points (i.e. $z\in Q_{\alpha},\; w\in Q_{\beta}$) is very large. It is a well-known (if somewhat minor) obstacle, and there are several ways to overcome it. We have chosen what we think is the simplest one, especially since we do not care about precise values of arising constants.

Taking two identical dyadic coordinate trees $T_x,\, T_y$ we see that the collection $\{Q_{\alpha}\}:= \{Q_{\alpha_x}\times Q_{\alpha_y}\},\; \alpha = (\alpha_x,\alpha_y) \in T_x\times T_y = T^2$  gives almost a disjoint decomposition of the bidisc $\mathbb{D}\times\mathbb{D}$ (these Whitney cubes may intersect, but each point of the bidisc is counted at most $16$ times). Assume that $\mu\geq0$ is a Borel measure on $\mathbb{D}^2$ for which $\mu(\partial\mathbb{D}^2)=0$ and that $g\in L^2(\mathbb{D}^2,\,d\mu)$ is a non-negative function. We then let
\begin{equation}\label{e:377}
\begin{split}
& \tilde{\mu}(\alpha) := \mu(Q_{\alpha}),\\
& \tilde{g}(\alpha) := \frac{1}{\mu(Q_{\alpha})}\int_{Q_{\alpha}}g(z)\,d\mu(z),\quad \alpha = (\alpha_x,\alpha_y)\in T^2,
\end{split}
\end{equation}
and we set $\tilde{g}(\alpha) := 0$, if $\mu(Q_{\alpha}) = 0$. Now we recall that $\mu$ is Carleson measure for $\mathcal{D}(\mathbb{D}^2)$ if and only if \eqref{e:25} holds for any $g$ as above, namely
\begin{equation}\label{e:37}
\begin{split}
&\|g\|^2_{L^2(\mathbb{D}^2,\,d\mu)} = \int_{\mathbb{D}^2}g^2(z)\,d\mu(z) = \sum_{\alpha\in T^2}\int_{Q_{\alpha}}g^2(z)\,d\mu(z) \gtrsim \int_{\mathbb{D}^2}\int_{\mathbb{D}^2}g(z)g(w)\Re K_z(w)\,d\mu(z)\,d\mu(w) \approx\\
&\sum_{\alpha\in T^2}\sum_{\beta\in T^2}\int_{Q_{\alpha}}\int_{Q_{\beta}}g(z)g(w)\Re K_z(w)\,d\mu(z)\,d\mu(w).
\end{split}
\end{equation}
In order to proceed we need the following Lemma, the proof of which is given in Section \ref{S:A.0}.
\begin{lemma}\label{l:A.0.s}
For any $\alpha,\beta\in T^2$ we have
\[
\Re K_z(w) \approx d_{\mathfrak{G}^2}(\alpha\wedge\beta) = d_{\mathfrak{G}}(\alpha_x\wedge\beta_x)\cdot d_{\mathfrak{G}}(\alpha_y\wedge\beta_y),\quad  z\in Q_{\alpha},\; w\in Q_{\beta}.
\].
\end{lemma}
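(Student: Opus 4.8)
The plan is to reduce the estimate $\Re K_z(w)\approx d_{\mathfrak{G}^2}(\alpha\wedge\beta)$ to its one-variable analogue, since both sides factor over the two coordinates. Recall that $K_z(w)=\left(C_1+\log\frac{1}{1-\bar z_1 w_1}\right)\left(C_1+\log\frac{1}{1-\bar z_2 w_2}\right)$, so $\Re K_z(w)$ is, by \eqref{e:23} (Lemma \ref{l:A.0.1}), comparable to $|K_z(w)|=\left|C_1+\log\frac{1}{1-\bar z_1 w_1}\right|\cdot\left|C_1+\log\frac{1}{1-\bar z_2 w_2}\right|$ once $C_1$ is chosen large. Hence it suffices to prove the \emph{one-dimensional} statement: for $\alpha_x,\beta_x\in T$ and $z_1\in Q_{\alpha_x}$, $w_1\in Q_{\beta_x}$, one has $\left|C_1+\log\frac{1}{1-\bar z_1 w_1}\right|\approx d_{\mathfrak{G}}(\alpha_x\wedge\beta_x)$, with constants independent of everything. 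Multiplying the two coordinate estimates and using \eqref{e:23} then yields the lemma. So I would state the one-dimensional fact as the real content and spend the proof on it.

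For the one-dimensional estimate, first recall the geometry: $z_1\in Q_{\alpha_x}$ means $1-|z_1|\approx 2^{-n(\alpha_x)}$ where $n(\alpha_x)=\mathrm{dist}_T(\alpha_x,o)$ is the generation, and $\arg z_1$ lies in the dyadic arc $J_{\alpha_x}$ of length $\approx 2^{-n(\alpha_x)}$; similarly for $w_1$ and $\beta_x$. The key classical estimate is that $\left|1-\bar z_1 w_1\right|\approx (1-|z_1|)+(1-|w_1|)+|\arg z_1-\arg w_1|$. Now I distinguish two regimes according to the relative position of the arcs $J_{\alpha_x}$ and $J_{\beta_x}$. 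If $J_{\alpha_x}$ and $J_{\beta_x}$ (together with their same-rank neighbours) are ''far apart'' — i.e. $\alpha_x\wedge\beta_x$ is a strict ancestor of both at generation $m<\min(n(\alpha_x),n(\beta_x))$, roughly, or more precisely the geodesics diverge before reaching either vertex — then $|\arg z_1-\arg w_1|\approx 2^{-m}$ dominates, so $\left|1-\bar z_1 w_1\right|\approx 2^{-m}$ and $\left|C_1+\log\frac{1}{1-\bar z_1 w_1}\right|\approx m+C_1\approx d_{\mathfrak{G}}(\alpha_x\wedge\beta_x)$, using $d_{\mathfrak{G}}(\alpha_x\wedge\beta_x)=|\mathcal{P}_{\mathfrak{G}}(\alpha_x)\cap\mathcal{P}_{\mathfrak{G}}(\beta_x)|\approx m+1$. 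In the complementary regime, where one of $\alpha_x,\beta_x$ is (nearly) an ancestor of the other — say $\beta_x\in\mathcal{P}_{\mathfrak G}(\alpha_x)$ or they are $\mathfrak G$-close at some generation — then $|\arg z_1-\arg w_1|\lesssim 2^{-n(\beta_x)}\lesssim (1-|w_1|)$, so $\left|1-\bar z_1 w_1\right|\approx 2^{-n(\beta_x)}$ and the logarithm is $\approx n(\beta_x)+C_1$, which is again $\approx d_{\mathfrak{G}}(\alpha_x\wedge\beta_x)$ since $\mathcal{P}_{\mathfrak G}(\alpha_x)\cap\mathcal{P}_{\mathfrak G}(\beta_x)$ has about $n(\beta_x)+1$ elements. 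One must also check the degenerate overlap cases (several $Q_\alpha$ containing the same point, up to four) cause only bounded discrepancies, which is harmless since constants are not tracked.

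The main obstacle I anticipate is the careful bookkeeping at the interface between the two regimes, i.e.\ making precise the claim ''$d_{\mathfrak{G}}(\alpha_x\wedge\beta_x)\approx$ (generation at which the $\mathfrak G$-thickened geodesics of $\alpha_x$ and $\beta_x$ last coincide)$+1$''. This is exactly the reason the auxiliary graph $\mathfrak G$ was introduced in Section \ref{SS:2.4}: without the extra edges, two Whitney boxes that are Euclidean-adjacent but sit across a dyadic splitting point could have $d_T(\alpha_x\wedge\beta_x)$ small while $|1-\bar z_1 w_1|$ is tiny, breaking the comparison; the $\mathfrak G$-predecessor sets repair this by always including same-rank neighbours, so that $d_{\mathfrak G}$ genuinely measures $-\log|1-\bar z_1 w_1|$ up to additive/multiplicative constants. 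I would isolate this as a short geometric sub-lemma (comparing $d_{\mathfrak{G}}(\alpha_x\wedge\beta_x)$ with $-\log\big((1-|z_1|)+(1-|w_1|)+|\arg z_1-\arg w_1|\big)$) and then the rest is the elementary estimate on $|1-\bar z_1 w_1|$ plus the product/real-part reduction above. Everything else is routine, and since the statement only asserts $\approx$, no sharp constants need to be computed.
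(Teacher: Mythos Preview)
Your proposal is correct and follows essentially the same route as the paper's proof (Lemma~\ref{l:A.0.1} in the Appendix): reduce to one variable via the product structure and \eqref{e:23}, use the elementary estimate $|1-\bar z_1 w_1|\approx \max\big(1-|z_1|,\,1-|w_1|,\,|\arg(\bar z_1 w_1)|\big)$, and compare this to a count of common $\mathfrak G$-predecessors. The only cosmetic difference is that the paper handles your two regimes uniformly by introducing the smallest (not necessarily dyadic) interval $J$ containing both $J_{\alpha_x}$ and $J_{\beta_x}$, then observing directly that $\gamma\in\mathcal P_{\mathfrak G}(\alpha_x)\cap\mathcal P_{\mathfrak G}(\beta_x)$ forces $|J_\gamma|\geq\tfrac12\max(|J_{\alpha_x}|,|J_{\beta_x}|)$ and $3J_\gamma\cap J_{\alpha_x},\,3J_\gamma\cap J_{\beta_x}\neq\emptyset$, so the number of such $\gamma$ is $\approx\log|J|^{-1}\approx\log|1-\bar z_1 w_1|^{-1}$; this replaces your case split but is otherwise the same argument.
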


Applying Lemma \ref{l:A.0.s} to the right-hand side of \eqref{e:37} we get
\begin{equation}\notag
\begin{split}
&\sum_{\alpha\in T^2}\sum_{\beta\in T^2}\int_{Q_{\alpha}}\int_{Q_{\beta}}g(z)g(w)\Re K_z(w)\,d\mu(z)\,d\mu(w) \approx\\
& \sum_{\alpha\in T^2}\sum_{\beta\in T^2}\int_{Q_{\alpha}}\int_{Q_{\beta}}g(z)g(w)d_{\mathfrak{G}^2}(\alpha\wedge\beta)\,d\mu(z)\,d\mu(w)=\\
&\sum_{\alpha\in T^2}\sum_{\beta\in T^2}\tilde{g}(\alpha)\tilde{g}(\beta)d_{\mathfrak{G}}(\alpha_x\wedge\beta_x)d_{\mathfrak{G}}(\alpha_y\wedge\beta_y)\tilde{\mu}(\alpha)\tilde{\mu}(\beta)
\end{split}
\end{equation}
We attack the calculation from the end, letting $\sigma(\alpha) = \tilde{g}(\alpha)\tilde{\mu}(\alpha),\; \alpha\in T^2$ (recall that measures and functions on $T^2$ are the same):
\begin{equation}\notag
\begin{split}
&\sum_{\gamma\in T^2}\left(\sum_{\alpha\in\mathcal{S}_{\mathfrak{G}^2}(\gamma)}\sigma(\alpha)\right)^2 = \sum_{\gamma\in T^2}\sum_{\alpha\in\mathcal{S}_{\mathfrak{G}^2}(\gamma)}\sum_{\beta\in\mathcal{S}_{\mathfrak{G}^2}(\gamma)}\sigma(\alpha)\sigma(\beta) = \sum_{\alpha\in T^2}\sum_{\beta\in T^2}\sum_{\gamma\in \mathcal{P}_{\mathfrak{G}^2}(\alpha)\cap \mathcal{P}_{\mathfrak{G}^2}(\beta)}\sigma(\alpha)\sigma(\beta)=\\
&\sum_{\alpha\in T^2}\sum_{\beta\in T^2}|\mathcal{P}_{\mathfrak{G}^2}(\alpha)\cap \mathcal{P}_{\mathfrak{G}^2}(\beta)|\sigma(\alpha)\sigma(\beta) = \sum_{\alpha\in T^2}\sum_{\beta\in T^2}\tilde{g}(\alpha)\tilde{g}(\beta)d_{\mathfrak{G}}(\alpha_x\wedge\beta_x)d_{\mathfrak{G}}(\alpha_y\wedge\beta_y)\tilde{\mu}(\alpha)\tilde{\mu}(\beta).
\end{split}
\end{equation}
 Repeating the calculation with $\mathcal{P}$ instead of $\mathcal{P}_{\mathfrak{G}^2}$ we obtain
\begin{equation}\notag
\sum_{\gamma\in T^2}\left(\sum_{\alpha\in\mathcal{S}(\gamma)}\sigma(\alpha)\right)^2 = \sum_{\alpha\in T^2}\sum_{\beta\in T^2}\tilde{g}(\alpha)\tilde{g}(\beta)d_{T^2}(\alpha\wedge\beta)\tilde{\mu}(\alpha)\tilde{\mu}(\beta).
\end{equation}
The successor set formula \eqref{e:331} implies that
\begin{equation}\notag
\begin{split}
&\sum_{\gamma\in T^2}\left(\sum_{\alpha\in\mathcal{S}_{\mathfrak{G}^2}(\gamma)}\sigma(\alpha)\right)^2 = \sum_{\gamma\in T^2}\sigma\left(\mathcal{S}_{\mathfrak{G}}(\gamma_x)\times\mathcal{S}_{\mathfrak{G}}(\gamma_y)\right)^2 \approx \sum_{\gamma\in T^2}\sigma\left(\mathcal{S}(\gamma_x)\times\mathcal{S}(\gamma_y)\right)^2=\\
&\sum_{\gamma\in T^2}\left(\sum_{\alpha\in\mathcal{S}(\gamma)}\sigma(\alpha)\right)^2.
\end{split}
\end{equation}
Combining the estimates above we see that \eqref{e:37} is equivalent to
\begin{equation}\notag
\sum_{\alpha\in T^2}\int_{Q_{\alpha}}g^2(z)\,d\mu(z) \gtrsim \sum_{\alpha\in T^2}\sum_{\beta\in T^2}\tilde{g}(\alpha)\tilde{g}(\beta)d_{T^2}(\alpha\wedge\beta)\tilde{\mu}(\alpha)\tilde{\mu}(\beta),
\end{equation}
where $\tilde{g}$ and $\tilde{\mu}$ are defined in \eqref{e:377}. We see that if $g$ is constant on the boxes $Q_{\alpha}$ and $\mu$ is Carleson measure for $\mathcal{D}(\mathbb{D}^2)$, then
\begin{equation}\label{e:38}
\|\tilde{g}\|^2_{L^2(T^2, d\tilde{\mu})} \gtrsim \sum_{\alpha\in T^2}\sum_{\beta\in T^2}\tilde{g}(\alpha)\tilde{g}(\beta)d_T(\alpha\wedge\beta)\tilde{\mu}(\alpha)\tilde{\mu}(\beta) = \sum_{\alpha\in T^2}\left(\sum_{\beta\leq\alpha}\tilde{g}(\beta)\tilde{\mu}(\beta)\right)^2.
\end{equation}
On the other hand, by Jensen's inequality, 
$$\sum_{\alpha\in T^2}\int_{Q_{\alpha}}g^2(z)\,d\mu(z) \geq \sum_{\alpha\in T^2}\tilde{g}^2(\alpha)\tilde{\mu}(\alpha),$$
 so if \eqref{e:38} holds for any non-negative $\tilde{g}$ in $L^2(T^2,\,d\tilde{\mu})$, then $\mu$ is Carleson.
 
Let $\nu\geq0$ be a Borel measure on $\overline{T}^2$. Given a $\nu$-measurable function $\varphi$ defined on $\overline{T}^2$, we let
\begin{equation}\label{e:40}
(\mathbb{I}_{\nu}^*\varphi)(\beta) := \int_{\mathcal{S}(\beta)}\varphi(\alpha)\,d\nu(\alpha),\quad \beta\in T^2.
\end{equation}
Since by our temporary assumption $\tilde{\mu}$ is supported on $T^2$, inequality \eqref{e:38} can be rewritten as
\begin{equation}\label{e:41}
\|\mathbb{I}^*_{\tilde{\mu}}\tilde{g}\|^2_{\ell^2(T^2)} \lesssim \|\tilde{g}\|^2_{L^2(T^2,\,d\tilde{\mu})},
\end{equation}
which means that the operator $\mathbb{I}^*_{\tilde{\mu}}:  L^2(T^2,\,d\tilde{\mu})\rightarrow \ell^2(T^2)$ is bounded (and its operator norm is comparable to the Carleson constant $[\mu]$). Given a pair of real functions $\varphi\in \ell^2(T^2)$ and $f\in L^2(T^2,\,d\nu)$ we see that
\begin{equation}\notag
\begin{split}
&\langle \mathbb{I}^*_{\nu}\varphi,f\rangle_{\ell^2(T^2)} = \sum_{\beta\in T^2}\int_{\mathcal{S}(\beta)}\varphi(\alpha)\,d\nu(\alpha)f(\beta) = \int_{\overline{T}^2}\varphi(\alpha)\sum_{\beta\in T^2}\chi_{\mathcal{S}(\beta)}(\alpha)f(\beta)\,d\nu(\alpha)=\\
& \int_{\overline{T}^2}\varphi(\alpha)\sum_{\beta\geq\alpha}f(\beta)\,d\nu(\alpha) = \langle\varphi, \mathbb{I}f\rangle_{L^2(\overline{T}^2,\,d\nu)}.
\end{split}
\end{equation}
Hence $\mathbb{I}$, as an operator acting from $\ell^2(T^2)$ to $L^2(\overline{T}^2,\,d\nu)$, is adjoint to $\mathbb{I}^*_{\nu}$ and $\|\mathbb{I}\| = \|\mathbb{I}^*_{\nu}\|$. We arrive at the following statement.
\begin{proposition}\label{p:71}
Let $\mu\geq0$ be a Borel measure on $\mathbb{D}^2$ and define $\tilde{\mu}$ as in \eqref{e:377}. Then $\tilde{\mu}$ is a trace measure for discrete bi-parameter Hardy inequality,
\begin{equation}\label{e:42}
\int_{\overline{T}^2}(\mathbb{I}f)^2\,d\tilde{\mu} \leq \tilde{C}_{\mu}\sum_{\alpha\in T^2}f^2(\alpha)
\end{equation}
if and only if $\mu$ is Carleson for $\mathcal{D}(\mathbb{D}^2)$. The best possible constant in \eqref{e:42} is comparable to the Carleson constant of $\mu$.
\end{proposition}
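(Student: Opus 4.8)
The plan is to run the duality-plus-discretization scheme set up in Sections~\ref{SS:2.2}--\ref{SS:2.4}. By the RKHS duality argument of Section~\ref{SS:2.2}, $\mu$ is Carleson for $\mathcal{D}(\mathbb{D}^2)$ exactly when the bilinear condition \eqref{e:25}, namely $\|g\|^2_{L^2(\mathbb{D}^2,d\mu)}\gtrsim\int_{\mathbb{D}^2}\int_{\mathbb{D}^2} g(z)g(w)\Re K_z(w)\,d\mu(z)\,d\mu(w)$, holds for all nonnegative $g\in L^2(\mathbb{D}^2,d\mu)$, with comparable constants; this already strips away the analytic and Sobolev structure. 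I would then decompose $\mathbb{D}^2$ into the Whitney boxes $Q_\alpha=Q_{\alpha_x}\times Q_{\alpha_y}$, $\alpha\in T^2$, which cover $\mathbb{D}^2$ with multiplicity at most $16$, split both integrals accordingly, and apply Lemma~\ref{l:A.0.s} to replace $\Re K_z(w)$ by $d_{\mathfrak{G}^2}(\alpha\wedge\beta)=d_{\mathfrak{G}}(\alpha_x\wedge\beta_x)\,d_{\mathfrak{G}}(\alpha_y\wedge\beta_y)$ when $z\in Q_\alpha$, $w\in Q_\beta$. With $\tilde\mu,\tilde g$ as in \eqref{e:377}, the right-hand side of \eqref{e:25} then becomes, up to absolute constants, $\sum_{\alpha,\beta\in T^2}\tilde g(\alpha)\tilde g(\beta)\,d_{\mathfrak{G}}(\alpha_x\wedge\beta_x)\,d_{\mathfrak{G}}(\alpha_y\wedge\beta_y)\,\tilde\mu(\alpha)\tilde\mu(\beta)$.

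The algebraic heart of the argument is the Fubini-type identity, valid for any nonnegative $\sigma$ on $T^2$, $\sum_{\gamma\in T^2}\bigl(\sum_{\alpha\in\mathcal{S}_{\mathfrak{G}^2}(\gamma)}\sigma(\alpha)\bigr)^2=\sum_{\alpha,\beta\in T^2}\bigl|\mathcal{P}_{\mathfrak{G}^2}(\alpha)\cap\mathcal{P}_{\mathfrak{G}^2}(\beta)\bigr|\,\sigma(\alpha)\sigma(\beta)$, obtained by expanding the square and interchanging the order of summation; since $|\mathcal{P}_{\mathfrak{G}^2}(\alpha)\cap\mathcal{P}_{\mathfrak{G}^2}(\beta)|=d_{\mathfrak{G}}(\alpha_x\wedge\beta_x)\,d_{\mathfrak{G}}(\alpha_y\wedge\beta_y)$ this converts the quadratic form above (with $\sigma=\tilde g\tilde\mu$) into a genuine sum of squares. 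Running the identical computation with $\mathcal{P}$, $\mathcal{S}$ in place of their $\mathfrak{G}^2$-decorated versions gives $\sum_{\gamma}\bigl(\sum_{\alpha\le\gamma}\tilde g(\alpha)\tilde\mu(\alpha)\bigr)^2$, and the bounded-overlap inclusion \eqref{e:331} together with $|N(\alpha)|\le 5$ shows the $\mathfrak{G}$- and tree-versions are comparable in each factor, hence comparable after taking the product and summing. Thus \eqref{e:25} is equivalent, with fixed constants, to the discrete bi-parameter inequality \eqref{e:38}, $\|\tilde g\|^2_{L^2(T^2,d\tilde\mu)}\gtrsim\sum_{\alpha\in T^2}\bigl(\sum_{\beta\le\alpha}\tilde g(\beta)\tilde\mu(\beta)\bigr)^2$, for all $\tilde g$ of the form \eqref{e:377}.

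It then remains to upgrade this to an inequality for \emph{all} nonnegative $\tilde g\in L^2(T^2,d\tilde\mu)$ and to identify the operator. For the forward direction, if $\mu$ is Carleson one applies \eqref{e:25} to functions $g$ that are constant on each $Q_\alpha$, which are exactly the $g$ with $\tilde g=g|_{Q_\alpha}$, obtaining \eqref{e:38} for every nonnegative $\tilde g$. Conversely, if \eqref{e:38} holds for all such $\tilde g$, then for a general nonnegative $g\in L^2(d\mu)$ Jensen's inequality gives $\sum_\alpha\int_{Q_\alpha}g^2\,d\mu\ge\sum_\alpha\tilde g(\alpha)^2\tilde\mu(\alpha)=\|\tilde g\|^2_{L^2(d\tilde\mu)}$, which dominates the right side of \eqref{e:38} and hence of \eqref{e:25}, so $\mu$ is Carleson with comparable constant. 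Finally, because $\tilde\mu$ is supported on $T^2$ one has $(\mathbb{I}^*_{\tilde\mu}\tilde g)(\beta)=\sum_{\alpha\le\beta}\tilde g(\alpha)\tilde\mu(\alpha)$, so \eqref{e:38} is precisely the boundedness of $\mathbb{I}^*_{\tilde\mu}:L^2(T^2,d\tilde\mu)\to\ell^2(T^2)$; the adjoint computation $\langle\mathbb{I}^*_\nu\varphi,f\rangle_{\ell^2(T^2)}=\langle\varphi,\mathbb{I}f\rangle_{L^2(\overline{T}^2,d\nu)}$ established just above then yields $\|\mathbb{I}\|_{\ell^2(T^2)\to L^2(d\tilde\mu)}=\|\mathbb{I}^*_{\tilde\mu}\|$, which is the trace inequality \eqref{e:42} with best constant comparable to the Carleson constant of $\mu$.

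The step needing the most care is the bookkeeping around the auxiliary graph $\mathfrak{G}$: one must verify that replacing $\mathcal{S}_{\mathfrak{G}}$ by $\mathcal{S}$ (equivalently $\mathcal{P}_{\mathfrak{G}}$ by $\mathcal{P}$) in \emph{both} tree coordinates at once costs only a universal factor — this is where \eqref{e:331} and $|N(\alpha)|\le 5$ do the work — and that the two reductions (disc $\to$ box averages via Lemma~\ref{l:A.0.s}, and the forward/backward passage through Jensen) are honest two-sided equivalences, so that the Carleson constant of $\mu$ and the optimal constant in \eqref{e:42} really do come out comparable rather than merely one-sidedly bounded.
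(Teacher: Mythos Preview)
Your proposal is correct and follows essentially the same route as the paper: the argument leading up to Proposition~\ref{p:71} in Section~\ref{SS:2.4} is exactly the duality reduction to \eqref{e:25}, Whitney decomposition plus Lemma~\ref{l:A.0.s}, the Fubini identity turning the quadratic form into $\sum_\gamma\bigl(\sum_{\alpha\in\mathcal{S}_{\mathfrak{G}^2}(\gamma)}\sigma(\alpha)\bigr)^2$, the comparison of $\mathfrak{G}^2$- and tree-successor sets via \eqref{e:331}, the forward/backward passage through constant-on-boxes functions and Jensen, and finally the identification with $\mathbb{I}^*_{\tilde\mu}$ and its adjoint $\mathbb{I}$. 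The care points you flag (two-sided comparability through the $\mathfrak{G}$ bookkeeping, and that Jensen only enters one-sidedly while the other direction uses box-constant test functions) are precisely the ones the paper handles.
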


\subsection{Proof of Theorem \ref{intromain}: Maz'ya approach}\label{SS:2.6}
Here we show that trace measures for the bi-parameter Hardy operator admit a characterization via a discrete subcapacitary condition, as in Theorem \ref{traceinequality}. Then we translate this condition back to the continuous world, obtaining \eqref{introcap}.

Let $\nu\geq0$ be a Borel measure on $\overline{T}^2$ (note that now it might have non-zero mass on $\partial T^2$). We call it subcapacitary, if for any finite collection $\{\alpha^j\}_{j=1}^N\subset T^2$ one has
\[
\nu\left(\bigcup_{j=1}^{N}\mathcal{S}(\alpha^j)\right) \leq C \capp \left(\bigcup_{j=1}^{N}\mathcal{S}(\alpha^j)\right)
\]
for some constant $C>0$ the depends only on $\nu$ -- the smallest such constant we denote by $C_{\nu}$).

Assume now that $\nu$ is a trace measure for the Hardy operator,
\[
\int_{\overline{T}^2}(\mathbb{I}f)^2\,d\nu \lesssim \sum_{\alpha\in T^2}f^2(\alpha)
\]
for any $f:T^2\mapsto\mathbb{R}_+$. Given a Borel set $E\subset \overline{T}^2$ consider the family $$\Omega_E = \{f\in \ell^2(T^2): f\geq0,\; \mathbb{I}f\geq1\;\textup{on}\;E\}$$ of $E$-admissible functions. Then for any $f\in \Omega_E$ one has
\[
\nu(E) = \int_{\overline{T}^2}\chi_E\,d\nu \leq \int_{\overline{T}^2}(\mathbb{I}f)^2\,d\nu \lesssim \|f\|^2_{\ell^2(T^2)}.
\]
Taking infimum over $\Omega_E$ we immediately get $\nu(E) \lesssim \capp E$.\par
The other direction is more involved, and the argument follows the route pioneered by Maz'ya. Assume that $\nu\geq0$ is a subcapacitary measure on $\overline{T}^2$ and that $f\in\ell^2(T^2),\,f\geq0$. By a distribution function argument 
\[
\int_{\overline{T}^2}f^2\,d\nu \approx \sum_{k\in\mathbb{Z}}2^{2k}\nu\left\{\alpha\in\overline{T}^2: \mathbb{I}f > 2^k\right\}.
\]
Since $f\geq0$, we have that if $(\mathbb{I}f)(\alpha) > 2^k$, then $(\mathbb{I}f)(\beta)> 2^k$ for any $\beta\leq\alpha$. Therefore for any $k\in\mathbb{Z}$ there exists a countable family $\{\alpha^j_k\}_{j=0}^{\infty}\subset T^2$ such that
\[
\left\{\alpha\in\overline{T}^2: \mathbb{I}f > 2^k\right\} = \bigcup_{j=0}^{\infty}\mathcal{S}(\alpha^j_k),
\]
with $(\mathbb{I}f)(\alpha^j_k) > 2^k$. It follows that
\[
\nu\left\{\alpha\in\overline{T}^2: \mathbb{I}f > 2^k\right\} = \lim_{N\rightarrow\infty}\nu\left(\bigcup_{j=0}^{N}\mathcal{S}(\alpha^j_k)\right) \lesssim\lim_{N\rightarrow\infty}\capp\left(\bigcup_{j=0}^{N}\mathcal{S}(\alpha^j_k)\right) = \capp\left(\bigcup_{j=0}^{\infty}\mathcal{S}(\alpha^j_k)\right).
\]
Now assume for a moment that the following inequality holds (see Theorem \ref{capacitarystrong} and its proof in Section \ref{S:5}),
\[
\sum_{k\in\mathbb{Z}}2^{2k}\capp\{\mathbb{I}f \geq 2^k\} \leq C\|f\|^2_{\ell^2(T^2)}
\]
for some absolute constant $C>0$. Then we immediately have
\[
\sum_{k\in\mathbb{Z}}2^{2k}\nu\left\{\alpha\in\overline{T}^2: \mathbb{I}f > 2^k\right\} \leq  \sum_{k\in\mathbb{Z}}2^{2k}\capp\left\{\alpha\in\overline{T}^2: \mathbb{I}f \geq 2^k\right\} \leq C\|f\|^2_{\ell(T^2)},
\]
for any $f\geq0$ on $T^2$. Therefore $\nu$ is a trace measure for Hardy inequality. Theorem \ref{traceinequality} is proven.

All that remains to finish the proof of Theorem \ref{intromain} (for measures with zero mass on $\partial\mathbb{D}^2$, and still assuming the Strong Capacitary Inequality) is to go back to the bidisc. We start by defining a continuous version of capacity that is convenient for our purposes.
 The Riesz-Bessel kernel of order $(\frac12,\frac12)$ on the torus $(\partial \mathbb{D})^2 $ is
\[
b_{(\frac12,\frac12)}(z,\zeta) = |\theta_1-\eta_1|^{-\frac12}|\theta_2-\eta_2|^{-\frac12},\quad z = (e^{i\theta_1},e^{i\theta_2}),\,\zeta = (e^{i\eta_1},e^{i\eta_2})\in (\partial\mathbb{D})^2,
\]
where the difference $\theta_i-\eta_i\in [-\pi,\pi)$ is taken modulo $2\pi$. The kernel extends to a convolution operator on $(\partial\mathbb{D})^2$ acting on Borel measures supported there,
\[
(B_{(\frac12,\frac12)}\mu)(z) = \int_{(\partial\mathbb{D})^2}b_{(\frac12,\frac12)}(z,\zeta)\,d\mu(\zeta).
\]
Let $E\subset (\partial\mathbb{D})^2$ be a closed set. The $(\frac12,\frac12)$-Bessel capacity of $E$ is
\[
\capp_{(\frac12,\frac12)}(E) = \inf\{\|h\|^2_{L^2((\partial\mathbb{D})^2,dm)}: \;h\geq0\;\textup{and}\; B_{(\frac12,\frac12)}h \geq 1\;\textup{on}\; E\},
\]
and it is realized by an equilibrium measure $\mu_E$:
\[
\capp_{(\frac12,\frac12)}(E) = \mathcal{E}_{(\frac12,\frac12)}[\mu_E] := \int_{(\partial\mathbb{D})^2}\left((B_{(\frac12,\frac12)}\mu_E)(z)\right)^2\,dm(z),
\]
where $m$ is normalized area measure on the torus $(\partial\mathbb{D})^2$.

Let $\{J_k\}_{k=0}^N$ be a finite collection of dyadic rectangles on $(\partial\mathbb{D})^2$, i.e. $J_k = J^1_k\times J^2_k$, where $J^i_k$ is a dyadic interval in $\partial\mathbb{D}$. For any such collection there exists a unique sequence $\{\alpha_k\}_{k=0}^N\subset T^2$ such that $J_k = S_{\alpha_k}\cap(\partial\mathbb{D})^2$ (here $S_{\alpha_k}$ is the Carleson box corresponding to $\alpha_k$), and vice versa, any finite sequence $\{\alpha_k\}_{k=0}^{N}$ produces a family $\{J_k\}_{k=0}^N$ of dyadic rectangles. A standard argument shows that $(\frac12,\frac12)$-Bessel capacity and discrete bilogarithmic capacity are comparable. For the proof, see Section \ref{S:A.0}.
\begin{lemma}
For any finite collection $\{\alpha_k\}_{k=0}^N$ one has
\begin{equation}\notag
\capp_{(\frac12,\frac12)}\left(\bigcup_{k=0}^N J_k\right) \approx \capp\left(\bigcup_{k=0}^N\mathcal{S}(\alpha_k)\right),
\end{equation}
where $\alpha_k$ and $J_k$ are related as above.
\end{lemma}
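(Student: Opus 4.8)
\medskip
\noindent The plan is to reduce both sides to the same finite extremal problem. On the bitree, $\mathbb{I}f$ is order-monotone: for $f\ge0$ and $\beta\le\alpha$ one has $\mathbb{I}f(\beta)\ge\mathbb{I}f(\alpha)$, so $\mathbb{I}f\ge1$ on $\mathcal{S}(\alpha_k)$ if and only if $\mathbb{I}f(\alpha_k)\ge1$; hence $\capp\big(\bigcup_{k}\mathcal{S}(\alpha_k)\big)=\capp(\{\alpha_0,\dots,\alpha_N\})$. Dualizing this finite-dimensional minimization, and using $\mathbb{I}^{*}\delta_{\alpha}=\chi_{\mathcal{P}(\alpha)}$ together with $|\mathcal{P}(\alpha_j)\cap\mathcal{P}(\alpha_k)|=d_{T^2}(\alpha_j\wedge\alpha_k)=d_T(\alpha_{j,x}\wedge\alpha_{k,x})\,d_T(\alpha_{j,y}\wedge\alpha_{k,y})$, one obtains
\[
\capp\Big(\bigcup_{k}\mathcal{S}(\alpha_k)\Big)=\sup_{c_0,\dots,c_N\ge0}\ \frac{\big(\sum_{k}c_k\big)^2}{\sum_{j,k}c_jc_k\,d_T(\alpha_{j,x}\wedge\alpha_{k,x})\,d_T(\alpha_{j,y}\wedge\alpha_{k,y})}.
\]

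On the torus side I would first pass from $\capp_{(\frac12,\frac12)}$ to a dyadic model. From the elementary estimate $|\theta-\eta|^{-1/2}\approx\sum_{I}|I|^{-1/2}\chi_I(\theta)\chi_I(\eta)$, the sum over dyadic arcs, taking products in the two variables shows that $b_{(\frac12,\frac12)}$ is comparable to the dyadic product kernel $\sum_{R=I\times J}(|I||J|)^{-1/2}\chi_R(z)\chi_R(\zeta)$, which encodes the tensor-Haar (Littlewood--Paley) square-function characterization of the mixed-order Bessel potential space $B_{(\frac12,\frac12)}(L^2)$ on the torus. The discrepancy between a single dyadic grid and arbitrary arcs of $\partial\mathbb{D}$ is absorbed exactly as in Lemma~\ref{l:A.0.s}, i.e.\ by passing to the auxiliary graph $\mathfrak{G}$ and using $d_{\mathfrak{G}}\approx d_T$ together with the fact that the $\mathfrak{G}$-successor sets are comparable on average to the ordinary ones. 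By \cite[Ch.~2]{ah1996} this yields $\capp_{(\frac12,\frac12)}\big(\bigcup_k J_k\big)\approx\inf\{\|f\|_{\ell^2}^{2}:\ f\ge0\text{ on dyadic rectangles},\ \sum_{R\ni\zeta}f(R)\ge1\ \text{for all }\zeta\in\bigcup_k J_k\}$. Identifying dyadic rectangles with $\mathcal{V}(T^2)$ and the torus with $(\partial T)^{2}$, every dyadic rectangle containing a distinguished boundary point $\zeta$ is an ancestor of $\zeta$, so $\sum_{R\ni\zeta}f(R)=\mathbb{I}f(\zeta)$, while $\bigcup_k J_k$ corresponds to $\bigcup_k\big(\mathcal{S}(\alpha_k)\cap(\partial T)^2\big)$; thus the last infimum is $\capp\big(\bigcup_k(\mathcal{S}(\alpha_k)\cap(\partial T)^2)\big)$. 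It remains to see that this is comparable to $\capp\big(\bigcup_k\mathcal{S}(\alpha_k)\big)$: the inequality ``$\le$'' is monotonicity of capacity, and for ``$\gtrsim$'' one tests the smaller set against measures $\sum_k c_k\hat\omega_k$, where $\hat\omega_k$ is the normalized equilibrium measure of the single distinguished box $\mathcal{S}(\alpha_k)\cap(\partial T)^2$. Each $\hat\omega_k$ is the tensor product of the two one-parameter box equilibria --- here the one-parameter maximum principle on $T$ is available --- and a short computation gives $\mathcal{E}[\hat\omega_j,\hat\omega_k]\approx d_{T^2}(\alpha_j\wedge\alpha_k)=\mathcal{E}[\delta_{\alpha_j},\delta_{\alpha_k}]$, so the quadratic form obtained this way is comparable to the one displayed above and the two capacities pinch.

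The only genuinely non-formal ingredients are the passage between the continuum and the tree model --- the decomposition of the product Bessel kernel together with the $\mathfrak{G}$-correction that makes it compatible with arbitrary arcs of $\partial\mathbb{D}$, equivalently the tensor-Haar characterization of the mixed-order potential space --- and the identification of the dyadic product capacity with $\capp$. In one parameter all of this is classical (see \cite{ah1996}, and for trees \cite{arsw2014}); the two-parameter versions follow by tensoring, since the kernel, the energy form, and the box equilibrium measures all factor through the product structure. I expect this factorization bookkeeping, and the check that no maximum principle is needed for the particular estimate $\mathcal{E}[\hat\omega_j,\hat\omega_k]\approx d_{T^2}(\alpha_j\wedge\alpha_k)$, to be the fiddliest part, though none of it is deep.
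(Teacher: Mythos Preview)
Your outline is correct, and the overall architecture---reduce both capacities to a quadratic form in the $d_{T^2}(\alpha_j\wedge\alpha_k)$ and compare---matches the paper's. The two substantive steps are handled differently, however, and it is worth noting what each choice buys.

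For the reduction $\capp\big(\bigcup_k\mathcal{S}(\alpha_k)\big)\approx\capp\big(\bigcup_k\partial\mathcal{S}(\alpha_k)\big)$, the paper invokes a general ``push to the boundary'' principle (Theorem~\ref{t:th.A.3} and Corollary~\ref{c:A.3}): any finite-energy measure on $\overline{T}^2$ can be replaced by a measure on $(\partial T)^2$ with pointwise comparable potential. Your alternative---testing the boundary capacity against $\sum_k c_k\hat\omega_k$ with $\hat\omega_k$ the tensor of the two one-parameter box equilibria, and computing $\mathcal{E}[\hat\omega_j,\hat\omega_k]\approx d_{T^2}(\alpha_j\wedge\alpha_k)$---is correct, more explicit, and tailored to finite unions of boxes; the paper's route is less hands-on but applies to arbitrary sets.

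For the continuous/discrete comparison, the paper (Lemma~\ref{l:A.9}) discretizes the Bessel \emph{potential} $B_{(\frac12,\frac12)}\mu$ and then needs a Wolff-type argument (Proposition~\ref{p:A.11}) to control the cross terms when squaring and integrating. Your kernel route can in fact avoid this: since $\mathcal{E}_{(\frac12,\frac12)}[\mu]=\iint K\,d\mu\,d\mu$ with $K=b_{(\frac12,\frac12)}*b_{(\frac12,\frac12)}$ and $b$ is a tensor, one gets $K(\zeta,\zeta')\approx\prod_{i}\log\tfrac{1}{|\zeta_i-\zeta_i'|}$, which by Lemma~\ref{l:A.0.s} is $\approx d_{\mathfrak{G}^2}$, and the passage $d_{\mathfrak{G}^2}\to d_{T^2}$ at the energy level is the bounded-overlap argument of Section~\ref{SS:2.4}. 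This is the ``tensoring'' you refer to, and it is arguably cleaner than the paper's square-function computation. One caution: your phrase ``tensor-Haar Littlewood--Paley characterization'' is a red herring if read literally. The primal map $h\mapsto f$, $f(R)=|R|^{-1/2}\int_R h$, does \emph{not} compare $\|h\|_{L^2}$ with $\|f\|_{\ell^2}$, so you cannot match the admissibility conditions directly. The correct implementation is exactly the dual/energy-kernel comparison just described; make that explicit rather than appealing to a square-function identity for $h$.
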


Theorem \ref{intromain} follows immediately. Indeed, we have shown that $\mu\geq0$ on $\mathbb{D}^2$ is Carleson if and only if its discrete image $\tilde{\mu}$ is a trace measure for Hardy operator, and that this happens if and only if $\tilde{\mu}$ is subcapacitary in $\overline{T}^2$. Since $\tilde{\mu}(\cup_{k=0}^N\mathcal{S}(\alpha_k)) = \mu(\cup_{k=0}^NS(J_k))$, \eqref{introcap} follows, and we are done.

\subsection{From the bidisc to the bitree: general case}\label{SS:2.5}
Up until now we assumed the measure $\mu$ to be supported inside the bidisc. Here we get rid of this restriction and prove Theorem \ref{intromain} in full generality, still assuming the Strong Capacitary Inequality. We also show that $C[\mu]$ is comparable to $\sup_{f}\|\vvar f\|^2_{L^2(\overline{\mathbb{D}}^2,\,d\mu)}$, as promised in Theorem \ref{dyadization}. To do so we first need to define the discrete image of a measure with non-zero mass on the boundary $\partial\mathbb{D}^2$. We consider the case of the
distinguished boundary $(\partial\mathbb{D})^2$ first, which is more interesting and contains the ingredients for the remaining part as well. The problem is that the boundaries of the complex disc and of the tree, and the measures supported on them, can not be identified without some care.

We introduce a map $\Lambda : (\partial T)^2\rightarrow (\partial\mathbb{D})^2$, $\Lambda(\alpha) = (\Lambda_0(\alpha_x),\Lambda_0(\alpha_y))$, where $\Lambda_0:\partial T\rightarrow\partial\mathbb{D}$ maps a geodesic $\omega = \{o=\omega^0,\omega^1,\dots\}$ to the point $\Lambda_0(\omega) = \bigcap_{n=0}^{\infty}S_{\omega^n} \in\partial\mathbb{D}$.
We will use $\Lambda$
to move measures back and forth from $(\partial T)^2$ to $(\partial
\mathbb{D})^2$, in such a way corresponding measures have comparable mass and
energy.

Consider on $\overline{T}$ the distance
\[
\delta_0 (\zeta, \xi) \assign 2^{-d_T(\zeta\wedge\xi)} - \frac{1}{2}\left(2^{-d_T(\zeta)} + 2^{-d_T(\xi)}\right)
\]
It is clear that  $\Lambda_0$ is a Lipschitz map with respect to the distance $\delta_0$ on $\partial T$ and Euclidean distance on $\partial \mathbb{D}$, and that $\Lambda_0$ is injective but for the set of the dyadic values $\frac{2 \pi j}{2^n}$ with $1 \leqslant j \leqslant 2^n$, which have two preimages.

Then $\Lambda$ is Lipschitz with respect to the distance $\delta$ defined in \eqref{e:31} on $(\partial T)^2$ and the usual distance of the torus $(\partial \mathbb{D})^2$. Given a positive, Borel measure $\nu$ on $(\partial T)^2$ , let $\Lambda_{\ast} \nu (F) \assign \nu (\Lambda^{- 1} (F))$ be its natural push-forward. We need to define an (unnatural) pull-back.
Given a positive, Borel measure $\mu$ on $(\partial \mathbb{D})^2$, define
$\Lambda^{\ast} \mu$ be the measure assigning to a Borel subset $E \subseteq
(\partial T)^2$ the number
\[ \Lambda^{\ast} \mu (E) = \int_{(\partial \mathbb{D})^2} \frac{\sharp
   (\Lambda^{- 1} (\{ z \} ) \cap E)}{\sharp (\Lambda^{- 1} (\{ z \} ))} d \mu
   (z) \nocomma, \]
that is,
\[ \int_{(\partial T)^2} \varphi (x) d \Lambda^{\ast} \mu (x) =
   \int_{(\partial \mathbb{D})^2} \frac{\sum_{x \in \Lambda^{- 1} (\{ z \})}
   \varphi (x)}{\sharp (\Lambda^{- 1} (\{ z \} ))} d \mu (z) . \]
\[ \  \]
If it is well-defined, then $\Lambda^{\ast} \mu$ defines a countably additive,
positive set function. But we have to show, first, that the function $z
\mapsto \frac{\sharp (\Lambda^{- 1} (\{ z \} ) \cap E)}{\sharp (\Lambda^{- 1}
(\{ z \} ))} = \varphi_E (z)$ is measurable on $(\partial \mathbb{D})^2$ (this is a simpler but slightly more technical version of the argument in \cite{arsw2014}).

For each point $\alpha$ in $T$ we denote its children by $\alpha_+$ and $\alpha_-$. We
can split $\partial T = A_+ \cup A_- \cup A$ into the disjoint union of three
Borel measurable sets: $A_+$ is the countable set of the geodesics $\omega =
(\omega^n)_{n = 0}^{\infty}$ such that $\omega^{n + 1} = \omega^n_+$
definitely; $A_-$ is defined similarly; $A = \partial T \setminus (A_+ \cup
A_{- .})$. The map $\Lambda_0$ is injective on each set. Correspondingly, we
split $(\partial T)^2$ into nine disjoint measurable sets $B_1, \ldots, B_9$ ,
on each of which $\Lambda$ is injective.

The map $z \mapsto \sharp (\Lambda^{- 1} (\{ z \} ))$ takes on the value $1$
on $\Lambda (A \times A)$, it takes on the value $2$ on $\Lambda (A_{\pm}
\times A \cup A \times A_{\pm})$, it takes on the value $4$ on $\Lambda
(A_{\pm} \times A_{\pm} \cup A_{\mp} \times A_{\pm})$; hence, it is Borel
measurable on $(\partial \mathbb{D})^2$. Similarly, the map $z \mapsto \sharp
(\Lambda^{- 1} (\{ z \} ) \cap E)$ takes on the value $1$ on $\Lambda (A
\times A \cap E)$, etcetera; hence it is Borel measurable as well. Thus,
$\varphi_E$ is measurable, as desired.\par

Next we consider the measures on the rest of the bidisc. First we extend the map $\Lambda$ on $\overline{T}^2$ by letting $\Lambda(\alpha_x,\alpha_y) = \Lambda_0(\alpha_x)\times\Lambda_0(\alpha_y)$, where  $\Lambda_0(\alpha_x) := Q_{\alpha_x},\; \Lambda_0(\alpha_y) := Q_{\alpha_y}$, if $\alpha = (\alpha_x,\alpha_y)\in T^2$. For a point $(\alpha_x,\omega_y)$ on the mixed boundary $T\times\partial T$ we set $\Lambda((\alpha_x,\omega_y)) := Q_{\alpha_x}\times\{\Lambda_0(\omega_y)\}$, and we do the same for the other part of the boundary.

Assume $\mu$ to be a positive Borel measure on $\mathbb{D}\times\partial\mathbb{D}$ and let $\alpha_x\in T_x,$ and $E_y$ be a Borel subset of $T_y$. We define the pull-back to be 
\[
(\Lambda^*\mu)(\{\alpha_x\}\times E_y) := \int_{Q_{\alpha_x}}\int_{E_y}\frac{\sharp\{\Lambda_0^{-1}(\{z_2\})\cap E_y\}}{\sharp\{\Lambda_0^{-1}(\{z_2\})\}}\,d\mu(z_1,z_2),
\]
the integrand being measurable for the same reasons as above.\\
Any set $E \subset T\times\partial T$ is a disjoint countable union of the product sets, i.e. there exist families $\{\alpha^j_x\}, E_y^j,\; j=0,\dots$, such that 
\[
E = \bigvee_{j=0}^{\infty}\left(\{\alpha_x^j\}\times E^j_y\right).
\]
Hence $\Lambda^*\mu$ admits a unique extension to Borel sets on $T\times\partial T$. The measures on $\partial\mathbb{D}\times\mathbb{D}$ are dealt with the same way.

Finally, for $\alpha\in T^2$ we put
\[
(\Lambda^*\mu)(\alpha) := \mu(\Lambda(\alpha)) =  \mu(Q_{\alpha}).
\]
We also need the one-dimensional version of the pull-back. Consider a Borel measure $\mu\geq0$ on the closed unit disc $\overline{\mathbb{D}}$, we define its pull-back to the tree $T$ to be
\[
(\Lambda^*_0\mu)(\alpha) = \mu(Q_{\alpha}),\; \Lambda^{\ast}_0 \mu (E) = \int_{\partial\mathbb{D}} \frac{\sharp
   (\Lambda_0^{- 1} (\{ z \} ) \cap E)}{\sharp (\Lambda_0^{- 1} (\{ z \} ))} d \mu
   (z) \nocomma,
\]
for a set $E\subset \partial T$ (it is much simpler in one dimension, since we do not need to take care of the mixed parts of the boundary).

There is no natural way to define a push-forward $\Lambda_*$ of a measure on $T^2$ (or on $T\times\partial T$ for that matter), since a point mass on $T^2$ (a positive number attached to a point $\alpha\in T^2$) can be moved to $Q_{\alpha}$ in several different manners (for instance it could be spread uniformly over $Q_{\alpha}$, or considered as a point mass, concentrated at the centerpoint of $Q_{\alpha}$). On the other hand, in what follows we do not need to use a push-forward of such a measure anyway.

Now we can prove the following Theorem, which contains one half of Theorem \ref{dyadization}.
\begin{theorem}\label{t:equivthingies}
  Let $\mu$ be a Borel measure on $\overline{\mathbb{D}}^2$, then
  \[ | | \mathbb{I} | |_{\mathcal{B} (\ell^2 (T^2), L^2 (\overline{T}^2,\,d\Lambda^{\ast} \mu))}^2
     \approx \| \tmop{Id} \|^2_{\mathcal{B} (\mathcal{D} (\mathbb{D}^2), L^2
     (\overline{\mathbb{D}}^2,\,d\mu))} . \]
  Moreover, $\| \tmop{Id} \|^2_{\mathcal{B} (\mathcal{D} (\mathbb{D}^2), L^2
  (\overline{\mathbb{D}}^2,\,d\mu))}$ is also comparable with the best constant $K^2$ in the stronger
  inequality
  \[ \int_{\overline{\mathbb{D}}^2} (\vvar f)^2 d \mu \leqslant K^2  \| f
     \|^2_{\mathcal{D} (\mathbb{D}^2)}, \]
  where $\vvar$ is the {\tmem{radial variation}} of $f$.
\end{theorem}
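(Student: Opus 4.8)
The plan is to reduce the continuous inequality to the discrete Hardy inequality on the bitree, already treated in Proposition~\ref{p:71} and Theorem~\ref{traceinequality}, by carefully tracking how the pull-back $\Lambda^\ast$ interacts with the decomposition $\overline{\mathbb{D}}^2 = \mathbb{D}^2 \cup (\mathbb{D}\times\partial\mathbb{D}) \cup (\partial\mathbb{D}\times\mathbb{D}) \cup (\partial\mathbb{D})^2$. The interior part was handled in Section~\ref{SS:2.4}, so the new content is the three boundary pieces, the distinguished boundary $(\partial\mathbb{D})^2$ being the essential case. First I would split $\mu = \mu_{\mathrm{int}} + \mu_{\mathrm{mix}} + \mu_{\mathrm{dist}}$ and observe that, since all the quantities involved (the embedding constant, the Hardy operator norm, the radial-variation constant) are monotone and subadditive in $\mu$ up to universal factors, it suffices to prove the two-sided estimate separately on each piece; the comparability for $\mu_{\mathrm{int}}$ is Theorem~\ref{t:equivthingies} restricted to the interior (i.e.\ Section~\ref{SS:2.4}), so the task is the boundary pieces.

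For the distinguished boundary, the key point is that a function $f \in \mathcal{D}(\mathbb{D}^2)$ restricted to a point $(\zeta,\xi) \in (\partial\mathbb{D})^2$ is controlled, up to the value at the origin, by the double radial variation $\vvar_{12}f(\zeta,\xi) = \int_0^1\int_0^1 |\partial_{zw}f(r\zeta,s\xi)|\,dr\,ds$, and more precisely that summing $|\partial_{zw}f|$ over the Whitney boxes $Q_{\alpha_x}\times Q_{\alpha_y}$ along the predecessor chain $\mathcal{P}(\omega_x)\times\mathcal{P}(\omega_y)$ for $\omega=(\omega_x,\omega_y)\in(\partial T)^2$ with $\Lambda(\omega)=(\zeta,\xi)$ dominates $|f(\zeta,\xi)|$ pointwise; this is exactly $\mathbb{I}$ applied to the sequence $\varphi(\alpha)\approx \int_{Q_\alpha}|\partial_{zw}f|\,dA$. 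Conversely, $\|\varphi\|_{\ell^2(T^2)}^2 \lesssim \|f\|_{\mathcal{D}(\mathbb{D}^2)}^2$ by the product structure of the Dirichlet norm together with the Cauchy--Schwarz / subharmonicity estimate $\int_{Q_\alpha}|\partial_{zw}f|\,dA \lesssim |Q_\alpha|^{1/2}\left(\int_{Q_\alpha^\sharp}|\partial_{zw}f|^2\,dA\right)^{1/2}$ on a slightly enlarged box $Q_\alpha^\sharp$ (here the graph $\mathfrak{G}$ of Section~\ref{SS:2.4} again absorbs the overlap and the geometric mismatch). Feeding $\varphi$ into the discrete Hardy inequality $\int_{\overline{T}^2}(\mathbb{I}\varphi)^2\,d(\Lambda^\ast\mu_{\mathrm{dist}}) \lesssim \|\varphi\|_{\ell^2(T^2)}^2$ then yields $\int_{(\partial\mathbb{D})^2}(\vvar_{12}f)^2\,d\mu_{\mathrm{dist}} \lesssim \|f\|_{\mathcal{D}(\mathbb{D}^2)}^2$, hence also the embedding bound since $|f|^2 \lesssim |f(0,\cdot)|^2 + |f(\cdot,0)|^2 + (\vvar_{12}f)^2 + (\text{lower-order variations})$ on the torus; the one-variable variations are controlled by the corresponding one-dimensional Carleson conditions, which are weaker than the bitree one. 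For the reverse direction, I would run the duality argument of Section~\ref{SS:2.2}: boundedness of $\mathrm{Id}$ forces \eqref{e:25} with $\mu$ replaced by $\Lambda_\ast\nu$ for the admissible test functions coming from equilibrium potentials, and pushing a near-extremal discrete $f$ for the Hardy problem forward through $\Lambda$ (using that $\Re K_z(w) \approx d_{\mathfrak{G}^2}(\alpha\wedge\beta)$ up to the boundary, by continuity of the estimate in Lemma~\ref{l:A.0.s}) gives $\|\mathbb{I}\|^2 \lesssim \|\mathrm{Id}\|^2$.

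The mixed-boundary pieces $\mathbb{D}\times\partial\mathbb{D}$ and $\partial\mathbb{D}\times\mathbb{D}$ are handled the same way, except that in one coordinate the Whitney-box chain is a genuine interior chain (a single predecessor set in $T$, no $\Lambda_0$ identification needed) and in the other it terminates at a boundary point; the relevant variation is the mixed one $\int_0^1|\partial_{zw}f(r\zeta,\xi_0)|\,dr$ at a fixed interior height $\xi_0$, uniformly in $\xi_0$, which is precisely what the supremum over $r_1,r_2<1$ in the product Dirichlet norm \eqref{e:21}, together with the definition of $\Lambda^\ast\mu$ on $T\times\partial T$ as the vertical integral of $\#(\Lambda_0^{-1}\cap E)/\#\Lambda_0^{-1}$, is designed to accommodate. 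The bookkeeping that the pull-back $\Lambda^\ast$ really does decompose as claimed on $E = \bigvee_j(\{\alpha_x^j\}\times E_y^j)$, and that it sends $\mu$-mass into $\ell^2(T^2)$-testable mass with comparable total, is the content of the measurability discussion already given before the statement, so it may be quoted.

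The main obstacle, I expect, is not any single estimate but the careful matching of the three definitions of $\Lambda^\ast$ (on $T^2$, on the mixed boundaries, on $(\partial T)^2$) with the three regimes of the radial variation, and in particular verifying that no mass is lost or double-counted at the ``corners'' where these regions meet — e.g.\ a point of $\mathbb{D}\times\partial\mathbb{D}$ approached along a sequence in $\mathbb{D}^2$. Because $\Lambda_0$ is two-to-one exactly on the dyadic points, and because a single boundary point of $(\partial\mathbb{D})^2$ can sit in the closure of up to $16$ Whitney boxes, the constants proliferate; the device that keeps everything finite is the uniform bound $|N(\alpha)|\le 5$ on $\mathfrak{G}$-neighbourhoods and its bitree version $|N(\alpha_x)|\cdot|N(\alpha_y)|\le 25$, exactly as in \eqref{e:331}. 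Granting that combinatorial control, each of the six inequalities (three pieces, two directions) reduces to the already-established discrete trace inequality plus standard one-variable Carleson estimates, and the theorem follows by summing.
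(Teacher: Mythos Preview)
Your forward direction --- discrete Hardy implies the radial-variation bound --- is essentially the paper's argument: the Whitney-box pointwise estimate via the mean value property and Cauchy--Schwarz, assembling $\varphi(\alpha)\approx H(\alpha)^{1/2}$, and feeding it into the discrete trace inequality. The treatment of the lower-order variations $\vvar_1,\vvar_2$ also follows the paper, though you should note that ``the one-dimensional Carleson condition is weaker than the bitree one'' is not automatic: the paper proves it by testing the adjoint Hardy inequality on $g=\chi_{\mathcal{S}_T(\alpha_1)\times\overline{T}}$ to extract the one-box condition~\eqref{iff} for the marginal $\mu_1$.

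The reverse direction is where your proposal diverges from the paper and where there is a genuine gap. You propose to run the duality argument of Section~\ref{SS:2.2} directly on a boundary measure and to ``push a near-extremal discrete $f$ forward through $\Lambda$''. Two problems. First, the duality identity \eqref{e:24}--\eqref{e:25} was derived for $\mu$ with no boundary mass; for $\mu$ on $(\partial\mathbb{D})^2$ the kernel $K_z(w)$ is singular on the diagonal and the embedding itself is only defined through the $\sup_{R<1}$ limit, so the adjoint computation does not go through as written. Second, there is no canonical way to push a function on $T^2$ to a holomorphic function in $\mathcal{D}(\mathbb{D}^2)$; this direction is simply not available. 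The paper avoids both issues with a dilation trick: one tests the Carleson condition against $f_R(z)=f(Rz)$, which amounts to replacing $\mu$ by $\mu_R(F)=\mu(R^{-1}F)$, a measure supported \emph{inside} $\mathbb{D}^2$. Proposition~\ref{p:71} and Theorem~\ref{traceinequality} then give that $\Lambda^\ast\mu_R$ is subcapacitary uniformly in $R$. Passing to the limit requires two further ingredients you do not mention: the smoothness property \eqref{e:A.42} (Lemma~\ref{l:A.8}), which guarantees a Carleson measure charges no dyadic slice so that $(\Lambda^\ast\mu)(\mathcal{S}(\alpha))=\mu(S_\alpha)$ exactly, and the grandparent capacity comparison (Lemma~\ref{l:A.50.5}), needed because $RS_\alpha\subset S_{p(\alpha)}$ rather than $S_\alpha$ itself.

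Structurally, the paper does not split $\mu$ into interior/mixed/distinguished pieces at all; it runs the cyclic implication $(A)\Rightarrow(B)\Rightarrow(C)\Rightarrow(A)$ on the full measure. Your splitting is harmless for the forward direction but does not help with the reverse one, which is where the real content lies.
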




Let $f$ be holomorphic in $\mathbb{D}^2$. The {\tmem{radial variation}} \ of $f$ at
$(\zeta_1,\zeta_2) \in \overline{\mathbb{D}}^2$ is
\begin{equation}\label{e:575}
\begin{split}
&\vvar(f)(\zeta_1,\zeta_2) = \vvar_{12}(f)(\zeta_1,\zeta_2) + \vvar_1(f)(\zeta_1) + \vvar_2(f)(\zeta_2) + |f(0,0)| =\\
&\int_0^{\zeta_1}\int_0^{\zeta_2}|\partial_{z_1,z_2}f(z_1,z_2)||dz_1||dz_2| + \int_0^{\zeta_1}|\partial_{z_1}f(z_1,0)||dz_1| + \int_0^{\zeta_2}|\partial_{z_2}f(0,z_2)||dz_2| + |f(0,0)|.
\end{split}
\end{equation}

\begin{proof}
We will prove the chain of implications $(A) \Rightarrow (B) \Rightarrow (C)
\Rightarrow (A)$:
\begin{itemizedot}
  \item (A) $\int_{\overline{\mathbb{D}}^2} \vvar (f)^2 d \mu \leqslant K_0^2  \|
  f \|^2_{\mathcal{D} (\mathbb{D}^2)}$.
  
  \item (B) $\int_{\overline{\mathbb{D}}^2} | f |^2 d \mu \leqslant K_1^2  \|
  f \|^2_{\mathcal{D} (\mathbb{D}^2)}$.
  
  \item (C) $\int_{\overline{T}^2} | \mathbb{I} \varphi |^2 d \Lambda^{\ast} \mu
  \leqslant K_2^2 \| \varphi \|_{\ell^2 (T^2)}^2$.
\end{itemizedot}
We note that (B) means 
\[ \sup_{R < 1} \int_{\overline{\mathbb{D}}^2} | f (R \zeta_1, R \zeta_2) |^2 d
   \mu (\zeta_1, \zeta_2) \leqslant K_1^2  \| f \|^2_{\mathcal{D} (\mathbb{D}^2)} .
\]

The implication $(A) \Rightarrow (B)$ is elementary: 
 the
inequality for the variation is a priori stronger than the inequality for $| f
|$. For $(\zeta_1,\zeta_2) \in \mathbb{D}^2$:
\begin{eqnarray*}
  | f (\zeta_1,\zeta_2) | & = & \left| \int_0^{\zeta_1} \partial_{z_1} f (z_1, \zeta_2) \,dz_1 +
  \int_0^{\zeta_2} \partial_{z_2} f (0, z_2) d z_2 + f (0, 0) \right|\\
  & = & \left| \int_0^{\zeta_1} \left( \int_0^{\zeta_2} \partial_{z_1z_2} f (z_1, z_2) \,dz_2
  + \partial_{z_1} f (z_1, 0) \right) d z_1 + \int_0^{\zeta_2} \partial_{z_2} f (0, z_2)\, dz_2 + f
  (0, 0) \right|\\
  & \leqslant & \vvar_{12} (f) (\zeta_1, \zeta_2) + \vvar_1 (f) (\zeta_1) + \vvar_2 (f) (\zeta_2) + |
  f (0, 0) | .
\end{eqnarray*}

For $0 < r < 1$ let $f_r (z_1, z_2) = f (r z_1, r z_2)$. If $\mu$
satisfies $(A)$, then it satisfies Carleson inequality for $f_r$ with constant independent of $r$, and we are done.\\

The proof of the implications $(B)\Rightarrow (C)$ and $(C)\Rightarrow (A)$ are more involved, and before proceeding we need an additional smoothness property of Carleson measures: if a measure $\mu\geq0$ on the closed bidisc is Carleson, then for any set $E\subset \overline{T}^2$ one has
\begin{equation}\label{e:A.42}
(\Lambda^*\mu)(E) = \mu(\Lambda(E)).
\end{equation}
It means essentially that Carleson measures have no singularities on coordinate slices of the torus (the dyadic grid $\{\partial\mathbb{D}\times\{2\pi j2^{-n}\}\},\; j,n\geq0$ has no mass), see Lemma \ref{l:A.8} for details.

We now prove $(B) \Rightarrow (C)$. Suppose $\mu$ satisfies

\[ \int_{\overline{\mathbb{D}}^2} | f (Rz_1,Rz_2) |^2 d \mu (z_1,z_2)
   = : \int_{\overline{\mathbb{D}}^2} | f (z_1, z_2) |^2 d \mu_R (z_1, z_2) \leqslant K_1^2  \| f \|^2_{\mathcal{D} (\mathbb{D}^2)} \]
with $K_1$ independent of $R < 1$. Here $\mu_R (F) \assign \mu \left(
\frac{1}{R} F \right),\; F\subset \mathbb{C}^2$, where we consider the measure $\mu$ as a Borel measure
on $\mathbb{C}^2$, supported on $\overline{\mathbb{D}}^2$. The measure
$\mu_R$ has support in $\mathbb{D}^2$ and $\mu_R (\partial \mathbb{D}^2) =
0$. By Theorems \ref{dyadization} (already proven above for measures inside the bidisc) and \ref{traceinequality} the measure $\nu_R := \Lambda^*\mu_R$ is subcapacitary on $\overline{T}^2$ for any $R\in (0,1)$. It is enough to show that this implies the subcapacitary property of $\nu := \Lambda^*\mu$ as well.

Consider an arbitrary point $\alpha = (\alpha_1,\alpha_2)\in T^2$. We recall that it uniquely corresponds to the Carleson box $S_{\alpha} = S_{\alpha_1}\times S_{\alpha_2}$ with $S_{\alpha_k} = \{\rho_ke^{is_k}:\; e^{is_k} \in J_k;\; r_k\leq\rho_k<1\},\;k=1,2$. Here $r_k = 1-2^{-d_T(\alpha_k)+1}$ and $J_k$ is a dyadic interval of generation $d_T(\alpha_k)$ on $\partial\mathbb{D}$ such that $J_k = \Lambda_0(\partial\mathcal{S}(\alpha_k))$. Denote by $p(\alpha)$ the 'grandparent' of $\alpha\in T^2$, $p(\alpha) = (p_0(\alpha_1),p_0(\alpha_2))$, where $p_0(\alpha_k)$ is the immediate parent of $\alpha_k$ in respective coordinate tree. We claim that for $R\geq \max(r_1,r_2)$ one has
\begin{equation}\label{e:140}
\nu(\mathcal{S}(\alpha)) \leq \nu_R(\mathcal{S}(p(\alpha))).
\end{equation}
Indeed, for those values of $R$ we immediately have $RS_{\alpha} \subset S_{p(\alpha)}$, since $R r_k \geq 1-2(1-r_k) = 1-2^{-d_T(p_0(\alpha_k))+1},\; k=1,2$. The smoothness property \eqref{e:A.42} implies
\begin{equation}\notag
\begin{split}
&\nu(\mathcal{S}(\alpha)) = (\Lambda^*\mu)(\mathcal{S}(\alpha)) = \mu(\Lambda(\mathcal{S}(\alpha))) = \mu(S_{\alpha}) = \mu_R(RS_{\alpha}) \leq \mu_R(S_{p(\alpha)}) =\\
&\mu_R(\Lambda(\mathcal{S}(p(\alpha)))) = \nu_R(\mathcal{S}(p(\alpha))),
\end{split}
\end{equation}
recalling $\Lambda(\mathcal{S}(\alpha)) = S_{\alpha}$.

Consider now any finite collection $\{\alpha^j\}_{j=1}^N$ of points in $T^2$. Taking $R$ to be greater than $\max(r^j_k),\; k=1,2,\; j=1,\dots, N$, we obtain
\[
\nu\left(\bigcup_{j=1}^N\mathcal{S}(\alpha^j)\right) \leq \nu_R\left(\bigcup_{j=1}^N\mathcal{S}(p(\alpha^j))\right).
\]
Since 
\[
\capp \left(\bigcup_{j=1}^N\mathcal{S}(\alpha^j)\right) \approx \capp \left(\bigcup_{j=1}^N\mathcal{S}(p(\alpha^j))\right),
\]
see Lemma \ref{l:A.50.5}, it follows that $\nu = \Lambda^*\mu$ is a subcapacitary measure with constant comparable to $K_1^2$, hence we have $(C)$.\par

Finally we show that $(C)\Rightarrow (A)$.
We start with a local estimate for pieces of the (main term of) radial variation. Given a point $\zeta =(\zeta_1,\zeta_2) \in \overline{\mathbb{D}}^2$ let $P(\zeta) := \{z = (r_1\zeta_1,r_2\zeta_2),\; 0\leq r_1, r_2\leq1\}$,
so that $\vvar_{12}(\zeta) = \int_{P(\zeta)}|\partial_{z_1z_2}f(z_1,z_2)|\,d|z_1|d|z_2|$. For $\zeta\in\overline{\mathbb{D}}^2$ and $\alpha\in T^2$ define
\begin{eqnarray*}
  W (\zeta,\alpha) & \assign & \int_0^{\zeta_1} \int_0^{\zeta_2}
  \chi_{Q_{\alpha} \cap P(\zeta)} (z_1, z_2)_{} | \partial_{z_1 z_2} f
  (z_1, z_2) |\,  | d z_1 |  | d z_2 |\\
  & = & \int_0^1\int_0^1 \chi_{Q_{\alpha}} (p \zeta_1, q \zeta_2) |
  \partial_{z_1z_2} f (p \zeta_1, q \zeta_2) |\cdot|\zeta_1||\zeta_2|\,dp\,dq\\
  & \leqslant & \max \{| \partial_{z_1z_2} f (z_1, z_2) | : (z_1, z_2) \in Q_{\alpha}\} \cdot | Q_{\alpha} |^{1 / 2}\\
  & = & | \partial_{z_1,z_2} f (z_1(\alpha), z_2(\alpha)) | \cdot
  | Q_{\alpha} |^{1 / 2}\\
  &  & \tmop{for} \tmop{some\; point}\; z(\alpha) = (z_1(\alpha), z_2(\alpha))\;
  \tmop{in}\; Q_{\alpha}\\
  & = & \frac{| Q_{\alpha} |^{1 / 2}}{| D (z_1(\alpha), r)
  \times D (z_2(\alpha), s) |} \left| \int_{D (z_1(\alpha), r)
  \times D (z_2(\alpha), s)} \partial_{z_1 z_2} f (z_1, z_2) \tmop{dA} (z_1)
  \tmop{dA} (z_2) \right|\\
  &  & \tmop{by} \tmop{the} \tmop{mean\; value\; principle\; } \nocomma \tmop{with} r = \frac{1 - |
  z_1(\alpha) |}{2} \tmop{and} s = \frac{1 - | z_2(\alpha)|}{2}  \\
  & \lesssim & \left( \int_{D (z_1(\alpha), r) \times D (z_2(\alpha), s)} | \partial_{z_1 z_2} f (z_1, z_2) |^2 \nobracket d A (z_1) d A (z_2)
  \nobracket \right)^{1 / 2}\\
  &  & \tmop{by} \tmop{Jensen's\; inequality} .\\
  & = : & (H ((\alpha) \nobracket)^{1 / 2} .
\end{eqnarray*}
In other words, every piece of radial variation that passes through  $Q_{\alpha}$ can be estimated by  a single quantity $H(\alpha)$.
Summing over $\alpha$ along the 'route' $P(\zeta)$ we get
\begin{eqnarray*}
  \vvar_{12} (f) (\zeta)  = \sum_{Q_{\alpha}\cap P(\zeta)\neq\emptyset}W(\zeta,\alpha) &\lesssim & \sum_{Q_{\alpha} \cap P
  (\zeta) \neq \emptyset} (H (\alpha))^{1 / 2}\\
  & \leqslant & \sum_{\Lambda^{-1}(\zeta) = \beta} (\mathbb{I} H^{1 / 2}) (\beta) .
\end{eqnarray*}
To elaborate, if $\zeta\in \mathbb{D}^2$, then we can identify a Whitney box $Q_{\beta}\ni \zeta$ in a unique way, uniquely defining $\Lambda^{-1}(\zeta) = \beta$. If $\zeta$ lies on the boundary of the bidisc, then $\zeta$ has several (but boundedly many) $\Lambda$-preimages, and we just sum over all of them.
Integrating:
\begin{eqnarray*}
  \int_{\overline{\mathbb{D}}^2} \vvar_{12} (f) (\zeta)^2 d \mu (\zeta)
  & \lesssim & \int_{\overline{\mathbb{D}}^2} \sum_{\Lambda^{-1} (\zeta) = \beta} (\mathbb{I} H^{1 / 2} (\beta))^2 d \mu (\zeta)\\
  & \approx & \int_{\overline{\mathbb{D}}^2} \frac{\sum_{\Lambda^{-1} (\zeta) =
  \beta} (\mathbb I H^{1 / 2} (\beta))^2}{\sharp \{ \beta : \Lambda^{-1} (\zeta) =
  \beta \}} d \mu (\zeta)\\
  & = & \int_{\overline{T}^2} (\mathbb{I} H^{1 / 2} (\beta))^2 d \Lambda^{\ast} \mu (\beta)\\
  & \lesssim & \sum_{\alpha\in T^2} H (\alpha)\\
  &  & \tmop{because} \Lambda^{\ast} \mu \tmop{is} \tmop{a} \tmop{trace}
  \tmop{measure} \tmop{for} \;\textup{the Hardy operator on the bitree}\; \nocomma\\
  & = & \sum_{\alpha\in T^2} \int_{D (z_1(\alpha), r) \times D
  (z_2(\alpha), s)} | \partial_{z_1 z_2} f (z_1, z_2) |^2 \nobracket d A (z_1) d
  A (z_2) \nobracket\\
  & = & \int_{\mathbb{D}^2} \sharp \{ \alpha : z \in D
  (z_1(\alpha), r) \times D (z_2(\alpha), s) \} | \partial_{z_1
  z_2} f (z_1, z_2) |^2 \nobracket \tmop{dA} (z_1) d A (z_2) \nobracket\\
  & \approx & \int_{\mathbb{D}^2} | \partial_{z_1 z_2} f (z_1, z_2) |^2 \nobracket d
  A (z_1) d A (z_2) \nobracket .
\end{eqnarray*}
Consider now $\vvar_1 (f) (\zeta_1) = \int_0^{\zeta_1} | \partial_{z_1} f (z_1, 0) |  |dz_1|$,
i.e. the radial variation of the function $z_1 \mapsto f (z_1, 0)$. By the one
variable result (see \cite{arsw2014}), we have that if $\mu_1$ is a Carleson measure for
$\mathcal{D} (\mathbb{D})$ then
\begin{equation}\label{e:422}
  \int_{\overline{\mathbb{D}}} \vvar_1 (f) (\zeta_1)^2 d \mu_1 (\zeta_1) \lesssim
   \frac{1}{\pi} \int_{\mathbb{D}} | \partial_{z_1} f (z_1, 0) |^2 d A (z_1). 
\end{equation}
Using the mean value property and Jensen's inequality we get
\[ | \partial_{z_1} f (z_1, 0) |^2 = \left| \frac{1}{2 \pi}  \int_{\partial\mathbb{D}}
   \partial_{z_1} f (z_1, e^{is}) ds \right|^2 \le \frac{1}{2 \pi} 
   \int_{\partial\mathbb{D}} | \partial_{z_1} f (z_1, e^{is}) |^2 ds. \]
Therefore
\[ \int_{\overline{\mathbb{D}}} \vvar_1 (f) (\zeta_1)^2 d \mu_1 (\zeta_1) \lesssim \frac{1}{2
   \pi^2}  \int_{\mathbb{D}} \int_{\partial\mathbb{D}} | \partial_{z_1} f (z_1, e^{is}) |^2
   dsdxdy \lesssim \|f\|^2_{\mathcal{D} (\mathbb{D}^2)} . \]
We are left to show that if $\mu$ is a Carleson measure for
$\mathcal{D} (\mathbb{D}^2)$, then the measure $\mu_1$ defined on every subset $A \subseteq \overline{\mathbb{D}}$ by
\[ \int_A d \mu_1 (\zeta_1) \assign \int_A \int_{\overline{\mathbb{D}}} d \mu
   (\zeta_1,\zeta_2) \]
is a Carleson measure for $\mathcal{D} (\mathbb{D})$. Let us prove the
implication in the discrete setting. By previous results, see \cite{arsw2014}, it is enough to show that $\Lambda_0^{\ast}
\mu_1$ is a trace measure for the Hardy operator on the tree. In the one
dimensional case we know that this is equivalent to requiring that
\begin{equation}
  \label{iff} \sum_{\beta_1 \le \alpha_1} \left(\Lambda_0^{\ast} \mu_1\right) (\mathcal{S}_T (\beta_1))^2 \lesssim
  \left(\Lambda_0^{\ast} \mu_1\right) (\mathcal{S}_T(\alpha_1))
\end{equation}
for any $\alpha_1 \in T$. Now
\[ \left(\Lambda_0^{\ast} \mu_1\right) (\mathcal{S}_T(\alpha_1)) = \int_{\mathcal{S}_T(\alpha_1) \times \overline{T}} d
   \Lambda^{\ast} \mu = \left(\Lambda^{\ast} \mu\right) (\mathcal{S}_T(\alpha_1)\times \overline{T}) . \]
Let $g \assign \chi_{\mathcal{S}_T(\alpha_1)\times \overline{T} }$. Then, for any $\beta = (\beta_1,\beta_2) \in T^2$
\[ \mathbb{I}^{\ast}_{\Lambda^{\ast} \mu} g (\beta) = \int \int_{\mathcal{S}_{T^2}(\beta) \cap
   (\mathcal{S}_{T}(\alpha_1)\times \overline{T})} d \Lambda^{\ast} \mu = \left(\Lambda^{\ast} \mu\right) (\mathcal{S}_{T^2} (\beta)
   \cap (\mathcal{S}_T(\alpha_1)\times \overline{T})) . \]
Hence, by the inequality for the adjoint operator we obtain
\begin{align*}
  \Lambda^{\ast} \mu (\mathcal{S}_{T}(\alpha_1)\times \overline{T}) & = \|g\|^2_{L^2  (\Lambda^{\ast} \mu)}
  \gtrsim \|\mathbb{I}^{\ast}_{\Lambda^{\ast} \mu} g\|^2_{\ell^2(T^2)} = \sum_{\beta\in T^2} \Lambda^{\ast} \mu (\mathcal{S}_{T^2}(\beta) \cap \left(\mathcal{S}_{T}(\alpha_1)\times \overline{T}\right))^2\\
 &\textup{by restricting to}\,\beta_1\geq\alpha_1,\,\beta_2=o \\ & \ge \sum_{\beta_1\geq\alpha_1} \Lambda^{\ast} \mu (\mathcal{S}_{T}(\beta_1)\times \overline{T})^2 = \sum_{\beta_1 \geq \alpha_1}
  \Lambda_0^{\ast} \mu_1 (\mathcal{S}_T(\beta_1))^2,
\end{align*}
thus proving inequality \eqref{iff}. Since $\Lambda_0^{\ast} \mu_1$ is a trace
measure for the Hardy operator on $T$ if and only if $\mu_1$ is Carleson for
$\mathcal{D} (\mathbb{D})$, \eqref{e:422} is proved.\\

The last term $| f (0, 0) |$ is elementary to treat. By the subcapacitary property, $(\Lambda^*\mu)(\overline{T}^2) \leq \capp(\overline{T}^2)C_{\mu}$, and therefore
\[
\int_{\overline{\mathbb{D}}^2}|f(0,0)|^2\,d\mu = |f(0,0)|^2\mu(\overline{\mathbb{D}}^2) = |f(0,0)|^2(\Lambda^*\mu)(\overline{T}^2) \lesssim C_{\mu}|f(0,0)|^2\leq C_{\mu}\|f\|^2_{\mathcal{D}(\mathbb{D}^2)}.
\]
We are done.\end{proof} 

It follows immediately that
\begin{proposition}
  Suppose $\mu$ on $\partial\mathbb{D}^2$ is a Carleson measure for the Dirichlet space on the bidisc. If $\| f
  \|^2_{\mathcal{D} (\mathbb{D}^2)} < \infty$, then
  \[ \lim_{R \rightarrow 1} f (R \zeta_1, R \zeta_2) = : f (\zeta_1, \zeta_2) \;
     \tmop{exists} \tmop{for} \mu\tmop{-a.e.} \; (\zeta_1, \zeta_2) \in  (\partial
     \mathbb{D})^2 \]
  and
  \[\lim_{R \rightarrow 1} \int_{(\partial \mathbb{D})^2} | f (R
     \zeta_1, R \zeta_2) |^2 d \mu (\zeta_1, \zeta_2) = \int_{(\partial \mathbb{D})^2} |
     f (\zeta) |^2 d \mu (\zeta) \leqslant \int_{(\partial
     \mathbb{D})^2} \vvar (f)^2 d \mu \leqslant K_1^2  \| f \|^2_{\mathcal{D}
     (\mathbb{D}^2)} . \]
\end{proposition}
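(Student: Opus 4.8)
The plan is to obtain the statement as a soft consequence of Theorem~\ref{t:equivthingies} and of the pointwise bound $|f|\le\vvar(f)$ recorded in the proof of that theorem. Since $\mu$ is a Carleson measure for $\mathcal{D}(\mathbb{D}^2)$, the strong form of Theorem~\ref{t:equivthingies} gives
\[
\int_{\overline{\mathbb{D}}^2}\vvar(f)^2\,d\mu\ \le\ K_1^2\,\|f\|_{\mathcal{D}(\mathbb{D}^2)}^2\ <\ \infty ,
\]
so that $\vvar(f)<\infty$ for $\mu$-a.e.\ point of $\overline{\mathbb{D}}^2$, in particular for $\mu$-a.e.\ $(\zeta_1,\zeta_2)\in(\partial\mathbb{D})^2$. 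Writing $\vvar(f)=\vvar_{12}(f)+\vvar_1(f)+\vvar_2(f)+|f(0,0)|$ as a sum of nonnegative quantities, I would first note that each of $\vvar_{12}(f)(\zeta_1,\zeta_2)$, $\vvar_1(f)(\zeta_1)$, $\vvar_2(f)(\zeta_2)$ is finite for $\mu$-a.e.\ $(\zeta_1,\zeta_2)$.

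For the existence of the radial limit, I would fix such a point $\zeta=(\zeta_1,\zeta_2)$ and use the fundamental-theorem-of-calculus decomposition (as in the step $(A)\Rightarrow(B)$ above): for $R<1$,
\[
f(R\zeta_1,R\zeta_2)=f(0,0)+\zeta_1\!\int_0^{R}\!\partial_{z_1}f(t\zeta_1,0)\,dt+\zeta_2\!\int_0^{R}\!\partial_{z_2}f(0,t\zeta_2)\,dt+\zeta_1\zeta_2\!\int_0^{R}\!\!\int_0^{R}\!\partial_{z_1 z_2}f(t\zeta_1,s\zeta_2)\,dt\,ds .
\]
The finiteness of $\vvar_1(f)(\zeta_1)$, $\vvar_2(f)(\zeta_2)$, $\vvar_{12}(f)(\zeta_1,\zeta_2)$ is exactly the statement that the three integrands are absolutely integrable over $[0,1]$, $[0,1]$, $[0,1]^2$ respectively; hence dominated convergence lets me pass to the limit as $R\uparrow1$ termwise, so $f(\zeta_1,\zeta_2):=\lim_{R\to1}f(R\zeta_1,R\zeta_2)$ exists for $\mu$-a.e.\ $\zeta$. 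Passing to the limit in the elementary triangle-inequality bound for $|f(R\zeta_1,R\zeta_2)|$ then gives $|f(\zeta_1,\zeta_2)|\le\vvar(f)(\zeta_1,\zeta_2)$.

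The $L^2$-convergence would be a second application of dominated convergence, now on $(\partial\mathbb{D})^2$. A change of variables in the defining integrals shows that each of $\vvar_{12}(f)$, $\vvar_1(f)$, $\vvar_2(f)$ is nondecreasing along radii; e.g.\ $\vvar_{12}(f)(R\zeta_1,R\zeta_2)=\int_0^{R}\!\int_0^{R}|\partial_{z_1z_2}f(t\zeta_1,s\zeta_2)|\,dt\,ds\le\vvar_{12}(f)(\zeta_1,\zeta_2)$. Consequently $|f(R\zeta_1,R\zeta_2)|\le\vvar(f)(\zeta_1,\zeta_2)$ for every $R<1$, and this majorant lies in $L^2(\mu)$ by Theorem~\ref{t:equivthingies}. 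Since $f(R\zeta)\to f(\zeta)$ $\mu$-a.e.\ by the previous paragraph, dominated convergence yields $\lim_{R\to1}\int_{(\partial\mathbb{D})^2}|f(R\zeta_1,R\zeta_2)|^2\,d\mu=\int_{(\partial\mathbb{D})^2}|f(\zeta)|^2\,d\mu$, and the displayed chain of inequalities follows from $|f(\zeta)|\le\vvar(f)(\zeta)$ together with Theorem~\ref{t:equivthingies}.

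I do not expect a genuine obstacle here: the proposition is a routine corollary of Theorem~\ref{t:equivthingies}. The only points to watch are the bookkeeping that a.e.-finiteness of $\vvar(f)$ descends to each of its nonnegative summands, and hence to the relevant one- and two-variable radial integrals in a fixed direction, and the verification that the two invocations of dominated convergence — first in the radial variable, then in $\mu$ on the torus — have the claimed integrable majorants; both are immediate from the definition of $\vvar$ and the finiteness established in the first step.
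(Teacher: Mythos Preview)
Your proof is correct and is exactly the argument the paper has in mind: the proposition is stated as an immediate corollary of Theorem~\ref{t:equivthingies}, and you have simply supplied the routine details (finiteness of $\vvar(f)$ $\mu$-a.e., termwise dominated convergence for the radial limit, and dominated convergence in $L^2(\mu)$ using the radial monotonicity of $\vvar$). There is nothing to add.
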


\subsection{Characterization of multipliers: Theorem \ref{intromultiplier}}\label{SS:2.7}
We start by showing that the left-hand side of \eqref{introstegenga} dominates the right-hand side.
A standard argument with reproducing kernels shows that $\|b\|_{H^\infty} \leq \|M_b\|_{\BBB(\DDD(\DD^2))}$. Namely,
$$|b(z,w)| \|K_{(z,w)}\|_{\DDD(\DD^2)}^2 = |b(z,w)||K_{(z,w)}(z,w)| = \left|\langle bK_{(z,w)}, K_{(z,w)} \rangle_{\DDD(\DD^2)}\right| \leq \|M_b\|_{\BBB(\DDD(\DD^2))}\|K_{(z,w)}\|_{\DDD(\DD^2)}^2.$$
On the other hand, we may view $M_b$ as a vector-valued multiplier on the vector-valued Dirichlet space $\DDD(\DD) \otimes \DDD(\DD) \approxeq \DDD(\DDD(\DD))$.
That is, we identify the function $f(z,w)=\sum_{m,n}a_{m,n}z^mw^n$ with the function 
$F:\DD\to\DDD(\DD)$, $F:z\mapsto F(z)=\left(w\mapsto f(z,w)\right)$. Here we equip $\DDD(\DD)$ with the norm given in \eqref{introdef}, and $\DDD(\DDD(\DD))$ with the norm 
$$
\left\|F\right\|_{\DDD(\DDD(\DD))}^2:=\frac{1}{\pi}\int_\DD\left\|\frac{dF}{dz}(z)\right\|_{\DDD(\DD)}^2dA(z)
+\sup_{0<r<1}\frac{1}{2\pi}\int_{0}^{2\pi}\|F(re^{it})\|_{\DDD(\DD)}^2dt=\|f\|_{\DDD(\DD^2)}^2.
$$
The multiplicator with symbol $b=b(z,w)$ is then identified with
$$M_b \colon f(z, \cdot) \mapsto b(z, \cdot) f(z, \cdot).$$
For each fixed $z \in \DD$, let $M_{b(z, \cdot)}$ denote multiplication by $b(z, \cdot)$ on $\DDD(\DD)$, and $K_z$ the reproducing kernel of 
$\DDD(\DD)$ at $z$. 
Then, see \cite[\S 2.5]{AM02},
$$ M_b^\ast(K_z \otimes h) = K_z \otimes M_{b(z, \cdot)}^\ast h, \quad h \in \DDD(\DD),$$
and thus
\begin{equation} \label{eq:multarg1}
\sup_{z \in \DD}  \|M_{b(z, \cdot)}\|_{\BBB(\DDD(\DD))} \leq \|M_b\|_{\BBB(\DDD(\DD^2))}.
\end{equation}
We can apply Stegenga's characterization \cite{stegenga1980} of the multipliers of $\DDD(\DD)$ to the left hand side of (\ref{eq:multarg1}), yielding that
$$\sup_{z \in \DD} [\left|\partial_w b(z, \cdot)\right|^2dA(\cdot)]_{CM(\mathcal{D}(\mathbb{D}))} \lesssim \|M_b\|_{\BBB(\DDD(\DD^2))}^2.$$
Hence,
\begin{equation} \label{eq:multarg2}
 \int_{\DD} |\partial_w b(z,w)|^2 |f(z,w)|^2 \, dA(w) \lesssim \|M_b\|_{\BBB(\DDD(\DD^2))}^2 \left(\int_{\partial\mathbb{D}} | f(z,e^{it})|^2 \, dt + \int_{\DD} |\partial_{w} f(z,w)|^2 dA(w)\right),
\end{equation}
which by integration, standard properties of $H^2(\DD)$, Fatou's Lemma, and dominated convergence, yields that
\begin{eqnarray}\label{eq:multarg3}
 &&\int_{\partial\mathbb{D}} \int_{\DD} |\partial_w b(e^{is},w)|^2 |f(e^{is},w)|^2  dA(w)\, \,ds =
 \int_{\DD}\lim_{r\to1}\int_{\partial\mathbb{D}}  |\partial_w b(re^{is},w)|^2 |f(re^{is},w)|^2 ds\,  dA(w) \crcr
 &\le&\varliminf_{r\to1}\int_{\DD}\int_{\partial\mathbb{D}}  |\partial_w b(re^{is},w)|^2 |f(re^{is},w)|^2 ds\,  dA(w) \crcr
 &\lesssim&\|M_b\|_{\BBB(\DDD(\DD^2))}^2 \varliminf_{r\to1}\int_{\partial\mathbb{D}} \left(\int_{\partial\mathbb{D}} | f(re^{is},e^{it})|^2 \, dt + \int_{\DD} |\partial_{w} f(re^{is},w)|^2 dA(w)\right)\,ds  \crcr
 &\le& \|M_b\|_{\BBB(\DDD(\DD^2))}^2 \|f\|_{\DDD(\DD^2)}^2.
\end{eqnarray}
Applying \eqref{eq:multarg2} with $\partial_z f(z, \cdot)$ in place of $f$ and integrating also yields that
\begin{equation} \label{eq:multarg4}
\int_{\DD} \int_{\DD} |\partial_w b(z,w)|^2 |\partial_z f(z,w)|^2 \, dA(z) \, dA(w) \lesssim \|M_b\|_{\BBB(\DDD(\DD^2))}^2 \|f\|_{\DDD(\DD^2)}^2.
\end{equation}
Similarly, the inequalities \eqref{eq:multarg1}-\eqref{eq:multarg4} also hold with the roles of $z$ and $w$ reversed. 

Suppose $M_b$ is bounded. Writing out the norm of $bf$, $f \in \DDD(\DD^2)$, applying the triangle inequality, the fact that $b \in H^\infty(\DD^2)$, and 
inequality \eqref{eq:multarg4} yields that
\begin{multline*}
\int_{\DD^2}|\partial_{zw}(bf)|^2dA(z)\,dA(w) \\ 
\gtrsim \int_{\DD^2}|(\partial_{zw}b)f|^2dA(z)\,dA(w)-
\int_{\DD^2}\left[\left|(\partial_zb)\partial_wf\right|^2+\left|(\partial_wb)\partial_zf\right|^2+\left|b\partial_{zw}f\right|^2\right]dA(z)\,dA(w) \\
\gtrsim \int_{\DD^2}|(\partial_{zw}b)f|^2dA(z)\,dA(w)-\|M_b\|_{\BBB(\DDD(\DD^2))}^2 \|f\|_{\DDD(\DD^2)}^2-\|b\|_{H^\infty}^2\|f\|_{\DDD(\DD^2)}^2.
\end{multline*}
Hence,
$$\|M_b f\|_{\DDD(\DD^2)}^2 \gtrsim \int_{\DD} \int_{\DD} |f|^2 \, d\mu_b - \|M_b\|_{\BBB(\DDD(\DD^2))}^2\|f\|_{\DDD(\DD^2)}^2,$$
and thus 
$$[\mu_b]_{CM} \lesssim \|M_b\|_{\BBB(\DDD(\DD^2))}^2.$$
The computations thus far have shown that the left-hand side of \eqref{introstegenga} dominates the right-hand side. 
The converse inequality is also clear, using the triangle inequality, from the estimates we have made.

\section{Strong Capacitary Inequality on the bitree}\label{S:5}
Here we prove Theorem \ref{capacitarystrong}. First we establish some extra notation. Similarly to the definitions in Section \ref{SS:2.35} we define the one-dimensional Hardy operator, its adjoint, and the logarithmic potential on the tree $T$:
\begin{equation}\notag
\begin{split}
&(I\varphi)(\alpha) := \sum_{\gamma\geq\alpha}\varphi(\gamma);\\
&(I^*\mu)(\beta) := \int_{\mathcal{S}_T(\beta)}\,d\mu(\alpha);\\
&V^{\mu} := (II^*)(\mu),
\end{split}
\end{equation}
where $\varphi\geq0$ is a function on $T$ and $\mu\geq0$ is a Borel measure on $\overline{T}$. The (one-dimensional) logarithmic capacity is defined in the same way,
\[
\capp E = \inf\{\|\varphi\|^2_{\ell^2(T)}: \; I\varphi \geq 1\;\textup{on}\; E\}, 
\]
for a Borel set $E\subset \overline{T}$.
We aim to prove the following result:
\begin{theorem}\label{t:th5.1}
For any $f:T^2\mapsto \mathbb{R}_+$ in $\ell^2(T^2)$ we have
\begin{equation}\label{e:th5.1}
\sum_{k\in\mathbb{Z}}2^{2k}\capp\{\alpha\in \overline{T}^2: \mathbb{I}f(\alpha) \geq 2^{k}\} \lesssim \sum_{\alpha\in T^2}f^2(\alpha) := \|f\|_{\ell^2(T^2)}^2.
\end{equation}
\end{theorem}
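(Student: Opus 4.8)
The plan is to run the classical Maz'ya--Adams argument for the strong capacitary inequality, with Theorem~\ref{almostmaximum} playing the role that the maximum principle plays in the radial one-parameter theory --- it being precisely this principle that fails on the bitree, cf.\ Proposition~\ref{countermaxprinc}. By monotone convergence it suffices to prove \eqref{e:th5.1} for finitely supported $f\geq0$, so that $\mathbb If$ is finite everywhere and every sum below is finite; and since $f\geq0$, each level set $\Omega_k:=\{\mathbb If\geq2^k\}$ is a finite union of successor sets $\mathcal S(\alpha)$ --- in particular it is closed, hence compact --- and these sets are nested, $\Omega_{k+1}\subseteq\Omega_k$. By the standard comparison between the capacity of such a union of successor sets and that of its trace on the distinguished boundary (the same comparison implicit in the two forms of Theorem~\ref{traceinequality}), it is enough to estimate the capacities of $\Omega_k\cap(\partial T)^2$, i.e.\ to work on $(\partial T)^2$, where Theorem~\ref{almostmaximum} is available. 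Let $\mu_k$ be the equilibrium measure of $\Omega_k$, so that (see \cite{ah1996}) $\capp\Omega_k=\mathcal E[\mu_k]=\|\mathbb I^{\ast}\mu_k\|_{\ell^2(T^2)}^2=\mu_k(\Omega_k)$, $\supp\mu_k\subseteq\Omega_k$, $\mathbb V^{\mu_k}\geq1$ quasi-everywhere on $\Omega_k$, and $\mu_k$ charges no set of zero capacity.

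Put $S:=\sum_k2^{2k}\capp\Omega_k$ (the quantity to be bounded) and form the superposed measure $\sigma:=\sum_k2^k\mu_k$. Since $\mathbb If\geq2^k$ on $\supp\mu_k$,
\[
\langle\mathbb I^{\ast}\sigma,f\rangle_{\ell^2(T^2)}=\langle\sigma,\mathbb If\rangle=\sum_k2^k\!\int\mathbb If\,d\mu_k\geq\sum_k2^{2k}\mu_k(\Omega_k)=S,
\]
while Cauchy--Schwarz gives $\langle\mathbb I^{\ast}\sigma,f\rangle\leq\|\mathbb I^{\ast}\sigma\|_{\ell^2}\|f\|_{\ell^2}=\mathcal E[\sigma]^{1/2}\|f\|_{\ell^2}$, whence $S^2\leq\mathcal E[\sigma]\,\|f\|_{\ell^2}^2$. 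The theorem therefore reduces to the reverse-type bound $\mathcal E[\sigma]\lesssim S$: combining the two inequalities yields $S\lesssim\|f\|_{\ell^2}^2$, which is \eqref{e:th5.1}.

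To bound $\mathcal E[\sigma]=\sum_{j,k}2^{j+k}\mathcal E[\mu_j,\mu_k]$ I would estimate, for $j\geq k$,
\[
\mathcal E[\mu_j,\mu_k]=\int\mathbb V^{\mu_k}\,d\mu_j=\int_{\{\mathbb V^{\mu_k}\leq\Lambda\}}\mathbb V^{\mu_k}\,d\mu_j+\int_{\{\mathbb V^{\mu_k}>\Lambda\}}\mathbb V^{\mu_k}\,d\mu_j,
\]
the first term being at most $\Lambda\,\mu_j(\overline T^2)=\Lambda\,\capp\Omega_j$. For the tail I would combine two facts: the elementary inequality $\mu_j(A)\leq\sqrt{\capp\Omega_j}\,\sqrt{\capp A}$, valid for any Borel $A$ (because $\mu_j(A)\leq\int\mathbb V^{\mu_A}\,d\mu_j=\mathcal E[\mu_j,\mu_A]\leq\mathcal E[\mu_j]^{1/2}\mathcal E[\mu_A]^{1/2}$, using $\mathbb V^{\mu_A}\geq1$ q.e.\ on $A$), and Theorem~\ref{almostmaximum}, which gives $\capp\{\mathbb V^{\mu_k}>t\}\lesssim\capp\Omega_k\,t^{-3}$ for $t\geq1$. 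A layer-cake integration then yields a convergent tail bounded by $\sqrt{\capp\Omega_j\,\capp\Omega_k}\,\Lambda^{-1/2}$, and optimizing the (automatically $\geq1$) parameter $\Lambda\approx(\capp\Omega_k/\capp\Omega_j)^{1/3}$ gives $\mathcal E[\mu_j,\mu_k]\lesssim(\capp\Omega_j)^{2/3}(\capp\Omega_k)^{1/3}$ for $j\geq k$. Writing $a_k:=2^{2k}\capp\Omega_k$, so that $\capp\Omega_k=2^{-2k}a_k$ and $S=\sum_ka_k$, the $(j,k)$-term of $\mathcal E[\sigma]$ ($j\geq k$) is $\lesssim2^{(k-j)/3}a_j^{2/3}a_k^{1/3}$; summing over $j\geq k$ (the factor $2$ from the symmetric off-diagonal being harmless) and applying the weighted arithmetic--geometric inequality $a_j^{2/3}a_k^{1/3}\leq\tfrac23a_j+\tfrac13a_k$ against the geometric weight $2^{-(j-k)/3}$ gives $\mathcal E[\sigma]\lesssim S$, completing the proof modulo Theorem~\ref{almostmaximum}.

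The genuine obstacle is Theorem~\ref{almostmaximum} itself --- the quantitative statement that the region on which a potential is large has small capacity, which replaces the missing maximum principle; everything else above is bookkeeping once it is in hand, and the exponent $3$ there is exactly (and only just) strong enough for the final double sum to converge after the arithmetic--geometric step. Establishing Theorem~\ref{almostmaximum} is in turn reduced to, and carried out as, a new extremal problem in one-parameter potential theory on the tree $T$. The remaining ingredients --- the reduction to finitely supported $f$ and to $(\partial T)^2$, and the measurability, support, and no-charge properties of equilibrium measures --- are routine, the last being standard results from \cite{ah1996}.
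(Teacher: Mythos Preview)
Your proposal is correct and follows essentially the same route as the paper: the Cauchy--Schwarz reduction to $\mathcal E[\sigma]\lesssim S$, the key mutual-energy estimate $\mathcal E[\mu_j,\mu_k]\lesssim(\capp\Omega_j)^{2/3}(\capp\Omega_k)^{1/3}$ for $j\geq k$ derived from Theorem~\ref{almostmaximum}, and the convergent off-diagonal sum are all the paper's argument (Section~\ref{ss:5.ii}). The only differences are cosmetic: the paper packages the $2/3$--$1/3$ bound as a standalone lemma (Lemma~\ref{l:l5.2.1}) using the simpler inequality $\mu_F(F_k)\leq\capp F_k$ rather than your Cauchy--Schwarz bound $\mu_j(A)\leq(\capp\Omega_j\cdot\capp A)^{1/2}$, and it closes the double sum with two applications of H\"older where you use arithmetic--geometric; both variants work and yield the same exponents.
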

Similar results were obtained by Adams \cite{adams1976}, Maz'ya \cite{mazya1972}, and others. However, they were based on a certain property of the respective potential-theoretic kernels --- that they were of 'radial nature'. In our context this roughly translates to the uniqueness of the geodesic between two different vertices of the underlying graph. While this property is elementary for a uniform dyadic tree $T$ (as well for a $p$-adic tree), the bitree $T^2$ does not enjoy it any more, and this is one of the main problems we have to overcome when we increase the dimension.

To highlight this difference we give a rough sketch of the proof for $d=1$ (i.e. for a dyadic tree). Given $k\in\mathbb{Z}$ assume that $If(\alpha) \approx 2^{k+1}$ and $If(\beta)\approx 2^{k}$ for $\beta>\alpha$ in the tree. Then, since there exists a unique geodesic connecting $\alpha$ and $\beta$, we have that $\sum_{\beta>\tau\geq\alpha}f(\tau)\approx 2^k$. Therefore one can expect, if we set $f_k:=\chi_{2^k<If\leq2^{k+1}}f$, that $If_k\approx 2^k\approx If$ on $\{If\geq 2^{k+1}\}$, so that $2^{-k+1}f_k$ is admissible for this set, and $\sum_{k\in\mathbb{Z}}2^{2k}\capp\{If\geq 2^{k+1}\} \lesssim \sum_{k\in\mathbb{Z}}\|f_k\|^2 \approx \|f\|^2$, since the functions $f_k$ have disjoint supports.

However we see that already for $d=2$ there are many geodesics in $T^2$ with endpoints at $\alpha$ and $\beta$, and the above argument fails, since one can construct a function $f_0$ such that $\mathbb{I}f_0(\alpha) \leq 1$ for every $\alpha\in\supp f_0$, but for every $\lambda > 1$ there exists a point $\beta\in T^2$ with $\mathbb{I}f_0(\beta) > \lambda$ (see Proposition \ref{p:A.2.1}). In other words, the maximum principle does not hold for $T^2$. However, while it fails pointwise, a quantitative version of the maximum principle is still true --- the set of 'bad'  points has asymptotically small capacity, see Corollary \ref{c:c5.1}. Therefore we can salvage enough of the argument to obtain Theorem \ref{t:th5.1}.

The proof is based on the following rearrangement lemma, Lemma \ref{l:l5.1}. We explain how it implies Theorem \ref{t:th5.1} in Section \ref{ss:5.ii}. Lemma \ref{l:l5.1} is proved in Section \ref{ss:5.iii} by reduction to a one-dimensional statement, Lemma \ref{l:l5.2}, which in turn is proved in Section \ref{ss:5.iv}.

\begin{lemma}[Rearrangement Lemma]\label{l:l5.1}
Let $\mu\geq0$ be any Borel measure with finite energy on $(\partial T)^2$. Given $\delta>0$ we define the $\delta$-level set of $\mathbb{V}^{\mu}$ by
\[
E^{\delta} := \{\alpha\in T^2:\; \mathbb{V}^{\mu}(\alpha) \leq \delta\}.
\]
We also define the $\delta$-restricted potential and energy by setting
\[
\mathbb{V}_{\delta}^{\mu}(\alpha) := \sum_{\beta\in E^{\delta}:\;\beta\geq\alpha}(\mathbb{I}^*\mu)(\beta);\;\;\;\; \mathcal{E}_{\delta}[\mu] := \sum_{\alpha\in E^{\delta}}(\mathbb{I}^*\mu)^2(\alpha) = \int_{(\partial{T})^2}\mathbb{V}^{\mu}_{\delta}\,d\mu.
\]
For $\lambda\geq\delta$ let 
\[
E_{\delta,\lambda} := \{\alpha\in (\partial T)^2:\; \mathbb{V}_{\delta}^{\mu}(\alpha)> \lambda\}.
\]
Then there exists a function $\varphi: \overline{T}^2\rightarrow\mathbb{R}_+$ supported on $T^2$ such that
\begin{subequations}\label{e:l5.1s}
\begin{eqnarray}
\label{e:l5.1s.1}& \mathbb{I}\varphi(\alpha) = \sum_{\beta \geq\alpha} \varphi(\beta) > \lambda, \quad \alpha \in E_{\delta,\lambda};\\
\label{e:l5.1s.2}& \|\varphi\|_{\ell^2(T^2)}^2 = \sum_{\alpha\in T^2} \left(\varphi(\alpha)\right)^2 \lesssim \frac{\delta}{\lambda}\mathcal{E}_{\delta}[\mu] = \frac{\delta}{\lambda}\sum_{\alpha\in E^{\delta}}\left(\mathbb{I}^*\mu(\alpha)\right)^2.
\end{eqnarray}
\end{subequations}
\end{lemma}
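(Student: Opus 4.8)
The plan is to recast the statement as a purely combinatorial potential-theoretic fact about a single non-negative function on $T^{2}$, and then to extract the decisive factor $\delta/\lambda$ by a layer-cake decomposition in the heights of the potential, reducing the two-dimensional bookkeeping to a one-variable rearrangement lemma.

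First I would set $\nu := (\mathbb{I}^{*}\mu)\cdot\chi_{E^{\delta}}$, a non-negative function on $T^{2}$ supported in $E^{\delta}$. Directly from the definitions, $\mathbb{I}\nu = \mathbb{V}^{\mu}_{\delta}$ on all of $\overline{T}^{2}$ and $\|\nu\|^{2}_{\ell^{2}(T^{2})} = \sum_{\alpha\in E^{\delta}}(\mathbb{I}^{*}\mu(\alpha))^{2} = \mathcal{E}_{\delta}[\mu]$. The only feature of $\mu$ that needs to survive is that $\mathbb{V}^{\mu}$ is non-increasing along the order of $\overline{T}^{2}$ (since $\mathcal{P}(\alpha')\subseteq\mathcal{P}(\alpha)$ when $\alpha'\geq\alpha$), so $E^{\delta}$ is an up-set; hence for $\alpha\in\supp\nu\subseteq E^{\delta}$ every $\gamma\geq\alpha$ lies in $E^{\delta}$ as well, and therefore $\mathbb{I}\nu(\alpha)=\mathbb{V}^{\mu}(\alpha)\leq\delta$. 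Thus it suffices to prove the self-contained claim: if $\nu\geq0$ on $T^{2}$ satisfies $\mathbb{I}\nu\leq\delta$ on $\supp\nu$ and $\lambda\geq\delta$, then there is a non-negative $\varphi$ on $T^{2}$ with $\mathbb{I}\varphi>\lambda$ on $\{\zeta\in(\partial T)^{2}:\mathbb{I}\nu(\zeta)>\lambda\}$ and $\|\varphi\|^{2}_{\ell^{2}(T^{2})}\lesssim(\delta/\lambda)\,\|\nu\|^{2}_{\ell^{2}(T^{2})}$; here the set appearing is exactly $E_{\delta,\lambda}$, and this is the reduction carried out in Section \ref{ss:5.iii}.

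Next I would slice $\mathbb{I}\nu$ into layers of height comparable to $\delta$. For integers $j\geq1$ put $E_{j}:=\{\alpha\in\overline{T}^{2}:\mathbb{I}\nu(\alpha)>j\delta\}$; these are nested down-sets, $\supp\nu\cap E_{1}=\emptyset$, and $\{\mathbb{I}\nu>\lambda\}\subseteq E_{m}$ with $m:=\lfloor\lambda/\delta\rfloor\gtrsim\lambda/\delta$. Writing $E_{j}=\bigcup_{\beta\in\mathcal{A}_{j}}\mathcal{S}(\beta)$ with $\mathcal{A}_{j}$ the antichain of maximal elements, one gets a natural candidate for $\varphi$ as a weighted superposition $\varphi=\sum_{j\geq1}c_{j}\,\nu_{j}$ of restrictions $\nu_{j}$ of $\nu$ to the successive annuli $E_{j}\setminus E_{j+1}$, tuned so that the partial potentials add up to $\gtrsim m\delta\approx\lambda$ at every bad boundary point. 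The obstruction is that the $\nu_{j}$ have disjoint supports, so this construction alone only gives $\|\varphi\|^{2}\lesssim\|\nu\|^{2}$, short of the required bound by precisely the factor $m\sim\lambda/\delta$. The missing gain has to come from the product structure, and it is built into the one-variable statement, Lemma \ref{l:l5.2}: after freezing one of the two coordinate trees and rerunning the one-dimensional argument fibrewise along the chains $\mathcal{P}(\zeta_{x})\subset T_{x}$, one expresses $\mathbb{V}^{\mu}_{\delta}(\zeta_{x},\cdot)$ as an ordinary tree-potential on $T_{y}$ and turns the hypothesis $\mathbb{I}\nu\leq\delta$ on $\supp\nu$ into the (weaker, but still usable) hypothesis of Lemma \ref{l:l5.2}; the sectional admissible functions produced there are then assembled over $T_{y}$, again a one-dimensional summation governed by the same lemma.

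The hard part is extracting that factor $\delta/\lambda$. In a single tree it comes for free — and in fact $\{\mathbb{I}\nu>\lambda\}\cap\partial T$ would be empty — because along a geodesic the potential is monotone, so the level $\lambda$ is attained only by genuinely spending $\sim\lambda/\delta$ increments of size $\delta$. On the bitree there is no unique geodesic, and $\lambda$ is instead reached by accumulating $\sim\lambda/\delta$ contributions of size $\delta$ over mutually incomparable staircase regions — this is exactly the failure of the Maximum Principle quantified in Proposition \ref{countermaxprinc}. Tracking how the fixed $\ell^{2}$-budget of $\nu$ is apportioned among those incomparable pieces, so that all bad boundary points $\zeta$ can be covered simultaneously while paying only $(\delta/\lambda)\|\nu\|^{2}$, is the crux of the matter, and isolating the one-dimensional core of this bookkeeping as Lemma \ref{l:l5.2} (proved in Section \ref{ss:5.iv}) is the device that makes it tractable.
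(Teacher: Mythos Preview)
Your opening reduction is correct and matches the paper: setting $\nu:=(\mathbb{I}^{*}\mu)\chi_{E^{\delta}}$, one has $\mathbb{I}\nu=\mathbb{V}^{\mu}_{\delta}$, $\|\nu\|_{\ell^{2}}^{2}=\mathcal{E}_{\delta}[\mu]$, and $\mathbb{I}\nu\le\delta$ on $\supp\nu$ because $E^{\delta}$ is an up-set. From here, however, the proposal does not actually construct $\varphi$.

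The layer-cake decomposition $E_{j}=\{\mathbb{I}\nu>j\delta\}$ that you introduce is a dead end, and you say so yourself: the disjoint-support argument gives only $\|\varphi\|^{2}\lesssim\|\nu\|^{2}$, missing the factor $\delta/\lambda$. The paper does \emph{not} slice by heights of the potential; it slices by the second coordinate. For each fixed $\alpha_{y}\in T_{y}$ one works on the layer $T_{x}\times\{\alpha_{y}\}$, defines $f_{R}(\beta_{x}):=(\mathbb{I}^{*}\mu)(\beta_{x},\alpha_{y})\chi_{E^{\delta}}(\beta_{x},\alpha_{y})$ and $g_{R}(\beta_{x}):=\sum_{\beta_{y}\ge\alpha_{y}}(\mathbb{I}^{*}\mu)(\beta_{x},\beta_{y})\chi_{E^{\delta}}(\beta_{x},\beta_{y})$, and shows that on the projection $F_{R}$ of the bad set one has $Ig_{R}>\lambda/3$ while $Ig_{R}\le\delta$ on $\supp f_{R}$. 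The Domination Principle on $T_{x}$ then converts this into the hypothesis $V^{\rho_{R}}\le 3\delta/\lambda$ on $\supp f_{R}$ that Lemma~\ref{l:l5.2} needs. This step---passing from $g_{R}$ to the equilibrium potential of $F_{R}$---is the mechanism that injects the $\delta/\lambda$ factor, and it is absent from your outline.

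Two further ingredients are missing. First, the output of Lemma~\ref{l:l5.2} on each layer must be assembled into a single $\varphi$; the $\ell^{2}$ bound \eqref{e:l5.1s.2} follows from disjointness of the layers, but the pointwise bound \eqref{e:l5.1s.1} requires a stopping-time argument: for each $\omega\in E_{\delta,\lambda}$ one locates the first $\alpha_{y}(\omega)$ at which the fibre potential exceeds $\lambda/3$, and checks that the contributions of the layers $\alpha_{y}\le\alpha_{y}(\omega)$ already recover $\lambda$. Second, your description of the fibrewise reduction (``freezing one of the two coordinate trees \dots\ along the chains $\mathcal{P}(\zeta_{x})$'') fixes a boundary point rather than an interior vertex of the second tree; the paper's construction needs one function $\varphi_{\alpha_{y}}$ per $\alpha_{y}\in T_{y}$, not per $\zeta\in\partial T$. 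As written, the proposal identifies the correct one-dimensional lemma but does not carry out the construction that turns it into a proof.
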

\textbf{Remark.} Observe that, by the maximum principle, $E_{\delta,\lambda} = \emptyset$ in the one-dimensional setting of a tree $T$.
\begin{corollary}\label{c:c5.1}
Let $E\subset (\partial T)^2$ be a Borel set, and $\mu = \mu_E$ be the equilibrium measure for $E$. Given $\lambda>1$ define 
\begin{equation}\label{e:c5.1c}
E_{\lambda} := \{\omega\in(\partial T)^2: \mathbb{V}^{\mu}(\omega) = \sum_{\beta\in \mathcal{P}(\omega)}\mathbb{I}^*\mu(\beta) > \lambda\}.
\end{equation}
Then
\begin{equation}\label{e:l5.1s2}
\capp E_{\lambda} \lesssim \frac{1}{\lambda^3}\capp E.
\end{equation}
\end{corollary}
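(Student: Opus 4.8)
The plan is to read \eqref{e:l5.1s2} off the Rearrangement Lemma applied with the \emph{constant} parameter $\delta=1$, once one notices that for an equilibrium measure the restricted potential $\mathbb{V}^{\mu}_{\delta}$ and the full potential $\mathbb{V}^{\mu}$ agree on the distinguished boundary. This is the quantitative substitute, at the level of the equilibrium measure, for the pointwise maximum principle which (Proposition \ref{countermaxprinc}) fails; note that the soft energy argument ($\mathcal{E}[\mu_E,\mu_{E_\lambda}]\le\sqrt{\mathcal{E}[\mu_E]\,\mathcal{E}[\mu_{E_\lambda}]}$) only gives the weaker bound $\capp E_\lambda\lesssim\lambda^{-2}\capp E$, and the extra factor $\lambda^{-1}$ is exactly the $\delta/\lambda$ gain in Lemma \ref{l:l5.1} with $\delta$ bounded.

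First I would assemble the standard facts about $\mu=\mu_E$ from \cite[Chapter 2]{ah1996} (assuming, as we may, $\capp E<\infty$, so that $\mu$ has finite energy on $(\partial T)^2$): $\capp E=\mathcal{E}[\mu]=\mu(E)=\|\mathbb{I}^{\ast}\mu\|_{\ell^2(T^2)}^2$; $\mathbb{V}^{\mu}\ge1$ quasi-everywhere on $E$; and, since a finite-energy measure does not charge polar sets and $\int\mathbb{V}^{\mu}\,d\mu=\mathcal{E}[\mu]=\mu(E)$, one gets $\mathbb{V}^{\mu}=1$ $\mu$-a.e. I also record the monotonicity $\mathbb{V}^{\mu}(\alpha)\le\mathbb{V}^{\mu}(\beta)$ whenever $\beta\le\alpha$ in $\overline{T}^2$, which is immediate from $\{\gamma:\gamma\ge\alpha\}\subseteq\{\gamma:\gamma\ge\beta\}$ and $\mathbb{I}^{\ast}\mu\ge0$. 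The key observation is then: if $\alpha\in T^2$ has $\mathbb{I}^{\ast}\mu(\alpha)=\mu(\mathcal{S}(\alpha))>0$, then the set $\mathcal{S}(\alpha)\cap\{\mathbb{V}^{\mu}\le1\}$ has positive $\mu$-mass (its complement in $\mathcal{S}(\alpha)$ being $\mu$-null), hence is nonempty; picking $\omega'$ in it, $\omega'\le\alpha$ and monotonicity give $\mathbb{V}^{\mu}(\alpha)\le\mathbb{V}^{\mu}(\omega')\le1$. Equivalently, $\mathbb{V}^{\mu}(\alpha)>1$ forces $\mathbb{I}^{\ast}\mu(\alpha)=0$.

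Consequently, for every $\delta\ge1$ the vertices $\beta$ with $\mathbb{V}^{\mu}(\beta)>\delta$ (those discarded in forming $\mathbb{V}^{\mu}_{\delta}$) carry no $\mathbb{I}^{\ast}\mu$-mass, so $\mathbb{V}^{\mu}_{\delta}\equiv\mathbb{V}^{\mu}$ on $(\partial T)^2$ and $\mathcal{E}_{\delta}[\mu]=\mathcal{E}[\mu]=\capp E$. In particular, with $\delta=1$,
\[
E_{1,\lambda}=\{\omega\in(\partial T)^2:\ \mathbb{V}^{\mu}_{1}(\omega)>\lambda\}=\{\omega\in(\partial T)^2:\ \mathbb{V}^{\mu}(\omega)>\lambda\}=E_{\lambda}.
\]
Now apply Lemma \ref{l:l5.1} with $\delta=1$ and the given $\lambda>1\,(\ge\delta)$: it produces $\varphi\colon\overline{T}^2\to\mathbb{R}_+$, supported on $T^2$, with $\mathbb{I}\varphi>\lambda$ on $E_{1,\lambda}=E_{\lambda}$ and $\|\varphi\|_{\ell^2(T^2)}^2\lesssim\frac{1}{\lambda}\,\mathcal{E}_{1}[\mu]=\frac{1}{\lambda}\capp E$. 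Then $\lambda^{-1}\varphi$ is admissible for $E_{\lambda}$ in the definition \eqref{e:334} of $\capp$, so
\[
\capp E_{\lambda}\ \le\ \lambda^{-2}\,\|\varphi\|_{\ell^2(T^2)}^2\ \lesssim\ \lambda^{-3}\,\capp E,
\]
which is \eqref{e:l5.1s2}.

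The only non-routine ingredient is the observation of the second paragraph: the equilibrium potential, though it may be arbitrarily large at points of $(\partial T)^2$, never exceeds $1$ at a vertex of $T^2$ that the measure actually ``passes through''. I expect the minor technical points to be: ensuring $\mathcal{S}(\alpha)$ is the compact $\overline{T}^2$-successor set, so that positive mass genuinely yields a point of $\{\mathbb{V}^{\mu}\le1\}\cap\mathcal{S}(\alpha)$; if one prefers $\supp\mu$ to ``$\mu$-a.e.'', upgrading $\mathbb{V}^{\mu}=1$ $\mu$-a.e.\ to $\mathbb{V}^{\mu}\le1$ on $\supp\mu$ via lower semicontinuity of $\mathbb{V}^{\mu}$; and reducing a Borel (non-compact) $E$ to its compact subsets through the Choquet property of $\capp$.
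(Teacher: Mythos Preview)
Your proof is correct and follows exactly the paper's approach: set $\delta=1$, observe that for the equilibrium measure every vertex with $\mathbb{I}^*\mu(\alpha)>0$ lies in $E^1$ (so $\mathbb{V}^\mu_1=\mathbb{V}^\mu$, $E_{1,\lambda}=E_\lambda$, $\mathcal{E}_1[\mu]=\capp E$), and then read off \eqref{e:l5.1s2} from Lemma~\ref{l:l5.1}. You have in fact supplied more detail than the paper does, both in justifying the key inclusion $\{\mathbb{I}^*\mu>0\}\subset E^1$ via monotonicity and $\mathbb{V}^\mu=1$ $\mu$-a.e., and in spelling out the final admissibility step $\capp E_\lambda\le\lambda^{-2}\|\varphi\|^2\lesssim\lambda^{-3}\capp E$.
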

\begin{proof}
Put $\delta = 1$. Since $\mu$ is equilibrium for $E$, we have $\{\alpha\in T^2:\; (\mathbb{I}^*\mu)(\alpha)>0\}\subset E^1$ and $E_{1,\lambda} = E_{\lambda}$. It remains to apply Lemma \ref{l:l5.1} with data $1,\lambda$.
\end{proof}

\subsection{Deducing Theorem \ref{t:th5.1} from Corollary \ref{c:c5.1}}\label{ss:5.ii}
Here we mostly follow the argument from Adams and Hedberg \cite[Chapter 7]{ah1996}. First we separate the $\ell^2$-norm of $f$, reducing \eqref{e:th5.1} to estimates of the level sets of $\mathbb{I}f$. We then prove that the energy scalar product of two equilibrium measures can be estimated by the capacities of the respective sets, Lemma \ref{l:l5.2.1}. This is the key point of the argument, and it is here that we use the Rearrangement Lemma \ref{l:l5.1} (or, more precisely, Corollary \ref{c:c5.1}). We finish the proof by showing that the mixed energy of the level sets (energy scalar product of their equilibrium measures) is concentrated on the diagonal (inequality \eqref{e:35}).

\noindent \textit{Removing $\|f\|_{\ell^2(T^2)}$}.

Given $k\in\mathbb{Z}$ let $\tilde{E}_k$ be the $k$-th level set of $\mathbb{I}f$,
\[
\tilde{E}_k = \{\alpha\in \overline{T}^2: (\mathbb{I}f)(\alpha) > 2^k\}.
\]
We then define $E_k$ to be the boundary projection of $\tilde{E}_k$
\[
E_k = \mathcal{S}(\tilde{E}_k)\cap (\partial T)^2,
\]
where $\mathcal{S}(\tilde{E}_k) = \bigcup_{\beta\in \tilde{E}_k}\mathcal{S}(\beta)$.
Corollary \ref{c:A.3} (see Section \ref{S:A.3}) implies that
\[
\capp \tilde{E}_k \approx \capp E_k,\quad k\in \mathbb{Z}.
\]
Hence \eqref{e:th5.1} is equivalent to
\[
\sum_{k\in\mathbb{Z}}2^{2k}\capp E_k \lesssim  \|f\|_{\ell^2(T^2)}^2.
\]
Since $E_k\subset \mathcal{S}(\tilde{E}_k)$, we see that $\mathbb{I}f \geq 2^k$ on $E_k$ as well. By its nature, $E_k$ is a countable union of clopen rectangles on $(\partial T)^2$, hence $E_k = \bigcup_{j=1}^{\infty}E_k^j$ with $\{E_k^j\}$ is an increasing sequence of compact sets. Define $\mu_k$ and $\mu_k^j$ to be equilibrium measures for $E_k$ and $E_k^j$ respectively. Clearly $\lim_{j\rightarrow\infty}\capp E_k^j = \capp E_k$. For $k\in \mathbb{Z}$ and $j\geq1$ we have
\begin{equation}\notag
2^{2k}\capp E_k^j = 2^k\int_{\overline{T}^2}2^k\,d\mu_k^j \leq 2^k\int_{\overline{T}^2}\mathbb{I}f\,d\mu_k^j = 2^k\int_{T^2}f\,d(\mathbb{I}^*\mu_k^j) = 2^k\sum_{\alpha\in T^2}f(\alpha)(\mathbb{I}^*\mu_k^j)(\alpha)
\end{equation}
by Tonelli's theorem. Since $\mathbb{I}^*\mu^j_k \rightarrow \mathbb{I}^*\mu_k$ in $\ell^2(T^2)$ \cite[Proposition 2.3.12]{ah1996}, we may pass to the limit as $j\rightarrow\infty$, obtaining
\begin{equation}\notag
2^{2k}\capp E_k \leq 2^k\int_{T^2}f\,d(\mathbb{I}^*\mu_k).
\end{equation}
Summing this estimate over $k\in\mathbb{Z}$ and applying Cauchy-Schwartz  we arrive at
\[
\sum_{k\in\mathbb{Z}}2^{2k}\capp E_k \leq \int_{{T}^2}f\sum_{k\in\mathbb{Z}}2^{k}\,d(\mathbb{I}^*\mu_k) \leq \|f\|_{\ell^2(T^2)}\|\sum_{k\in\mathbb{Z}}2^{k}\,\mathbb{I}^*\mu_k\|_{\ell^2(T^2)}.
\]
 We conclude that \eqref{e:th5.1} follows from
\begin{equation}\label{e:34}
\|\sum_{k\in\mathbb{Z}}2^{k}\,\mathbb{I}^*\mu_k\|^2_{\ell^2({T}^2)} \lesssim \sum_{k\in\mathbb{Z}}2^{2k}\capp E_k.
\end{equation}

\noindent \textit{Equilibrium potential on the subset.}

Expanding the left-hand side of \eqref{e:34} we obtain
\[
\sum_{k\in\mathbb{Z}}\sum_{j\in\mathbb{Z}}2^{j+k}\int_{\overline{T}^2}\mathbb{V}^{\mu_k}\,d\mu_j,
\]
and this expression is symmetric over $j$ and $k$. Therefore \eqref{e:34} is equivalent to
\begin{equation}\label{e:35}
\sum_{k\in\mathbb{Z}}\sum_{j\leq k}2^{j+k}\int_{\overline{T}^2}\mathbb{V}^{\mu_k}\,d\mu_j \lesssim \sum_{k\in\mathbb{Z}}2^{2k}\int_{\overline{T}^2}\mathbb{V}^{\mu_k}\,d\mu_k,
\end{equation}
since $\capp E_k = |\mu_k| = \int_{\overline{T}^2}\mathbb{V}^{\mu_k}\,d\mu_k$. We see that in order to prove \eqref{e:th5.1} we need to show that the sum on the left-hand side of \eqref{e:35} is dominated by its diagonal term. First we state the following lemma.
\begin{lemma}\label{l:l5.2.1}
Let $F, E \subset (\partial T)^2$ be a pair of sets on the distinguished boundary of $T^2$, such that $\capp F\leq\capp E$, and let $\mu_F,\mu_E$ be their equilibrium measures. Then there exists an absolute constant $C>1$ such that
\begin{equation}\label{l:l5.2.1s}
\int_{\overline{T}^2}\mathbb{V}^{\mu_E}\,d\mu_F \leq C|\mu_E|^{\frac13}|\mu_F|^{\frac23} = C (\capp E)^{\frac13}(\capp F)^{\frac23}.
\end{equation}
\end{lemma}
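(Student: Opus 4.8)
The plan is to bound the mixed energy $\int \mathbb{V}^{\mu_E}\,d\mu_F$ by splitting the range of the potential $\mathbb{V}^{\mu_E}$ at a level $\lambda$ to be optimized. On the sublevel set $\{\mathbb{V}^{\mu_E}\le\lambda\}$ the integrand is controlled trivially by $\lambda$, giving a contribution at most $\lambda|\mu_F|$. On the superlevel set, the key is that $\mu_F$ is subcapacitary (it is an equilibrium measure, hence its mass on any union of successor sets is comparable to the capacity of that set — this is the content of Theorem \ref{traceinequality} together with $\capp E_\lambda=|\mu_{E_\lambda}|$), so the $\mu_F$-mass of $\{\mathbb{V}^{\mu_E}>\lambda\}$ is controlled by $\capp\{\mathbb{V}^{\mu_E}>\lambda\}$, and Corollary \ref{c:c5.1} estimates this last quantity by $\lambda^{-3}\capp E=\lambda^{-3}|\mu_E|$. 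One must be a little careful: on the superlevel set $\mathbb{V}^{\mu_E}$ itself is large and we are integrating it, not merely its indicator. So I would instead write $\int_{\{\mathbb{V}^{\mu_E}>\lambda\}}\mathbb{V}^{\mu_E}\,d\mu_F$ as $\lambda\,\mu_F(\{\mathbb{V}^{\mu_E}>\lambda\})+\int_\lambda^\infty \mu_F(\{\mathbb{V}^{\mu_E}>s\})\,ds$ and apply the subcapacitary bound together with Corollary \ref{c:c5.1} to each $\mu_F(\{\mathbb{V}^{\mu_E}>s\})\lesssim \capp\{\mathbb{V}^{\mu_E}>s\}\lesssim |\mu_E|/s^3$ for $s\ge 1$; the tail integral $\int_\lambda^\infty |\mu_E|\,s^{-3}\,ds\approx |\mu_E|\lambda^{-2}$ converges and is of lower order.

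Collecting the pieces, for any $\lambda\ge 1$ we get
\[
\int_{\overline{T}^2}\mathbb{V}^{\mu_E}\,d\mu_F \;\lesssim\; \lambda|\mu_F| \;+\; \lambda\cdot\frac{|\mu_E|}{\lambda^3} \;+\; \frac{|\mu_E|}{\lambda^2} \;\approx\; \lambda|\mu_F| + \frac{|\mu_E|}{\lambda^2}.
\]
Optimizing in $\lambda$, the balance $\lambda|\mu_F|\approx|\mu_E|\lambda^{-2}$ gives $\lambda\approx(|\mu_E|/|\mu_F|)^{1/3}$, which is $\ge 1$ precisely because $|\mu_F|=\capp F\le\capp E=|\mu_E|$. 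Plugging this choice back in yields $\int\mathbb{V}^{\mu_E}\,d\mu_F\lesssim |\mu_E|^{1/3}|\mu_F|^{2/3}$, which is \eqref{l:l5.2.1s}. The use of $\capp E=\mathcal{E}[\mu_E]=\mu_E(E)=|\mu_E|$ (and likewise for $F$) from Section \ref{SS:2.35} turns the capacity statement into the stated one about masses.

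The main obstacle, and the only nontrivial input, is the application of Corollary \ref{c:c5.1}: one must check that its hypotheses genuinely apply here, i.e. that $E_\lambda=\{\mathbb{V}^{\mu_E}>\lambda\}\subseteq(\partial T)^2$ is exactly the set to which the corollary's conclusion $\capp E_\lambda\lesssim\lambda^{-3}\capp E$ applies, including that $\mu_E$ is concentrated on $E^1$ so that $\mathbb{V}_{1,\lambda}^{\mu_E}$ coincides with $\mathbb{V}^{\mu_E}$ there (exactly as noted in the proof of Corollary \ref{c:c5.1}). A secondary point requiring care is the passage to the superlevel-set integral above a threshold rather than a bare indicator — one wants to make sure the $s\mapsto\mu_F(\{\mathbb{V}^{\mu_E}>s\})$ estimate is uniform for $s\ge\lambda\ge1$ and that the tail integral is handled cleanly; but this is routine once Corollary \ref{c:c5.1} is in hand. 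Everything else — the subcapacitarity of equilibrium measures, Tonelli, the definition of energy — is standard and already recorded above.
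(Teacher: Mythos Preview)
Your approach is essentially the paper's: decompose according to the level sets of $\mathbb{V}^{\mu_E}$, bound the $\mu_F$-mass of each superlevel set by its capacity, and invoke Corollary~\ref{c:c5.1}. The paper presents this by setting $\lambda:=\int\mathbb{V}^{\mu_E}\,d\mu_F/|\mu_F|$ up front and deriving $\lambda^3\lesssim|\mu_E|/|\mu_F|$ via a dyadic split at height $\approx\lambda$, whereas you leave $\lambda$ free and optimize at the end; this difference is purely cosmetic.

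There is, however, one genuine issue with your justification. You say ``$\mu_F$ is subcapacitary \dots\ this is the content of Theorem~\ref{traceinequality}'', but citing Theorem~\ref{traceinequality} here is circular: that theorem is proved (in Section~\ref{SS:2.6}) assuming the Strong Capacitary Inequality, and the Strong Capacitary Inequality is proved (in Section~\ref{ss:5.ii}) using the present lemma. What you actually need is far more elementary and does not touch the trace inequality at all: since $\mu_F$ is the equilibrium measure of $F$, one has $\mathbb{V}^{\mu_F}\le 1$ on $\supp\mu_F$, and then for any Borel set $A$ Property~\ref{l:A1.s.2} of Lemma~\ref{l:A.1.1} gives
\[
\mu_F(A)=\mu_F(A\cap\supp\mu_F)\le\capp(A\cap\supp\mu_F)\le\capp A.
\]
This is exactly how the paper justifies $\mu_F(F_k)\le\capp F_k$. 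Replace the citation and your argument is complete.
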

\begin{proof}
If $\capp F=0$, then \eqref{l:l5.2.1s} is trivial. Therefore we let
\[
\lambda = \frac{\int_{\overline{T}^2}\mathbb{V}^{\mu_E}\,d\mu_F}{|\mu_F|},
\]
and we assume that $\lambda \geq 16$ (otherwise we just set $C$ to be large enough). Define $F_k,\; k\in\mathbb{Z}_+,$ to be the level sets of $\mathbb{V}^{\mu_E}$ on $\textup{cl}F$
\begin{equation}\label{notag}
\begin{split}
&F_k := \{\alpha\in \textup{cl}F:\; 2^{k-1} \leq \mathbb{V}^{\mu_E}(\alpha)< 2^{k}\},\quad k\geq1\\
&F_0 := \{\alpha\in \textup{cl}F:\; \mathbb{V}^{\mu_E} < 1\}.
\end{split}
\end{equation}
Corollary \ref{c:c5.1} implies that
\[
\capp F_k \leq C 2^{-3k}\capp E
\]
for some $C>0$. Clearly $F_k,\; k\geq0$, are disjoint, and $\capp\left(\textup{cl}F\setminus\bigcup_{k=0}^{\infty}F_k\right) = 0$ (since the potential of $\mu_E$ can be infinite only on a polar set). Hence we have
\begin{equation}\notag
\begin{split}
&\lambda|\mu_F| = \int_{\overline{T}^2}\mathbb{V}^{\mu_E}\,d\mu_F = \sum_{k\geq 0}\int_{F_k}\mathbb{V}^{\mu_E}\,d\mu_F \leq\\
& \sum_{k\geq 0}\int_{F_k}2^{k}\,d\mu_F=
\sum_{k\geq 0}2^{k}\mu_F(F_k).
\end{split}
\end{equation}
Fix $j$ such that $2^{j-1}\leq \lambda < 2^{j}$. Since $\mu_F(F_k) = \mu_F(F_k\cap\supp\mu_F) \leq \capp F_k$ (see Lemma \ref{l:A.1.1}), we get
\begin{equation}\notag
\begin{split}
&\lambda|\mu_F| \leq \sum_{k=0}^{j-3}2^{k}\mu_F(F_k) + \sum_{k\geq j-2}2^{k}\capp F_k\leq\\
&2^{j-2}\mu_F(\textup{cl}F) + C\sum_{k\geq j-2}2^{-2k}\capp E \leq \frac{\lambda}{2}\mu_F (\textup{cl}F) + C\lambda^{-2}\capp E.
\end{split}
\end{equation}
Hence 
\[
|\mu_F| \leq C\lambda^{-3}|\mu_E|,
\]
which immediately implies \eqref{l:l5.2.1s}.
\end{proof}

As we will see below, in order to prove the Strong Capacitary Inequality it actually suffices to show that 
\[
\int_{\overline{T}^2}\mathbb{V}^{\mu_E}\,d\mu_F \leq |\mu_E|^{\frac12-\varepsilon}|\mu_F|^{\frac12+\varepsilon},
\]
for some $\varepsilon>0$. H\"{o}lder's inequality gives $|\mu_E|^{\frac12}|\mu_F|^{\frac12}$ on the right-hand side (which is not good enough). On the other hand, in the tree setting, or, more generally, in any setting where the Maximum Principle holds, one has much better estimate
\[
\int_{\overline{T}^2}\mathbb{V}^{\mu_E}\,d\mu_F \lesssim |\mu_F|.
\]

\noindent \textit{Estimates of the potentials of the level sets.}

Applying Lemma \ref{l:l5.2.1} to the sets $E_k$ and $E_j,\; k\geq j$, yields
\[
\int_{\overline{T}^2}\mathbb{V}^{\mu_j}\,d\mu_k \lesssim |\mu_k|^{\frac23}|\mu_j|^{\frac13},
\]
since $E_k\subset E_j$. By H\"{o}lder's inequality
\begin{equation}\notag
\begin{split}
&\sum_{k\in\mathbb{Z}}\sum_{j\leq k}2^{k+j}\int_{\overline{T}^2}\mathbb{V}^{\mu_j}\,d\mu_k \lesssim \sum_{k\in\mathbb{Z}}\sum_{j\leq k}2^{k+j}|\mu_k|^{\frac23}|\mu_j|^{\frac13} = \\
&\sum_{k\in\mathbb{Z}}2^{\frac43k}|\mu_k|^{\frac23}\cdot 2^{-\frac13k}\sum_{j\leq k}2^j|\mu_j|^{\frac13} \leq \left(\sum_{k\in\mathbb{Z}}2^{2k}|\mu_k|\right)^{\frac23}\left(\sum_{k\in\mathbb{Z}}2^{-k}\left(\sum_{j\leq k}2^j|\mu_j|^{\frac13}\right)^3\right)^{\frac13}.
\end{split}
\end{equation}
Another application of H\"{o}lder's inequality to the second term on the right-hand side gives
\begin{equation}\notag
\begin{split}
&\sum_{k\in\mathbb{Z}}2^{-k}\left(\sum_{j\leq k}2^j|\mu_j|^{\frac13}\right)^3 = \sum_{k\in\mathbb{Z}}2^{-k}\left(\sum_{j\leq k}2^{\frac16j}2^{\frac{5}{6}j}|\mu_j|^{\frac13}\right)^3 \lesssim\\
&\sum_{k\in\mathbb{Z}}2^{-k}\sum_{j\leq k}2^{\frac12j}\sum_{j\leq k}2^{\frac52j}|\mu_j| \lesssim \sum_{k\in\mathbb{Z}}2^{2k}|\mu_k|.
\end{split}
\end{equation}
Gathering the estimates we obtain
\[
\sum_{k\in\mathbb{Z}}\sum_{j\leq k}2^{k+j}\int_{\overline{T}^2}\mathbb{V}^{\mu_j}\,d\mu_k \lesssim  \sum_{k\in\mathbb{Z}}2^{2k}|\mu_k|,
\]
which is \eqref{e:35}.

We note that in the last part of the proof we did not use the fact that the sets $E_k$ are generated by the function $f$. Indeed, \eqref{e:35} holds for any nested sequence $\{E_k\}$ of sets on the distinguished boundary.
\subsection{Rearrangement and the energy decay}\label{ss:5.iii.5}
Before proceeding to the proof of Rearrangement Lemma we show how one can also deduce the energy decay rate of a measure outside its support. 
See e.g.\cite{AHMV18bis} for some other applications.
\begin{proposition}\label{l:l5.11}
Assume $\mu\geq0$ is a Borel measure on $(\partial T)^2$ such that $\mathbb{V}^{\mu} \geq 1$ on $\supp\mu$. Then for any $0<\delta \leq 1$ one has 
\begin{equation}\notag
\mathcal{E}_{\delta}[\mu] \lesssim \delta^{\frac13}\mathcal{E}[\mu].
\end{equation}
In particular, there exists $\delta>0$ such that
\[
\mathcal{E}[\mu] - \mathcal{E}_{\delta}[\mu] = \sum_{\alpha:\;\mathbb{V}^{\mu}(\alpha)\geq\delta}(\mathbb{I}^*\mu)^2(\alpha) \geq \frac{9}{10}\mathcal{E}[\mu].
\]
\end{proposition}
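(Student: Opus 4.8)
The plan is to run a layer-cake decomposition of $\mathcal{E}_\delta[\mu]=\int_{(\partial T)^2}\mathbb{V}^\mu_\delta\,d\mu$ against the measure $\mu$, splitting the range of $\mathbb{V}^\mu_\delta$ at a threshold $\Lambda$ which will be taken to be $\Lambda=\delta^{1/3}$, and to control the high part using the Rearrangement Lemma \ref{l:l5.1}. We may assume $\mathcal{E}[\mu]<\infty$ (otherwise there is nothing to prove, and then Lemma \ref{l:l5.1} applies). Two elementary observations will be used: since $E^\delta\subseteq T^2$ we have $\mathcal{E}_\delta[\mu]\le\mathcal{E}[\mu]$; and the hypothesis $\mathbb{V}^\mu\ge1$ on $\supp\mu$ gives
\[ \mu\big((\partial T)^2\big)=\int_{\supp\mu}1\,d\mu\le\int_{\overline{T}^2}\mathbb{V}^\mu\,d\mu=\mathcal{E}[\mu]. \]

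For the low part, $\int_{\{\mathbb{V}^\mu_\delta\le\Lambda\}}\mathbb{V}^\mu_\delta\,d\mu\le\Lambda\,\mu((\partial T)^2)\le\Lambda\,\mathcal{E}[\mu]$. For the high part set $A:=E_{\delta,\Lambda}=\{\mathbb{V}^\mu_\delta>\Lambda\}$ and use the layer-cake identity $\int_A\mathbb{V}^\mu_\delta\,d\mu=\Lambda\,\mu(A)+\int_\Lambda^\infty\mu(E_{\delta,\lambda})\,d\lambda$. The crux is a bound on $\mu(E_{\delta,\lambda})$ for each fixed $\lambda\ge\Lambda\ge\delta$: Lemma \ref{l:l5.1} provides $\varphi_\lambda\ge0$ on $T^2$ with $\mathbb{I}\varphi_\lambda>\lambda$ on $E_{\delta,\lambda}$ and $\|\varphi_\lambda\|_{\ell^2(T^2)}^2\lesssim\tfrac{\delta}{\lambda}\mathcal{E}_\delta[\mu]$; pairing against $\mu$, using the adjoint identity $\int_{\overline{T}^2}\mathbb{I}\varphi_\lambda\,d\mu=\langle\varphi_\lambda,\mathbb{I}^*\mu\rangle_{\ell^2(T^2)}$ (Tonelli), and Cauchy--Schwarz gives
\[ \lambda\,\mu(E_{\delta,\lambda})\le\int_{\overline{T}^2}\mathbb{I}\varphi_\lambda\,d\mu=\langle\varphi_\lambda,\mathbb{I}^*\mu\rangle\le\|\varphi_\lambda\|_{\ell^2(T^2)}\,\mathcal{E}[\mu]^{1/2}\lesssim\Bigl(\tfrac{\delta}{\lambda}\Bigr)^{1/2}\mathcal{E}_\delta[\mu]^{1/2}\mathcal{E}[\mu]^{1/2}, \]
that is, $\mu(E_{\delta,\lambda})\lesssim\delta^{1/2}\lambda^{-3/2}\mathcal{E}_\delta[\mu]^{1/2}\mathcal{E}[\mu]^{1/2}$. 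Substituting this into the layer-cake identity, and using $\int_\Lambda^\infty\lambda^{-3/2}\,d\lambda\approx\Lambda^{-1/2}$, one gets $\int_A\mathbb{V}^\mu_\delta\,d\mu\lesssim\delta^{1/2}\Lambda^{-1/2}\mathcal{E}_\delta[\mu]^{1/2}\mathcal{E}[\mu]^{1/2}\le\delta^{1/2}\Lambda^{-1/2}\mathcal{E}[\mu]$.

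Adding the two pieces yields $\mathcal{E}_\delta[\mu]\lesssim\big(\Lambda+\delta^{1/2}\Lambda^{-1/2}\big)\mathcal{E}[\mu]$, and the choice $\Lambda=\delta^{1/3}$ — which satisfies $\Lambda\ge\delta$ for $0<\delta\le1$, so that Lemma \ref{l:l5.1} is indeed available for every $\lambda\ge\Lambda$ — balances the two terms and gives $\mathcal{E}_\delta[\mu]\lesssim\delta^{1/3}\mathcal{E}[\mu]$. (Absorbing the high part via Young's inequality instead and optimizing in $\Lambda$ would even yield the exponent $\tfrac12$, but $\tfrac13$ suffices.) For the last assertion, pick $\delta$ so small that the implied constant times $\delta^{1/3}$ is at most $\tfrac1{10}$; then $\mathcal{E}_\delta[\mu]\le\tfrac1{10}\mathcal{E}[\mu]$, and since $\mathcal{E}[\mu]-\mathcal{E}_\delta[\mu]=\sum_{\alpha\in T^2\setminus E^\delta}(\mathbb{I}^*\mu)^2(\alpha)=\sum_{\alpha:\ \mathbb{V}^\mu(\alpha)>\delta}(\mathbb{I}^*\mu)^2(\alpha)$, the claimed lower bound $\mathcal{E}[\mu]-\mathcal{E}_\delta[\mu]\ge\tfrac9{10}\mathcal{E}[\mu]$ follows. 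The only genuinely difficult ingredient is the Rearrangement Lemma \ref{l:l5.1} itself; granting it, the steps that require care are the choice of the splitting level $\Lambda=\delta^{1/3}$ and keeping track of where the hypothesis $\mathbb{V}^\mu\ge1$ on $\supp\mu$ is invoked (only in the low-part bound, via $\mu((\partial T)^2)\le\mathcal{E}[\mu]$, but there it is essential).
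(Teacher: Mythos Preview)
Your proof is correct and follows essentially the same approach as the paper: both arguments apply the Rearrangement Lemma \ref{l:l5.1} to bound $\mu(E_{\delta,\lambda})$ via duality and Cauchy--Schwarz, run a layer-cake (the paper uses a dyadic version), and control the low part through $|\mu|\le\mathcal{E}[\mu]$ from the hypothesis $\mathbb{V}^\mu\ge1$ on $\supp\mu$. The only cosmetic difference is that the paper chooses its cutoff level $2^{-N}\approx\eta_\delta=\mathcal{E}_\delta[\mu]/\mathcal{E}[\mu]$ and closes via a bootstrap inequality $\eta_\delta\le\tfrac15\eta_\delta+C\eta_\delta^{-1/2}\delta^{1/2}$, whereas you fix $\Lambda=\delta^{1/3}$ directly and balance; your parenthetical observation that Young's inequality yields the sharper exponent $\tfrac12$ is also valid.
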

\begin{proof}
Fix $\delta\in (0,1]$, and let $\eta_{\delta}$ be such that
\[
\eta_{\delta} := \frac{\mathcal{E}_{\delta}[\mu]}{\mathcal{E}[\mu]}.
\]
Given $k\in\mathbb{Z}$ let $F_k := \{\alpha\in\supp\mu:\; 2^{-k}\geq \mathbb{V}_{\delta}(\alpha) > 2^{-k-1}\}$.  Applying the rearrangement procedure to $\lambda = 2^{-k}\geq \delta$ we obtain a function $\varphi_k$ that satisfies \eqref{e:l5.1s}. In particular, $\mathbb{I}\varphi_k \geq 2^{-k}$ on $F_k$, therefore by Tonelli's theorem and \eqref{e:l5.1s.2} one has
\begin{equation}\notag
\begin{split}
&2^{-k}\mu(F_{k}) = \int_{F_k}2^{-k}\,d\mu \leq \int_{F_k}\mathbb{I}\varphi_k\,d\mu = \int_{(\partial T)^2}\chi_{F_k}(\omega)\left(\sum_{\beta\geq\omega}\varphi_k(\beta)\right)\,d\mu(\omega) \leq\\ &\int_{(\partial T)^2}\left(\sum_{\beta\geq\omega}\varphi_k(\beta)\right)\,d\mu(\omega) =
\sum_{\beta\in T^2}\varphi_k(\beta)(\mathbb{I^*}\mu)(\beta) \leq \left(\sum_{\beta\in T^2}\varphi_k^2(\beta)\right)^{\frac{1}{2}}\left(\sum_{\beta\in T^2}(\mathbb{I}^*\mu)^2(\beta)\right)^{\frac{1}{2}} \leq \\ &C_0\left(2^k\delta\mathcal{E}_{\delta}[\mu]\right)^{\frac12}\mathcal{E}^{\frac{1}{2}}[\mu] \leq C_02^{\frac{k}{2}}\delta^{\frac{1}{2}}\mathcal{E}[\mu]
\end{split}
\end{equation}
for some absolute constant $C_0>1$. Set $N = N(\delta)$ to be such that $2^{-N+1} \geq \frac{1}{(10C_0)^2}\eta_{\delta} \geq 2^{-N}$. If $\eta_{\delta} \leq 2(10C_0^2)^2\delta$, then the result follows immediately, hence we may assume that $2^{-N} \geq \delta$.
Summing up over $k$ we realize that
\begin{multline*}
\mathcal{E}_{\delta}[\mu] = \int_{(\partial T)^2}\mathbb{V}_{\delta}^{\mu}\,d\mu \leq2\sum_{k=-\infty}^{+\infty}\int_{F_k}\mathbb{V}_{\delta}^{\mu}\,d\mu \\ \leq 2\sum_{k=-\infty}^N2^{-k}\mu(F_k) + 2^{-N+1}\mu \left( \left\{\alpha\in \supp\mu:\; \mathbb{V}_{\delta}^{\mu}(\alpha)\leq 2^{-N}\right\}\right) \\ \leq
2\sum_{k=-\infty}^N2^{-k}\mu(F_k) + 2^{-N+1}|\mu| \leq 2\sum_{k=-\infty}^N2^{-k}\mu(F_k) + 2^{-N+1}\mathcal{E}[\mu],
\end{multline*}
since $\mathcal{E}[\mu] = \int_{(\partial T)^2}\mathbb{V}^\mu\,d\mu$ and $\mathbb{V}^{\mu} \geq1$ on the support of $\mu$. Therefore
\[
\mathcal{E}_{\delta}[\mu] = \eta_{\delta}\mathcal{E}[\mu] \leq 2^{-N+1}\mathcal{E}[\mu] + 2C_0\sum_{k\leq N}2^{\frac{k}{2}}\delta^{\frac12}\mathcal{E}[\mu],
\]
hence
\[
\eta_{\delta} \leq \frac{1}{5}\eta_{\delta} + 4C_02^{\frac{N}{2}}\delta^{\frac{1}{2}} \leq\frac{1}{5}\eta_{\delta} + \frac45\eta_{\delta}^{-\frac12}\delta^{\frac12},
\]
and we are done.
\end{proof}
\subsection{Proof of Lemma \ref{l:l5.1}. Reducing dimension.}\label{ss:5.iii}

We assume that $\delta = 1$ (otherwise we just rescale by replacing $\lambda$ by $\lambda/\delta$ in the $\delta=1$ statement), and from now on we write $E_{\lambda}$ instead of $E_{1,\lambda}$.

We construct the function $\varphi$ that satisfies \eqref{e:l5.1s}. It is done separately on each layer of the form $T_x\times\{\alpha_y\},\;\alpha_y\in T_y$ (although the statement we wish to prove is symmetric in the two components $x$ and $y$, our argument is not, and it can be of course developed exchanging their roles).
More precisely, for every $\alpha_y\in T_y$ we produce a function $\varphi_{\alpha_y}: \overline{T}^2\rightarrow\mathbb{R}_+$ such that \textit{(a)} it is supported on the layer $R = T_x\times\{\alpha_y\}$; \textit{(b)} on a certain subset $A_R$ of the set $E_{\lambda}\cap(\partial T_x\times \mathcal{S}(\alpha_y))$ it gives at least as much potential as $\mu$ restricted to this layer 
\begin{equation}\label{e:41.a}
(\mathbb{I}\varphi_{\alpha_y})(\omega) = \sum_{\alpha\geq\omega}\varphi_{\alpha_y}(\alpha) = \sum_{\alpha_x\geq\omega_x}\varphi_{\alpha_y}(\alpha_x,\alpha_y) \geq \sum_{\alpha_x\geq\omega_x:\;\alpha\in E^{1}}(\mathbb{I}^*\mu)(\alpha_x,\alpha_y),\quad \omega\in E_{R};
\end{equation}
\textit{(c)} its $\ell^2$-norm is much smaller than the energy of $\mu\vert_{R}$
\begin{equation}\label{e:41.b}
\|\varphi_{\alpha_y}\|^2_{\ell^2(T^2)} = \sum_{\alpha\in T^2}\varphi_{\alpha_y}^2(\alpha_x,\alpha_y) =\sum_{\alpha_x\in T_x}\varphi_{\alpha_y}^2(\alpha_x,\alpha_y) \lesssim \frac{1}{\lambda}\sum_{\alpha_x\in T_x:\;\alpha\in E^1}(\mathbb{I}^*\mu)^2(\alpha_x,\alpha_y),
\end{equation}
where we have recalled that $\varphi_{\alpha_y}$ is supported only on $T_x\times \{\alpha_y\}$.
Each layer $T_x\times\{\alpha_y\}$ is essentially a dyadic tree, and the (restricted) potential of $\mu$ exhibits one-dimensional behaviour there, so we can consider the problem in the dyadic tree setting and use one-dimensional arguments.

Finally we set $\varphi = \frac32\sum_{\alpha_y\in T_y}\varphi_{\alpha_y}$, and show that $\varphi$ satisfies \eqref{e:l5.1s.1} and \eqref{e:l5.1s.2}; the second inequality immediately follows from \eqref{e:41.b}, since $T^2 = \bigcup_{\alpha_y\in T_y}T_x\times\{\alpha_y\}$ and the supports of $\varphi_{\alpha_y}$ are disjoint.

\noindent \textit{Construction of $\varphi_{\alpha_y}$}\\
Given $\gamma\in \overline{T}$ we define $\partial\mathcal{S}(\gamma) := \mathcal{S}(\gamma)\cap\partial T$ to be the boundary successor set of $\gamma$.
Fix a point $\alpha_y\in T_y$, and let
\[
A_R = \{\omega\in E_{\lambda}\cap (\partial T_x\times \partial\mathcal{S}(\alpha_y)): \mathbb{V}^{\mu}_1(\omega_x,\alpha_y) > \frac{\lambda}{3}\}.
\]
In other words, $\omega\in E_{\lambda}$ is in $A_R$, if $\omega_y\leq \alpha_y$, and the (restricted) potential of $\mu$ at the 'fiber' $(\omega_x,\alpha_y)$ is large enough. Define $F_R$ to be the projection of $A_R$ on the coordinate tree $T_x$,
\[
F_{R} = \{\omega_x\in\partial T_x:\; \textup{there exists}\; \omega_y\leq\alpha_y\;\textup{such that}\; (\omega_x,\omega_y)\in A_R\}.
\]
Observe that $F_R$ is an open set in $T_x$.
\begin{SCfigure}[][h]
\centering
\caption{\;\\$\times$: points in $E_{\lambda}$\\
$\otimes$: points in $A_R$\\
$\bigcirc$: points in $F_R$}

\includegraphics[height=8cm]{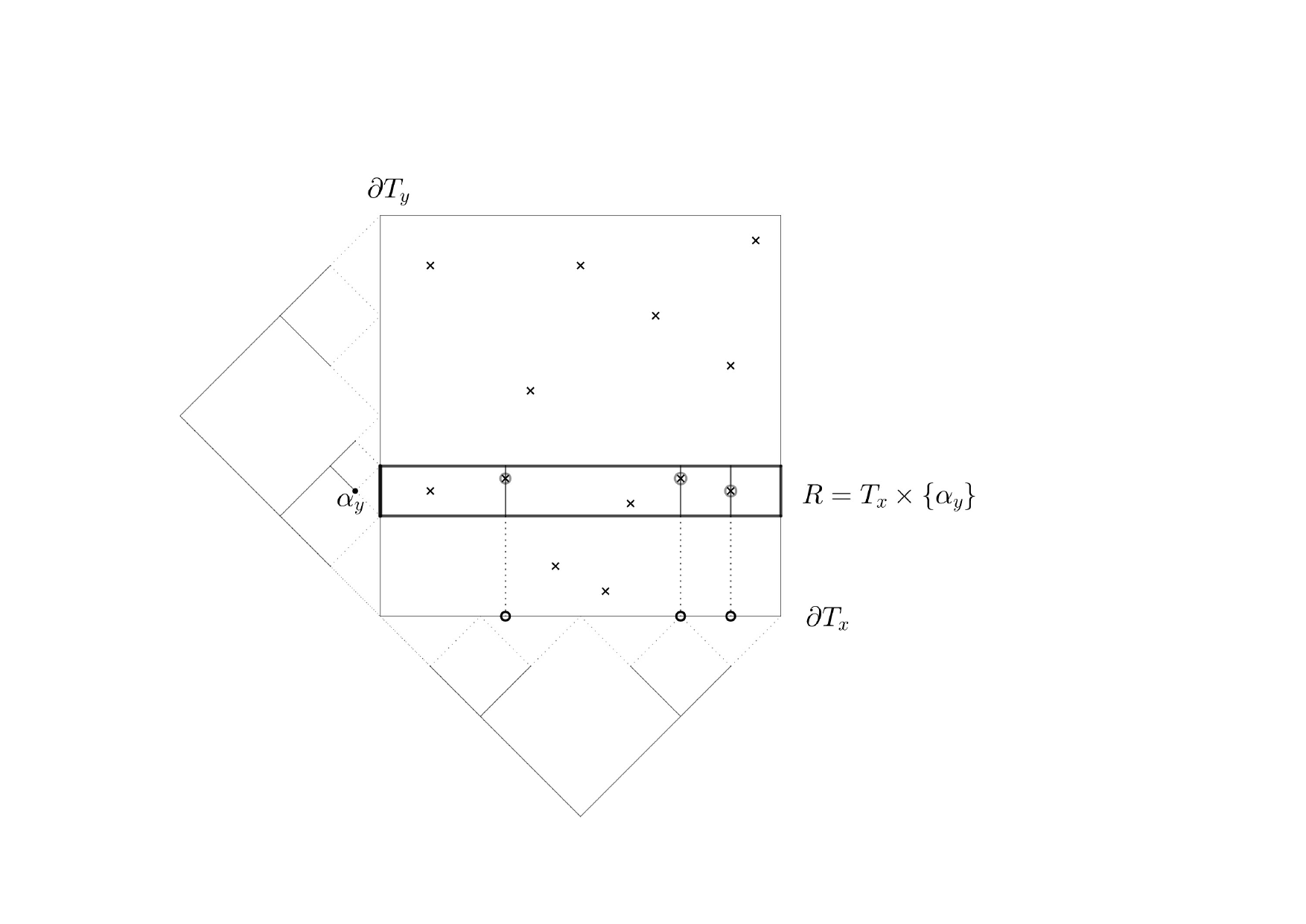}
\end{SCfigure}

We proceed by performing the dimension reduction argument --- we restate our problem on the dyadic tree $T_x$. To do so we introduce auxiliary functions $f_R$ and $g_R$ supported on $T_x$. Let
\[
f_R(\beta_x) :=  \int_{\partial\mathcal{S}(\beta_x)}\int_{\partial\mathcal{S}(\alpha_y)}d\mu(\omega_x,\omega_y) = (\mathbb{I}^*\mu)(\beta_x,\alpha_y),
\]
if $(\beta_x,\alpha_y) \in E^1$ (i.e. if $\mathbb{V}^{\mu}(\beta_x,\alpha_y) \leq 1$),
and $f_R(\beta_x) := 0$ otherwise. Next,
\[
g_R(\beta_x) := \sum_{\beta_y\geq\alpha_y:\; (\beta_x,\beta_y)\in E^1}\int_{\partial\mathcal{S}(\beta_x)}\int_{\partial\mathcal{S}(\beta_y)}d\mu(\omega_x,\omega_y) = \sum_{\beta_{y}\geq\alpha_y:\;(\beta_x,\beta_y)\in E^1}(\mathbb{I}^*\mu)(\beta_x,\beta_y).
\]
Therefore 
\[
(Ig_R)(\alpha_x) = \sum_{\beta_x\geq\alpha_x}g_R(\beta_x) = \sum_{\beta\geq\alpha:\; \beta\in E^1}(\mathbb{I}^*\mu)(\beta) = \mathbb{V}^{\mu}_1(\alpha_x,\alpha_y),\quad\alpha_x\in \overline{T}_x,\; \alpha = (\alpha_x,\alpha_y),
\]
and, in particular,
\begin{equation}\label{e:421}
(Ig_R)(\omega_x) = \mathbb{V}^{\mu}_1(\omega_x,\alpha_y)>\frac{\lambda}{3},
\end{equation}
if $\omega_x\in F_R$. On the other hand, if $\alpha_x \in \supp f_R$, then by definition of $E^1$ one has
\begin{equation}\label{e:43}
(Ig_R)(\alpha_x) \leq 1.
\end{equation}
Next we present some crucial properties of $f_R$ and $g_R$.
\begin{lemma}\label{l:auxl}
Let $f_R$ and $g_R$ be as above. Given $\alpha_x\in T_x$ one always has $f_R(\alpha_x) \geq f_R(\alpha_x^-) + f_R(\alpha_x^+)$, where $\alpha_x^{\pm}$ are two children of $\alpha_x$. In particular, the function $If_R$ is positive superharmonic on $T_x$ (i.e. $(If_R)(\alpha_x)\geq \frac{1}{3}\left((If_R)(\alpha_x^+) + (If_R)(\alpha_x^-) + (If_R)(p(\alpha_x))\right)$), and for any $\alpha_x\in T_x$ either $f_R(\alpha_x) =0$, or $f(\beta_x) > 0$ for any $\beta_x\geq\alpha_x$. The same is true for $g_R$.
\end{lemma}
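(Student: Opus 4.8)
\emph{Proof plan for Lemma \ref{l:auxl}.}

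\smallskip

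The one fact that drives everything is the monotonicity of the potential along the order of the bitree: since $\mathbb{V}^{\mu}(\gamma)=\sum_{\beta\in\mathcal{P}(\gamma)}(\mathbb{I}^*\mu)(\beta)$ is a sum of non-negative terms and $\gamma\leq\gamma'$ forces $\mathcal{P}(\gamma)\supseteq\mathcal{P}(\gamma')$, we have $\mathbb{V}^{\mu}(\gamma)\geq\mathbb{V}^{\mu}(\gamma')$ whenever $\gamma\leq\gamma'$. Consequently $E^{1}=\{\mathbb{V}^{\mu}\leq1\}$ is closed under passing to predecessors: if $\gamma\notin E^{1}$ then no $\gamma''\leq\gamma$ lies in $E^{1}$. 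I will combine this with the elementary tree identities $\mathcal{S}(\alpha_x^{-})\sqcup\mathcal{S}(\alpha_x^{+})\subseteq\mathcal{S}(\alpha_x)$ (disjoint union) and $\mathcal{S}(\alpha_x)\supseteq\mathcal{S}(\beta_x)$ for $\beta_x\geq\alpha_x$, together with the fact that $(\mathbb{I}^*\mu)(\beta_x,\alpha_y)=\mu\big(\mathcal{S}(\beta_x)\times\mathcal{S}(\alpha_y)\big)$.

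\smallskip

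For the superadditivity $f_R(\alpha_x)\geq f_R(\alpha_x^-)+f_R(\alpha_x^+)$ I split into cases. If $(\alpha_x,\alpha_y)\notin E^{1}$, then by the monotonicity above $(\alpha_x^{\pm},\alpha_y)\notin E^{1}$ either, so all three quantities vanish. If $(\alpha_x,\alpha_y)\in E^{1}$, then $f_R(\alpha_x^{\pm})\leq(\mathbb{I}^*\mu)(\alpha_x^{\pm},\alpha_y)=\mu\big(\mathcal{S}(\alpha_x^{\pm})\times\mathcal{S}(\alpha_y)\big)$ (with equality if the fiber is in $E^1$ and $0$ otherwise), whence by disjointness and additivity of $\mu$
\[
f_R(\alpha_x^-)+f_R(\alpha_x^+)\leq \mu\big((\mathcal{S}(\alpha_x^-)\cup\mathcal{S}(\alpha_x^+))\times\mathcal{S}(\alpha_y)\big)\leq \mu\big(\mathcal{S}(\alpha_x)\times\mathcal{S}(\alpha_y)\big)=f_R(\alpha_x).
\]
From this the superharmonicity of $If_R$ is a formal computation: for $\alpha_x$ not the root, with children $\alpha_x^{\pm}$ and parent $p(\alpha_x)$, one has $(If_R)(\alpha_x^{\pm})=(If_R)(\alpha_x)+f_R(\alpha_x^{\pm})$ and $(If_R)(p(\alpha_x))=(If_R)(\alpha_x)-f_R(\alpha_x)$; adding the three gives $3(If_R)(\alpha_x)+\big(f_R(\alpha_x^+)+f_R(\alpha_x^-)-f_R(\alpha_x)\big)\leq 3(If_R)(\alpha_x)$, which is the required inequality. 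At the root the inequality holds trivially (no parent term). Positivity of $If_R$ is immediate from $f_R\geq0$.

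\smallskip

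For the dichotomy, suppose $f_R(\alpha_x)>0$; then $(\alpha_x,\alpha_y)\in E^{1}$ and $\mu\big(\mathcal{S}(\alpha_x)\times\mathcal{S}(\alpha_y)\big)>0$. For any $\beta_x\geq\alpha_x$ we have $\mathcal{S}(\beta_x)\supseteq\mathcal{S}(\alpha_x)$, so the mass stays positive, and $(\beta_x,\alpha_y)\geq(\alpha_x,\alpha_y)$ gives $\mathbb{V}^{\mu}(\beta_x,\alpha_y)\leq\mathbb{V}^{\mu}(\alpha_x,\alpha_y)\leq1$, so $(\beta_x,\alpha_y)\in E^{1}$; hence $f_R(\beta_x)>0$. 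Contrapositively, vanishing at some ancestor forces vanishing at $\alpha_x$, which is the assertion. The same three statements for $g_R$ follow by the identical reasoning applied fiberwise: the index set $\{\beta_y\geq\alpha_y:(\alpha_x,\beta_y)\in E^{1}\}$ over which $g_R$ sums only \emph{grows} as $\alpha_x$ moves towards the root (again by monotonicity of $\mathbb{V}^{\mu}$), so enlarging to the index set of $g_R(\alpha_x)$ and then using $\mathcal{S}(\alpha_x^-)\sqcup\mathcal{S}(\alpha_x^+)\subseteq\mathcal{S}(\alpha_x)$ and additivity of $\mu$ gives $g_R(\alpha_x^-)+g_R(\alpha_x^+)\leq g_R(\alpha_x)$; the superharmonicity of $Ig_R$ and the support dichotomy then follow exactly as for $f_R$. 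The only mild subtlety throughout is that the truncation by $E^{1}$ could in principle destroy superadditivity, and this is precisely what the monotonicity of $\mathbb{V}^{\mu}$ rules out; there is no real obstacle beyond bookkeeping.
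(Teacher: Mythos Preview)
Your proof is correct and follows essentially the same approach as the paper's: the paper's one-line proof simply invokes the fact that $E^{1}$ is upward-closed (if $(\beta_x,\beta_y)\in E^{1}$ then $(\gamma_x,\gamma_y)\in E^{1}$ for all $\gamma_x\geq\beta_x$, $\gamma_y\geq\beta_y$), and you have correctly identified this monotonicity of $\mathbb{V}^{\mu}$ as the driving mechanism and spelled out the case analysis and bookkeeping that the paper leaves implicit.
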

\begin{proof}
All of these properties immediately follow from the definition of $f_R$ an $g_R$, and the fact that if $(\beta_x,\beta_y)\in E^1$, then $(\gamma_x,\gamma_y)$  is in $E^1$ as well for any $\gamma_x\geq\beta_x,\; \gamma_y\geq\beta_y$.
\end{proof}

The inequalities \eqref{e:421} and \eqref{e:43} show that $F_R$ is, in a sense, far away from the support of $f_R$, and we can express this property only in terms of $F_R$ and $\supp f_R$. Since $F_R$ is open in $T_x$, we can exhaust it by compacts, i.e. there exists an increasing sequence of compact sets $F_k$ such that $F = \bigcup_{k=1}^{\infty}F_k$. Define $\rho_k$ and $\rho_R$ to be equilibrium measures for $F_k$ and $F_R$. We have $\frac{\lambda}{3}V^{\rho_k} \leq Ig_R$ on $F_k$. By the Domination Principle, given in Lemma \ref{l:A.1.1}, it follows that
\[
\frac{\lambda}{3}V^{\rho_k} \leq Ig_R
\]
everywhere on $T_x$. In particular,
\[
V^{\rho_k}(\tau_x) \leq \frac{3}{\lambda},\quad \tau_x\in\supp f_R.
\]
Since $V^{\rho_k}\rightarrow V^{\rho_R}$ pointwise on $T_x$, we have
\[
V^{\rho_R}(\tau_x) \leq \frac{3}{\lambda},\quad \tau_x\in\supp f_R.
\]

We have moved all pieces of our problem, constructing $\varphi_{\alpha_y}$, to the dyadic tree $T_x$, and its solution is given by the following lemma, the proof of which will be given in the next subsection.
\begin{lemma}[One-dimensional statement]\label{l:l5.2}
Let $F$ be an open set on the boundary $\partial T$ of the dyadic tree $T$ and let $\rho$ be its equilibrium measure. Assume that a function $f:T\rightarrow\mathbb{R}_+$ satisfies 
\begin{equation}\label{e:l5.2c}
V^{\rho}(\alpha) \leq \delta,\quad \alpha \in \supp f
\end{equation}
with some $\delta\leq \frac13$. Then there exists a non-negative measure $\sigma$ such that
\begin{subequations}\label{e:l5.2s}
\begin{eqnarray}
\label{e:l5.2s.1}& V^{\sigma}(\omega) \geq (If)(\omega),\quad \omega\in F;\\
\label{e:l5.2s.2}& \mathcal{E}[\sigma] \lesssim \delta\|f\|^2_{\ell^2(T)}.
\end{eqnarray}
\end{subequations}
\end{lemma}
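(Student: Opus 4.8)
The plan is to construct $\sigma$ by superposing, over the vertices of $\supp f$, localized pieces of the equilibrium measure $\rho$ itself. For $\gamma\in\supp f$ write $\rho|_{\mathcal{S}(\gamma)}$ for the restriction of $\rho$ to the successor set $\mathcal{S}(\gamma)$ (a measure carried by $\overline F\cap\partial\mathcal{S}(\gamma)$), and set
\[
\sigma:=\tfrac32\sum_{\gamma\in\supp f}f(\gamma)\,\rho|_{\mathcal{S}(\gamma)} .
\]
The first step is a restriction identity computed directly on $T$: for every $\gamma$ the complementary potential $V^{\rho|_{\mathcal{S}(\gamma)^{c}}}$ is \emph{constant} on the whole cylinder $\partial\mathcal{S}(\gamma)$, with value $V^{\rho}(\gamma)-d_{T}(\gamma)\rho(\mathcal{S}(\gamma))$ --- indeed a strict descendant $\beta<\gamma$ contributes nothing, since $\mathcal{S}(\beta)\subseteq\mathcal{S}(\gamma)$, while an ancestor $\beta\in\mathcal{P}(\gamma)$ contributes $\rho(\mathcal{S}(\beta))-\rho(\mathcal{S}(\gamma))$. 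Since $V^{\rho}=1$ quasi-everywhere on $F$, this gives $V^{\rho|_{\mathcal{S}(\gamma)}}\equiv c_{\gamma}:=1-V^{\rho}(\gamma)+d_{T}(\gamma)\rho(\mathcal{S}(\gamma))$ quasi-everywhere on $F\cap\partial\mathcal{S}(\gamma)$, while $V^{\rho|_{\mathcal{S}(\gamma)}}\le V^{\rho}\le 1$ everywhere (one-dimensional maximum principle, Lemma~\ref{l:A.1.1}), so $c_{\gamma}\le 1$. Here the hypothesis enters: when $\gamma\in\supp f$ we have $V^{\rho}(\gamma)\le\delta\le\tfrac13$, hence $c_{\gamma}\ge 1-\delta\ge\tfrac23$ and, from $c_{\gamma}\le 1$,
\[
d_{T}(\gamma)\rho(\mathcal{S}(\gamma))=c_{\gamma}-1+V^{\rho}(\gamma)\le\delta,
\qquad\text{so}\qquad
\mathcal{E}[\rho|_{\mathcal{S}(\gamma)}]\le\rho(\mathcal{S}(\gamma))\le\frac{\delta}{d_{T}(\gamma)} .
\]

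The domination \eqref{e:l5.2s.1} is then immediate. If $\omega\in F$ then for $\gamma\in\mathcal{P}(\omega)$ one has $\omega\in F\cap\partial\mathcal{S}(\gamma)$, so that term contributes $\tfrac32 f(\gamma)c_{\gamma}\ge f(\gamma)$ quasi-everywhere, and every other term is non-negative; hence $V^{\sigma}(\omega)\ge\sum_{\gamma\in\mathcal{P}(\omega)\cap\supp f}f(\gamma)=(If)(\omega)$ quasi-everywhere on $F$ (the countable union of exceptional polar sets is polar, and the quasi-everywhere version is what the applications of this lemma use).

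The real work is the energy bound \eqref{e:l5.2s.2}. Expanding,
\[
\mathcal{E}[\sigma]=\tfrac94\sum_{\gamma,\gamma'\in\supp f}f(\gamma)f(\gamma')\,\mathcal{E}\!\left[\rho|_{\mathcal{S}(\gamma)},\rho|_{\mathcal{S}(\gamma')}\right],
\]
and the mutual energies are evaluated through $\mathcal{E}[\mu,\nu]=\sum_{\beta}(I^{*}\mu)(\beta)(I^{*}\nu)(\beta)$: for incomparable $\gamma,\gamma'$ it equals $d_{T}(\gamma\wedge\gamma')\rho(\mathcal{S}(\gamma))\rho(\mathcal{S}(\gamma'))$, and for $\gamma'\le\gamma$ it equals $d_{T}(\gamma)\rho(\mathcal{S}(\gamma))\rho(\mathcal{S}(\gamma'))$ plus lower-order terms supported on the geodesic segment below $\gamma$. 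The diagonal is already $\lesssim\delta\|f\|_{\ell^{2}(T)}^{2}$ by the display above; the obstacle is the off-diagonal sum, because the pieces $\rho|_{\mathcal{S}(\gamma)}$ are \emph{not} orthogonal --- their potentials interact at common ancestors --- so the triangle inequality in the energy norm is useless (the bare series $\sum_{\gamma}1/d_{T}(\gamma)$ diverges). The way around it is to reorganize the double sum by least common ancestors, use $\rho(\mathcal{S}(\gamma))\le\delta/d_{T}(\gamma)$ on $\supp f$, and crucially use that $\{V^{\rho}\le\delta\}$ is predecessor-closed and contains $\supp f$: this reduces the off-diagonal estimate to a weighted Hardy-type inequality on the single tree $T$ --- namely that $\|I^{*}h\|_{\ell^{2}(T)}^{2}\lesssim\delta^{-1}\sum_{\gamma}h(\gamma)^{2}d_{T}(\gamma)^{2}$ for $h$ supported on $\{V^{\rho}\le\delta\}$, applied to $h=f/d_{T}$. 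Establishing this one-dimensional inequality (carried out in the next subsection) is the technical heart of the lemma; everything else is bookkeeping.
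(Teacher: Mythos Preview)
Your measure $\sigma$ actually coincides (up to the harmless constant) with the paper's: since $\supp f$ lies entirely above the stopping-time set $F_\delta=\{V^\rho>\delta\text{ for the first time}\}$, one has $(If)(\omega)=(If)(\beta_\omega)$ for every $\omega\in\overline F$, and hence $\sum_{\gamma\in\supp f}f(\gamma)\,\rho|_{\mathcal S(\gamma)}=\sum_{\beta\in F_\delta}(If)(\beta)\,\rho|_{\partial\mathcal S(\beta)}$. So the construction and the domination step are fine, and indeed your bound $c_\gamma\ge 1-\delta$ is slightly sharper than the paper's $1-2\delta$.

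The gap is in the energy argument. When you replace $\rho(\mathcal S(\gamma))$ by its upper bound $\delta/d_T(\gamma)$ you discard the only thing that makes the off-diagonal sum finite: the $r_\gamma$'s are not arbitrary numbers bounded by $\delta/d_T(\gamma)$, they are values of a \emph{single} measure $\rho$, and in particular $r_\gamma=0$ on any subtree disjoint from $\supp\rho$. Your resulting ``Hardy-type inequality'' $\|I^{*}h\|_{\ell^2}^2\lesssim\delta^{-1}\sum_\gamma h(\gamma)^2 d_T(\gamma)^2$ for $h$ supported on $\{V^\rho\le\delta\}$ is simply false: take $F=\partial\mathcal S(\alpha_0)$ with $d_T(\alpha_0)=N$ large, let $\tau$ be the sibling of the depth-$1$ ancestor of $\alpha_0$; then the entire subtree $\mathcal S(\tau)$ sits inside $\{V^\rho\le\delta\}$ (indeed $V^\rho\equiv|\rho|\approx 1/N$ there), and choosing $h=\chi_{\{d_T=M\}\cap\mathcal S(\tau)}$ gives $\|I^{*}h\|^2\approx 2^{2M}$ against $\delta^{-1}\sum h^2 d_T^2\approx \delta^{-1}M^2 2^M$. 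The actual energy of your $\sigma$ is fine in this example --- those $\gamma$'s contribute nothing because $r_\gamma=0$ --- but your proof does not see this.

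The paper's remedy is exactly to keep $\rho$ in the game: it lifts $\tilde\sigma$ to the stopping level $F_\delta$, decomposes $F_\delta$ dyadically according to the values of $If$, and controls the cross terms via the one-dimensional Domination Principle together with the fact that $V^{\rho_\delta}\le 2\delta$ on $F_\delta$. This yields $\mathcal E[\tilde\sigma_\delta]\le C\delta\,\mathcal E[f,\tilde\sigma_\delta]$, from which the bound follows by positivity of the energy form. You will need an argument of this type; the crude pointwise bound on $r_\gamma$ alone cannot close the estimate.
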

Suppose for the moment that Lemma \ref{l:l5.2} holds. We apply it with $T=T_x$, $F=F_R$, $\delta = \frac{3}{\lambda}$ and $f = f_R$ to obtain a measure $\sigma = \sigma_R$ supported on $F_R\subset\partial T_x$ that satisfies \eqref{e:l5.2s}. Now we define
\[
\varphi_{\alpha_y}(\alpha_x,\alpha_y) := (I^*\sigma_R)(\alpha_x),\quad \alpha_x\in T_x,
\]
and we set $\varphi_{\alpha_y}\equiv 0$ outside of $T_x\times\{\alpha_y\}$. We see that \eqref{e:l5.2s} implies \eqref{e:41.a} and \eqref{e:41.b}. Finally we let
\[
\varphi = \frac32\sum_{\alpha_y\in T_y}\varphi_{\alpha_y}.
\]
We are left to show that $\varphi$ is the desired function, that is, it satisfies \eqref{e:l5.1s}. The inequality \eqref{e:l5.1s.2} follows immediately from \eqref{e:41.b}
\begin{equation}\notag
\begin{split}
&\|\varphi\|^2_{\ell^2(T^2)} = \frac94\sum_{\alpha_y\in T_y}\|\varphi_{\alpha_y}\|^2_{\ell^2(T^2)} = \frac94\sum_{\alpha_y\in T_y}\|(I^*\sigma_R)\|^2_{\ell^2(T_x)}\lesssim
\sum_{\alpha_y\in T_y}\frac{1}{\lambda}\|f_R\|^2_{\ell^2(T_x)} =\\
& \frac{1}{\lambda}\sum_{\alpha = (\alpha_x,\alpha_y):\alpha\in E^1}f_R^2(\alpha_x) = \frac{1}{\lambda}\sum_{\alpha\in E^1}(\mathbb{I}^*\mu)^2(\alpha).
\end{split}
\end{equation}
To prove \eqref{e:l5.1s.1} we use a stopping time argument. Fix a point $\omega\in E_{\lambda}$. We define $\alpha_y(\omega)$ to be the first (with respect to the natural order on $T_y$) point such that the (restricted) potential of $\mu$ on the fiber $(\omega_x,\alpha_y(\omega))$ exceeds $\frac{\lambda}{3}$. In other words, 
\[
\mathbb{V}_1^{\mu}(\omega_x,\alpha_y) > \frac{\lambda}{3},\quad\alpha_y\leq\alpha_y(\omega),
\]
and
\[
\mathbb{V}_1^{\mu}(\omega_x,\alpha_y) \leq \frac{\lambda}{3},\quad\alpha_y>\alpha_y(\omega)
\]
(if $\mathbb{V}_1^{\mu}(\omega_x,o_y)\geq\frac{\lambda}{3}$, we set $\alpha_y(\omega) = o_y$, where $o_y$ is the root of $T_y$). Clearly $\omega\in A_R$ with $R = T_x\times\{\alpha_y\}$ for $\alpha_y\leq\alpha_y(\omega)$ -- remember that $\omega\in A_R$, if $\mathbb{V}_1^{\mu}(\omega_x,\alpha_y)>\frac{\lambda}{3}$. Therefore
\begin{equation}\notag
\begin{split}
&V_1^{\mu}(\omega) = \sum_{\alpha\geq\omega:\;\alpha\in E^1}(\mathbb{I}^*\mu)(\alpha) = \sum_{\alpha_x\geq\omega_x}\sum_{\alpha_y\geq\omega_y}(\mathbb{I}_1^*\mu)(\alpha_x,\alpha_y) = \\
&\sum_{\alpha_x\geq\omega_x}\left(\sum_{\alpha_y(\omega)\geq\alpha_y\geq\omega_y}(\mathbb{I}_1^*\mu)(\alpha_x,\alpha_y) +\sum_{\alpha_y>\alpha_y(\omega)}(\mathbb{I}_1^*\mu)(\alpha_x,\alpha_y)\right),
\end{split}
\end{equation}
where $\mathbb{I}^*_1\mu := \mathbb{I}^*\mu\cdot\chi_{E^1}$.
By the definition of $\alpha_y(\omega)$,
\[
\frac{\lambda}{3} > \sum_{\alpha_x\geq\omega_x}\sum_{\alpha_y>\alpha_y(\omega)}(\mathbb{I}_1^*\mu)(\alpha_x,\alpha_y),
\]
and therefore
\[
\lambda \leq \mathbb{V}_1^{\mu}(\omega_x,\omega_y) \leq \sum_{\alpha_x\geq\omega_x}\sum_{\alpha_y(\omega)\geq\alpha_y\geq\omega_y}(\mathbb{I}_1^*\mu)(\alpha_x,\alpha_y)+\frac{\lambda}{3}.
\]
By \eqref{e:41.a}
\begin{equation}\notag
\sum_{\alpha_x\geq\omega_x}\varphi_{\alpha_y}(\alpha_x,\alpha_y) \geq \sum_{\alpha_x\geq\omega_x}(\mathbb{I}_1^*\mu)(\alpha_x,\alpha_y),
\end{equation}
for $\alpha_y\leq\alpha_{y}(\omega)$. Therefore
\begin{equation}\notag
\begin{split}
&\mathbb{I}\varphi(\omega) = \frac32\sum_{\alpha_y\in T_y}\mathbb{I}\varphi_{\alpha_y}(\omega)= \frac32\sum_{\alpha_y\geq \omega_y}\sum_{\alpha_x\geq\omega_x}\varphi_{\alpha_y}(\alpha_x,\alpha_y)\geq\\
&\frac32\sum_{\alpha_y(\omega)\geq\alpha_y\geq \omega_y}\sum_{\alpha_x\geq\omega_x}\varphi_{\alpha_y}(\alpha_x,\alpha_y) \geq \frac32\sum_{\alpha_y(\omega)\geq\alpha_y\geq\omega_y}\sum_{\alpha_x\geq\omega_x}(\mathbb{I}_1^*\mu)(\alpha_x,\alpha_y)\geq\\
&\frac{3}{2}\frac23\lambda = \lambda,
\end{split}
\end{equation}
proving \eqref{e:l5.1s.1}.
\subsection{Proof of Lemma \ref{l:l5.1}. One-dimensional argument: Lemma \ref{l:l5.2}}\label{ss:5.iv}
As mentioned earlier, the condition \eqref{e:l5.2c} can be interpreted as a statement about the distance between $F$ and $\supp f$, in the sense that these two sets are far from each other. More precisely, if we want to find a function $\varphi$ such that $I\varphi \geq If$ on $F$, there is a more effective, in terms of energy, solution than simply letting $\varphi = f$. A natural approach is to modify the equilibrium measure $\rho$ of $F$, since $\varphi = I^*\rho$ provides the best way of acquiring unit potential on $F$.

The argument below goes as follows: first we split the set $F$ into several parts in such a way that $If$ is constant on each part. Then we modify the equilibrium measure $\rho$ on each part according to the value of $If$ there. Finally we show that the resulting measure satisfies \eqref{e:l5.2s}.\\

\noindent \textit{Partition of $F$}

We start with observing that $V^{\rho}(o) \leq \delta$. Indeed, since $V^{\rho}$ is monotone on $T$ (with respect to the natural order), we see that for any $\omega\in\supp f$
\[
\delta \geq V^{\rho}(\omega) \geq V^{\rho}(o).
\]
This allows us to define the $\delta$-level sets of $V^{\rho}$,
\[
F_{\delta} := \{\beta\in T: V^{\rho}(\beta) >\delta \textup{\,and}\, V^{\rho}(\alpha) \leq \delta,\, \textup{if}\,\alpha>\beta\}.
\]
$F_{\delta}$ is essentially a stopping-time set for $V^{\rho}$. Define $\tilde{F}:= \{\omega\in \textup{cl}F:\; V^{\rho} = 1\}$. Since $F$ is open, we have $V^{\rho} \equiv 1$ on $F$ (see Lemma \ref{l:A.1.1}), hence $F\subset \tilde{F}$. Therefore $\tilde{F}\subset \mathcal{S}(F_{\delta})$. Also we note that, if $\beta\in F_{\delta}$, then
\begin{equation}\notag
\begin{split}
&V^{\rho}(\beta) = V^{\rho}(p(\beta)) + (I^*\rho)(\beta) \leq V^{\rho}(p(\beta)) + (I^*\rho)(o) =\\
&V^{\rho}(p(\beta)) + V^{\rho}(o)\leq \delta + \delta = 2\delta,
\end{split}
\end{equation}
where $p(\beta)$ denotes the immediate parent of $\beta$ in $T$. In particular we see that $\supp f$ is outside $\mathcal{S}(F_{\delta})$,
\[
\supp f\cap \mathcal{S}(F_{\delta}) = \emptyset,
\]
where $\mathcal{S}(F_{\delta}) = \bigcup_{\beta\in F_{\delta}}\mathcal{S}(\beta)$. Also $\mathcal{S}(\beta_1)\cap\mathcal{S}(\beta_2) = \emptyset$ for any pair of (different) points $\beta_1,\beta_2\in F_{\delta}$, so that the sets $\{\partial\mathcal{S}(\beta)\}_{\beta\in F_{\delta}}$ form a disjoint covering of the set $\tilde{F}$.  
Now we define the partition of $\rho$ as follows
\[
\rho_{\beta} = \rho\vert_{\partial\mathcal{S}(\beta)},\quad \beta \in F_{\delta}.
\]
Recall that $\supp\rho = \textup{cl}F\subset\partial T$ and that $\int_{\partial T}V^{\rho}\,d\rho = \capp F = |\rho|$, therefore $\rho(\textup{cl}F\setminus\tilde{F}) = 0$. It follows immediately that
\[
\rho = \sum_{\beta\in F_{\delta}}\rho_{\beta}.
\] 
We are ready to define the measure $\sigma$. Given $\beta\in F_{\delta}$ we set
\[
\tilde{\sigma}_{\beta} := (If)(\beta)\rho_{\beta},
\]
and 
\[
\tilde{\sigma} = \sum_{\beta\in F_{\delta}}\tilde{\sigma}_{\beta}.
\]
Finally we let
\[
\sigma = (1-2\delta)^{-1}\tilde{\sigma}.
\]
We are left to show that $\sigma$ satisfies \eqref{e:l5.2s}.\\

\noindent \textit{Inequality \eqref{e:l5.2s.1}}\\
Fix any $\omega\in F$. There exists exactly one point $\beta_{\omega}\in F_{\delta}$ such that $\beta_{\omega} > \omega$. By definition of $\rho_{\beta}$ we have
\[
(I^*\rho_{\beta{\omega}})(\alpha) = (I^*\rho)(\alpha)
\]
for any $\alpha \leq \beta_{\omega}$. It follows that
\begin{equation}\label{e:53}
\begin{split}
&V^{\rho_{\beta_{\omega}}}(\omega) = \sum_{\alpha\geq\omega}(I^*\rho_{\beta_{\omega}})(\alpha) \geq \sum_{\beta_{\omega}>\alpha\geq\omega}(I^*\rho_{\beta_{\omega}})(\alpha) =\\
&\sum_{\beta_{\omega}>\alpha\geq\omega}(I^*\rho)(\alpha) = V^{\rho}(\omega)-V^{\rho}(\beta_{\omega}) \geq 1-2\delta,
\end{split}
\end{equation}
since $V^{\rho} \equiv 1$ on $F$. Hence
\begin{equation}\notag
\begin{split}
&V^{\sigma}(\omega)=(1-2\delta)^{-1}V^{\tilde{\sigma}}(\omega) \geq (1-2\delta)^{-1}V^{\tilde{\sigma}_{\beta_{\omega}}}(\omega) = \\
&(1-2\delta)^{-1}(If)(\beta_{\omega})V^{\rho_{\beta_{\omega}}}(\omega)\geq (If)(\beta_{\omega}) = (If)(\omega),
\end{split}
\end{equation}
since $f$ has no mass on the set $\mathcal{S}(\beta_{\omega})$.

\noindent \textit{Inequality \eqref{e:l5.2s.2}}

To prove this inequality we do a further partition of $\tilde{\sigma}$ and $\rho$ with respect to the distribution of $If$. First we raise $\tilde{\sigma}$ and $\rho$ to the set $F_{\delta}$,
\begin{equation}\notag
\begin{split}
&\tilde{\sigma}_{\delta}(\beta) := \chi_{F_{\delta}}(I^*\tilde{\sigma})(\beta),\\
&\rho_{\delta}(\beta) := \chi_{F_{\delta}}(I^*\rho)(\beta),\quad \beta\in T.
\end{split}
\end{equation}
Clearly $\tilde{\sigma}_{\delta},\rho_{\delta}$ are supported on $F_{\delta}$, and $(I^*\tilde{\sigma_{\delta}})(\alpha) = (I^*\tilde{\sigma})(\alpha)$ for any $\alpha \geq F_{\delta}$ (by this we mean that $\alpha\geq \theta$ for all $\theta\in F_{\delta}$). Also
\begin{equation}\label{e:57}
\tilde{\sigma}_{\delta}(\beta) = (If)(\beta)\rho_{\delta}(\beta),\quad \beta\in F_{\delta},
\end{equation}
by the definition of $\tilde{\sigma}$.

Next we compute the energy of $\tilde{\sigma}$
\begin{equation}\notag
\begin{split}
&\mathcal{E}[\tilde{\sigma}] = \int_{\overline T} V^{\tilde{\sigma}}\,d\tilde{\sigma} = \int_{\overline T}\sum_{\alpha\geq\omega}(I^*\tilde{\sigma})(\alpha)\,d\tilde{\sigma}(\omega)=\\
&\sum_{\beta\in F_{\delta}}\int_{\mathcal{S}(\beta)}\sum_{\alpha\geq\beta}(I^*\tilde{\sigma})(\alpha)\,d\tilde{\sigma}(\omega) + \sum_{\beta\in F_{\delta}}\int_{\mathcal{S}(\beta)}\sum_{\beta> \alpha\geq\omega}(I^*\tilde{\sigma})(\alpha)\,d\tilde{\sigma}(\omega)\leq\\
&\sum_{\beta\in F_{\delta}}\sum_{\alpha\geq\beta}(I^*\tilde{\sigma}_{\delta})(\alpha)\tilde{\sigma_{\delta}}(\beta) + \sum_{\beta\in F_{\delta}}\int_{\mathcal{S}(\beta)}\sum_{\beta\geq \alpha\geq\omega}(I^*\tilde{\sigma})(\alpha)\,d\tilde{\sigma}(\omega) :=\\
& (I) + (II).
\end{split}
\end{equation}
For the first term we have
\[
(I) = \int_{F_{\delta}}V^{\tilde{\sigma}_{\delta}}\,d\tilde{\sigma}_{\delta} = \mathcal{E}[\tilde{\sigma}_{\delta}].
\]
We expand the second term obtaining
\begin{equation}\label{e:55}
\begin{split}
&(II) = \sum_{\beta\in F_{\delta}}\int_{\mathcal{S}(\beta)}\sum_{\beta\geq \alpha\geq\omega}(I^*\tilde{\sigma})(\alpha)\,d\tilde{\sigma}(\omega) = \sum_{\beta\in F_{\delta}}\int_{\mathcal{S}(\beta)}\sum_{\beta\geq \alpha\geq\omega}(I^*\tilde{\sigma}_{\beta})(\alpha)\,d\tilde{\sigma}_{\beta}(\omega)=\\
&\sum_{\beta\in F_{\delta}}(If)(\beta)\int_{\mathcal{S}(\beta)}\sum_{\beta\geq \alpha\geq\omega}(I^*\rho_{\beta})(\alpha)\,d\tilde{\sigma}_{\beta}(\omega)\leq \sum_{\beta\in F_{\delta}}(If)(\beta)(I^*\tilde{\sigma})(\beta),
\end{split}
\end{equation}
since 
\[
\sum_{\beta\geq \alpha\geq\omega}(I^*\rho_{\beta})(\alpha) = \sum_{\beta\geq \alpha\geq\omega}(I^*\rho)(\alpha)\leq \sum_{\alpha\geq\omega}(I^*\rho)(\alpha)=V^{\rho}(\omega) \leq1
\]
for $\omega\in \textup{cl} F$. We see that
\[
(II) \leq  \int_{\overline T} If\,d\tilde{\sigma}_{\delta} = \sum_{\alpha\in T}f(\alpha)(I^*\tilde{\sigma}_{\delta})(\alpha) =: \mathcal{E}[f,\tilde\sigma_{\delta}].
\]
Hence
\[
\mathcal{E}[\tilde{\sigma}] \leq \mathcal{E}[\tilde{\sigma}_{\delta}] + \mathcal{E}[f,\tilde{\sigma}_{\delta}].
\]
We observe that in order to prove \eqref{e:l5.2s.2} it is enough to show that
\begin{equation}\label{e:61}
\mathcal{E}[\tilde{\sigma}_{\delta}] \leq C\delta\mathcal{E}[f,\tilde{\sigma}_{\delta}]
\end{equation}
for some absolute constant $C>0$. Indeed, by positivity of the energy integral we have
\begin{equation}\notag
\begin{split}
&0\leq\sum_{\alpha\in T}\left((I^*\tilde{\sigma}_{\delta})(\alpha) - C\delta f(\alpha)\right)^2 = \mathcal{E}[\tilde{\sigma}_{\delta}]-2C\delta\mathcal{E}[f,\tilde{\sigma}_{\delta}] + (C\delta)^2\|f\|^2_{\ell^2(T)}=\\
&(\mathcal{E}[\tilde{\sigma}_{\delta}] - C\delta\mathcal{E}[f,\tilde{\sigma}_{\delta}]) + C\delta(C\delta\|f\|^2_{\ell^2(T)}-\mathcal{E}[f,\tilde{\sigma}_{\delta}]),
\end{split}
\end{equation}
so if the first term is negative, the second one must be positive. Hence $\mathcal{E}[f,\tilde{\sigma}_{\delta}] \leq C\delta\|f\|^2_{\ell^2(T)}$, and by \eqref{e:61},
\[
\mathcal{E}[\tilde{\sigma}] \leq \mathcal{E}[\tilde{\sigma}_{\delta}] + \mathcal{E}[f,\tilde{\sigma}_{\delta}] \leq (C\delta)^2\|f\|^2_{\ell^2(T)} + C\delta\|f\|^2_{\ell^2(T)},
\]
which is \eqref{e:l5.2s.2}.

Now for any $k\in\mathbb{Z}$ we define
\begin{equation}\notag
F_{\delta,k} := \{\beta\in F_{\delta}: 2^{k}\leq (If)(\beta) < 2^{k+1}\},
\end{equation}
and we set $\tilde{\sigma}_{\delta,k} = \tilde{\sigma}_{\delta}\vert_{F_{\delta,k}}$, $\rho_{\delta,k} = \rho_{\delta}\vert_{F_{\delta,k}}$.\\
Clearly $\tilde{\sigma}_{\delta} = \sum_{k}\tilde{\sigma}_{\delta,k}$, and
\[
\mathcal{E}[\tilde{\sigma}_{\delta}] = \int_{\overline T}V^{\tilde{\sigma}_{\delta}}\,d\tilde{\sigma}_{\delta} = \sum_{k\in\mathbb{Z}}\sum_{j\in\mathbb{Z}}\int_{\overline T}V^{\tilde{\sigma}_{\delta,j}}\,d\tilde{\sigma}_{\delta,k}\leq 2\sum_{k\in\mathbb{Z}}\sum_{j\leq k}\int_{\overline T}V^{\tilde{\sigma}_{\delta,j}}\,d\tilde{\sigma}_{\delta,k}.
\]
For $j\leq k$, by \eqref{e:57}, we have that $V^{\tilde{\sigma}_{\delta,j}} \leq 2^{j+1}V^{\rho_{\delta,j}}$ on $F_{\delta,k}$. Thus
\begin{equation}\notag
\begin{split}
&\sum_{j\leq k}\int_{\overline T}V^{\tilde{\sigma}_{\delta,j}}\,d\tilde{\sigma}_{\delta,k} \leq \sum_{j\leq k}2^{j+1}\int_{\overline T}V^{\rho_{\delta,j}}\,d\tilde{\sigma}_{\delta,k} \leq 2^{k+1}\int_{\overline T} \sum_{j\leq k}V^{\rho_{\delta,j}}\,d\tilde{\sigma}_{\delta,k}=\\
& 2^{k+1}\int_{\overline T} V^{\sum_{j\leq k}\rho_{\delta,j}}\,d\tilde{\sigma}_{\delta,k}\leq  2^{k+1}\int_{\overline T} V^{\rho_{\delta}}\,d\tilde{\sigma}_{\delta,k} \leq 2^{k+2}\delta|\tilde{\sigma}_{\delta,k}|,
\end{split}
\end{equation}
since $V^{\rho_{\delta}} \leq 2\delta$ on $F_{\delta}$. Summing this estimate over $k$ we obtain
\begin{equation}\notag
\begin{split}
&\sum_{k\in\mathbb{Z}}\sum_{j\leq k}\int_{\overline T}V^{\tilde{\sigma}_{\delta,j}}\,d\tilde{\sigma}_{\delta,k} \leq \sum_{k\in\mathbb{Z}}2^{k+2}\delta|\tilde{\sigma}_{\delta,k}| = \\
&\sum_{k\in\mathbb{Z}}\sum_{\beta\in F_{\delta,k}}2^{k+2}\delta\tilde{\sigma}_{\delta,k}(\beta) \leq 4\delta\sum_{k\in\mathbb{Z}}\sum_{\beta\in F_{\delta,k}}(If)(\beta)\tilde{\sigma}_{\delta,k}(\beta)=\\
&4\delta\sum_{\beta\in F_{\delta}}(If)(\beta)\tilde{\sigma}_{\delta}(\beta) = 4\delta\mathcal{E}[f,\tilde{\sigma}_{\delta}],
\end{split}
\end{equation}
so we get \eqref{e:61}, and therefore \eqref{e:l5.2s.2}.

\section{Concluding remarks}\label{conclusions}

We have characterized the Carleson measures for the Dirichlet space using, as in Stegenga's \cite{stegenga1980}, a Strong Capacitary Inequality.
In the one-parameter case, other characterizations can be given. In \cite{ars2002} and \cite{kermansawyer1988}, the Carleson measures for $\DDD(\DD)$ are defined in 
terms of two, seemingly different, \it one-box testing \rm conditions, in which the advantage is that they have to be verified for single Carleson boxes,
and not unions thereof. The disadvantage is that the measure $\mu$, unlike in the capacitary condition, 
appears on both sides of the testing inequality. 

The bi-parameter non-linear case, $1<p<\infty$, could also be considered; the space under scrutiny would be the tensor product of two copies of an analytic Besov space.
The one dimensional case was considered for example in \cite{wu99} and \cite{ars2002}. Here, we think that the needed tool is a bi-parameter version of Wolff's inequality 
\cite{HW83}, which could be considered as one half of the Muckenhoupt-Wheeden inequality. With that at hand, one could extend a sizeable portion of the
potential theory we have developed here to the non-linear case.

The probabilistic theory underlying bi-parameter linear Potential Theory is that of two-parameter martingales \cite{Cairoli1971, Gundy1980}.
It would be interesting to make this relationship explicit, and to find a way to pass results from one theory to the other.

Much of the Potential Theory we have developed on bi-trees can be applied to yield a Potential Theory on product spaces much more general than $\partial\DD\times\partial\DD$, following, for example, the route taken in \cite{arsw2014}.

The Dirichlet space on the bidisc does not come with a Complete Nevanlinna--Pick kernel. In fact, no tensor product Hilbert space does \cite{Young18}. 
If the kernel had the Complete Nevanlinna-Pick property, the characterization of Carleson measures for $\DDD(\DD^2)$ would as consequence yield the characterization of its 
{\it universal interpolating sequences}, by a recent result of 
Aleman, Hartz, McCarthy, and Richter \cite{AHMR17}. We think this is an interesting open problem, for which we have no guess. See \cite{AM02, Seip04} for a deep and broad discussion of interpolating sequences for Hilbert function spaces.

\section{Appendix}\label{S:A}
In this section we collect several results that were used or mentioned in the main text. First, in Section \ref{S:A.0} we provide the proofs of the more technical results from Section \ref{S:2} regarding the discretization procedure. Then we present some basic properties of bi-logarithmic potentials and equilibrium measures, see Lemma \ref{l:A.1.1}. In Section \ref{S:A.2} we give counterexamples to the maximum and domination principles, in Propositions \ref{p:A.2.1} and \ref{p:A.2.2}, respectively. Finally, in Theorem \ref{t:th.A.3} we show that given a measure on $\overline{T}^2$, we can construct a measure supported on the distinguished boundary, equivalent to the original measure in the sense of potentials. From this we deduce that the capacity of a set is equivalent to the capacity of its boundary projection, see Corollary \ref{c:A.3}.
\subsection{}\label{S:A.0}
We start with  providing some results justifying the discretization of the unit disc (bidisc) via the graphs $\mathfrak{G}$ and $T$. The graph $\mathfrak{G}$ serves as an intermediate point in the discretization scheme between the unit disc and the dyadic tree -- see Section \ref{SS:2.4} for precise definitions. While it is more complicated and rather inconvenient to work with when compared to the tree $T$, its geometry is better suited for representing the unit disc, which is why we use it to justify passing from $\mathbb{D}$ to $T$ (and therefore from $\mathbb{D}^2$ to $T^2$).

First we show that $\mathfrak{G}$ provides a model for the hyperbolic metric on $\overline{\mathbb{D}}$.
\begin{lemma}\label{l:A.0.1}
Given two points $z,w\in\overline{\mathbb{D}}^2$ one has
\begin{equation}\label{e:A.51}
\left|10+\log\frac{1}{1-\bar{z}_1w_1}\right|\left|10+\log\frac{1}{1-\bar{z}_2w_2}\right|\approx d_{\mathfrak{G}^2}(\alpha(z)\wedge\beta(w)) = d_{\mathfrak{G}}(\alpha_x(z)\wedge\beta_x(w))d_{\mathfrak{G}}(\alpha_y(z)\wedge\beta_y(w)),
\end{equation}
where $z = (z_1,z_2)$, $w= (w_1,w_2)$, and $\alpha,\beta\in \overline{T}$ are any of the preimages $\alpha(z) = (\alpha_x(z),\alpha_y(z)) \in \Lambda^{-1}(z),\; \beta(w) = (\beta_x(w),\beta_y(w)) \in \Lambda^{-1}(w)$. We recall that the natural map $\Lambda$ from $\overline{T}^2$ to $\overline{\mathbb{D}}^2$ was defined in Section \ref{SS:2.5}.
\end{lemma}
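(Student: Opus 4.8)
The plan is to factor the estimate into its two coordinates and then reduce to a single elementary inequality. Since $A\approx B$ and $C\approx D$ imply $AC\approx BD$, it suffices to establish the one-variable statement: for $z,w\in\overline{\mathbb{D}}$ and any preimages $\alpha\in\Lambda_0^{-1}(z)$, $\beta\in\Lambda_0^{-1}(w)$ in $\overline{T}$,
\begin{equation*}
\left|10+\log\frac{1}{1-\bar{z}w}\right|\approx d_{\mathfrak{G}}(\alpha\wedge\beta)=\left|\mathcal{P}_{\mathfrak{G}}(\alpha)\cap\mathcal{P}_{\mathfrak{G}}(\beta)\right|;
\end{equation*}
the four-dimensional claim \eqref{e:A.51} then follows by multiplying the two coordinatewise estimates, and \eqref{e:23} is the case $z=w$ up to constants.

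First I would pass to the real part. Since $|z|,|w|\le1$ we have $\Re(1-\bar{z}w)=1-\Re(\bar{z}w)\ge1-|z||w|\ge0$, so $1-\bar{z}w$ lies in the closed right half-plane and $|\Im\log\frac{1}{1-\bar{z}w}|\le\pi/2$; on the other hand $\Re\bigl(10+\log\frac{1}{1-\bar{z}w}\bigr)=10-\log|1-\bar{z}w|\ge10-\log2>\pi/2$, because $|1-\bar{z}w|\le2$. Hence the modulus of $10+\log\frac{1}{1-\bar{z}w}$ is comparable to its real part, and since $t\mapsto10-\log t$ is comparable to $1+\log(1/t)$ on $(0,2]$, it remains to show $1+\log\frac{1}{|1-\bar{z}w|}\approx d_{\mathfrak{G}}(\alpha\wedge\beta)$.

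Next I would invoke the classical comparison $|1-\bar{z}w|\approx(1-|z|)+(1-|w|)+\operatorname{dist}_{\partial\mathbb{D}}(z/|z|,w/|w|)$, with arc-length distance on $\partial\mathbb{D}$ and $z/|z|$ read as an arbitrary unit vector when $z=0$, which follows from $|1-\bar{z}w|\approx|1-|z||w||+|\arg(\bar{z}w)|$ together with $1-|z||w|\approx(1-|z|)+(1-|w|)$. The heart of the proof is then the purely geometric identity
\begin{equation*}
(1-|z|)+(1-|w|)+\operatorname{dist}_{\partial\mathbb{D}}(z/|z|,w/|w|)\approx 2^{-d_{\mathfrak{G}}(\alpha\wedge\beta)}.
\end{equation*}
To prove it I would use the construction of Section~\ref{SS:2.4}: when $\alpha\in T$ and $z\in Q_\alpha$ one has $1-|z|\approx2^{-d_T(\alpha)}$ and $z/|z|\in J_\alpha$, and when $\alpha\in\partial T$ one has $z=\Lambda_0(\alpha)\in\partial\mathbb{D}$, $1-|z|=0$ and $z\in J_{\alpha^n}$ for every truncation $\alpha^n$; similarly for $w,\beta$. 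Using the description $\mathcal{S}_{\mathfrak{G}}(\gamma)\cap T=\{\delta\in T:\ J_\delta\subset J_\gamma\cup J_\gamma^-\cup J_\gamma^+\}$ from Section~\ref{SS:2.4}, a vertex $\gamma$ lies in $\mathcal{P}_{\mathfrak{G}}(\alpha)\cap\mathcal{P}_{\mathfrak{G}}(\beta)$ precisely when both $J_\alpha$ and $J_\beta$ are contained in the triple arc $J_\gamma\cup J_\gamma^-\cup J_\gamma^+$; letting $g$ be the largest generation at which this happens, one checks that $2^{-g}$ is comparable to the left-hand side above and that $d_{\mathfrak{G}}(\alpha\wedge\beta)=g+O(1)$. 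Since $d_{\mathfrak{G}}(\alpha\wedge\beta)\ge1$, this yields $1+\log\frac{1}{|1-\bar{z}w|}\approx1+d_{\mathfrak{G}}(\alpha\wedge\beta)\log2\approx d_{\mathfrak{G}}(\alpha\wedge\beta)$, completing the one-variable estimate and hence the lemma.

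I expect the only genuine work to lie in that last displayed comparison: this is exactly the point where the auxiliary graph $\mathfrak{G}$ is needed, since in the bare tree $T$ two Euclidean-close points can have $\alpha\wedge\beta=o$, whereas adjacency of dyadic arcs is built into $\mathfrak{G}$. The difficulty is entirely bookkeeping — tracking generations, the ambiguity that up to four Whitney boxes may contain a given point, and the boundary cases $|z|=1$ or $|w|=1$ — so that the additive error in $d_{\mathfrak{G}}(\alpha\wedge\beta)=g+O(1)$ is genuinely uniform. Everything else is soft.
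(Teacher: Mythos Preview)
Your proposal is correct and follows essentially the same route as the paper's proof: reduce to one variable, pass from the complex logarithm to $\log\tfrac{1}{|1-\bar zw|}$, compare $|1-\bar zw|$ with the Carleson--box quantity $\max(1-|z|,1-|w|,|\arg(\bar zw)|)$ (the paper phrases this as $|1-\bar zw|\approx|J|$ for the smallest arc $J$ containing $J_\alpha\cup J_\beta$), and then count common $\mathfrak{G}$-ancestors via the triple-arc description of $\mathcal{S}_{\mathfrak{G}}$. One small slip: since each generation $k\le g$ contributes a bounded number (between $1$ and an absolute constant, not exactly one) of admissible $\gamma$, the correct conclusion is $d_{\mathfrak{G}}(\alpha\wedge\beta)\approx g$ rather than $=g+O(1)$; this is harmless, as only the multiplicative comparison is needed.
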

\begin{proof}
Clearly it is enough to show \eqref{e:A.51} separately for each coordinate; we show that
\begin{equation}\label{e:A.52}
\left|10 + \log\frac{1}{1-\bar{z_1}w_1}\right| \approx d_{\mathfrak{G}}(\alpha_x\wedge\beta_x).
\end{equation}
Note that
\[
\left|10 + \log\frac{1}{1-\bar{z}_1w_1}\right| \approx 10 + \log\frac{1}{|1-\bar{z}_1w_1|},
\] 
since $1+\log\frac{1}{|1-\bar{z}_1w_1|} >0$ for any pair of points $z_1,w_1\in\overline{\mathbb{D}}$.

We start by assuming  $z_1,w_1 \in\mathbb{D}$. Recall that there exist uniquely defined $\alpha_x,\beta_x\in T$ such that
$z_1 \in Q_{\alpha_x},\; w_1\in Q_{\beta_x}$.  Let $J$ be the smallest interval, not necessarily dyadic, containing both $J_{\alpha_x}$ and $J_{\beta_x}$. We claim that $d_{\mathfrak{G}}(\alpha_x\wedge\beta_x) \approx \log |J|^{-1}$. Indeed, in order for $\gamma$ to be a common point of the sets $\mathcal{P}_{\mathfrak{G}}(\alpha_x)$ and $\mathcal{P}_{\mathfrak{G}}(\beta_x)$, the dyadic interval $J_{\gamma}$ has to be large, $|J_{\gamma}| \geq \frac12\max(|J_{\alpha_x}|, |J_{\beta_x}|)$, and $3J_{\gamma}$ must have non-empty intersection with both $J_{\alpha_x}$ and $J_{\beta_x}$. The number of such intervals is approximately $\log\frac{1}{|J|}$. On the other hand, an elementary computation yields that
\[
|1-\bar{z}_1w_1|\approx \max(1-|z_1|,1-|w_1|, |\arg(\bar{z}_1w_1)|) \approx |J|.
\]

Now let $z_1,w_1\in \partial\mathbb{D}$. If these two points coincide, then $d_{\mathfrak{G}}=\infty$ regardless of the choice of pre-images $\alpha_x \in \Lambda_0^{-1}(\{z_1\}),\; \beta_x \in \Lambda_0^{-1}(\{w_1\})$. Otherwise we let $J$ to be the smallest interval containing $z_1$ and $w_1$, and repeat the above argument. 

The cases $z_1\in \mathbb{D},\; w_1\in \partial\mathbb{D}$ and $w_1\in \partial\mathbb{D},\; z_1\in \mathbb{D}$ are dealt with similarly.
\end{proof}

Next we investigate the properties of the map $\Lambda$, and the induced pull-backs and push-forwards of measures, as introduced in Section \ref{SS:2.5}.
\begin{lemma}\label{l:A.8}
  The Lipschitz map $\Lambda : \overline{T}^2 \rightarrow \overline{\mathbb{D}}^2$ induces maps $\Lambda_{\ast} : \tmop{Meas}^+ ((\partial
  T)^2) \rightarrow \tmop{Meas}^+ ((\partial\mathbb{D})^2))$ and $\Lambda^{\ast} : \tmop{Meas}^+ (\overline{\mathbb{D}}^2) \rightarrow
  \tmop{Meas}^+ (\overline{T}^2)$, $\tmop{Meas}^+$ denoting the space of non-negative Borel measures on the respective set, with the following properties:
  \begin{itemize}
    \item $\Lambda_{\ast} \Lambda^{\ast} \mu = \mu $, if $\mu$ is supported on $(\partial\mathbb{D})^2$.
    
    \item If $\mu\geq0$ is a Borel measure on $(\partial\mathbb{D})^2$ with finite $(\frac12,\frac12)$-energy,
    then $\Lambda^{\ast} \mu (E) = \mu (\Lambda (E))$ for all measurable sets
    $E$ in $(\partial T)^2$. In particular, $\mu(\Delta) =0$, where $\Delta$ is the dyadic grid on $(\partial\mathbb{D})^2$, the set of points with at least one dyadic coordinate.
    \item If $\nu\geq0$ is a Borel measure on $(\partial T)^2$ with finite energy, then $\nu (\partial T_x\times \{\omega_y\} \bigcup \{\omega_x\}\times\partial T_y) = 0$ for any $\omega \in (\partial T)^2$.
    
    \item For such a measure $\nu$, it holds that $\Lambda^{\ast} \Lambda_{\ast} \nu = \nu$.
  \end{itemize}
 
\end{lemma}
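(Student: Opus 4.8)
The plan is to verify the four bullet points in sequence, since each one is used in establishing the next, and the first two essentially recapitulate constructions already laid out in Section \ref{SS:2.5}. For the first bullet, $\Lambda_{\ast}\Lambda^{\ast}\mu = \mu$ when $\mu$ is supported on $(\partial\mathbb{D})^2$: I would simply unwind the definitions. By definition $\Lambda^{\ast}\mu(E) = \int_{(\partial\mathbb{D})^2} \frac{\sharp(\Lambda^{-1}(\{z\})\cap E)}{\sharp(\Lambda^{-1}(\{z\}))}\,d\mu(z)$, and pushing forward, $(\Lambda_{\ast}\Lambda^{\ast}\mu)(F) = \Lambda^{\ast}\mu(\Lambda^{-1}(F)) = \int_{(\partial\mathbb{D})^2}\frac{\sharp(\Lambda^{-1}(\{z\})\cap\Lambda^{-1}(F))}{\sharp(\Lambda^{-1}(\{z\}))}\,d\mu(z)$. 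For $z\in F$ the ratio is $1$ and for $z\notin F$ it is $0$, so the integral is $\mu(F)$. The only care needed is the measurability of the integrand, which was established in Section \ref{SS:2.5}.

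The crux of the lemma is the second bullet: for $\mu$ with finite $(\tfrac12,\tfrac12)$-energy, $\Lambda^{\ast}\mu(E) = \mu(\Lambda(E))$, equivalently $\mu(\Delta) = 0$ where $\Delta$ is the set of points on $(\partial\mathbb{D})^2$ with at least one dyadic coordinate. This is where the real work lies. The point is that the map $z\mapsto\sharp(\Lambda^{-1}(\{z\}))$ is $1$ off $\Delta$ and $\geq 2$ on $\Delta$, so $\Lambda^{\ast}\mu$ and $\mu\circ\Lambda$ differ only on $\Delta$, and the discrepancy vanishes once $\mu(\Delta) = 0$. To prove $\mu(\Delta) = 0$: $\Delta$ is a countable union of coordinate slices $\{e^{i\theta_0}\}\times\partial\mathbb{D}$ and $\partial\mathbb{D}\times\{e^{i\theta_0}\}$ over dyadic $\theta_0 = 2\pi j 2^{-n}$; by countable subadditivity it suffices to show each such slice is null for $\mu$. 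If $\mu(\{e^{i\theta_0}\}\times\partial\mathbb{D}) = c > 0$, then restricting the kernel to this slice, the energy $\int\int b_{(\frac12,\frac12)}(z,\zeta)\,d\mu(z)\,d\mu(\zeta)$ contains the contribution $\int\int |\theta_0 - \theta_0|^{-1/2}|\eta_2 - \xi_2|^{-1/2}\,d\mu'(\eta_2)\,d\mu'(\xi_2)$ from the first coordinate, where the factor $|\theta_0-\theta_0|^{-1/2} = +\infty$. Making this rigorous requires a mild truncation/monotone convergence argument: restrict $\mu$ to a small neighborhood where the first coordinates are within $\varepsilon$ of $e^{i\theta_0}$, estimate the energy from below by $\varepsilon^{-1/2}$ times (something bounded below), and let $\varepsilon\to 0$ to contradict finiteness of the energy. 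I expect this is the main obstacle — not conceptually deep, but requiring a careful lower bound on the self-energy of a measure concentrated near a slice, together with the observation that a measure of positive mass cannot have the first-coordinate projection concentrated at a single point with finite $\tfrac12$-energy in that variable.

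For the third bullet, that a finite-energy measure $\nu$ on $(\partial T)^2$ charges no mixed slice $\partial T_x\times\{\omega_y\}\cup\{\omega_x\}\times\partial T_y$, the argument is the discrete analogue of the second bullet, and in fact simpler. The energy of $\nu$ equals $\sum_{\alpha\in T^2}(\mathbb{I}^{\ast}\nu)^2(\alpha)$ by \eqref{e:337}. If $\nu(\partial T_x\times\{\omega_y\}) = c > 0$, pick the geodesic $\omega_y = \{\omega_y^n\}$; then for each $n$, $\mathcal{S}(\alpha)\supseteq\partial T_x\times\{\omega_y\}$ whenever $\alpha = (o_x,\omega_y^n)$, so $(\mathbb{I}^{\ast}\nu)(o_x,\omega_y^n)\geq c$ for all $n$, giving $\sum_{n}(\mathbb{I}^{\ast}\nu)^2(o_x,\omega_y^n)\geq\sum_n c^2 = \infty$, contradicting finite energy. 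The case of $\{\omega_x\}\times\partial T_y$ is symmetric, and the union follows by subadditivity. Finally, for the fourth bullet, $\Lambda^{\ast}\Lambda_{\ast}\nu = \nu$: once the third bullet guarantees $\nu$ charges no mixed slice, $\Lambda$ is injective $\nu$-almost everywhere on $(\partial T)^2$ (the non-injectivity set of $\Lambda$ is precisely contained in a countable union of such mixed slices, those indexed by the dyadic points $\omega$). Hence $\Lambda_{\ast}\nu$ is, up to $\nu$-null sets, just the image measure under an a.e.\ injective map, and applying the definition of $\Lambda^{\ast}$ recovers $\nu$: for $E\subset(\partial T)^2$, $(\Lambda^{\ast}\Lambda_{\ast}\nu)(E) = \int\frac{\sharp(\Lambda^{-1}(\{z\})\cap E)}{\sharp(\Lambda^{-1}(\{z\}))}\,d(\Lambda_{\ast}\nu)(z)$, and since $\Lambda_{\ast}\nu$ lives on the set where $\Lambda^{-1}(\{z\})$ is a singleton, the integrand is $\chi_{\Lambda(E)}(z)$ and the integral is $(\Lambda_{\ast}\nu)(\Lambda(E)) = \nu(\Lambda^{-1}(\Lambda(E))) = \nu(E)$, the last equality again using a.e.\ injectivity. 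I would note in passing that finite energy is genuinely needed in bullets two through four — without it the coordinate-slice phenomenon is exactly what obstructs the identities.
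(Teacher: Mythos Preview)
Your proposal is correct, and for bullets one, three, and four it matches the paper's proof almost verbatim (the paper's third bullet runs the same energy blow-up along the geodesic, and its fourth bullet is exactly the chain $\Lambda^{\ast}\Lambda_{\ast}\nu(E)=\Lambda_{\ast}\nu(\Lambda(E))=\nu(\Lambda^{-1}(\Lambda(E)))=\nu(E)$, justified by $\nu$-a.e.\ injectivity from the third bullet).

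The one genuine difference is in the second bullet. The paper does not argue directly that a measure charging a slice has infinite energy; instead it shows that each slice $\{1\}\times\partial\mathbb{D}$ is \emph{polar}, i.e.\ has zero $(\tfrac12,\tfrac12)$-capacity, by exhibiting explicit admissible functions $h_K(e^{i\theta_1},e^{i\theta_2})=K^{-1}\sum_{j=1}^K 2^{j/2}\chi_{[-2^{-j},2^{-j}]}(\theta_1)$ with $\|h_K\|_{L^2}^2\lesssim K^{-1}$ and $B_{(\frac12,\frac12)}h_K\gtrsim 1$ on the slice, then appeals to the dual definition of capacity (finite-energy measures are null on polar sets). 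Your direct approach---restrict $\mu$ to the slice and observe the restricted energy is infinite---is the contrapositive and is arguably more elementary since it avoids capacity altogether. One caution: the quantity you wrote, $\int\!\!\int b_{(\frac12,\frac12)}(z,\zeta)\,d\mu(z)\,d\mu(\zeta)$, is not the energy as defined in the paper; there $\mathcal{E}_{(\frac12,\frac12)}[\mu]=\int(B_{(\frac12,\frac12)}\mu)^2\,dm$, so in double-integral form the kernel is $b_{(\frac12,\frac12)}\ast b_{(\frac12,\frac12)}$ rather than $b_{(\frac12,\frac12)}$ itself. This does not affect your argument---the convolution square still blows up at the diagonal in the first coordinate, so the restricted energy is still infinite---but the computation should be phrased accordingly (e.g.\ use $B_{(\frac12,\frac12)}\mu\ge B_{(\frac12,\frac12)}(\mu|_{\text{slice}})$ pointwise and show the latter is not in $L^2(m)$).
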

\begin{proof}
The first point is obvious.

\textit{Proof of the second point.} It is enough to show that $\mu(\Delta) = 0$, since $\Delta$ precisely consists of the points where $\Lambda^{-1}$ is not uniquely defined. In turn, one only has to prove that, say, $\mu (\{ 1 \} \times \partial \mathbb{D})= 0$, since $\Delta$ is a countable union of such sets.

Let us recall the dual definition of capacity: for any compact set $E\subset \overline{T}^2$ one has
\begin{equation*}
\capp^{\frac12} E = \sup\left\{\frac{\mu(E)}{\mathcal{E}^{\frac12}[\mu]}:\; \supp \mu\subset E\right\},
\end{equation*}
and the maximizer is exactly the equilibrium measure $\mu_E$. From here, is not difficult to see that the proof of the second point of the statement will follow if we show that $\{ 1 \} \times \partial \mathbb{D}$ is a polar set, meaning that
\[
\capp_{(\frac12,\frac12)}(\{ 1 \} \times \partial \mathbb{D}) = 0.
\]
This is almost a direct corollary of the one-dimensional fact that the Bessel $\frac12$-capacity of a singleton on the unit circle is zero. To elaborate, let 
\[
h_K(e^{i\theta_1},e^{i\theta_2}) := \frac{1}{K}\sum_{j=1}^K 2^{\frac{j}{2}}\chi_{[-2^{-j},2^{-j}]}(\theta_1).
\]
Clearly $\int_{(\partial\mathbb{D})^2}h^2_{K}(z)\,dm(z) \lesssim \frac{1}{K}$, and an elementary computation shows that
\[
(b_{(\frac12,\frac12)}h_K)(e^{i0},e^{i\theta_2}) \approx \frac{1}{K}\sum_{j=1}^K2^{\frac{j}{2}}\int_{0}^{2^{-j}}\frac{d\theta_1}{\theta_1^{\frac12}}\approx 1.
\]
Hence $Ch_K$ is an admissible function for some large $C>1$. Letting $K$ to infinity we immediately obtain the desired result.

\textit{Proof of the third point}. Assume that $\nu(\{\omega_x\}\times\partial T_y) =\varepsilon >0$ for some $\omega_x\in \partial T_x$. Then we immediately have $(\mathbb{I}^*\nu)(\alpha_x,o) \geq \varepsilon$ for any $\alpha_x>\omega_x$, and
\[
\mathcal{E}[\nu] = \sum_{\alpha\in T^2}(\mathbb{I}^*\nu)^2(\alpha) \geq \sum_{\alpha_x>\omega_x}(\mathbb{I}^*\nu)^2(\alpha_x,o) = \infty.
\]
The same argument shows that $\partial T_x\times\{\omega_y\}$ has measure zero.

\textit{Proof of the fourth point.}
\[
  \Lambda^{\ast} \Lambda_{\ast} \nu (E)  =  \Lambda_{\ast} \nu (\Lambda (E))
  \nocomma = \nu (\Lambda^{- 1} (\Lambda (E)))   =  \nu (E),
\]
since $\Lambda$ fails to be injective only on a set of vanishing $\nu$-measure.
\end{proof}

Next we show that $\mathfrak{G}$ and $T$ are similar in capacitary sense. In the one-parameter setting, much stronger results are available, see for example Lemma 2.14 in \cite{Bishop94}.
\begin{lemma}\label{l:A.50.5}
Given a finite family of points $\{\alpha^j\}_{j=1}^n\subset T^2$, one has
\begin{equation}\label{e:A.50.5}
\capp \left(\bigcup_{j=1}^n\mathcal{S}(\alpha^j)\right) \approx \capp\left(\bigcup_{j=1}^n\partial\mathcal{S}(\alpha^j)\right) \approx \capp\left(\bigcup_{j=1}^n\mathcal{S}_{\mathfrak{G}^2}(\alpha^j)\right) \approx \capp\left(\bigcup_{j=1}^n\partial\mathcal{S}_{\mathfrak{G}^2}(\alpha^j)\right) .
\end{equation}
In particular, 
\[
\capp \left(\bigcup_{j=1}^n\mathcal{S}(\alpha^j)\right) \approx \capp \left(\bigcup_{j=1}^n\mathcal{S}(p(\alpha^j))\right),
\]
where $p(\alpha^j) = (p(\alpha^j_x), p(\alpha^j_y))$ is the 'grandparent' of $\alpha^j$ in $T^2$, see the proof of Theorem \ref{t:equivthingies}.
\end{lemma}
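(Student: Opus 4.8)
The plan is to run a short chain of inclusions, collapse three of the four quantities onto the fourth by means of Corollary~\ref{c:A.3}, and then transfer the one surviving inequality to the torus, where it becomes the standard fact that Bessel capacities are insensitive to a bounded dilation of each cube in a family.

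\emph{Reduction to one inequality.} From $\partial\mathcal{S}(\alpha^j)\subset\mathcal{S}(\alpha^j)\subset\mathcal{S}_{\mathfrak{G}^2}(\alpha^j)$ and $\partial\mathcal{S}_{\mathfrak{G}^2}(\alpha^j)\subset\mathcal{S}_{\mathfrak{G}^2}(\alpha^j)$, monotonicity of $\capp$ gives
\[
\capp\Big(\bigcup_j\partial\mathcal{S}(\alpha^j)\Big)\le\capp\Big(\bigcup_j\mathcal{S}(\alpha^j)\Big)\le\capp\Big(\bigcup_j\mathcal{S}_{\mathfrak{G}^2}(\alpha^j)\Big),\qquad\capp\Big(\bigcup_j\partial\mathcal{S}_{\mathfrak{G}^2}(\alpha^j)\Big)\le\capp\Big(\bigcup_j\mathcal{S}_{\mathfrak{G}^2}(\alpha^j)\Big).
\]
The set $\bigcup_j\mathcal{S}(\alpha^j)$ is downward closed, and so is $\bigcup_j\mathcal{S}_{\mathfrak{G}^2}(\alpha^j)$: using $\mathcal{S}_{\mathfrak{G}}(\gamma)\cap T=\{\beta:\ J_\beta\subset J_\gamma\cup J^-_\gamma\cup J^+_\gamma\}$, or \eqref{e:331}, one sees that $\mathcal{S}_{\mathfrak{G}^2}(\alpha^j)=\bigcup_{s,t\in\{-,0,+\}}\mathcal{S}_{T^2}\big((\sigma^s\alpha^j_x,\sigma^t\alpha^j_y)\big)$, where $\sigma^\pm(\gamma)$ denotes the neighbour of $\gamma$ of the same generation and $\sigma^0(\gamma)=\gamma$. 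Hence Corollary~\ref{c:A.3}, comparing the capacity of a downward closed set with that of its boundary projection, yields
\[
\capp\Big(\bigcup_j\mathcal{S}(\alpha^j)\Big)\approx\capp\Big(\bigcup_j\partial\mathcal{S}(\alpha^j)\Big),\qquad\capp\Big(\bigcup_j\mathcal{S}_{\mathfrak{G}^2}(\alpha^j)\Big)\approx\capp\Big(\bigcup_j\partial\mathcal{S}_{\mathfrak{G}^2}(\alpha^j)\Big).
\]
Therefore the whole lemma follows once we prove $\capp\big(\bigcup_j\mathcal{S}_{\mathfrak{G}^2}(\alpha^j)\big)\lesssim\capp\big(\bigcup_j\mathcal{S}(\alpha^j)\big)$.

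\emph{Passing to the torus and the blow-up estimate.} The decomposition above exhibits $\bigcup_j\mathcal{S}_{\mathfrak{G}^2}(\alpha^j)$ as a union of $9n$ ordinary successor sets $\mathcal{S}_{T^2}\big((\sigma^s\alpha^j_x,\sigma^t\alpha^j_y)\big)$, whose associated dyadic rectangles $J_{\sigma^s\alpha^j_x}\times J_{\sigma^t\alpha^j_y}$ satisfy $\bigcup_{s,t}J_{\sigma^s\alpha^j_x}\times J_{\sigma^t\alpha^j_y}=3J_{\alpha^j_x}\times 3J_{\alpha^j_y}$, the rectangle $J_{\alpha^j_x}\times J_{\alpha^j_y}$ dilated by $3$ about its centre. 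Applying the comparison between $\capp_{(\frac12,\frac12)}$ and the discrete bilogarithmic capacity (the Lemma of Section~\ref{SS:2.6}, proved in Section~\ref{S:A.0}) to the families $\{\alpha^j\}$ and $\{(\sigma^s\alpha^j_x,\sigma^t\alpha^j_y)\}$ we obtain
\[
\capp\Big(\bigcup_j\mathcal{S}(\alpha^j)\Big)\approx\capp_{(\frac12,\frac12)}\Big(\bigcup_jJ_{\alpha^j_x}\times J_{\alpha^j_y}\Big),\qquad\capp\Big(\bigcup_j\mathcal{S}_{\mathfrak{G}^2}(\alpha^j)\Big)\approx\capp_{(\frac12,\frac12)}\Big(\bigcup_j3J_{\alpha^j_x}\times 3J_{\alpha^j_y}\Big),
\]
so that it remains to show $\capp_{(\frac12,\frac12)}\big(\bigcup_j3J_{\alpha^j_x}\times 3J_{\alpha^j_y}\big)\lesssim\capp_{(\frac12,\frac12)}\big(\bigcup_jJ_{\alpha^j_x}\times J_{\alpha^j_y}\big)$. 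This is the expected obstacle: it is the statement that the Bessel capacity is unchanged, up to a constant, when each cube in a family is dilated by a fixed factor. In one variable this is classical (e.g. Lemma~2.14 of \cite{Bishop94}, or \cite{ah1996}); for the product capacity $\capp_{(\frac12,\frac12)}$, built from the translation-invariant product kernel $b_{(\frac12,\frac12)}$, one runs the one-variable comparison of equilibrium energies separately in each coordinate. This is the only non-routine step, and it is the place where we are content to lose absolute constants.

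\emph{The grandparent statement.} Since $J_{p_0(\alpha_x)}\subset 3J_{\alpha_x}$ for every $\alpha_x\in T$, one has $\mathcal{S}_{T^2}(\alpha^j)\subset\mathcal{S}_{T^2}(p(\alpha^j))\subset\mathcal{S}_{\mathfrak{G}^2}(\alpha^j)$ for each $j$, with $p(\alpha^j)=(p_0(\alpha^j_x),p_0(\alpha^j_y))$ as in the proof of Theorem~\ref{t:equivthingies}. Taking unions and using the four-way comparison just established (together with Corollary~\ref{c:A.3} for the boundary projections) gives $\capp\big(\bigcup_j\mathcal{S}(p(\alpha^j))\big)\approx\capp\big(\bigcup_j\mathcal{S}(\alpha^j)\big)$, which completes the proof.
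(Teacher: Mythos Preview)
Your reductions are sound and match the paper: the boundary-projection equivalences come from Corollary~\ref{c:A.3}, and monotonicity gives the trivial inequalities, so everything reduces to the single estimate $\capp\big(\bigcup_j\mathcal{S}_{\mathfrak{G}^2}(\alpha^j)\big)\lesssim\capp\big(\bigcup_j\mathcal{S}(\alpha^j)\big)$. The grandparent corollary is also fine once the main chain is in hand.

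The gap is in your proof of that single estimate. You transfer both sides to the torus via the capacity-comparison lemma of Section~\ref{SS:2.6} (Lemma~\ref{l:A.0.3}), which is legitimate and not circular, and you are then left with
\[
\capp_{(\frac12,\frac12)}\Big(\bigcup_j 3J^1_{\alpha^j}\times 3J^2_{\alpha^j}\Big)\ \lesssim\ \capp_{(\frac12,\frac12)}\Big(\bigcup_j J^1_{\alpha^j}\times J^2_{\alpha^j}\Big).
\]
You call this ``classical'' and propose to ``run the one-variable comparison of equilibrium energies separately in each coordinate.'' That sentence is not a proof. The product Bessel capacity is not a product of one-variable capacities, the equilibrium measure of $\bigcup_j J_j$ does not factor, and the shifts $J_{\alpha^j_x}\mapsto J_{\sigma^\pm\alpha^j_x}$ are by amounts that depend on the generation of each $\alpha^j$, so translation invariance of $b_{(\frac12,\frac12)}$ does not settle it either. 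In the one-parameter setting this blow-up estimate goes through a Harnack-type control on equilibrium potentials (this is what underlies \cite[Lemma~2.14]{Bishop94}); in the bi-parameter setting the paper explicitly flags that such potential-theoretic tools (notably the Maximum Principle) fail. So the very step you defer as routine is the one that requires a genuinely bi-parameter argument, and you have not supplied one. Note also that, via Lemma~\ref{l:A.0.3}, the continuous blow-up estimate you assert is \emph{equivalent} to the discrete inequality you set out to prove, so appealing to it without an independent proof is circular in substance if not in form.

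By contrast, the paper stays on the bitree and gives a short self-contained energy argument: with $\mu_E,\mu_F$ equilibrium for $E=\bigcup_j\mathcal{S}(\alpha^j)$ and $F=\bigcup_j\mathcal{S}_{\mathfrak{G}^2}(\alpha^j)$, the successor-set formula~\eqref{e:331} gives $\mathcal{E}[\mu_E,\mu_F]\gtrsim\mathcal{E}[\mu_F]$, and positivity of $\mathcal{E}[\mu_E-\varepsilon\mu_F]$ then forces $\mathcal{E}[\mu_E]\gtrsim\mathcal{E}[\mu_F]$. This argument uses only the bounded-overlap structure of the $\mathfrak{G}$-neighbours and avoids any appeal to maximum or Harnack principles; it is exactly the kind of replacement you need for the step you left open.
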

\begin{proof}
The first and last equivalences of \eqref{e:A.50.5} come from the fact that the capacities of a set and its boundary projection are comparable, see Corollary \ref{c:A.3}. Since $\mathcal{S}_{\mathfrak{G}^2}(\alpha)\supset \mathcal{S}(\alpha)$ for any $\alpha\in T^2$, we have 
\[
\capp \left(\bigcup_{j=1}^n\mathcal{S}(\alpha^j)\right) \leq \capp \left(\bigcup_{j=1}^n\mathcal{S}_{\mathfrak{G}^2}(\alpha^j)\right).
\]
To show the reverse inequality
\[
\capp E:= \capp \left(\bigcup_{j=1}^n\mathcal{S}(\alpha^j)\right) \gtrsim \capp \left(\bigcup_{j=1}^n\mathcal{S}_{\mathfrak{G}^2}(\alpha^j)\right) =: \capp F,
\]
we prove that the energies of $\mu_E$ and $\mu_F$ are comparable. We start by showing that the mixed energy of $\mu_E$ and $\mu_F$ dominates $\mathcal{E}[\mu_F]$, using an argument similar to the one in Section \ref{SS:2.4}. We have
\[
\mathcal{E}[\mu_E,\mu_F] = \sum_{\alpha\in T^2}(\mathbb{I}^*\mu_E)(\alpha)(\mathbb{I^*}\mu_F)(\alpha) = \sum_{\alpha\in T^2}\mu_E(\mathcal{S}(\alpha))\mu_F(\mathcal{S}(\alpha)).
\]
The successor set formula \eqref{e:331} implies that for any $\alpha\in T^2$ there exists a finite collection $G_{\alpha} = \{\beta^j_{\alpha}\}_{j=1}^N$, $N$ independent of $\alpha$, such that $\mathcal{S}_{\mathfrak{G}^2}(\alpha)\subset \bigcup_{j=1}^N\mathcal{S}_{T^2}(\beta^j_{\alpha})$, and moreover, for any $\beta\in T^2$ there exist at most $N$ points $\alpha$ such that $\beta\in G_{\alpha}$. It follows that
\begin{equation}\notag
\begin{split}
&\sum_{\alpha\in T^2}\mu_E(\mathcal{S}(\alpha))\mu_F(\mathcal{S}(\alpha)) \gtrsim  \sum_{\alpha\in T^2}\mu_E(\mathcal{S}(\alpha))\sum_{\beta\in G_{\alpha}}\mu_F(\mathcal{S}_{T^2}(\beta))\gtrsim\sum_{\alpha\in T^2}\mu_E(\mathcal{S}(\alpha))\mu_F(\mathcal{S}_{\mathfrak{G}^2}(\alpha)) =\\ &\sum_{\alpha\in T^2}(\mathbb{I}^*\mu_E)(\alpha)\int_{\mathcal{S}_{\mathfrak{G}^2}(\alpha)}\,d\mu_F =
\int_{\overline{T}^2}\sum_{\alpha\in\mathcal{P}_{\mathfrak{G}^2}(\beta)}(\mathbb{I}^*\mu_E)(\alpha)\,d\mu_F(\beta).
\end{split}
\end{equation}
Given $\beta\in F$ there must exist at least one $\alpha^j$ such that $\alpha^j \in \mathcal{P}_{\mathfrak{G}}^2(\beta)$. Since $\mu_E$ is the equilibrium measure of $E$, $\alpha^j \in E$, and $\alpha^j\in T^2$ (so that $\capp\{\alpha^j\}>0$) we have 
$1\leq\mathbb{V}^{\mu_E}(\alpha^j) \leq \sum_{\alpha\in\mathcal{P}_{\mathfrak{G}^2}(\alpha^j)}(\mathbb{I}^*\mu_E)(\alpha) \leq \sum_{\alpha\in\mathcal{P}_{\mathfrak{G}^2}(\beta)}(\mathbb{I}^*\mu_E)(\alpha)$. It follows immediately that $\int_{\overline{T}^2}\sum_{\alpha\in\mathcal{P}_{\mathfrak{G}^2}(\beta)}(\mathbb{I}^*\mu_E)(\alpha)\,d\mu_F(\beta) \geq \mu_F(F) = \mathcal{E}[\mu_F]$, therefore
\[
\mathcal{E}[\mu_E,\mu_F]\geq \varepsilon\mathcal{E}[\mu_F]
\]
for some $\varepsilon>0$ that does not depend on $E$ or $F$. By positivity of the energy integral
\[
0\leq \mathcal{E}[\mu_E - \varepsilon\mu_F] = \mathcal{E}[\mu_E] - 2\varepsilon\mathcal{E}[\mu_E,\mu_F] + \varepsilon^2\mathcal{E}[\mu_F] = \left(\mathcal{E}[\mu_E] - \varepsilon\mathcal{E}[\mu_E,\mu_F]\right) + \varepsilon\left(\varepsilon\mathcal{E}[\mu_F] - \mathcal{E}[\mu_E,\mu_F]\right).
\]
We have shown that the first term must be positive, which in turn implies that $\mathcal{E}[\mu_E]\geq \varepsilon^2\mathcal{E}[\mu_F]$. We are done.
\end{proof}

The next result compares the capacities of sets in $\overline{T}^2$ and $(\partial\mathbb{D})^2$. We refer to Sections \ref{SS:2.35} and \ref{SS:2.6} for the relevant definitions. By arguments in Section \ref{S:A.3}, we can always estimate the capacity of a set in $\overline{T}^2$ by the capacity of its boundary projection. Therefore we only consider sets on the distinguished boundaries of the bitree and bidisc. Moreover, it is sufficient to consider finite unions of 'rectangles'. The proof mostly consists of arguments from \cite[Chapter 4]{arsw2014}, adapted to the two-parameter setting.

\begin{lemma}\label{l:A.0.3}
Let $\{\alpha^k\}_{j=1}^N$ be a finite collection of points in $T^2$. One then has
\begin{equation}\label{e:A.71}
\capp\left(\bigcup_{k=1}^N\mathcal{S}(\alpha^k)\right) \approx \capp\left(\bigcup_{k=1}^N\partial\mathcal{S}(\alpha^k)\right) \approx \capp_{(\frac12,\frac12)}\left(\bigcup_{k=1}^N \Lambda(\partial\mathcal{S}(\alpha^k))\right) = \capp_{(\frac12,\frac12)}\left(\bigcup_{k=1}^N J_{\alpha^k}\right),
\end{equation}
where $J_{\alpha} = S_{\alpha}\cap (\partial\mathbb{D})^2$ is the intersection of the Carleson box $S_{\alpha}$ with the torus.
\end{lemma}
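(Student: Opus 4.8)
The plan is to compare the discrete bilogarithmic capacity on the bitree with the $(\frac12,\frac12)$-Bessel capacity on the torus by transferring admissible functions (and equilibrium measures) across the map $\Lambda$, exactly as in \cite[Chapter 4]{arsw2014} but carried out in both coordinates simultaneously. The first equivalence in \eqref{e:A.71} is already available: it is the content of Corollary \ref{c:A.3} (capacity of a set is comparable to the capacity of its boundary projection), so it remains to prove the middle equivalence $\capp(\bigcup_k\partial\mathcal{S}(\alpha^k)) \approx \capp_{(\frac12,\frac12)}(\bigcup_k \Lambda(\partial\mathcal{S}(\alpha^k)))$. Here $\Lambda$ identifies the distinguished boundary $(\partial T)^2$ with the torus $(\partial\mathbb{D})^2$ up to the dyadic grid $\Delta$, which is $(\frac12,\frac12)$-polar by Lemma \ref{l:A.8}; in particular $\Lambda(\partial\mathcal{S}(\alpha^k))$ agrees with the dyadic rectangle $J_{\alpha^k}$ up to a polar set, which gives the final equality in \eqref{e:A.71} for free.

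For the direction $\capp_{(\frac12,\frac12)} \lesssim \capp$, I would start with a near-optimal nonnegative function $\varphi$ on $T^2$ with $\mathbb{I}\varphi \geq 1$ on $\bigcup_k\partial\mathcal{S}(\alpha^k)$ and $\|\varphi\|_{\ell^2(T^2)}^2 \approx \capp(\bigcup_k\partial\mathcal{S}(\alpha^k))$, and build from it a function $h$ on the torus by spreading the mass $\varphi(\beta)$ over the appropriate scale: concretely $h = \sum_\beta \varphi(\beta_x,\beta_y)\, 2^{d_T(\beta_x)/2}2^{d_T(\beta_y)/2}\chi_{J_{\beta_x}\times J_{\beta_y}}$, normalized so that $\|h\|_{L^2(dm)}^2 \approx \|\varphi\|_{\ell^2(T^2)}^2$ (the $L^2$ normalization of $2^{d_T(\beta)/2}\chi_{J_\beta}$ on $\partial\mathbb{D}$ is the point of the exponents). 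One then checks, using the explicit form of $b_{(\frac12,\frac12)}$ and the estimate $\int_0^{2^{-j}}\theta^{-1/2}\,d\theta \approx 2^{-j/2}$ as in the proof of Lemma \ref{l:A.8}, that $B_{(\frac12,\frac12)}h(z) \gtrsim \mathbb{I}\varphi(\beta(z)) \geq 1$ for $z$ in the relevant set, so $Ch$ is admissible for some absolute $C$. The reverse direction $\capp \lesssim \capp_{(\frac12,\frac12)}$ proceeds dually: take the equilibrium measure $\mu_E$ for $E = \bigcup_k J_{\alpha^k}$ on the torus, push it to $(\partial T)^2$ via $\Lambda^{\ast}$ (using the second point of Lemma \ref{l:A.8}, which ensures $\Lambda^{\ast}\mu_E(\cdot) = \mu_E(\Lambda(\cdot))$ since $\mu_E$ has finite $(\frac12,\frac12)$-energy), and verify that $\mathbb{I}^\ast(\Lambda^\ast\mu_E)(\beta) = \mu_E(\Lambda(\mathcal{S}(\beta))) \approx B_{(\frac12,\frac12)}\mu_E$ on the corresponding square $\Lambda(Q(\beta))$ on the torus, so that $\mathcal{E}[\Lambda^\ast\mu_E] \approx \mathcal{E}_{(\frac12,\frac12)}[\mu_E]$ by comparing sums over $T^2$ with integrals over $(\partial\mathbb{D})^2$; the dual characterizations of both capacities then finish the argument.

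The technical heart — and the step I expect to cost the most — is the two-sided comparison $\mathbb{I}^\ast(\Lambda^\ast\mu)(\beta) \approx (B_{(\frac12,\frac12)}\mu)(z)$ for $z \in \Lambda(Q(\beta))$, equivalently the norm-equivalence between the discrete operator $\mathbb{I}$ and the Riesz–Bessel potential $B_{(\frac12,\frac12)}$ on the torus, because here the tensor-product structure genuinely intervenes: the kernel $b_{(\frac12,\frac12)}$ does not behave like a function of a single "bi-parameter distance", so the estimates have to be run coordinate-by-coordinate and then multiplied, keeping careful track of the normalizing powers $2^{d_T(\cdot)/2}$. This is precisely the computation carried out in \cite[Chapter 4]{arsw2014} in one variable, so in the interest of space I would state the one-variable comparison as a cited lemma and then note that, because both $b_{(\frac12,\frac12)}$ and the bitree kernel $k(\zeta,\alpha) = \chi(\alpha \in \mathcal{P}(\zeta))$ are exact tensor products in the two coordinates, the bi-parameter comparison follows by applying the one-variable estimate in each factor — the only genuinely new ingredient being the polarity of the dyadic grid, already recorded in Lemma \ref{l:A.8}.
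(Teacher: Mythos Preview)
Your overall strategy---reduce to the boundary via Corollary~\ref{c:A.3}, then compare the two capacities through the dual characterization and an energy comparison across $\Lambda$---is correct and is exactly what the paper does. The paper carries both directions through the dual side: it transports the equilibrium measures $\mu_F$ and $\nu_E$ via $\Lambda^{\ast}$ and $\Lambda_{\ast}$ respectively, checks mass and support are preserved (using Lemma~\ref{l:A.8}), and then reduces everything to the single energy equivalence $\mathcal{E}[\nu]\approx\mathcal{E}_{(\frac12,\frac12)}[\mu]$ for suitably related $\mu,\nu$ (Lemma~\ref{l:A.9}).

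There is, however, a genuine misstatement in your key intermediate step. The pointwise comparison you write,
\[
\mathbb{I}^{\ast}(\Lambda^{\ast}\mu)(\beta)=\mu(\Lambda(\mathcal{S}(\beta)))\approx (B_{(\frac12,\frac12)}\mu)(z)\quad\text{for }z\in\Lambda(Q(\beta)),
\]
is false: the left side is a single box mass $\mu(J_\beta)$, while the right side is a singular integral accumulating contributions from all scales. The correct pointwise statement (the paper's \eqref{e:A.86}) is
\[
(B_{(\frac12,\frac12)}\mu)(z)\;\approx\;\sum_{\alpha\in\mathcal{P}_{\mathfrak{G}^2}(z)}\frac{(\mathbb{I}^{\ast}\nu)(\alpha)}{M(\partial\mathcal{S}(\alpha))^{1/2}},
\]
a sum over the whole predecessor column with the $M^{-1/2}$ normalization. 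This is the step that genuinely factors coordinatewise. Once it is in place, the energy comparison becomes the assertion that the linear map
\[
(a_\alpha)_{\alpha\in T^2}\;\longmapsto\;\sum_{\alpha\in\mathcal{P}_{\mathfrak{G}^2}(\cdot)}\frac{a_\alpha}{M(\partial\mathcal{S}(\alpha))^{1/2}}
\]
is bounded $\ell^2(T^2)\to L^2((\partial\mathbb{D})^2,dm)$, together with the trivial diagonal lower bound. Your tensor-product remark applies \emph{here} and is legitimate: since $\mathcal{P}_{\mathfrak{G}^2}=\mathcal{P}_{\mathfrak{G}}\times\mathcal{P}_{\mathfrak{G}}$, $M=M_0\otimes M_0$ and $m=m_0\otimes m_0$, this operator is an honest Hilbert-space tensor product of its one-variable counterparts, and boundedness follows from the one-variable case in \cite{arsw2014}. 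That is in fact a simplification over the paper, which proves this boundedness by a direct bi-parameter Wolff-type argument (Proposition~\ref{p:A.11} and the surrounding computation). So your instinct is right, but it must be attached to the correct intermediate object: the weighted predecessor sum, not $\mathbb{I}^{\ast}\nu(\beta)$ itself.

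A minor point: your asymmetric treatment (primal for one inequality, dual for the other) works, but the function $h$ you construct is exactly the image of $\varphi$ under the operator above, so the $L^2$ bound on $h$ is the same tensor-product boundedness you need anyway. Running both directions through the dual characterization, as the paper does, avoids proving the same estimate twice.
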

\begin{proof}
As before, the first equivalence follows from Corollary \ref{c:A.3}. Set $E := \bigcup_{k=1}^n\partial\mathcal{S}(\alpha^k)$ and $F:= \bigcup_{k=1}^N J_{\alpha^k}$. Clearly both sets are compact in their respective topologies. Let $\nu_E, \mu_F$ be the equilibrium measures for $E$ and $F$, so that $\supp\nu_E\subset E,\; \supp\mu_F\subset F$. To compare the capacities of $E$ and $F$ we need to know how to move  equilibrium measures between $(\partial T)^2$ and $(\partial\mathbb{D})^2$. Our first step in this direction is to show that
\begin{equation}\label{e:A.81}
\begin{split}
&\supp\Lambda^*\mu_F\subset E,\; \mu_F(F) = (\Lambda^*\mu_F)(E);\\
&\supp\Lambda_*\nu_E\subset F,\; \nu_E(E) = (\Lambda_*\nu_E)(F).
\end{split}
\end{equation}
We start with $\mu_F$.  The mass conservation property can be shown separately on each rectangle. Fix any $J_{\alpha_k}$ and denote $\partial \mathcal{S}(\alpha^k)$ by $R_{\alpha^k}$. Recalling the definition of $\Lambda^*$ we see that
\[
(\Lambda^*\mu_F)(R_{\alpha^k}) = \int_{(\partial\mathbb{D})^2}\psi(z)\,d\mu_F,
\]
where $\psi(z):= \frac{\sharp\{\Lambda^{-1}(\{z\})\cap R_{\alpha^k}\}}{\sharp\{\Lambda^{-1}(\{z\})\}},\; z\in (\partial\mathbb{D})^2$. For any $z$ in the (torus) interior of the rectangle $J_{\alpha^k}$ we clearly have $\psi(z) =1$. Unfortunately $\Lambda^{-1}(J_{\alpha^k})$ is slightly larger than $R_{\alpha^k}$, so $\psi(z)$ could be $\frac12$ or $\frac14$, depending on whether $z$ is on the side of $J_{\alpha^k}$, or is one of its corners. However, by Lemma \ref{l:A.8} we see that $\mu_F(\partial J_{\alpha^k}) = 0$, since, clearly, $\mathcal{E}_{(\frac12,\frac12)}[\mu_F] = \mu_F(F) < +\infty$. Here $\partial J_{\alpha^k}$ is the boundary of the rectangle $J_{\alpha^k}$ in the torus $(\partial\mathbb{D})^2$. It follows that $(\Lambda^*\mu_F)(R_{\alpha^k}) = \mu_F(J_{\alpha^k})$, hence $\mu_F(F) = (\Lambda^*\mu_F)(E)$. Arguing as above we obtain
\[
(\Lambda^*\mu_F)((\partial T)^2\setminus E) = \int_{(\partial\mathbb{D})^2}\frac{\sharp\{\Lambda^{-1}(\{z\})\cap ((\partial T)^2\setminus E)\}}{\sharp\{\Lambda^{-1}(\{z\})\}}\,d\mu_F = 0,
\]
since $\mu_F$ has zero mass on the boundary of $F$ in $(\partial\mathbb{D})^2$ and the set $\{\Lambda^{-1}(\{z\})\cap ((\partial T)^2\setminus E)\}$ is empty for any $z$ in the interior of $F$. Therefore $\supp\Lambda^*\mu_F\subset E$, and we have the first part of \eqref{e:A.81}.

The argument for $\nu_E$ is similar. Clearly $\Lambda^{-1}((\partial\mathbb{D})^2\setminus F) \cap E = \emptyset$, hence $(\Lambda_*\nu_E)((\partial\mathbb{D})^2\setminus F)=0$, and $\supp\Lambda_*\nu_E\subset F$. Further, $\Lambda(E) = F$, and thus $E \subset \Lambda^{-1}(F)$, and by compactness of $E$
\[
(\Lambda_*\nu_E)(F) = \nu_E(\Lambda^{-1}(F)) = \nu_E(E).
\]

By the dual definition of capacity,
\begin{equation}
\begin{split}
&\capp E = \sup\left\{\frac{(\nu(E))^2}{\mathcal{E}[\nu]}:\;\supp\nu\subset E\right\},\\
&\capp_{(\frac12,\frac12)} F = \sup\left\{\frac{(\mu(F))^2}{\mathcal{E}_{(\frac12,\frac12)}[\mu]}:\;\supp\mu\subset E\right\},
\end{split}
\end{equation}
and $\nu_E, \mu_F$ are the respective maximizers. Therefore 
\[
\capp E = \frac{(\nu_E(E))^2}{\mathcal{E}[\nu_E]} = \frac{((\Lambda_*\nu_E)(F))^2}{\mathcal{E}[\nu_E]},
\]
and
\[
\capp_{(\frac12,\frac12)} F = \frac{(\mu_F(F))^2}{\mathcal{E}_{(\frac12,\frac12)}[\mu_F]} = \frac{((\Lambda^*\mu_F)(E))^2}{\mathcal{E}_{(\frac12,\frac12)}[\mu_F]},
\]
so in order to prove \eqref{e:A.71} it is enough to show that
\begin{subequations}\label{e:A.82}
\begin{eqnarray}
\label{e:A.82.1}& \mathcal{E}[\nu_E] \approx \mathcal{E}_{(\frac12,\frac12)}[\Lambda_*\nu_E],\\
\label{e:A.82.2}& \mathcal{E}_{(\frac12,\frac12)}[\mu_F] \approx \mathcal{E}[\Lambda^*\mu_F].
\end{eqnarray}
\end{subequations}
Both of these equivalences follow from Lemma \ref{l:A.9} below. Indeed, to obtain \eqref{e:A.82.1} we apply this Lemma with $\nu = \nu_E$ and $\mu = \Lambda_*\nu_E$, similarly \eqref{e:A.82.2} follows by assuming $\nu = \Lambda^*\mu_F$ and $\mu = \mu_F$.
\end{proof}
\begin{lemma}\label{l:A.9}
Consider two Borel measures $\mu,\nu\geq0$ on $(\partial\mathbb{D})^2$ and $(\partial T)^2$ respectively such that for any $\alpha\in (\partial T)^2$ they satisfy
\begin{equation}\label{e:A.85}
\mu(J_{\alpha}) = \nu(\Lambda^{-1}(J_{\alpha}));\;\;\; \nu(R_{\alpha}) = \mu(\Lambda(R_{\alpha})),
\end{equation}
and their respective energies are finite.
Then 
\begin{equation}\label{e:A.83}
\mathcal{E}[\nu] \approx \mathcal{E}_{(\frac12,\frac12)}[\mu].
\end{equation}
\end{lemma}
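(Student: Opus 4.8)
The plan is to show that $\mathcal{E}[\nu]$ and $\mathcal{E}_{(\frac12,\frac12)}[\mu]$ are each comparable to the ``bi-logarithmic'' energy
\[
\mathcal{L}[\mu] := \int\!\!\int \Big(1+\log\tfrac{1}{|\theta_1-\eta_1|}\Big)\Big(1+\log\tfrac{1}{|\theta_2-\eta_2|}\Big)\,d\mu(z)\,d\mu(\zeta),
\]
where $z=(e^{i\theta_1},e^{i\theta_2})$, $\zeta=(e^{i\eta_1},e^{i\eta_2})$ and each $\theta_i-\eta_i$ is taken in $[-\pi,\pi)$. For the discrete side, observe first that since $\nu$ has finite energy, Lemma~\ref{l:A.8} gives $\nu(\partial T_x\times\{\omega_y\})=\nu(\{\omega_x\}\times\partial T_y)=0$, so $\nu(\mathcal{S}(\alpha))=\nu(R_\alpha)$ for $\alpha\in T^2$; combining this with \eqref{e:A.85} (and with the vanishing of $\mu$ on the torus-boundary of a dyadic rectangle, again by Lemma~\ref{l:A.8}) yields $(\mathbb{I}^{\ast}\nu)(\alpha)=\nu(R_\alpha)=\mu(J_\alpha)$. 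Hence, recalling $\mathcal{E}[\nu]=\sum_{\alpha\in T^2}(\mathbb{I}^{\ast}\nu)^2(\alpha)$ from \eqref{e:337}, Tonelli gives
\[
\mathcal{E}[\nu]=\sum_{\alpha\in T^2}\mu(J_\alpha)^2=\int\!\!\int \#\{\alpha\in T^2:\ z,\zeta\in J_\alpha\}\,d\mu(z)\,d\mu(\zeta)=\int\!\!\int N(\theta_1,\eta_1)\,N(\theta_2,\eta_2)\,d\mu(z)\,d\mu(\zeta),
\]
where $N(\theta,\eta)$ is the number of dyadic arcs of $\partial\mathbb{D}$ containing both $e^{i\theta}$ and $e^{i\eta}$, and the last equality uses the product structure $J_\alpha=I_{\alpha_x}\times I_{\alpha_y}$.

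For the Bessel side, expand $\mathcal{E}_{(\frac12,\frac12)}[\mu]=\|B_{(\frac12,\frac12)}\mu\|_{L^2((\partial\mathbb{D})^2,dm)}^2$; by Tonelli and the product form of $b_{(\frac12,\frac12)}$ this equals $\int\!\!\int \kappa(\eta_1,\eta_1')\kappa(\eta_2,\eta_2')\,d\mu\,d\mu$ with $\kappa(\eta,\eta')=\frac{1}{2\pi}\int_{-\pi}^{\pi}|\theta-\eta|^{-1/2}|\theta-\eta'|^{-1/2}\,d\theta$. An elementary estimate of $\kappa$, splitting the $\theta$-integral into the regions $|\theta-\eta|\le|\eta-\eta'|$, $|\theta-\eta'|\le|\eta-\eta'|$, and the complementary one where $|\theta-\eta|\approx|\theta-\eta'|$, gives $\kappa(\eta,\eta')\approx 1+\log\tfrac{1}{|\eta-\eta'|}$, and therefore $\mathcal{E}_{(\frac12,\frac12)}[\mu]\approx\mathcal{L}[\mu]$.

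It remains to prove $\int\!\!\int N(\theta_1,\eta_1)N(\theta_2,\eta_2)\,d\mu\,d\mu\approx\mathcal{L}[\mu]$, which is the heart of the matter (and the two-parameter version of the corresponding argument in \cite[Chapter~4]{arsw2014}). One inequality is immediate from the pointwise bound $N(\theta,\eta)\le 1+\log_2(2\pi/|\theta-\eta|)$. For the reverse one I would pass through the auxiliary graph $\mathfrak{G}$ of Section~\ref{SS:2.4} (equivalently, through the dyadic grids rotated by a third and two thirds of each scale): writing $3I_\alpha$ for the union of the dyadic arc $I_\alpha$ with its two neighbours of the same length, one checks the pointwise bound $\#\{\alpha\in T:\ e^{i\theta},e^{i\eta}\in 3I_\alpha\}\gtrsim 1+\log\tfrac{1}{|\theta-\eta|}$, whence $\mathcal{L}[\mu]\lesssim\int\!\!\int \widehat N(\theta_1,\eta_1)\widehat N(\theta_2,\eta_2)\,d\mu\,d\mu=\sum_{\alpha\in T^2}\mu\big(3I_{\alpha_x}\times 3I_{\alpha_y}\big)^2$, with $\widehat N$ the tripled count. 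Since every tripled rectangle $3I_{\alpha_x}\times 3I_{\alpha_y}$ is a union of nine genuine dyadic rectangles $J_\beta$ of the same levels, and each $J_\beta$ occurs in at most nine of these unions, Cauchy--Schwarz and summation give $\sum_{\alpha\in T^2}\mu(3I_{\alpha_x}\times3I_{\alpha_y})^2\lesssim\sum_{\beta\in T^2}\mu(J_\beta)^2=\mathcal{E}[\nu]$ — the same bounded-multiplicity computation used in Section~\ref{SS:2.4} and in Lemma~\ref{l:A.50.5}. Chaining the three steps proves \eqref{e:A.83}. I expect the main obstacle to be precisely this last reverse inequality: the comparison $N\le C\log$ cannot be inverted pointwise because of pairs straddling a dyadic division point, and controlling this in the product geometry — so that the ``bad'' behaviour in the two coordinates cannot reinforce one another — is exactly what the enlargement to $\mathfrak{G}$ (or the shifted grids) accomplishes; once it is in place, the remaining manipulations are routine uses of Tonelli and Cauchy--Schwarz.
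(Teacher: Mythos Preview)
Your argument is correct and takes a genuinely different route from the paper's. The paper first discretizes the Bessel \emph{potential} pointwise, proving
\[
(B_{(\frac12,\frac12)}\mu)(z)\approx\sum_{\alpha\in\mathcal{P}_{\mathfrak{G}^2}(z)}\frac{(\mathbb{I}^*\nu)(\alpha)}{M(\partial\mathcal{S}(\alpha))^{1/2}},
\]
and then squares and integrates in $dm$; the resulting double sum $\sum_{\alpha,\beta}$ is controlled by a combinatorial argument about ``minimal'' common $\mathfrak{G}^2$-ancestors of $\alpha$ and $\beta$ (their Proposition in the proof), which is essentially a linear instance of Wolff's inequality. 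You instead integrate out the $dm$-variable \emph{first}, reducing to the composed kernel $\kappa(\eta,\eta')\approx 1+\log\frac{1}{|\eta-\eta'|}$, and then compare the resulting bi-logarithmic energy $\mathcal{L}[\mu]$ with the discrete energy $\sum_\alpha\mu(J_\alpha)^2$ via the tripled-interval count $\widehat N$ and the bounded-overlap trick. Both arguments rely on the $\mathfrak{G}$-enlargement to fix the dyadic-endpoint defect, but yours avoids the minimal-ancestor combinatorics entirely and is shorter for the purpose of this lemma; the paper's route, on the other hand, yields the intermediate pointwise discretization of $B_{(\frac12,\frac12)}\mu$, which is a stronger statement that could be reused elsewhere.

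Two minor remarks. First, your appeals to Lemma~\ref{l:A.8} at the start are unnecessary: since $\nu$ is supported on $(\partial T)^2$ one has $\nu(\mathcal{S}(\alpha))=\nu(R_\alpha)$ tautologically, and the hypothesis $\nu(R_\alpha)=\mu(\Lambda(R_\alpha))=\mu(J_\alpha)$ gives $(\mathbb{I}^*\nu)(\alpha)=\mu(J_\alpha)$ directly. Second, in the last step it is worth stating explicitly that $(\sum_{i=1}^9 a_i)^2\le 9\sum a_i^2$ together with the fact that each $J_\beta$ lies in at most nine tripled rectangles gives the constant $81$; you say ``Cauchy--Schwarz and summation'' which is fine but a reader may appreciate seeing the count spelled out.
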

\begin{proof}
We start from the continuous side. Define $M := \Lambda^*m$, where $m$ is the normalized area measure on the torus $(\partial\mathbb{D})^2$. Clearly, $M(\partial{S}(\alpha)) = 2^{-d_{T}(\alpha_x) -d_{T}(\alpha_y)+2} = m(J_{\alpha})$. Given a point $z\in (\partial\mathbb{D})^2$, set
\[
\mathcal{P}_{\mathfrak{G}^2}(z) := \bigcup_{\omega\in\Lambda^{-1}(\{z\})}\mathcal{P}_{\mathfrak{G}^2}(\omega).
\] 
First we discretize the Bessel potential, namely, we show that for any $z\in (\partial\mathbb{D})^2$ one has
\begin{equation}\label{e:A.86}
\int_{(\partial\mathbb{D})^2}b_{(\frac12,\frac12)}(z,\zeta)\,d\mu(\zeta) \approx \sum_{\alpha\in \mathcal{P}_{\mathfrak{G}^2}(z)}\frac{(\mathbb{I}^*\nu)(\alpha)}{M^{\frac12}(\partial\mathcal{S}(\alpha))}.
\end{equation}
Fix a point $z = (e^{i\theta_1},e^{i\theta_2})\in(\partial\mathbb{D})^2$. 
Let $\tilde{J}_{z}(\varepsilon,\delta)$ be a rectangle with centerpoint $z$, and the sidelengths $4\pi\varepsilon,4\pi\delta$, that is,
\[
\tilde{J}_{z}(\varepsilon,\delta) = \left\{\zeta = (e^{i\eta_1},e^{i\eta_2})\in(\partial\mathbb{D})^2:\; \frac{|\theta_1-\eta_1|}{2\pi} \leq \varepsilon;\, \frac{|\theta_2-\eta_2|}{2\pi}\leq\delta\right\}.
\]
 A simple computation gives
\begin{equation}\notag
\begin{split}
&\int_0^{2\pi}\int_0^{2\pi}\frac{d\mu(e^{i\eta_1}, e^{i\eta_2})}{|\theta_1 - \eta_1|^{\frac12}|\theta_2-\eta_2|^{\frac12}} \approx \sum_{n_1\geq0}\sum_{n_2\geq0}\int_{2^{-n_1-1}\leq \frac{|\theta_1-\eta_1|}{2\pi} \leq 2^{-n_1}}\int_{2^{-n_2-1}\leq \frac{|\theta_2-\eta_2|}{2\pi} \leq 2^{-n_2}}\frac{\,d\mu(e^{i\eta_1},e^{i\eta_2})}{2^{-\frac{n_1}{2}}2^{-\frac{n_2}{2}}}\approx\\
&\sum_{n_1\geq0}\sum_{n_2\geq0}\int_{\frac{|\theta_1-n_1|}{2\pi} \leq 2^{-\eta_1}}\int_{\frac{|\theta_2-\eta_2|}{2\pi} \leq 2^{-n_2}}\frac{\,d\mu(e^{i\eta_1},e^{i\eta_2})}{2^{-\frac{n_1}{2}}2^{-\frac{n_2}{2}}} = \sum_{n_1\geq0}\sum_{n_2\geq0}\frac{\mu(\tilde{J}_z(2^{-n_1},2^{-n_2}))}{2^{-\frac{n_1}{2}-\frac{n_2}{2}}}\approx\\
&\sum_{n_1\geq0}\sum_{n_2\geq0}\frac{\mu(\tilde{J}_z(10\cdot2^{-n_1},10\cdot2^{-n_2}))}{2^{-\frac{n_1}{2}-\frac{n_2}{2}}}.
\end{split}
\end{equation}
Fix some $n_1,n_2 \geq 0$ and consider $\alpha\in \mathcal{P}_{\mathfrak{G}^2}(z)$ such that $ d_T(\alpha_x) = n_1+1$, $d_T(\alpha_y) = n_2+1$. Denote the collection of such points by $N_z(n_1,n_2)$. Then
we have
\[
\tilde{J}_z(2^{-n_1},2^{-n_2}) \subset \bigcup_{\alpha \in N_z(n_1,n_2)}J_{\alpha} \subset \tilde{J}_z(10\cdot2^{-n_1},10\cdot2^{-n_2}).
\]
Indeed, if $\alpha$ is such a point, then $J_{\alpha}$ either contains $z$, or is one of the neighbouring rectangles of the same generation, and vice versa, all such rectangles correspond to some point in $\mathcal{P}_{\mathfrak{G}^2}(z)$. It follows that
\[
\int_{(\partial\mathbb{D})^2}b_{(\frac12,\frac12)}(z,\zeta)\,d\mu(\zeta) \approx\sum_{n_1,n_2\geq0}\frac{\mu\left(\bigcup_{\alpha\in N_z(n_1,n_2)}J_{\alpha}\right)}{2^{-\frac{n_1}{2}-\frac{n_2}{2}}}.
\]
Since $\mathcal{E}_{(\frac12,\frac12)}[\mu] < +\infty$, we have $\mu(\partial J_{\alpha}) = 0$ for any $\alpha$ by Lemma \ref{l:A.8}. Combined with \eqref{e:A.85} we obtain
\[
\mu\left(\bigcup_{\alpha\in N_z(n_1,n_2)}J_{\alpha}\right) = \sum_{\alpha\in N_z(n_1,n_2)} \mu(J_{\alpha}) = \sum_{\alpha\in N_Z(n_1,n_2)}\nu(\Lambda^{-1}(J_{\alpha})) = \sum_{\alpha\in N_z(n_1,n_2)}(\mathbb{I}^*\nu)(\alpha),
\]
the last equality following from the fact that $\supp\nu\subset (\partial T)^2$ and $\Lambda(\partial\mathcal{S}(\alpha)) = J_{\alpha}$. Gathering the estimates we arrive at
\begin{equation}\notag
\int_{(\partial\mathbb{D})^2}b_{(\frac12,\frac12)}(z,\zeta)\,d\mu(\zeta) \approx \sum_{n_1,n_2\geq0}\sum_{\alpha\in N_z(n_1,n_2)}\frac{(\mathbb{I}^*\nu)(\alpha)}{M(\partial \mathcal{S}(\alpha))^{\frac12}}= \sum_{\alpha\in\mathcal{P}_{\mathfrak{G}^2}(z)}\frac{(\mathbb{I}^*\nu)(\alpha)}{M(\partial \mathcal{S}(\alpha))^{\frac12}},
\end{equation}
which is \eqref{e:A.86}.

The next part of the argument is actually a very special (linear) case of the well-known Wolff's inequality, that can be proven rather elementarily. We start by expanding the integrand,
\begin{equation}\notag
\begin{split}
&\int_{(\partial\mathbb{D})^2}\left(\sum_{\alpha\in\mathcal{P}_{\mathfrak{G}^2}(z)}\frac{(\mathbb{I}^*\nu)(\alpha)}{M(\partial \mathcal{S}(\alpha))^{\frac12}}\right)^2\,dm(z) = \int_{(\partial\mathbb{D})^2}\sum_{\alpha,\beta\in\mathcal{P}_{\mathfrak{G}^2}(z)}\frac{(\mathbb{I}^*\nu)(\alpha)(\mathbb{I}^*\nu)(\beta)}{M(\partial \mathcal{S}(\alpha))^{\frac12}M(\partial \mathcal{S}(\beta))^{\frac12}}\,dm(z).
\end{split}
\end{equation}
Recall that  $\alpha\in \mathcal{P}_{\mathfrak{G}^2}(\zeta)$ should be relatively close to the point $\zeta$, namely
\[
J_{\alpha} \subset \tilde{J}_{\zeta}(10\cdot2^{-d_T(\alpha_x)+1},10\cdot2^{-d_T(\alpha_y)+1}).
\]
Therefore $\{z: \alpha\in \mathcal{P}_{\mathfrak{G}^2}(z)\} \subset \tilde{J}_{\alpha}$, where $\tilde{J}_{\alpha} = \Lambda(\partial\mathcal{S}_{\mathfrak{G}^2}(p_2(\alpha)))$ and $p_2(\alpha) = (p_2(\alpha_x),p_2(\alpha_y))$ with $p_2(\alpha_x), p_2(\alpha_y)$ being the grandparents of $\alpha_x, \alpha_y$ in the tree geometry (if one of the points is the root $o$ or one of its children we assume the grandparent to be the root as well). It follows that
\begin{equation}\notag
\begin{split}
&\int_{(\partial\mathbb{D})^2}\sum_{\alpha,\beta\in\mathcal{P}_{\mathfrak{G}^2}(z)}\frac{(\mathbb{I}^*\nu)(\alpha)(\mathbb{I}^*\nu)(\beta)}{M(\partial \mathcal{S}(\alpha))^{\frac12}M(\partial \mathcal{S}(\beta))^{\frac12}}\,dm(z)  = \sum_{\alpha,\beta\in T^2}\int_{z: \alpha,\beta\in \mathcal{P}_{\mathfrak{G}^2}(z)}\frac{(\mathbb{I}^*\nu)(\alpha)(\mathbb{I}^*\nu)(\beta)}{M(\partial \mathcal{S}(\alpha))^{\frac12}M(\partial \mathcal{S}(\beta))^{\frac12}}\,dm(z)\leq\\
&\sum_{\alpha,\beta\in T^2}\frac{(\mathbb{I}^*\nu)(\alpha)(\mathbb{I}^*\nu)(\beta)}{M(\partial \mathcal{S}(\alpha))^{\frac12}M(\partial \mathcal{S}(\beta))^{\frac12}}\cdot m\left(\tilde{J}_{\alpha}\cap \tilde{J}_{\beta}\right) \lesssim\\& \sum_{\alpha,\beta\in T^2}\frac{(\mathbb{I}^*\nu)(\alpha)(\mathbb{I}^*\nu)(\beta)}{M(\partial \mathcal{S}(\alpha))^{\frac12}M(\partial \mathcal{S}(\beta))^{\frac12}}\cdot M\left(\partial\mathcal{S}_{\mathfrak{G}^2}(p_2(\alpha))\cap \partial\mathcal{S}_{\mathfrak{G}^2}(p_2(\beta))\right)\lesssim\\
& \sum_{\alpha,\beta\in T^2}\frac{(\mathbb{I}^*\nu)(\alpha)(\mathbb{I}^*\nu)(\beta)}{M(\partial \mathcal{S}_{\mathfrak{G}^2}(\alpha))^{\frac12}M(\partial \mathcal{S}_{\mathfrak{G}^2}(\beta))^{\frac12}}\cdot M\left(\partial\mathcal{S}_{\mathfrak{G}^2}(\alpha)\cap \partial\mathcal{S}_{\mathfrak{G}^2}(\beta)\right),
\end{split}
\end{equation}
since $M(\partial\mathcal{S}_{\mathfrak{G}^2}(p_2(\alpha))) \approx M(\partial\mathcal{S}(\alpha)))$ for any $\alpha\in T^2$.

A point $\gamma\in T^2$ is called a proper $\mathfrak{G}^2$-descendant of $\tau\in T^2$ if $\gamma\in \mathcal{S}_{\mathfrak{G}^2}(\tau)$, and it lies strictly below $\tau$, namely, either $d_{T}(\gamma_x) \geq d_T(\tau_x)$ and $d_{T}(\gamma_y) > d_{T}(\tau_y)$, or $d_{T}(\gamma_x) > d_T(\tau_x)$ and $d_{T}(\gamma_y) \geq d_{T}(\tau_y)$. Observe that for any two points $\alpha,\beta\in T^2$ there exists a (possibly non-unique) $\tau\in T^2$ such that $\alpha,\beta$ are proper $\mathfrak{G}^2$-descendants of $\tau$, and $\tau$ is minimal, that is, for any proper $\mathfrak{G}^2$-descendant $\gamma$ of $\tau$ one of the points $\alpha,\beta$ is not a proper $\mathfrak{G}^2$-descendant of $\gamma$. Clearly,
\begin{equation}\notag
\begin{split}
& \sum_{\alpha,\beta\in T^2}\frac{(\mathbb{I}^*\nu)(\alpha)(\mathbb{I}^*\nu)(\beta)}{M(\partial \mathcal{S}_{\mathfrak{G}^2}(\alpha))^{\frac12}M(\partial \mathcal{S}_{\mathfrak{G}^2}(\beta))^{\frac12}}\cdot M\left(\partial\mathcal{S}_{\mathfrak{G}^2}(\alpha)\cap \partial\mathcal{S}_{\mathfrak{G}^2}(\beta)\right) \leq\\
&\sum_{\tau\in T^2} \sum_{\alpha,\beta:\; \tau \;\textup{is minimal for}\; \alpha,\beta}\frac{(\mathbb{I}^*\nu)(\alpha)(\mathbb{I}^*\nu)(\beta)}{M(\partial \mathcal{S}_{\mathfrak{G}^2}(\alpha))^{\frac12}M(\partial \mathcal{S}_{\mathfrak{G}^2}(\beta))^{\frac12}}\cdot M\left(\partial\mathcal{S}_{\mathfrak{G}^2}(\alpha)\cap \partial\mathcal{S}_{\mathfrak{G}^2}(\beta)\right). 
\end{split}
\end{equation}
We aim to show that for any $\tau\in T^2$ one has
\begin{equation}\label{e:A.87}
\begin{split}
&\sum_{\alpha,\beta:\; \tau \;\textup{is minimal for}\; \alpha,\beta}\frac{(\mathbb{I}^*\nu)(\alpha)(\mathbb{I}^*\nu)(\beta)}{M(\partial \mathcal{S}_{\mathfrak{G}^2}(\alpha))^{\frac12}M(\partial \mathcal{S}_{\mathfrak{G}^2}(\beta))^{\frac12}}\cdot M\left(\partial\mathcal{S}_{\mathfrak{G}^2}(\alpha)\cap \partial\mathcal{S}_{\mathfrak{G}^2}(\beta)\right)\lesssim
(\nu(\mathcal{S}_{\mathfrak{G}^2}(\tau)))^2.
\end{split}
\end{equation}
Given $\alpha,\beta,\tau \in T^2$ let $a_x,a_y,b_x,b_y,t_x,t_y$ be their respective generation numbers, $a_x := d_T(\alpha_x)-1, a_y := d_T(\alpha_y)-1$ etc. First we note that $M(\partial \mathcal{S}_{\mathfrak{G}^2}(\alpha))^{\frac12} \approx 2^{-\frac{a_x+a_y}{2}}$, $M(\partial \mathcal{S}_{\mathfrak{G}^2}(\beta))^{\frac12} \approx 2^{-\frac{b_x+b_y}{2}}$, and\\ $M\left(\partial\mathcal{S}_{\mathfrak{G}^2}(\alpha))\cap \partial\mathcal{S}_{\mathfrak{G}^2}(\beta))\right) \lesssim 2^{-\max(a_x,b_x)-\max(a_y,b_y)}$, where the last equivalence is by a trivial estimate of $m(\tilde{J}_{\alpha}\cap \tilde{J}_{\beta})$. The key observation here is as follows.
\begin{proposition}\label{p:A.11} Assume that $\tau$ is minimal for $\alpha$ and $\beta$ and either $a_x\geq t_x+4$ and $b_x \geq t_x+4$, or $a_y\geq t_y+4$ and $b_y \geq t_y+4$. Then $\mathcal{S}_{\mathfrak{G}^2}(\alpha)\cap \mathcal{S}_{\mathfrak{G}^2}(\beta) = \emptyset$. In other words, if $\alpha$ and $\beta$ lie very 'deep' inside $\tau$ and are not 'perpendicular', then they must be 'far' from each other (see Fig. \ref{e:fig22}).
\end{proposition}
\begin{proof}
 Indeed, assume that $a_x\geq t_x+4$, $b_x \geq t_x+4$, and let $\gamma \in \mathcal{S}_{\mathfrak{G}^2}(\alpha) \cap \mathcal{S}_{\mathfrak{G}^2}(\beta)$, so that, in particular, $\gamma_x \in \mathcal{S}_{\mathfrak{G}}(\alpha_x)\cap\mathcal{S}_{\mathfrak{G}}(\beta_x)$. It follows immediately that both $\alpha_x,\beta_x$ belong either to $ S_{\mathfrak{G}}(p_2(\alpha_x))$ (if $b_x\geq a_x$) or to $\mathcal{S}_{\mathfrak{G}}(p_2(\beta_x))$ (if $b_x\leq a_x$). Suppose we are in the first case. Then, since $a_x \geq t_x+4$, we see that $p_2(\alpha_x) \in S_{\mathfrak{G}}(\tau_x)$ and $d_T(p_2(\alpha_x)) = a_x-1 \geq t_x+1$. Now define $\gamma = (\gamma_x,\gamma_y) := (p_2(\alpha_x),\tau_y)$. Clearly $\gamma$ is a proper $\mathfrak{G}^2$-descendant of $\tau$, and at the same time both $\alpha$ and $\beta$ are proper $\mathfrak{G}^2$-descendants of $\gamma$. We have a contradiction. The case $a_y\geq t_y+4,\; b_y\geq t_y+4$ is done similarly. \end{proof}
\begin{figure}[h]
\centering
\includegraphics[trim= 30mm 0mm 30mm 0mm,clip, width = 0.7\textwidth]{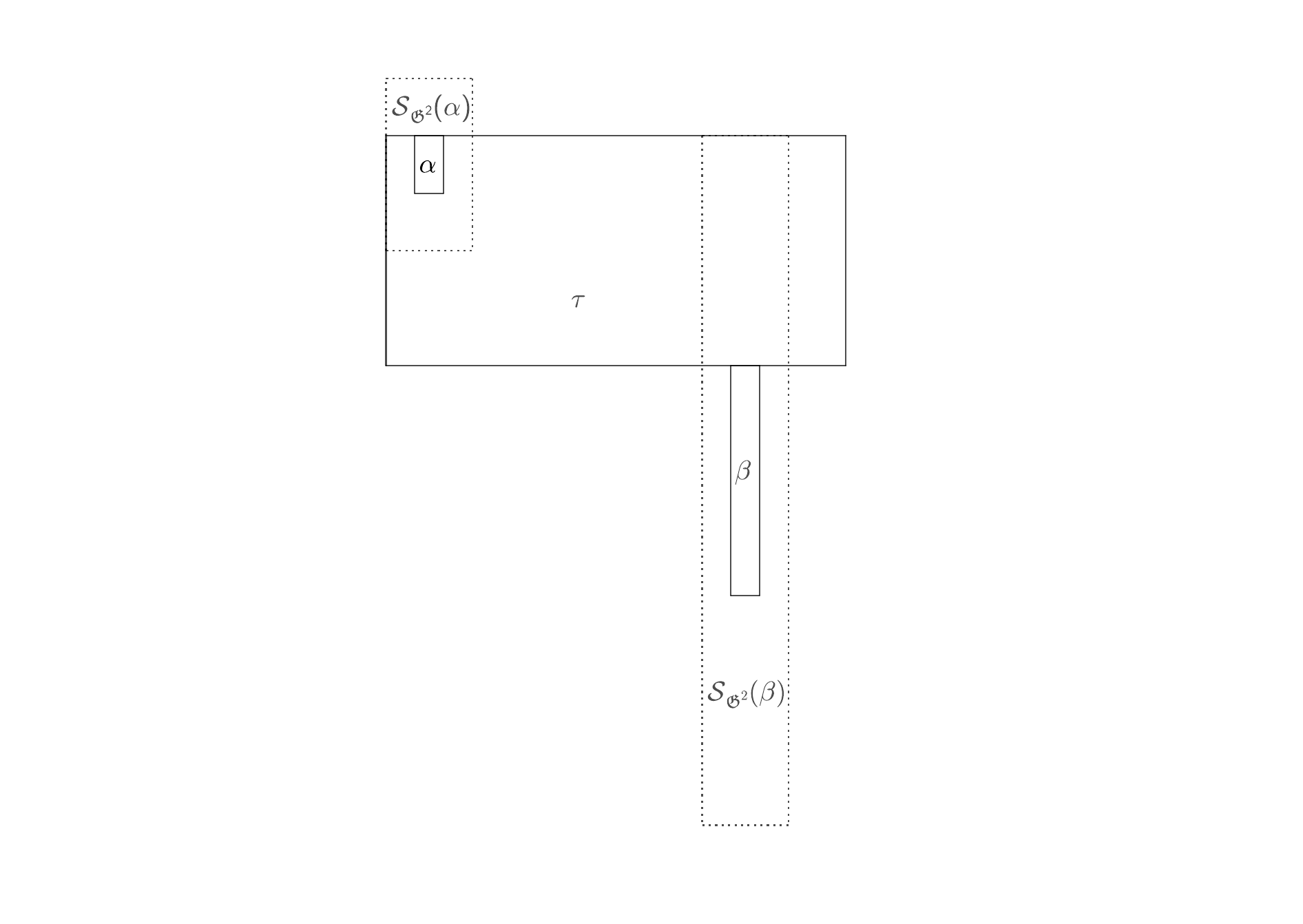}
\caption{}
\label{e:fig22}
\end{figure}

Now we are ready to return to \eqref{e:A.87}. Fix $\tau\in T^2$. By the observation above, if $\tau$ is minimal for $\alpha$ and $\beta$, then 
\begin{multline*}
0 \neq\frac{(\mathbb{I}^*\nu)(\alpha)(\mathbb{I}^*\nu)(\beta)}{M(\partial \mathcal{S}_{\mathfrak{G}^2}(\alpha)^{\frac12}M(\partial \mathcal{S}_{\mathfrak{G}^2}(\beta)^{\frac12}}\cdot M\left(\partial\mathcal{S}_{\mathfrak{G}^2}(\alpha))\cap \partial\mathcal{S}_{\mathfrak{G}^2}(\beta))\right) \\ \lesssim \frac{(\mathbb{I}^*\nu)(\alpha)(\mathbb{I}^*\nu)(\beta)}{2^{-\frac{a_x+a_y+b_x+b_y}{2}}}\cdot 2^{-\max(a_x,b_x) -\max(a_y,b_y)}
\end{multline*}
only if  one of $a_x,b_x$ and one of $a_y,b_y$ are comparable to $t_x$ and $t_y$, respectively. Note that we always have $a_x,b_x\geq t_x$ and $a_y,b_y\geq t_y$, since $\tau$ is minimal for $\alpha,\beta$. Given a point $\alpha\in T^2$, assume that $a_x\leq t_x+4$ and $a_y\leq t_y+4$, and for $b_x\geq t_x,\;b_y\geq t_y$ denote by $A_{\tau}(\alpha,b_x,b_y)$ the set of all points $\beta$ such that $d_T(\beta_x) = b_x+1, d_{T}(\beta_y) = b_y+1$, and $\tau$ is minimal for $\alpha$ and $\beta$. Then
\begin{equation}\notag
\begin{split}
&\sum_{b_x\geq t_x}\sum_{b_y\geq t_y}\sum_{\beta\in A_{\tau}(\alpha,b_x,b_y)}\frac{(\mathbb{I}^*\nu)(\alpha)(\mathbb{I}^*\nu)(\beta)}{2^{-\frac{a_x+a_y+b_x+b_y}{2}}}\cdot 2^{-\max(a_x,b_x) -\max(a_y,b_y)}\approx\\
&(\mathbb{I^*}\nu)(\alpha)\cdot \sum_{b_x\geq t_x}\sum_{b_y\geq t_y}2^{\frac{t_x+t_y+b_x+b_y}{2}-b_x -b_y}\sum_{\beta\in A_{\tau}(\alpha,b_x,b_y)}(\mathbb{I}^*\nu)(\beta)\approx (\mathbb{I^*}\nu)(\alpha)\sum_{\beta\in A_{\tau}(\alpha,b_x,b_y)}(\mathbb{I}^*\nu)(\beta)\lesssim\\
&(\mathbb{I^*}\nu)(\alpha)\nu(\mathcal{S}_{\mathfrak{G}^2}(\tau)),
\end{split}
\end{equation}
since $\mathcal{S}(\beta^1)\cap\mathcal{S}(\beta^2) = \emptyset$ for any non-identical pair  $\beta^1,\beta^2\in A_{\tau}(\alpha,b_x,b_y)$, and $\bigcup_{\beta\in A_{\tau}(\alpha,b_x,b_y)} \subset \mathcal{S}_{\mathfrak{G}^2}(\tau)$. It follows that
\[
\sum_{a_x=t_x}^{t_x+4}\sum_{a_y=t_y}^{t_y+4}\sum_{\alpha:\; \alpha \;\textup{is a proper $\mathfrak{G}^2$-descendant of}\; \tau,\; d_{T}(\alpha_x) = a_x+1,\,d_T(\alpha_y)=a_y+1}(\mathbb{I^*}\nu)(\alpha)\nu(\mathcal{S}_{\mathfrak{G}^2}(\tau)) \approx \left(\nu(\mathcal{S}_{\mathfrak{G}^2}(\tau))\right)^2.
\]

The remaining cases (i.e. $a_x\leq t_x+4$ and $b_y\leq t_y+4$, $b_x\leq t_x+4$ and $a_y\leq t_y+4$, or $b_x\leq t_x+4$ and $b_y\leq t_y+4$) are dealt with similarly. Gathering all the estimates we arrive at \eqref{e:A.87}, hence
\[
\sum_{\tau\in T^2}\sum_{\alpha,\beta:\; \tau \;\textup{is minimal for}\; \alpha,\beta}\frac{(\mathbb{I}^*\nu)(\alpha)(\mathbb{I}^*\nu)(\beta)}{M(\partial \mathcal{S}_{\mathfrak{G}^2}(\alpha))^{\frac12}M(\partial \mathcal{S}_{\mathfrak{G}^2}(\beta))^{\frac12}}\cdot M\left(\partial\mathcal{S}_{\mathfrak{G}^2}(\alpha))\cap \partial\mathcal{S}_{\mathfrak{G}^2}(\beta))\right)\lesssim
\sum_{\tau\in T^2}(\nu(\mathcal{S}_{\mathfrak{G}^2}(\tau)))^2.
\]
By the $\mathfrak{G}$-neighbours argument of Section \ref{SS:2.4} we have
\[
\sum_{\tau\in T^2}(\nu(\mathcal{S}_{\mathfrak{G}^2}(\tau)))^2 \approx \sum_{\tau\in T^2}((\mathbb{I^*}\nu)(\tau))^2 = \mathcal{E}[\nu],
\]
and therefore
\begin{equation}\label{e:A.89}
\int_{(\partial\mathbb{D})^2}\sum_{\alpha,\beta\in\mathcal{P}_{\mathfrak{G}^2}(z)}\frac{(\mathbb{I}^*\nu)(\alpha)(\mathbb{I}^*\nu)(\beta)}{M(\partial \mathcal{S}(\alpha))^{\frac12}M(\partial \mathcal{S}(\beta))^{\frac12}}\,dm(z) \lesssim \mathcal{E}[\nu].
\end{equation}
On the other hand,
\begin{equation}\label{e:A.90}
\begin{split}
&\int_{(\partial\mathbb{D})^2}\sum_{\alpha,\beta\in\mathcal{P}_{\mathfrak{G}^2}(z)}\frac{(\mathbb{I}^*\nu)(\alpha)(\mathbb{I}^*\nu)(\beta)}{M(\partial \mathcal{S}(\alpha))^{\frac12}M(\partial \mathcal{S}(\beta))^{\frac12}}\,dm(z) \geq \int_{(\partial\mathbb{D})^2}\sum_{\alpha\in\mathcal{P}_{\mathfrak{G}^2}(z)}\frac{((\mathbb{I}^*\nu)(\alpha))^2}{M(\partial \mathcal{S}(\alpha))}\,dm(z)\geq\\
&\sum_{\alpha\in T^2}\frac{((\mathbb{I}^*\nu)(\alpha))^2}{M(\partial \mathcal{S}(\alpha))}\cdot m\{z:\; \alpha\in\mathcal{P}_{\mathfrak{G}^2}(z)\} \geq \sum_{\alpha\in T^2}\frac{((\mathbb{I}^*\nu)(\alpha))^2}{M(\partial \mathcal{S}(\alpha))}\cdot m(J_{\alpha}) = \sum_{\alpha\in T^2}((\mathbb{I}^*\nu)(\alpha))^2 = \mathcal{E}[\nu].
\end{split}
\end{equation}
Combined with \eqref{e:A.86} these estimates give us
\begin{multline*}
\mathcal{E}_{(\frac12,\frac12)}[\mu] = \int_{(\partial\mathbb{D})^2}\left(\int_{(\partial\mathbb{D})^2}b_{(\frac12,\frac12)}(z,\zeta)\,d\mu(\zeta)\right)^2\,dm(z) \\ \approx \int_{(\partial\mathbb{D})^2}\left(\sum_{\alpha\in\mathcal{P}_{\mathfrak{G}^2}(z)}\frac{(\mathbb{I}^*\nu)(\alpha)}{M(\partial \mathcal{S}(\alpha))^{\frac12}}\right)^2\,dm(z)\approx \mathcal{E}[\nu],
\end{multline*}
which finishes the proof.
\end{proof}
\subsection{}\label{S:A.1}
\begin{lemma}\label{l:A.1.1}
The following properties hold:

\begin{enumerate}
\item  Let $E$ be a Borel subset of $\overline{T}^2$ and let $\mu\geq0$ be a Borel measure on $\overline{T}^2$ such that $\mathcal{E}[\mu]<\infty$ and $\mathbb{V}^{\mu} \leq 1$ quasi-everywhere on $E$. Then $\capp E \geq \mu(E)$. \label{l:A1.s.2}
\item Let $F$ be an open set in $\partial T$ and $\rho$ be its equilibrium measure. Then $\supp\rho = \textup{cl}F$, and $V^{\rho} \equiv 1$ on $F$. \label{l:A1.71}
\item  \textbf{Maximum principle.} Let $\mu\geq0$ be a measure on $\overline{T}$ with finite energy. Then
\begin{equation}\label{e:A1.s.2}
\sup_{\alpha\in\supp\mu}V^{\mu}(\alpha) = \sup_{\alpha\in \overline{T}}V^{\mu}(\alpha).
\end{equation}\label{l:A1.s.3}
\item \textbf{Domination principle.} Let $f$ be a non-negative function in $\ell^2(T)$ such that $f(\alpha) \geq f(\alpha^+) + f(\alpha^-)$ for any point $\alpha \in T$ and its two children $\alpha^{\pm}$. Suppose $\nu\geq0$ is a Borel measure on $\overline{T}$ with finite energy such that
\begin{equation}\label{e:A1.c.4}
(If)(\alpha) \geq V^{\nu},\quad \alpha\in \supp\nu.
\end{equation}
Then this inequality holds everywhere,
\begin{equation}\label{e:A1.s.4}
(If)(\alpha) \geq V^{\nu}(\alpha),\quad \alpha\in \overline{T}.
\end{equation}
\label{l:A1.s.4}
\end{enumerate}
\end{lemma}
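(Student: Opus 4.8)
All four assertions live on a single tree $T$ with the radially decreasing kernel $G(\alpha,\beta)=\chi_{\mathcal S(\beta)}(\alpha)$, so the classical potential theory of \cite[Ch.~2]{ah1996} and of \cite{arsw2014} applies essentially verbatim; the purpose of the lemma is only to record the statements we use. The plan is to prove the maximum principle (3) first, by hand, since it is elementary and feeds into the others; then (1), which curiously needs no maximum principle and so survives on the bitree elsewhere in the paper; and finally to deduce (2) and (4) from the variational description of the equilibrium measure together with (3). I expect (4), the domination principle, to be the delicate point.

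\textbf{Maximum principle (3).} The key observation is that $V^{\mu}$ is nondecreasing along every branch pointing toward $\partial T$: if $\omega\le\alpha$ in $\overline T$ then $\mathcal P(\alpha)\subseteq\mathcal P(\omega)$, and since $V^{\mu}(\alpha)=\sum_{\gamma\in\mathcal P(\alpha)}\mu(\mathcal S(\gamma))$ with nonnegative terms, $V^{\mu}(\omega)\ge V^{\mu}(\alpha)$. Hence $\sup_{\overline T}V^{\mu}$ is realized along $\partial T$. Fix $\omega\in\partial T$; if $\omega\in\supp\mu$ we are done, so suppose not. Then (by compactness, since the sets $\mathcal S(\omega^{n})\cap\supp\mu$ are nested and closed with intersection $\{\omega\}\cap\supp\mu=\emptyset$) there is a largest $n_{0}$ with $\mathcal S(\omega^{n_{0}})\cap\supp\mu\ne\emptyset$; set $\gamma_{0}:=\omega^{n_{0}}$. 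For $\gamma\in\mathcal P(\omega)$ strictly below $\gamma_{0}$ we have $\mathcal S(\gamma)\subseteq\mathcal S(\omega^{n_{0}+1})$, which misses $\supp\mu$, so $\mu(\mathcal S(\gamma))=0$; thus $V^{\mu}(\omega)=\sum_{\gamma\ge\gamma_{0}}\mu(\mathcal S(\gamma))=V^{\mu}(\gamma_{0})$. Choosing $\xi\in\supp\mu\cap\mathcal S(\gamma_{0})$, so that $\gamma_{0}\in\mathcal P(\xi)$ and hence $\{\gamma:\gamma\ge\gamma_{0}\}\subseteq\mathcal P(\xi)$, we get $V^{\mu}(\xi)\ge\sum_{\gamma\ge\gamma_{0}}\mu(\mathcal S(\gamma))=V^{\mu}(\omega)$. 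This proves $\sup_{\overline T}V^{\mu}\le\sup_{\supp\mu}V^{\mu}$; the reverse inequality is trivial.

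\textbf{Assertion (1).} First reduce to compact $E$ by inner regularity: $\mu(E)=\sup\{\mu(K):K\subseteq E\text{ compact}\}$, while $\capp K\le\capp E$ and $\mathbb V^{\mu}\le1$ quasi-everywhere on $K$ as well. For compact $E$ (with $\capp E<\infty$, else trivial) let $\mu_{E}$ be its equilibrium measure, so $\supp\mu_{E}\subseteq E$; using the standard facts $\mathbb V^{\mu_{E}}\ge1$ q.e.\ on $E$ and that a finite-energy measure does not charge polar sets, one has
\[
\mu(E)\le\int_{E}\mathbb V^{\mu_{E}}\,d\mu\le\int_{\overline T^{2}}\mathbb V^{\mu_{E}}\,d\mu=\mathcal E[\mu,\mu_{E}]=\int_{\overline T^{2}}\mathbb V^{\mu}\,d\mu_{E}\le\mu_{E}(E)=\capp E,
\]
the last step because $\mathbb V^{\mu}\le1$ holds $\mu_{E}$-a.e.\ ($\mu_{E}$ lives on $E$ and ignores the polar exceptional set). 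This uses only Tonelli, not a maximum principle, which is why the statement persists for the bitree.

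\textbf{Assertions (2) and (4).} For (2): the equilibrium property $V^{\rho}\le1$ on $\supp\rho$ combined with the maximum principle (3) gives $V^{\rho}\le1$ everywhere on $\overline T$, while $V^{\rho}\ge1$ q.e.\ on $F$; hence $V^{\rho}=1$ q.e.\ on $F$. To upgrade this to equality \emph{everywhere} on $F$ I would exploit openness: any $\omega_{0}\in F$ lies in a cylinder $\partial\mathcal S(\beta)\subseteq F$, and on that cylinder $V^{\rho}$ differs from the potential of $\rho\restriction_{\mathcal S(\beta)}$ by the constant $V^{\rho}(\beta)\le1$; reducing to the full boundary $\partial\mathcal S(\beta)$, whose equilibrium potential is constant by self-similarity, forces $V^{\rho}\equiv1$ on $\partial\mathcal S(\beta)$, hence at $\omega_{0}$. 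The identity $\supp\rho=\textup{cl}F$ then follows by the usual variational argument: $\supp\rho\subseteq\textup{cl}F$ because $\rho$ is the equilibrium measure of a compact exhaustion of $F$, and if a relatively open piece of $F$ avoided $\supp\rho$ one could strictly lower the energy, contradicting minimality. For (4) --- which I expect to be the main obstacle --- I would run the classical domination/sweeping argument. Put $g:=I^{*}\nu\ge0$, which satisfies $g(\alpha)\ge g(\alpha^{+})+g(\alpha^{-})$ just like $f$, so $V^{\nu}=Ig$; both $If$ and $V^{\nu}$ are superharmonic on $T$ (for $If$ this is Lemma~\ref{l:auxl}, and $V^{\nu}$ has $\Delta V^{\nu}(\alpha)=-\tfrac13\nu(\{\alpha\})\le0$), hence $w:=\min(If,V^{\nu})$ is superharmonic and equals $V^{\nu}$ on $\supp\nu$ since $V^{\nu}\le If$ there. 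The heart of the matter is to show that $V^{\nu}$ is the \emph{least} such superharmonic function, equivalently that $V^{\nu}$ is the potential of the balayage of $\nu$ onto $\{If\ge V^{\nu}\}$; this is exactly where the radial (unique-geodesic) nature of $T$ is used and where the maximum principle (3) is invoked, and I would follow \cite[Ch.~2]{ah1996} (or \cite{arsw2014}) line by line, the tree kernel $G$ being of the radially decreasing type to which that theory applies.
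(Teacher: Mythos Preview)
Your proofs of (1) and (3) are correct and essentially match the paper's; for (1) the paper routes through $\mu|_E$ and the positivity of $\mathcal E[\mu_E-\mu|_E]$, while you go directly via Fubini and $\supp\mu_E\subseteq E$, but the content is the same.

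The real divergence is in (4). You flag the domination principle as ``the delicate point'' and plan to invoke the abstract balayage/sweeping theory of \cite{ah1996}, deferring the heart of the argument. In fact the paper's proof is a five-line descent that uses nothing but the tree structure and the superadditivity hypothesis on $f$. Take a \emph{highest} $\alpha^{0}\in T$ with $(If)(\alpha^{0})<V^{\nu}(\alpha^{0})$; subtracting the inequality at the parent yields $f(\alpha^{0})<(I^{*}\nu)(\alpha^{0})$. Since the hypothesis forces $\alpha^{0}\notin\supp\nu$, one has $\nu(\{\alpha^{0}\})=0$ and hence $(I^{*}\nu)(\alpha^{0})=(I^{*}\nu)(\alpha^{0}_{+})+(I^{*}\nu)(\alpha^{0}_{-})$; combined with $f(\alpha^{0})\ge f(\alpha^{0}_{+})+f(\alpha^{0}_{-})$ this gives a child $\alpha^{1}$ with $f(\alpha^{1})<(I^{*}\nu)(\alpha^{1})$. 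Iterating produces a geodesic $\{\alpha^{k}\}$ whose endpoint $\omega$ lies in $\supp\nu$ (because $\nu(\mathcal S(\alpha^{k}))>0$ for every $k$), and summing gives $(If)(\omega)=(If)(\alpha^{0})+\sum_{k\ge1}f(\alpha^{k})<V^{\nu}(\alpha^{0})+\sum_{k\ge1}(I^{*}\nu)(\alpha^{k})=V^{\nu}(\omega)$, a contradiction. No superharmonic minimality, no maximum principle, no balayage.

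For (2) your outline via (3) and ``self-similarity'' is workable but vague; the paper again argues directly. For $V^{\rho}\equiv1$ on $F$: given $\omega\in F$ choose $\tau>\omega$ with $\partial\mathcal S(\tau)\subset F$; energy minimality forces $(I^{*}\rho)(\alpha)=2(I^{*}\rho)(\alpha^{\pm})$ for every $\alpha\le\tau$ (otherwise shifting mass between siblings strictly lowers the energy), so $V^{\rho}$ is constant on $\partial\mathcal S(\tau)$, hence equal to $1$. For $\supp\rho=\textup{cl}\,F$: if some $\omega\in\textup{cl}\,F$ had a $\rho$-null neighbourhood, take the smallest ancestor $\alpha$ with $\rho(\mathcal S(\alpha))>0$; the child over $\omega$ carries no mass so has potential $V^{\rho}(\alpha)\ge1$, forcing the sibling (which does carry mass) to have potential strictly above $1$, contradicting $V^{\rho}\le1$ on $\supp\rho$.
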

\begin{proof}
\textit{Property \ref{l:A1.s.2}}. Define, as usual, the restricted measure $\mu\vert_E$ by $\mu\vert_E(F) := \mu(E\cap F)$, and let $\mu_E$ be the equilibrium measure of $E$. Clearly, $\mathcal{E}[\mu\vert_E] < \infty$ and $\mathbb{V}^{\mu\vert_E} \leq \mathbb{V}^{\mu}$. We have
\[
\mathcal{E}[\mu\vert_E] = \int_{\overline{T}^2}\mathbb{V}^{\mu\vert_E}\,d\mu\vert_E \leq \int_{\overline{T}^2}\mathbb{V}^{\mu_E}\,d\mu\vert_E = \mathcal{E}[\mu_E,\mu\vert_E],
\]
since $\mathbb{V}^{\mu_E}\geq1$ quasi-everywhere on $E$. Hence, since $\mathcal{E}[\mu_E - \mu\vert_E] \geq 0$,
\[
\mu(E) \leq \mathcal{E}[\mu_E,\mu\vert_E] \leq \mathcal{E}[\mu_E] = \capp E.
\]

\textit{Property \ref{l:A1.71}}.\;\; We first show that $V^{\rho} = 1$ on $F$. Fix a point $\omega\in F$. Since $F$ is open, there is a point $\tau>\omega$ such that $\partial\mathcal{S}(\tau)\subset F$. Since $\rho$ minimizes the energy, an elementary computation shows that for any $\alpha\leq \tau$ one must have $(I^*\rho)(\alpha) = 2(I^*\rho)(\alpha^+) = 2(I^*\rho)(\alpha^-)$, where $\alpha^{\pm}$ are the children of $\alpha$. Therefore $V^{\rho}$ is actually constant on $\partial\mathcal{S}(\tau)$, and thus $V^{\rho}(\omega) = 1$.

Next we show that $\supp\rho = \textup{cl}F$. Let $\omega\in \textup{cl}F$, and consider an open neighbourhood $U_{\omega}$ of $\omega$. If $\rho(U_{\omega}) = 0$ there is a smallest point $\alpha > \omega$ such that $\rho(\mathcal{S}(\alpha)) \neq 0$. Denote its two children by $\alpha^+$ and $\alpha^-$, and assume that $\omega\in\mathcal{S}(\alpha^+)$. Since $F$ is open, $\mathcal{S}(\alpha^+) \cap F \neq \emptyset$. Therefore, $V^{\rho}(\alpha^+) = V^{\rho}(\alpha) \geq 1$, since $V^{\rho} = 1$ on $F$. On the other hand, $(I^*\rho)(\alpha^-) >0$, by the minimality of $\alpha$. Thus $V^{\rho}(\alpha^-) >1$, which contradicts the fact that $V^{\rho} \leq 1$ on $\supp\rho$. Therefore it must have been that $\rho(U_{\omega}) > 0$, and thus that $\textup{cl}F\subset \supp\rho$. The converse is elementary.

\textit{Property \ref{l:A1.s.3}.}\; It is enough to check \eqref{e:A1.s.2} inside the tree (i.e. for $\alpha\in T$), since for any $\beta\in\partial T$ we have $V^{\mu}(\beta) = \sup_{\alpha > \beta}V^{\mu}(\alpha)$. Now assume that there exists a point $\beta\in T\setminus\supp\mu$ such that
\[
V^{\mu}(\beta) > V^{\mu}(\alpha),\quad\alpha\in \supp\mu.
\]
We see immediately that $\mathcal{S}(\beta)\cap \supp\mu = \emptyset$, and hence  there exists a unique point $\tau_{\beta}>\beta$ such that $\mathcal{S}(\tau_{\beta})\cap\supp\mu\neq\emptyset$, but $\mathcal{S}(\tau)\cap\supp\mu=\emptyset$ for every $\tau<\tau_{\beta}$. Then $(I^*\mu)(\tau) = 0$ for such points $\tau$, and
\[
V^{\mu}(\beta) = V^{\mu}(\tau_{\beta}) + \sum_{\tau_{\beta}>\tau\geq\beta}(I^*\mu)(\beta) = V^{\mu}(\tau_{\beta}).
\]
Monotonicity of $V^{\mu}$, with respect to natural order on $T$, implies that $V^{\mu}(\tau_{\beta}) < V^{\mu}(\alpha)$ for any $\alpha \in \mathcal{S}(\tau_{\beta})\cap \supp\mu$, yielding a contradiction.

\textit{Property \ref{l:A1.s.4}.}
As before, it is enough to show \eqref{e:A1.s.4} only for points inside $T$. Now suppose there exists $\alpha^0\in T$ such that
\[
(If)(\alpha^0) < V^{\nu}(\alpha^0)
\]
and
\begin{equation}\notag
(If)(\tau) \geq V^{\nu}(\tau),\quad \tau > \alpha^0.
\end{equation}
It follows immediately that $f(\alpha^0) < (I^*\nu)(\alpha^0)$. Hence one of the children of $\alpha^0$, which we denote by $\alpha_1$, satisfies $f(\alpha^1) < (I^*\nu)(\alpha^1)$. Continuing this argument we obtain a sequence $\{\alpha^k\}_{0}^{\infty}$ of nested points such that $f(\alpha^k) < (I^*\nu)(\alpha^k),\; k=0,1,\dots$. Denote the endpoint of this geodesic by $\omega = \bigcap_{k}\mathcal{S}(\alpha^k)$. Clearly $\omega\in \supp \nu$. It follows that
\[
(If)(\omega) = (If)(\alpha^0) + \sum_{k=1}^{\infty}f(\alpha^k) < V^{\nu}(\alpha^0) + \sum_{k=1}^{\infty}(I^*\nu)(\alpha^k) = V^{\nu}(\omega),
\]
a contradiction.
\end{proof}

\subsection{}\label{S:A.2}
\begin{proposition}\label{p:A.2.1}
For any $\lambda>1$ there exists a point $\omega\in \overline{T}^2$ and a set $E\subset \overline{T}^2$ such that the equilibrium measure $\mu= \mu_E$ of this set satisfies
\begin{equation}\label{e:pA.2.1}
\mathbb{V}^{\mu}(\omega) \geq \lambda.
\end{equation}
\end{proposition}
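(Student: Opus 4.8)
The plan is to build, for every large integer $M$, a set $E=E_M$ on the distinguished boundary which is a disjoint union of $M+1$ ``crossing boundary rectangles'' $Q_0,\dots,Q_M$, positioned so that one and the same point $\omega\in(\partial T)^2$ sits below each $Q_p$ in $\asymp \capp(Q_p)^{-1}$ many ways. If one can show that the equilibrium measure $\mu_E$ still charges each $Q_p$ comparably to $\capp Q_p$, then each piece feeds a bounded amount into $\mathbb V^{\mu_E}(\omega)$, so $\mathbb V^{\mu_E}(\omega)\gtrsim M$, and taking $M\gtrsim\lambda$ proves the proposition with $\mu=\mu_E$.

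\emph{The construction.} Fix a geodesic $\zeta\in\partial T$, write $\zeta^{(k)}$ for the ancestor of $\zeta$ with $d_T(\zeta^{(k)})=k+1$, and let $\sigma^{(k)}$ be the sibling of $\zeta^{(k+1)}$ (the other child of $\zeta^{(k)}$). Set $\omega:=(\zeta,\zeta)$, fix a large absolute constant $K$, and for $p=0,\dots,M$ put
\[
\gamma_p:=\bigl(\sigma^{(K^{p})},\ \sigma^{(K^{M-p})}\bigr)\in T^2,\qquad
Q_p:=\mathcal S(\gamma_p)\cap(\partial T)^2,\qquad E:=\bigcup_{p=0}^M Q_p .
\]
The branch depths $K^p$ are distinct, so the first coordinates $\sigma^{(K^p)}$ are pairwise incomparable and the $Q_p$ are pairwise disjoint; $E$ is compact, so its equilibrium measure $\mu:=\mu_E$ is well defined.

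\emph{Step 1 (reduction to a mass bound).} If $\beta\ge\gamma_p$ then $\mathcal S(\beta)\supseteq Q_p$; the $\beta$ with this property that also lie in $\mathcal P(\omega)$ are exactly $\mathcal P(\omega\wedge\gamma_p)$, of cardinality $d_{T^2}(\omega\wedge\gamma_p)=(K^{p}+1)(K^{M-p}+1)$. Hence, using disjointness and Tonelli,
\[
\mathbb V^{\mu}(\omega)=\sum_{\beta\in\mathcal P(\omega)}(\mathbb I^{*}\mu)(\beta)
\ \ge\ \sum_{p=0}^M\mu(Q_p)\,(K^{p}+1)(K^{M-p}+1).
\]
Since $\capp Q_p\asymp\bigl((K^{p}+1)(K^{M-p}+1)\bigr)^{-1}$ by the standard one‑rectangle estimate, everything reduces to proving
\begin{equation}\label{e:plan-mass}
\mu(Q_p)\ \gtrsim\ \capp Q_p\qquad\text{uniformly in }p,M .
\end{equation}

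\emph{Step 2 (decoupling under $\mu_E$).} Let $\widehat\mu_p:=\capp(Q_p)^{-1}\mu_{Q_p}$. Because each $Q_p$ is a single boundary rectangle, $\mathbb V^{\mu_{Q_p}}\lesssim 1$ on $\overline T^2$; and for $\eta\in Q_{p'}$ with $p'\ne p$ the only common ancestors of $\eta$ and $Q_p$ are those of $\gamma_p$ and $\gamma_{p'}$, so $\mathbb V^{\mu_{Q_p}}(\eta)=d_{T^2}(\gamma_p\wedge\gamma_{p'})\,\capp Q_p$. With $d_{T^2}(\gamma_p\wedge\gamma_{p'})=(K^{\min(p,p')}+1)(K^{M-\max(p,p')}+1)$ a one‑line geometric summation gives that $\sum_{p'\ne p}\sup_{Q_p}\mathbb V^{\mu_{Q_{p'}}}\le C/(K-1)=:\tau$ for every $p$, and likewise $\sum_{p\ne p'}\sup_{Q_p}\mathbb V^{\mu_{Q_{p'}}}\le\tau$ for every $p'$, so $\tau$ can be made $\le\tfrac12$. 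Pairing $\mu_E$ against $\widehat\mu_p$ and using $\mathbb V^{\mu_E}\ge 1$ q.e.\ on $E\supseteq Q_p$ (equilibrium measures ignoring polar sets) yields $1\le\mathcal E[\mu_E,\widehat\mu_p]\lesssim m_p+\sum_{p'\ne p}m_{p'}\sup_{Q_p}\mathbb V^{\mu_{Q_{p'}}}$, where $m_p:=\mu_E(Q_p)/\capp Q_p$. Summing these inequalities over $p$ collapses the double sum into $\tau\sum_p m_p$, whence $\sum_p m_p\gtrsim M+1$. Fed back into Step 1 this already gives $\mathbb V^{\mu}(\omega)\gtrsim\sum_p m_p\gtrsim M+1$, so $M\asymp\lambda$ finishes the proof; in particular \eqref{e:plan-mass} is obtained on the average, which is all that is needed.

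\emph{Where the difficulty sits.} Everything except Step 2 is bookkeeping with standard one‑rectangle capacity/potential computations (done via the explicit admissible function on $\mathcal P(\gamma_p)$) together with elementary properties of equilibrium measures from Lemma \ref{l:A.1.1} and \cite[Ch.\ 2]{ah1996}. The one place where the absence of a maximum principle really threatens the argument is the decoupling estimate: crossing rectangles interact, and for a naive placement the cross‑potentials $\sup_{Q_p}\mathbb V^{\mu_{Q_{p'}}}$ are of unit size, so their sum over $p'$ diverges with $M$ and $\mu_E$ could concentrate on a single piece. The super‑geometric choice of branch depths ($K^{p}$ against $K^{M-p}$) is precisely what makes these cross‑potentials summable with sum $O(1/K)$; propagating this from the $\mu_{Q_p}$ to $\mu_E$ through the energy pairing is the technical core, and it is here that one must be careful.
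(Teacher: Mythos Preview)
Your construction and argument are correct, and the strategy is essentially the same as the paper's: place $M+1$ ``blocks'' along a hyperbola $d_T(\cdot_x)\cdot d_T(\cdot_y)\asymp K^M$ above a single $\omega$, choosing super-geometric depths $K^p$ so that the cross-interactions $d_{T^2}(\gamma_p\wedge\gamma_{p'})\capp Q_{p'}\asymp K^{-|p-p'|}$ sum to $O(1/K)$, and then deduce that the equilibrium measure cannot collapse onto a single block.

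The tactical differences are minor but worth noting. The paper takes $E$ to be a finite set of \emph{points} in $T^2$ (so $\mu_E$ is a finite linear system), and proves the stronger pointwise balance $\max_i\mu(\alpha^i)\le 5\min_i\mu(\alpha^i)$ by a direct two-line comparison: if some $\mu(\alpha^{i_1})>5\mu(\alpha^{i_2})$, the small cross-term bound forces $\mathbb V^\mu(\alpha^{i_1})>1$, contradicting the equilibrium condition. You instead put $E$ on the distinguished boundary and use an energy-pairing argument against the individual $\widehat\mu_p$ to obtain only the averaged bound $\sum_p m_p\gtrsim M$, which, as you observe, is all that is needed. The paper's route is slightly more elementary (no need to invoke the product structure of $\mu_{Q_p}$ or the one-rectangle maximum principle), while yours has the small advantage of producing $\mu_E$ supported on $(\partial T)^2$, matching the formulation of Proposition~\ref{countermaxprinc} in the introduction.
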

\begin{proof}
Put $n = 20([\lambda]+1)$ and $k = 20^n$. Now fix any point $\omega\in (\partial T)^2$ and for any $0\leq i \leq n$ consider the unique point $\alpha^i = (\alpha^i_x,\alpha^i_y)$ that satisfies $\alpha^i \geq\omega,\; d_T(\alpha^i_x) = 20^{n-i},\; d_T(\alpha_y^i) = 20^i$, so that $d_{T^2}(\alpha^i) = 20^n=k$. We have $\alpha_x^i\wedge\alpha_x^j = \alpha_x^i$, $\alpha_y^i\wedge\alpha_y^j = \alpha_y^j$ for $i>j$, hence
\begin{equation}\label{e:221}
\sum_{i\neq j}d_{T^2}(\alpha^i\wedge\alpha^j) = \sum_{j<i}20^{n+j-i} + \sum_{j>i}20^{n+i-j} \leq \frac{2}{19}20^n \leq \frac{k}{9}.
\end{equation}
Now let $E := \{\alpha^i\}_{i=0}^n$ and $\mu=\mu_E$ to be the equilibrium measure of $E$. We claim that $\mathbb{V}^{\mu}(\omega) \geq \frac{n}{20} \geq\lambda$.

To show this we first note  that the values of $\mu$ at $\alpha^i,\; i=0,\dots , n$ are more or less the same,
\begin{equation}\label{e:222}
\sup_{0\leq i\leq n}\mu(\alpha^i) \leq 5\inf_{0\leq i \leq n}\mu(\alpha^i).
\end{equation}
Indeed, assume that $i_1,i_2$ are such that $\sup_{0\leq i\leq n}\mu(\alpha^i) = \mu(\alpha^{i_1})$, $\inf_{0\leq i\leq n}\mu(\alpha^i) = \mu(\alpha^{i_2})$, and 
\[
\mu(\alpha^{i_1}) > 5\mu(\alpha^{i_2}).
\]
Since every element of $E$ has non-zero capacity (actually $\capp \{\alpha^i\} \equiv \frac{1}{k} >0$), we have
\begin{equation}\notag
\begin{split}
&1\leq\mathbb{V}^{\mu}(\alpha^{i_2}) = \sum_{i\neq i_2}d_{T^2}(\alpha^{i}\wedge\alpha^{i_2})\mu(\alpha^i) + k\mu(\alpha^{i_2})\leq\\
&\sum_{i\neq i_2}d_{T^2}(\alpha^{i}\wedge\alpha^{i_2})\mu(\alpha^{i_1}) + k\mu(\alpha^{i_2})\leq \frac{k}{9}\mu(\alpha^{i_1}) + \frac{k}{5}\mu(\alpha^{i_1})\leq\\
&\frac12k\mu(\alpha^{i_1}) + \frac12\sum_{i\neq i_1}d_{T^2}(\alpha^{i}\wedge\alpha^{i_1})\mu(\alpha^i) = \frac12\mathbb{V}^{\mu}(\alpha^{i_1}) .
\end{split}
\end{equation}
On the other hand, $\mu(\alpha^{i_1})>0$, hence $\mathbb{V}^{\mu}(\alpha^{i_1}) = 1$, and we have a contradiction.

Furthermore,
\begin{equation}\notag
\begin{split}
&1= \mathbb{V}^{\mu}(\alpha^{i_1}) = k\mu(\alpha^{i_1}) + \sum_{i\neq i_1}d_{T^2}(\alpha^{i}\wedge\alpha^{i_1})\mu(\alpha^i)\leq \\
&k\mu(\alpha^{i_1}) + \sum_{i\neq i_1}d_{T^2}(\alpha^{i}\wedge\alpha^{i_1})\mu(\alpha^{i_1}) \leq \frac{10}{9}k\mu(\alpha^{i_1}).
\end{split}
\end{equation}
Therefore, for any $0\leq i\leq n$,
\[
\mu(\alpha^{i}) \geq\frac15\mu(\alpha^{i_1})\geq \frac{9}{50k}.
\]
It follows immediately that
\[
\mathbb{V}^{\mu}(\omega) = \sum_{i=0}^nd_{T^2}(\omega\wedge\alpha^i)\mu(\alpha^i) = \sum_{i=0}^nd_{T^2}(\alpha^i)\mu(\alpha^i) \geq kn\mu(\alpha^{i_2}) \geq\frac{9n}{50},
\]
and we are done.
\end{proof}
\begin{proposition}\label{p:A.2.2}
For any $\lambda > 0$ there exists a pair of measures $\mu,\nu\geq0$ on $\overline{T}^2$ such that
\begin{equation}\label{e:pA.2.2.c}
\mathbb{V}^{\nu}(\alpha) \geq \mathbb{V}^{\mu}(\alpha),\quad\alpha\in \supp\mu,
\end{equation}
but
\begin{equation}\label{e:pA.2.2.s}
\sup_{\alpha\in \overline{T}^2}\mathbb{V}^{\nu}(\alpha) \leq \sup_{\alpha\in \overline{T}^2}\frac{1}{\lambda}\mathbb{V}^{\mu}(\alpha).
\end{equation}
\end{proposition}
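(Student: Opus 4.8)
The idea is to obtain this almost for free from the quantitative failure of the maximum principle that has just been established (Proposition~\ref{p:A.2.1}, or Proposition~\ref{countermaxprinc}), together with one trivial observation: the Dirac mass at the root of $T^{2}$ has constant bilogarithmic potential. Let $\nu:=\delta_{o}$ be the unit point mass at the root $o$ of $T^{2}$ — equivalently, the equilibrium measure of the singleton $\{o\}$, since $\capp\{o\}=1$. Because $o\in\mathcal{S}(\beta)$ if and only if $\beta=o$, we have $(\mathbb{I}^{*}\nu)(\beta)=\chi_{\{o\}}(\beta)$, and hence $\mathbb{V}^{\nu}(\alpha)=(\mathbb{I}\mathbb{I}^{*}\nu)(\alpha)=\sum_{\gamma\ge\alpha}\chi_{\{o\}}(\gamma)=1$ for every $\alpha\in\overline{T}^{2}$, since $o\ge\alpha$ for all $\alpha$. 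In particular $\mathbb{V}^{\nu}\equiv1$, so $\mathcal{E}[\nu]=1<\infty$ and $\sup_{\alpha\in T^{2}}\mathbb{V}^{\nu}(\alpha)=1$.

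For $\mu$ I would reuse the configuration from the proof of Proposition~\ref{p:A.2.1}: fix a large integer $n$ (to be chosen below in terms of $\lambda$), let $E=\{\alpha^{i}\}_{i=0}^{n}$ be the set constructed there (with $d_{T^{2}}(\alpha^{i})=20^{n}$ for every $i$), and let $\mu=\mu_{E}$ be its equilibrium measure, so that $\mathbb{V}^{\mu}\le1$ on $\supp\mu$. That proof exhibits a point $\omega$ lying below all the $\alpha^{i}$ for which
\[
\mathbb{V}^{\mu}(\omega)=\sum_{i}d_{T^{2}}(\omega\wedge\alpha^{i})\,\mu(\alpha^{i})=\sum_{i}d_{T^{2}}(\alpha^{i})\,\mu(\alpha^{i})\ge\frac{9n}{50};
\]
such an $\omega$ may be taken in $T^{2}$ (alternatively one uses the lower semicontinuity of $\mathbb{V}^{\mu}$ and the density of $T^{2}$ in $\overline{T}^{2}$), so $\sup_{\alpha\in T^{2}}\mathbb{V}^{\mu}(\alpha)\ge\frac{9n}{50}$.

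It remains to put the pieces together. On $\supp\mu$ we have $\mathbb{V}^{\nu}=1\ge\mathbb{V}^{\mu}$, which is \eqref{e:pA.2.2.c}; and choosing $n$ so large that $\frac{9n}{50}\ge\lambda$ gives
\[
\sup_{\alpha\in T^{2}}\mathbb{V}^{\nu}(\alpha)=1\le\frac{1}{\lambda}\cdot\frac{9n}{50}\le\frac{1}{\lambda}\sup_{\alpha\in T^{2}}\mathbb{V}^{\mu}(\alpha),
\]
which is \eqref{e:pA.2.2.s}. I do not expect any genuine obstacle here: all the difficulty is already contained in Proposition~\ref{p:A.2.1}/\ref{countermaxprinc}, and the only thing one has to notice is that the globally constant potential $\mathbb{V}^{\delta_{o}}\equiv1$ automatically dominates $\mathbb{V}^{\mu}$ on $\supp\mu$ (where $\mathbb{V}^{\mu}\le1$) while lying a factor of order $n$ below $\sup_{\alpha\in T^{2}}\mathbb{V}^{\mu}$.
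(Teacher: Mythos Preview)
Your proposal is correct and follows exactly the paper's approach: take $\nu$ to be the unit mass at the root (so $\mathbb{V}^{\nu}\equiv 1$) and $\mu=\mu_E$ the equilibrium measure from Proposition~\ref{p:A.2.1}. Your write-up is in fact slightly more careful than the paper's, since you address why the supremum over $T^2$ (rather than $\overline{T}^2$) is still at least $\lambda$.
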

\begin{proof}
This is a direct corollary of Proposition \ref{p:A.2.1} above. Indeed, given $\lambda>0$ let $\mu = \mu_E$ be as in \eqref{e:pA.2.1}, and let $\nu := \chi_{o}$ be the unit point mass at the root. Then, clearly, 
$\mathbb{V}^{\nu}\equiv 1\;\textup{on}\; \overline{T}^2$, and in particular on $\supp\mu$, but $\sup_{\alpha\in \overline{T}^2}\mathbb{V}^{\mu}\geq\lambda$.
\end{proof}
\subsection{}\label{S:A.3}
Let $m_0$ be normalized length on $\partial\mathbb{D}$.  Define $M_0$ to be its natural pull-back on $\partial T$, $M_0 = \Lambda_0^*m_0$. In particular, we have
\[
M_0(\partial\mathcal{S}(\beta)) = 2^{-d_T(\beta)+1},\quad \beta\in T.
\]
Similarly, as in Lemma \ref{l:A.9}, let $M = \Lambda^*m$, where $m$ is normalized area measure on the torus $(\partial\mathbb{D})^2$. Clearly,
\[
M(\partial \mathcal{S}(\beta)) = 2^{-d_{T}(\beta_x)-d_{T}(\beta_y)+2} = M_0(\partial \mathcal{S}(\beta_x))M_0(\partial\mathcal{S}(\beta_y)),\quad \beta = (\beta_x,\beta_y) \in T^2.
\]
Let us show that $d_{T^2}$ is almost a martingale with respect to the measure $M$.
\begin{lemma}\label{l:A.3.1}
Assume that $\alpha,\beta\in T^2$. Then
\begin{equation}\label{e:91}
d_{T^2}(\alpha\wedge\beta) \approx \frac{1}{M(\partial\mathcal{S}(\alpha))M(\partial\mathcal{S}(\beta))}\int_{\partial\mathcal{S}(\alpha)}\int_{\partial\mathcal{S}(\beta)}d_{T^2}(\xi\wedge\omega)dM(\xi)\,dM(\omega).
\end{equation}
\end{lemma}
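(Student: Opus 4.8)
The plan is to reduce \eqref{e:91} to a one-dimensional statement and then multiply. Two ingredients make this work. First, the measure $M = \Lambda^*m$ on $(\partial T)^2$ is the product $M_0\otimes M_0$, where $M_0 = \Lambda_0^*m_0$; this is immediate from the definition of $\Lambda^*$ evaluated on a product Borel set $E_x\times E_y$, using $\Lambda=(\Lambda_0,\Lambda_0)$, $\sharp\Lambda^{-1}(\{z\}) = \sharp\Lambda_0^{-1}(\{z_1\})\cdot\sharp\Lambda_0^{-1}(\{z_2\})$, $m = m_0\otimes m_0$, and Fubini. Second, both the kernel and the domain factor: $d_{T^2}(\xi\wedge\omega) = d_T(\xi_x\wedge\omega_x)\,d_T(\xi_y\wedge\omega_y)$ and $\partial\mathcal{S}(\alpha) = \partial\mathcal{S}(\alpha_x)\times\partial\mathcal{S}(\alpha_y)$. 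Combining these with Tonelli, the right-hand side of \eqref{e:91} splits as a product of two identical one-dimensional expressions, one per coordinate; since $d_{T^2}(\alpha\wedge\beta) = d_T(\alpha_x\wedge\beta_x)\,d_T(\alpha_y\wedge\beta_y)$ as well, it suffices to prove the one-dimensional version
\[
d_T(a\wedge b)\ \approx\ \frac{1}{M_0(\partial\mathcal{S}(a))\,M_0(\partial\mathcal{S}(b))}\int_{\partial\mathcal{S}(a)}\int_{\partial\mathcal{S}(b)} d_T(\xi\wedge\omega)\,dM_0(\xi)\,dM_0(\omega),\qquad a,b\in T,
\]
with absolute constants.

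For the one-dimensional statement I would argue by cases. If $a$ and $b$ are incomparable, then for every $\xi\in\partial\mathcal{S}(a)$ and $\omega\in\partial\mathcal{S}(b)$ the geodesics of $\xi$ and $\omega$ to the root first meet exactly at $a\wedge b$, so $\xi\wedge\omega = a\wedge b$ identically and the average equals $d_T(a\wedge b)$ exactly. If $a\le b$ (so $a\wedge b = b$ and $\partial\mathcal{S}(a)\subseteq\partial\mathcal{S}(b)$), fix $\xi\in\partial\mathcal{S}(b)$ and let $\gamma_0 = b,\gamma_1,\gamma_2,\dots$ be the vertices of the $\xi$-geodesic below $b$. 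Up to an $M_0$-null set, $\partial\mathcal{S}(b)$ is the disjoint union over $k\ge0$ of the sets $\{\omega:\xi\wedge\omega = \gamma_k\}$, each of which equals $\partial\mathcal{S}$ of the child of $\gamma_k$ off the $\xi$-geodesic and hence has $M_0$-measure $2^{-d_T(\gamma_k)-1} = 2^{-d_T(b)-k-1}$; since $d_T(\gamma_k) = d_T(b)+k$, one obtains
\[
\int_{\partial\mathcal{S}(b)} d_T(\xi\wedge\omega)\,dM_0(\omega) \;=\; \sum_{k\ge0}\bigl(d_T(b)+k\bigr)2^{-d_T(b)-k-1} \;=\; 2^{-d_T(b)}\bigl(d_T(b)+1\bigr),
\]
independently of $\xi$. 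Integrating over $\xi\in\partial\mathcal{S}(a)$ and dividing by $M_0(\partial\mathcal{S}(a))M_0(\partial\mathcal{S}(b)) = 2^{-d_T(a)-d_T(b)}$ yields $d_T(b)+1 = d_T(a\wedge b)+1$, which is comparable to $d_T(a\wedge b)$ with constants in $[1,2]$ since $d_T\ge1$.

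Multiplying the two coordinates then gives \eqref{e:91} with an absolute constant (the ratio lies in $[1,4]$). The argument is essentially bookkeeping; the only points needing a little care are the identity $M = M_0\otimes M_0$ — equivalently, that $M$ charges no coordinate slice, which is also recorded in Lemma~\ref{l:A.8} — and checking that the decomposition of $\partial\mathcal{S}(b)$ used above is a partition modulo $M_0$-null sets. I do not expect a genuine obstacle here: this is the easy, ``martingale-type'' companion to the more delicate Lemma~\ref{l:A.9}.
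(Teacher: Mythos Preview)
Your proof is correct and follows essentially the same route as the paper: reduce to one coordinate by the product structure of $M$, $d_{T^2}$, and $\partial\mathcal{S}$, then control the average of $d_T(\xi\wedge\omega)$ by decomposing $\partial\mathcal{S}(b)$ along the geodesic below $b$. The only organisational difference is that the paper proves the single-integral estimate
\[
\frac{1}{M_0(\partial\mathcal{S}(\beta_x))}\int_{\partial\mathcal{S}(\beta_x)} d_T(\tau_x\wedge\omega_x)\,dM_0(\omega_x)\ \le\ 3\,d_T(\tau_x\wedge\beta_x),\qquad \tau_x\in\overline{T}_x,\ \beta_x\in T_x,
\]
and then applies it twice (picking up a factor $9$), whereas you split into the incomparable case (where $\xi\wedge\omega=a\wedge b$ identically, giving equality) and the comparable case (where your geodesic decomposition yields the exact value $d_T(a\wedge b)+1$, hence a constant in $[1,2]$). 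Your version is marginally sharper in constants and avoids the iteration; the paper's single-integral lemma is a touch more modular. Either way the argument is routine.
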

\begin{proof}
Due to multiplicativity it is enough to prove that, say,
\begin{equation}\notag
d_{T}(\alpha_x\wedge\beta_x) \approx \frac{1}{M_0(\partial\mathcal{S}(\alpha_x))M_0(\partial\mathcal{S}(\beta_x))}\int_{\partial\mathcal{S}(\alpha_x)}\int_{\partial\mathcal{S}(\beta_x)}d_{T}(\xi_x\wedge\omega_x)dM(\xi_x)\,dM(\omega_x).
\end{equation}
If $\xi_x\leq\alpha_x$ and $\omega_x\leq\beta_x$, then $d_{T}(\xi_x\wedge\omega_x) \geq d_{T}(\alpha_x\wedge\beta_x)$, hence
\[
d_{T}(\alpha_x\wedge\beta_x) \leq \frac{1}{M_0(\partial\mathcal{S}(\alpha_x))M_0(\partial\mathcal{S}(\beta_x))}\int_{\partial\mathcal{S}(\alpha_x)}\int_{\partial\mathcal{S}(\beta_x)}d_{T}(\xi_x\wedge\omega_x)dM_0(\xi_x)\,dM_0(\omega_x).
\]
To get the reverse inequality we first show that for any $\beta_x\in T_x$ and $\tau_x\in \overline{T}_x$ we have
\begin{equation}\label{e:92}
\frac{1}{M_0(\partial\mathcal{S}(\beta_x))}\int_{\partial\mathcal{S}(\beta_x)}d_{T}(\tau_x\wedge\omega_x)\,dmM_0(\omega_x) \leq 3d_{T}(\tau_x\wedge\beta_x).
\end{equation}
If $\tau_x\geq\beta_x$ or these two points are not comparable, then, clearly, $d_T(\tau_x\wedge\beta_x) = d_T(\tau_x\wedge\omega_x)$ for $\omega_x\leq\beta_x$, and \eqref{e:92} is trivial. Hence from  now on we assume that $\tau_x < \beta_x$. Let $n:= d_T(\beta_x)$ and $N:= d_T(\tau_x)$. For every $n\leq k \leq N$ there exists exactly one point $\gamma_k\in T_x$ such that $\tau_x \leq\gamma_k \leq\beta_x$, and $d_T(\gamma_k) = k$ (in particular $\gamma_n = \beta_x,\; \gamma_N = \tau_x$). Define 
\[
S_k = \partial\mathcal{S}(\gamma_k)\setminus\partial\mathcal{S}(\gamma_{k+1}),\quad n\leq k \leq N-1,
\]
and
\[
S_N = \partial\mathcal{S}(\tau_x).
\]
If $\omega_x\in S_k$, then, clearly, $d_T(\tau_x\wedge\omega_x) = k$. Moreover, these sets are disjoint and form a covering of $\partial\mathcal{S}(\beta_x)$. Also $M_0(S_k) = 2^{-k}- 2^{-k-1}, n\leq k \leq N-1$ and $M_0(S_N) = 2^{-N}$. We have
\begin{equation}\notag
\begin{split}
&\frac{1}{M_0(\partial\mathcal{S}(\beta_x))}\int_{\partial\mathcal{S}(\beta_x)}d_T(\tau_x\wedge\omega_x)\,dM_0(\omega_x) =\\
& 2^{d_T(\beta_x)}\sum_{k=n}^N\int_{S_k}d_t(\tau_x\wedge\omega_x)\,dM_0(\omega_x) =
2^n\sum_{k=n}^Nk\cdot M_0(S_k) \leq\\
& 2^n\sum_{k=n}^Nk2^{-k}\leq 3n = 3d_T(\tau_x\wedge\beta_x),
\end{split}
\end{equation}
and we arrive at \eqref{e:92}. It follows immediately that
\begin{equation}\notag
\begin{split}
&\frac{1}{M_0(\partial\mathcal{S}(\alpha_x))M_0(\partial\mathcal{S}(\beta_x))}\int_{\partial\mathcal{S}(\alpha_x)}\int_{\partial\mathcal{S}(\beta_x)}d_{T}(\xi_x\wedge\omega_x)\,dM_0(\xi_x)\,dM_0(\omega_x) \leq\\
&3\frac{1}{M_0(\partial\mathcal{S}(\alpha_x))}\int_{\partial\mathcal{S}(\alpha_x)}d_{T}(\xi_x\wedge\beta_x)M_0(\xi_x) \leq 9 d_T(\alpha_x\wedge\beta_x).
\end{split}
\end{equation}
\end{proof}
\begin{theorem}\label{t:th.A.3}
Suppose that $\mu\geq0$ is a Borel measure on $\overline{T}^2$ with finite energy. By the Disintegration Theorem we can define a measure $\mu_b$ supported on the $(\partial T)^2$ by
\begin{equation}\label{e:lA.3}
\begin{split}
&d\mu_b(\omega_x,\omega_y) = \sum_{\beta>\omega}\frac{\mu(\beta)}{M(\partial\mathcal{S}(\beta))}\,dM(\omega) +\\
&\sum_{\beta_y>\omega_y}\frac{d\mu(\omega_x,\beta_y)}{M_0(\partial\mathcal{S}(\beta_y))}\,dM_0(\omega_y) + \sum_{\beta_x>\omega_x}\frac{d\mu(\beta_x,\omega_y)}{M_0(\partial\mathcal{S}(\beta_x))}\,dM_0(\omega_x)+ d\mu(\omega_x,\omega_y).
\end{split}
\end{equation}
Then the potentials of $\mu$ and $\mu_b$ are equivalent,
\begin{equation}\label{e:lA.3s}
\mathbb{V}^{\mu}(\alpha) \approx \mathbb{V}^{\mu_b}(\alpha),\quad \alpha\in \overline{T}^2.
\end{equation}
\end{theorem}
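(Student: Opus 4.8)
The plan is to reduce the statement to the one–coordinate averaging estimate already contained in the proof of Lemma~\ref{l:A.3.1}, after decomposing $\mu$ according to which of the two coordinates of its support lie in the open tree and which lie on the boundary. First I would record the kernel form of the potential: for any $\alpha\in\overline{T}^2$ and any non-negative Borel measure $\lambda$ on $\overline{T}^2$, Tonelli's theorem gives
\[
\mathbb{V}^{\lambda}(\alpha)=\sum_{\gamma\geq\alpha}\lambda(\mathcal{S}(\gamma))=\int_{\overline{T}^2}\bigl|\mathcal{P}(\alpha)\cap\mathcal{P}(\xi)\bigr|\,d\lambda(\xi)=\int_{\overline{T}^2}d_{T^2}(\alpha\wedge\xi)\,d\lambda(\xi),
\]
all identities being read in $[0,+\infty]$.

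Next I would isolate the ingredient that does the work. From inequality~\eqref{e:92} (the upper bound in the proof of Lemma~\ref{l:A.3.1}) together with the trivial monotonicity $d_T(\tau_x\wedge\omega_x)\geq d_T(\tau_x\wedge\beta_x)$ whenever $\omega_x\leq\beta_x$ (since then $\mathcal{P}(\beta_x)\subseteq\mathcal{P}(\omega_x)$), one obtains, with absolute constants,
\[
\frac{1}{M_0(\partial\mathcal{S}(\beta_x))}\int_{\partial\mathcal{S}(\beta_x)}d_T(\tau_x\wedge\omega_x)\,dM_0(\omega_x)\approx d_T(\tau_x\wedge\beta_x),\qquad \tau_x\in\overline{T}_x,\ \beta_x\in T_x .
\]
Since $d_{T^2}$, $M$ and $\partial\mathcal{S}(\cdot)$ factor over the two coordinates, multiplying two copies of this gives
\[
\frac{1}{M(\partial\mathcal{S}(\beta))}\int_{\partial\mathcal{S}(\beta)}d_{T^2}(\alpha\wedge\omega)\,dM(\omega)\approx d_{T^2}(\alpha\wedge\beta),\qquad \alpha\in\overline{T}^2,\ \beta\in T^2 .
\]
These two averaging estimates — the second for the interior part, the first (one coordinate at a time) for the mixed boundary — are all that is needed.

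Then I would split $\mu=\mu^{xy}+\mu^{x}+\mu^{y}+\mu^{\partial}$ according to $\overline{T}^2=(T_x\times T_y)\sqcup(\partial T_x\times T_y)\sqcup(T_x\times\partial T_y)\sqcup(\partial T_x\times\partial T_y)$, and observe that~\eqref{e:lA.3} is precisely the corresponding decomposition of $\mu_b$: the part $\mu^{\partial}$ on the distinguished boundary is left untouched; the (purely atomic) part $\mu^{xy}=\sum_{\beta\in T^2}\mu(\{\beta\})\delta_\beta$ is replaced by $\sum_{\beta\in T^2}\mu(\{\beta\})\,M(\partial\mathcal{S}(\beta))^{-1}\,M|_{\partial\mathcal{S}(\beta)}$; and $\mu^{x}=\sum_{\beta_y\in T_y}\sigma_{\beta_y}\otimes\delta_{\beta_y}$, with $\sigma_{\beta_y}$ the measure on $\partial T_x$ obtained by restricting $\mu$ to $\partial T_x\times\{\beta_y\}$, is replaced by $\sum_{\beta_y\in T_y}\sigma_{\beta_y}\otimes M_0(\partial\mathcal{S}(\beta_y))^{-1}\,M_0|_{\partial\mathcal{S}(\beta_y)}$, and symmetrically for $\mu^{y}$; the measurability needed to make this precise is the two-parameter version of the argument in Section~\ref{SS:2.5}. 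Applying $\mathbb{V}^{\cdot}(\alpha)$ to each summand, interchanging sum and integral by Tonelli, and inserting the displayed averaging estimates (the two-coordinate form for $\mu^{xy}$, the one-coordinate form applied in the $y$- resp.\ $x$-coordinate for $\mu^{x}$ and $\mu^{y}$) gives
\[
\mathbb{V}^{(\mu_b)^{xy}}(\alpha)\approx\mathbb{V}^{\mu^{xy}}(\alpha),\quad \mathbb{V}^{(\mu_b)^{x}}(\alpha)\approx\mathbb{V}^{\mu^{x}}(\alpha),\quad \mathbb{V}^{(\mu_b)^{y}}(\alpha)\approx\mathbb{V}^{\mu^{y}}(\alpha),\quad \mathbb{V}^{(\mu_b)^{\partial}}(\alpha)=\mathbb{V}^{\mu^{\partial}}(\alpha),
\]
with constants independent of $\alpha$ and $\mu$. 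Since all four potentials are non-negative, adding these four comparisons yields~\eqref{e:lA.3s}.

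The computation is routine; the only points requiring care are (i) the measurability making~\eqref{e:lA.3} a genuine Borel measure, which is the two-parameter analogue of Section~\ref{SS:2.5} and of \cite{arsw2014}; (ii) the Tonelli interchanges, harmless since every integrand is non-negative; and (iii) permitting the value $+\infty$ when $\alpha$ has a boundary coordinate, so that all displayed identities and comparisons are understood in $[0,+\infty]$. The one genuine ingredient — the averaging estimate — is already established inside the proof of Lemma~\ref{l:A.3.1}, so the main task is just to line up the four terms of~\eqref{e:lA.3} with the four pieces of $\mu$ and sum.
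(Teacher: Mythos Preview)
Your proposal is correct and follows essentially the same route as the paper's proof: both use the kernel representation $\mathbb{V}^{\lambda}(\alpha)=\int d_{T^2}(\alpha\wedge\xi)\,d\lambda(\xi)$, split according to the four pieces of $\overline{T}^2=T^2\cup(\partial T_x\times T_y)\cup(T_x\times\partial T_y)\cup(\partial T)^2$, apply the averaging estimate from Lemma~\ref{l:A.3.1} (or its one-coordinate version~\eqref{e:92}) to each piece via Tonelli, and sum. The only organizational difference is that the paper expands $\mathbb{V}^{\mu_b}(\alpha)$ directly into the four integrals $(I)$--$(IV)$ and then recognizes each as the corresponding piece of $\mathbb{V}^{\mu}(\alpha)$, whereas you first decompose $\mu$ and then push each piece to the boundary; this is the same computation read in the opposite order.
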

\begin{proof}
Fix any point $\alpha\in \overline{T}^2$. We have
\begin{equation}\notag
\begin{split}
&\mathbb{V}^{\mu_b}(\alpha) = \int_{(\partial T)^2}d_{T^2}(\alpha\wedge\omega)\,d\mu_b(\omega)=
\int_{(\partial T)^2}d_{T^2}(\alpha\wedge\omega)\sum_{\beta>\omega}\frac{\mu(\beta)}{M(\partial\mathcal{S}(\beta))}\,dM(\omega) + \\
&\int_{(\partial T)^2}d_{T^2}(\alpha\wedge\omega)\sum_{\beta_y>\omega_y}\frac{d\mu(\omega_x,\beta_y)}{M_0(\partial\mathcal{S}(\beta_y))}\,dM_0(\omega_y) +\\
&\int_{(\partial T)^2}d_{T^2}(\alpha\wedge\omega)\sum_{\beta_x>\omega_x}\frac{d\mu(\beta_x,\omega_y)}{M_0(\partial\mathcal{S}(\beta_x))}\,dM_0(\omega_x) +\\
&\int_{(\partial T)^2}d_{T^2}(\alpha\wedge\omega)d\mu(\omega) := (I) + (II) + (III) + (IV).
\end{split}
\end{equation}
By Tonelli's theorem and Lemma \ref{l:A.3.1}
\begin{equation}\notag
(I) = \sum_{\beta\in T^2}\frac{\mu(\beta)}{M(\partial\mathcal{S}(\beta))}\int_{\partial\mathcal{S}(\beta)}d_{T^2}(\alpha\wedge\omega)\,dM(\omega)\approx \sum_{\beta\in T^2}\mu(\beta)d_{T^2}(\alpha\wedge\beta).
\end{equation}
Similarly,
\begin{equation}\notag
\begin{split}
&(II) =\\
&\int_{\partial T_x}d_T(\alpha_x\wedge\omega_x)\sum_{\beta_y\in T_y}\left(\frac{1}{M_0(\partial\mathcal{S}(\beta_y))}\int_{\partial\mathcal{S}(\beta_y)} d_T(\alpha_y\wedge\omega_y)dM_0(\omega_y)\right)d\mu(\omega_x,\beta_y)\approx\\
&\sum_{\beta_y\in T_y}d_T(\alpha_y\wedge\beta_y)\int_{\partial T_x}d_T(\alpha_x\wedge\omega_x)d\mu(\omega_x,\beta_y),
\end{split}
\end{equation}
and
\[
(III) \approx \sum_{\beta_x\in T_x}d_T(\alpha_x\wedge\beta_x)\int_{\partial T_y}d_T(\alpha_y\wedge\omega_y)d\mu(\beta_x,\omega_y).
\]
We arrive at
\begin{equation}\notag
\begin{split}
&V^{\mu_b}(\alpha) \approx \sum_{\beta\in T^2}\mu(\beta)d_{T^2}(\alpha\wedge\beta) + 
\sum_{\beta_y\in T_y}d_T(\alpha_y\wedge\beta_y)\int_{\partial T_x}d_T(\alpha_x\wedge\omega_x)d\mu(\omega_x,\beta_y) +\\
&\sum_{\beta_x\in T_x}d_T(\alpha_x\wedge\beta_x)\int_{\partial T_y}d_T(\alpha_y\wedge\omega_y)d\mu(\beta_x,\omega_y) +
\int_{(\partial T)^2}d_{T^2}(\alpha\wedge\omega)\,d\mu(\omega) =\\
&\int_{T^2}d_{T^2}(\alpha\wedge\tau)\,d\mu(\tau) + \int_{\partial T_x\times T_y}d_{T^2}(\alpha\wedge\tau)\,d\mu(\tau) + \int_{T_x\times \partial T_y}d_{T^2}(\alpha\wedge\tau)\,d\mu(\tau)+\\
&\int_{\partial T_x\times\partial T_y}d_{T^2}(\alpha\wedge\tau)\,d\mu(\tau) = \mathbb{V}^{\mu}(\alpha).
\end{split}
\end{equation}
\end{proof}
\begin{corollary}\label{c:A.3}
Given a Borel set $E\subset \overline{T}^2$ define its boundary projection $\mathcal{S}_b(E)\subset(\partial T)^2$ to be
\[
\mathcal{S}_b(E) = \bigcup_{\beta\in E}\partial\mathcal{S}(\beta).
\]
Then there exists a constant $C>1$ such that
\begin{equation}\label{e:cA.3}
\capp \mathcal{S}_b(E) \leq\capp E \leq C\capp \mathcal{S}_b(E).
\end{equation}
\end{corollary}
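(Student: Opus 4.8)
The plan is to establish the two inequalities in \eqref{e:cA.3} separately, the estimate $\capp E\le C\capp\mathcal{S}_b(E)$ being the substantial one.

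For $\capp\mathcal{S}_b(E)\le\capp E$ I would argue straight from the definition \eqref{e:334}: if $\varphi\ge0$ satisfies $\mathbb{I}\varphi\ge1$ on $E$, then it also satisfies $\mathbb{I}\varphi\ge1$ on $\mathcal{S}_b(E)$, since every $\zeta\in\partial\mathcal{S}(\beta)$ with $\beta\in E$ obeys $\zeta\le\beta$, whence $\mathbb{I}\varphi(\zeta)=\sum_{\gamma\ge\zeta}\varphi(\gamma)\ge\sum_{\gamma\ge\beta}\varphi(\gamma)=\mathbb{I}\varphi(\beta)\ge1$ by positivity of $\varphi$. Taking the infimum over such $\varphi$ yields the inequality, with no hypothesis on $E$.

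For the reverse bound I would first reduce to $E$ compact: $\capp$ is a Choquet capacity, so $\capp E=\sup\{\capp K:K\subseteq E\text{ compact}\}$ for Borel $E$ (see \cite[Chapter 2]{ah1996}), and both $\capp$ and $E\mapsto\mathcal{S}_b(E)$ are monotone, so the general case follows from the compact one. Let then $E$ be compact with $\capp E>0$ (the case $\capp E=0$ being trivial), and let $\mu:=\mu_E$ be its equilibrium measure, so $\supp\mu\subseteq E$, $\mathbb{V}^\mu\le1$ on $\supp\mu$, and $|\mu|=\mathcal{E}[\mu]=\capp E$. Since $\mathcal{E}[\mu]<\infty$, Theorem \ref{t:th.A.3} furnishes a measure $\mu_b$ on $(\partial T)^2$ with $\mathbb{V}^\mu\approx\mathbb{V}^{\mu_b}$ on all of $\overline{T}^2$. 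From the explicit formula \eqref{e:lA.3}, $\mu_b$ is concentrated on $\bigcup_{\gamma\in\supp\mu}\partial\mathcal{S}(\gamma)\subseteq\mathcal{S}_b(E)$, and integrating the four summands of \eqref{e:lA.3} over $(\partial T)^2$ — each of which, by Tonelli, integrates to the $\mu$-mass of the corresponding piece $T^2$, $\partial T_x\times T_y$, $T_x\times\partial T_y$, $(\partial T)^2$ of $\overline{T}^2$ — gives $|\mu_b|=|\mu|=\capp E$. To size $\mathcal{E}[\mu_b]$ I would chain, using Theorem \ref{t:th.A.3} and the symmetry of mutual energy \eqref{e:337},
\[
\mathcal{E}[\mu_b]=\int_{\overline{T}^2}\mathbb{V}^{\mu_b}\,d\mu_b\lesssim\int_{\overline{T}^2}\mathbb{V}^{\mu}\,d\mu_b=\int_{\overline{T}^2}\mathbb{V}^{\mu_b}\,d\mu\lesssim\int_{\supp\mu}\mathbb{V}^{\mu}\,d\mu\le|\mu|=\capp E,
\]
where the middle integral runs over $\supp\mu$, on which $\mathbb{V}^{\mu_b}\lesssim\mathbb{V}^\mu\le1$; this same computation shows $\mathcal{E}[\mu,\mu_b]<\infty$, which legitimises the symmetry step. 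Finally I would invoke the elementary dual lower bound for capacity: for any set $F$, any finite-energy $\sigma\ge0$ concentrated on $F$, and any $\varphi\ge0$ with $\mathbb{I}\varphi\ge1$ on $F$,
\[
|\sigma|\le\int_F\mathbb{I}\varphi\,d\sigma=\langle\varphi,\mathbb{I}^\ast\sigma\rangle_{\ell^2(T^2)}\le\|\varphi\|_{\ell^2(T^2)}\,\mathcal{E}[\sigma]^{1/2},
\]
so $\capp F\ge|\sigma|^2/\mathcal{E}[\sigma]$; applied with $F=\mathcal{S}_b(E)$ and $\sigma=\mu_b$ this gives $\capp\mathcal{S}_b(E)\ge|\mu_b|^2/\mathcal{E}[\mu_b]=(\capp E)^2/\mathcal{E}[\mu_b]\gtrsim\capp E$, as desired.

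I do not foresee a genuine obstacle; the one point that needs care — rather than being hard — is that $\mathbb{V}^{\mu_b}$ need not be bounded on $(\partial T)^2$, because the Maximum Principle fails on the bitree (Proposition \ref{p:A.2.1}), so one cannot bound $\mathcal{E}[\mu_b]=\int\mathbb{V}^{\mu_b}\,d\mu_b$ directly by $|\mu_b|$; both the finiteness and the correct size of $\mathcal{E}[\mu_b]$ must be obtained by transferring to $\mu$, which is supported on the good set $\{\mathbb{V}^\mu\le1\}$. The other mildly technical item is the bookkeeping with the four terms of \eqref{e:lA.3} for the support and total mass of $\mu_b$. Everything else is immediate once Theorem \ref{t:th.A.3} is in hand.
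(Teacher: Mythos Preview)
Your proposal is correct and follows essentially the same route as the paper: reduce to compact $E$, push the equilibrium measure $\mu=\mu_E$ to the distinguished boundary via Theorem~\ref{t:th.A.3}, verify $|\mu_b|=|\mu|$ and $\mathcal{E}[\mu_b]\lesssim\mathcal{E}[\mu]=\capp E$ through the chain $\int\mathbb{V}^{\mu_b}\,d\mu_b\approx\int\mathbb{V}^{\mu_b}\,d\mu\approx\int\mathbb{V}^\mu\,d\mu$, and then conclude by duality. The only cosmetic difference is that the paper carries out the last step by introducing the equilibrium measure $\nu$ of $\mathcal{S}_b(E)$ and expanding $0\le\mathcal{E}[\mu_b-C\nu]$, whereas you invoke the equivalent dual bound $\capp F\ge|\sigma|^2/\mathcal{E}[\sigma]$ directly.
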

\begin{proof}
We start by assuming that $E$ is compact.
The left inequality is trivial, since any function admissible for $E$ is also admissible for $\mathcal{S}_b(E)$. Now let $\mu$ and $\nu$ be the equilibrium measures for $E$ and $\mathcal{S}_b(E)$ respectively. By the definition of $\mu_b$,
\begin{equation}\notag
\begin{split}
&|\mu_b| = \int_{(\partial T)^2}\,d\mu_b = 
\int_{(\partial T)^2}\sum_{\beta>\omega}\frac{\mu(\beta)}{M(\partial\mathcal{S}(\beta))}\,dM(\omega) +\\
&\int_{(\partial T)^2}\sum_{\beta_y>\omega_y}\frac{d\mu(\omega_x,\beta_y)}{M_0(\partial\mathcal{S}(\beta_y))}\,dM_0(\omega_y) +\int_{(\partial T)^2}\sum_{\beta_x>\omega_x}\frac{d\mu(\beta_x,\omega_y)}{M_0(\partial\mathcal{S}(\beta_x))}\,dM_0(\omega_x)+\\
&\int_{(\partial T)^2}d\mu(\omega)= \sum_{\beta\in T^2}\mu(\beta) + \sum_{\beta_y\in T_y}\int_{\partial T_x}d\mu(\omega_x,\beta_y) + \sum_{\beta_x\in T_x}\int_{\partial T_y}d\mu(\beta_x,\omega_y)+\\
&\int_{(\partial T)^2}\,d\mu = \int_{\overline{T}^2}\,d\mu = |\mu|.
\end{split}
\end{equation}
By Theorem \ref{t:th.A.3} and the fact that $\mu$ is an equilibrium measure,
\begin{equation} \label{eq:bcomparable}
|\mu_b| = |\mu| = \int_{\overline{T}^2}\mathbb{V}^{\mu}\,d\mu \approx \int_{\overline{T}^2}\mathbb{V}^{\mu_b}\,d\mu \approx \int_{\overline{T}^2}\mathbb{V}^{\mu_b}\,d\mu_b.
\end{equation}
On the other hand, for every $C\in\mathbb{R}$ we have
\begin{equation}\notag
0 \leq \int_{\overline{T}^2}\mathbb{V}^{\mu_b}\,d\mu_b -2C\int_{\overline{T}^2}\mathbb{V}^{\nu}\,d\mu_b + C^2\int_{\overline{T}^2}\mathbb{V}^{\nu}\,d\nu \leq \int_{\overline{T}^2}\mathbb{V}^{\mu_b}\,d\mu_b -2C|\mu_b| + C^2|\nu|,
\end{equation}
since $\nu$ is an equilibrium measure for $\mathcal{S}_b(E)$ and $\mathbb{V}^{\nu} \geq 1$ quasi-everywhere
on $\mathcal{S}_b(E)\supset\supp\mu_b$. By \eqref{eq:bcomparable} there is a $C > 1$, independent of $E$, such that
\begin{equation}\notag
0 \leq \int_{\overline{T}^2}\mathbb{V}^{\mu_b}\,d\mu_b -C|\mu_b| + C\left(C|\nu|-|\mu_b|\right)\leq C\left(C|\nu|-|\mu_b|\right).
\end{equation}
Therefore
\[
C\capp \mathcal{S}_b(E) = C|\nu| \geq |\mu_b| = |\mu| = \capp E,
\]
and we get the second half of \eqref{e:cA.3}.\par
Given a general set $E$ we exhaust it by compact sets $E_k$ from inside. Then $\lim_{k\rightarrow\infty}\capp E_k = \capp E$ and $\lim_{k\rightarrow\infty}\capp\mathcal{S}_b(E_k) = \capp\mathcal{S}_b(E)$, an we still have \eqref{e:cA.3} by the argument above.
\end{proof}


\section*{Acknowledgments}
We thank the anonymous reviewers for their useful comments.

\bibliographystyle{amsplain}

\begin{dajauthors}
\begin{authorinfo}[nicola]
Nicola Arcozzi\\
Dipartimento di Matematica\\
Università di Bologna "Alma Mater"\\
P.zza di P.ta S. Donato 5, 40126 Bologna, Italy\\
nicola.arcozzi\imageat{}unibo.it 
\end{authorinfo}

\begin{authorinfo}[pavel]
  Pavel Mozolyako\\
  Department of Mathematics and Computer Science\\
  Saint-Petersburg University\\
  14th line V.O., 29B, 199178 Saint-Petersburg, Russia\\
  pmzlcroak\imageat{}gmail\imagedot{}com
\end{authorinfo}

\begin{authorinfo}[kalle]
  Karl-Mikael Perfekt \\
  Department of Mathematical Sciences \\
  Norwegian University of Science and Technology \\
  7491 Trondheim, Norway\\
  karl-mikael\imagedot{}perfekt\imageat{}ntnu\imagedot{}no\\
\end{authorinfo}

\begin{authorinfo}[giulia]
  Giulia Sarfatti\\
  Dipartimento di Ingegneria Industriale e Scienze Matematiche \\
  Università Politecnica delle Marche \\
  via Brecce Bianche 12, 60131 Ancona, Italy \\
  g\imagedot{}sarfatti\imageat{}univpm\imagedot{}it\\
\end{authorinfo}
\end{dajauthors}

\end{document}